\documentclass[12pt]{amsart}
\usepackage{amssymb,latexsym,amsthm, amsmath, comment}
\usepackage[all]{xy}
\usepackage{graphicx, comment, eufrak}
\usepackage{fullpage}
\usepackage{color}

% hyperlinks
\ifx\pdfoutput\undefined
\usepackage{hyperref}
\else
\usepackage[pdftex,colorlinks=true,linkcolor=blue,urlcolor=blue]{hyperref}
\fi
% end hyperlinks

\parskip 1ex

\theoremstyle{plain}
\newtheorem{theorem}{Theorem}
\newtheorem{corollary}{Corollary}
\newtheorem{proposition}{Proposition}

\newtheorem*{ques}{Question}

\newtheorem{lemma}{Lemma}
\theoremstyle{definition}

\newtheorem{definition}{Definition}
\newtheorem{remark}{Remark}

\begin{document}

\title[Self affine Delone sets and deviation phenomena]
      {Self affine Delone sets and deviation phenomena}
\author{Scott Schmieding}
\address{University of Maryland}
\email{schmiedi@math.umd.edu}
\author{Rodrigo Trevi\~no}
\address{Courant Institute of Mathematical Sciences\\
         New York University}
\email{rodrigo@math.nyu.edu}
\date{\today}

\begin{abstract}
We study the growth of norms of ergodic integrals for the translation action on spaces coming from expansive, self-affine Delone sets. The linear map giving the self-affinity induces a renormalization map on the pattern space and we show that the rate of growth of ergodic integrals is controlled by the induced action of the renormalizing map on the cohomology of the pattern space up to boundary errors. We explore the consequences for the diffraction of such Delone sets, and explore in detail what the picture is for substitution tilings as well as for cut and project sets which are self-affine. We also explicitly compute some examples.
\end{abstract}

%\dedicatory{
%\begin{flushleft}
%'Til morning comes, let's tessellate. \\ \hspace{1.75in} \%emph{-($\Delta$)}
%\end{flushleft}}
\maketitle

\section{Introduction}
The study of mathematical quasicrystals is dominated by the study of Delone sets and the many properties they possess, as well as objects constructed from them which have physical interpretation, such as their diffraction spectrum. Given a Delone set $\Lambda$, which are often taken to be mathematical models of quasicrystals, one can form a dynamical system given by the translation action by $\mathbb{R}^d$ and one can form a ``pattern space'' associated to it given by the closure of all the $\mathbb{R}^d$ translations under the appropriate topology. When $\Lambda$ has some finiteness properties, its pattern space $\Omega_\Lambda$ is a compact metric space whereon $\mathbb{R}^d$ acts by translations in a uniquely ergodic way. The pattern space $\Omega_\Lambda$ is a foliated space with leaves corresponding to $\mathbb{R}^d$ orbits of different patterns in $\Omega_\Lambda$, although it is not a manifold. By a result of Dworkin \cite{dworkin}, there is a deep connection between the ergodic properties of the dynamical system defined by translations of $\Lambda$, i.e., by a minimal $\mathbb{R}^d$ action on $\Omega_\Lambda$ and the diffraction spectrum of $\Lambda$, which models the outcome of a physical experiment consisting of exposing a material to radiation.

It is the diffraction point of view which made Schechtman's discovery of quasicrystals remarkable: their diffraction spectrum exhibited certain symmetries which would not be possible if the material had crystalline or repetitive atomic structure \cite{schechtman}. Thus their structure more closely resembles \emph{aperiodic} configurations yet repetitive and somewhat ordered. These are properties shared with some well-known aperiodic tilings of the plane, such as the Penrose tiling. Another feature of well known aperiodic tilings of the plane such as the Penrose, is their abundance of self-similarities. This is immediately seen when one recognizes such tilings as arising from a substitution rule.

In this paper we study the rate of convergence of ergodic integrals for systems which come from translations of Delone sets when the Delone sets are self-affine, which is slightly more general than self-similar. We obtain a theorem about the rates of fluctuations of ergodic integrals and show that the action on the cohomology of an associated topological space by the self-affinity controls the rates of growth of ergodic integrals. These results are even of physical significance by the aformentioned result of Dworkin. The aim of this paper is to explore and exploit the connection between ergodic theory, cohomology, and diffraction when a Delone set is self-affine.
\subsection{Statement of results}
\label{sec:results}
In this section we give sufficient background to state our results. A more detailed explanation of terms can be found in \S \ref{sec:background}.

Let $\Lambda$ be a countably infinite closed subset of $\mathbb{R}^d$ and denote by $B_r(x)$ the Euclidean ball of radius $r$ around $x\in \mathbb{R}^d$, or $B_r$ the ball around zero in $\mathbb{R}^d$. For any closed set we consider the translation action on $\mathbb{R}^d$: $\Lambda \mapsto \varphi_t(\Lambda)$, $t\in\mathbb{R}^d$. As such $x \in \varphi_t(\Lambda)$ if and only if $ (x -t) \in \Lambda.$

$\Lambda$ is \emph{uniformly discrete} if there exists a $r > 0$ such that for any two $x,y\in\Lambda$, $x\neq y$, we have that $|x-y| > r$. $\Lambda$ is \emph{relatively dense} if there exists an $R>0$ such that $B_R\cap  \varphi_t(\Lambda) \neq \varnothing$ for all $t\in \mathbb{R}^d$.

\begin{definition}
$\Lambda$ is a \emph{Delone set} if it is uniformly discrete and relatively dense in $\mathbb{R}^d$. A \emph{cluster} $P$ of $\Lambda$ is a non-empty finite subset $P\subset\Lambda$.
\end{definition}
Associated to $\Lambda$, there is a foliated compact metric space $\Omega_\Lambda$ called the \emph{pattern space} of $\Lambda$ on which $\mathbb{R}^d$ acts by translations. In this paper, we will always assume this action is minimal and uniquely ergodic. The space $\Omega_\Lambda$ contains $\Lambda$, all of its translations by $\mathbb{R}^d$ and usually many other other Delone sets (see \S \ref{sec:background} for the definition).
\begin{definition}
A Delone set $\Lambda$ is \emph{self-affine} if there exists an expanding matrix $A\in GL^+(d,\mathbb{R}) := \mathrm{exp} (\mathfrak{gl}(d,\mathbb{R}))$ and a measure preserving homeomorphism $\Phi_A:\Omega_\Lambda\rightarrow \Omega_\Lambda$ such that $A$ induces the conjugacy
\begin{equation}
\label{eqn:mph}
 \Phi_A\circ \varphi_{ t} = \varphi_{A t} \circ \Phi_A
\end{equation}
for any $t\in\mathbb{R}^d$. By expanding, we mean that $A$ has all eigenvalues of norm greater than 1.
\end{definition}
When $A$ is pure dilation ($\tau\cdot\mathrm{Id}$ for some $\tau>1$) then $A$ gives $\Lambda$ a \emph{self-similarity}. Otherwise, we say that $A$ gives $\Lambda$ \emph{self-affinity}.

We note that tilings with a substitution rule (see \S \ref{sec:subs} for a definition) naturally define self-affine Delone sets. More generally, substitution rules can also be defined for Delone sets without relying on tilings \cite{LagariasWang}.

The pattern space $\Omega_\Lambda$ is not a manifold but has a foliated topology in the sense of \cite{global}. As such, cohomology theories have been developed to study topological invariants of this space. The relevant theories are discussed in \S \ref{sec:cohom}, but for now we will denote the cohomology of $\Omega_\Lambda$ as $H^*(\Omega_\Lambda;\mathbb{C})$. We will only consider the cases where the cohomology space $H^k(\Omega_\Lambda;\mathbb{C})$ is trivial whenever $k\not\in\{0,\dots, d\}$.

\begin{definition}
A Delone set $\Lambda$ is \emph{renormalizable of finite type} (RFT) if it is self-affine and $\dim H^*(\Omega_\Lambda;\mathbb{C})<\infty$.
\end{definition}
The fact that $\Phi_{A}$ is measure-preserving, satisfies (\ref{eqn:mph}), together with the structure of the cylinder sets in the foliated topology (\ref{eqn:charts1}), equips $\Omega_{\Lambda}$ with the structure of a Smale space, in the sense of \cite{putnam:notes}. It is not hard to show that $\Omega_{\Lambda}$ is non-wandering under $\Phi_{A}$, in which case \cite[Theorem 4.4.2]{putnam:notes} implies that there is a point $\Lambda_{0} \in \Omega_{\Lambda}$ which is periodic under the map $\Phi_{A}$. If $\Lambda$ is RFT under the self-homeomorphism $\Phi_{A}$ satisfying (\ref{eqn:mph}), then it is also RFT under the self-homeomorphism $\Phi_{A}^{k}$ for any $k \in \mathbb{N}$. By replacing $\Phi_{A}$ with a large enough power, and noticing that $\Omega_{\Lambda_{0}} = \Omega_{\Lambda}$, \emph{we will assume throughout that $\Lambda$ is fixed under $\Phi_A$}. The condition that $\Phi_{A}$ fixes $\Lambda$ is first used in Lemma \ref{lem:indMap}.

We will list the eigenvalues $\lambda_i$ of $A$ by decreasing norm: $|\lambda_1|\geq \cdots\geq |\lambda_d| > 1$. By the conjugacy in (\ref{eqn:mph}), the homeomorphism $\Phi_A$ induces a non-trivial action on the cohomology of the pattern space:
$$\Phi^*_A: H^*(\Omega_{\Lambda};\mathbb{C})\longrightarrow H^*(\Omega_{\Lambda};\mathbb{C}).$$

For RFT Delone sets $\Lambda$, since the dimension of the cohomology $H^*(\Omega_{\Lambda};\mathbb{C})$ is finite dimensional, the action of $\Phi^*_A$ is defined by $d$ matrices $\mathcal{A}_i$ so that
$$\mathcal{A}_i: H^i(\Omega_{\Lambda};\mathbb{C}) \longrightarrow H^i(\Omega_{\Lambda};\mathbb{C})$$
is the induced action. In such case, we denote by $|\nu_1| > \dots> |\nu_r| > 0$ the norms of the $r$ distinct eigenvalues of the matrix $\mathcal{A}_d$. We will see that $\nu_1 = \det A$ which, without loss of generality (that is, by taking powers), we will assume it is always positive (see  (\ref{eqn:firstEig})).

Let $E_i$ be the generalized eigenspaces for the action of $\mathcal{A}_d$ on $H^d(\Omega_{\Lambda};\mathbb{C})$ induced by the map $\Phi_A$ corresponding to the eigenvalue $\nu_i$. The subspaces $E_i$ are decomposed as
$$E_i = \bigoplus_{j=1}^{\kappa(i)} E_{i,j},$$
where $\kappa(i)$ is the size of the largest Jordan block associated with $\nu_i$, as follows.
%and the $E_{i,j}$ is the span of vectors corresponding to columns in Jordan blocks $J$ with with property that $J_{j,j} = \nu_i$ and $J_{j,j-1} = 1$.
For each $i$, we choose a basis of classes $\{[\eta_{i,j,k}]\}$ with the property that $\langle [\eta_{i,j,1}],[\eta_{i,j,2}],\dots, [\eta_{i,j,s(i,j)}]\rangle = E_{i,j}$ and
\begin{equation}
\label{eqn:basisAct}
\mathcal{A}_d [\eta_{i,j,k}] = \left\{\begin{array}{ll}
\nu_i [\eta_{i,j,k}] + [\eta_{i,j-1,k}]   &\mbox{ for $j>1$,} \\
\nu_i [\eta_{i,j,k}]  &\mbox{ for $j=1$.}
  \end{array}\right.
\end{equation}
\begin{remark}
The choice of basis in (\ref{eqn:basisAct}) is not standard but it is done in order to make calculations in \S \ref{sec:erg} easier (in particular, those of Proposition \ref{cor:indAct}). The driving feature of this choice is that for any vector $v\in E_{i,j}$ we have that $\|\mathcal{A}_d v\|^n \leq C n^{j-1}|\nu_i|^n \|v\|$ for some $C$. In other words, all vectors in $E_{i,j}$ grow at the same rate under iterations of $\mathcal{A}_d$.
\end{remark}
Following the terminology of \cite{BufetovSolomyak}, we make the following definition.
\begin{definition}
The \emph{rapidly expanding subspace} $E^+(\Omega_\Lambda) \subset H^d(\Omega_\Lambda)$ is the direct sum of all generalized eigenspaces $E_i$ of $\mathcal{A}_d$ such that the corresponding eigenvalues $\nu_i$ of $\mathcal{A}_d$ satisfy
\begin{equation}
\label{eqn:RES}
\frac{\log |\nu_i|}{\log \nu_1} \geq 1 - \frac{\log|\lambda_d|}{\log \nu_1}.
\end{equation}
\end{definition}
\begin{remark}
We will see that in the case where $A$ is a uniform dilation (all $|\lambda_i|$ are equal) the criterion for belonging to $E^+(\Omega_\Lambda)$, by (\ref{eqn:firstEig}), reduces to
\begin{equation}
\label{eqn:RES2}
\frac{\log |\nu_i|}{\log \nu_1} \geq \frac{d-1}{d}.
\end{equation}
\end{remark}

We order the indices of distinct subspaces of $E^+(\Omega_\Lambda)$ as follows. First, we set $I^+_\Lambda = I^{+,>}_\Lambda \cup I^{+,=}_\Lambda$ be the index set of classes $[\eta_{i,j,k}]$ which form a generalized eigenbasis for $E^+(\Omega_\Lambda)$, where the indices in $I^{+,>}_\Lambda$ contains vectors corresponding to a strict inequality in (\ref{eqn:RES}) and the indices in $I^{+,=}_\Lambda$ correspond to vectors associated to eigenvalues which give an equality in (\ref{eqn:RES}). Note that $I^{+,=}_\Lambda$ can be empty but $I^{+,>}_\Lambda$ always has at least one element.
The set $I_\Lambda^+$ is partially ordered: $(i,j,k)\leq (i',j',k')$ if $L(i,j,T)T^{ds_i}\geq L(i',j',T)T^{ds_{i'}}$ for $T>1$, where
\begin{equation}
  \label{eqn:logs}
  L(i,j,T) = \left\{\begin{array}{ll}
  (\log T)^{j-1} &\mbox{ if $\nu_i$ satisfies (\ref{eqn:RES}) strictly} \\
  (\log T)^{j} &\mbox{ if $\nu_i$ satisfies equality in (\ref{eqn:RES})}
  \end{array}\right.
\end{equation}
and $s_i = \frac{\log |\nu_i|}{\log \nu_1}$. The order does not depend on the indices $k$.
%$L(i,j,T)$ are powers of $\log T$ defined in \S \ref{sec:erg}

Let $\Lambda$ be a RFT Delone set and $A\in GL^+(d,\mathbb{R}) = \mathrm{exp}( \mathfrak{gl}(d,\mathbb{R}))$ be the associated expanding matrix. Let $a\in \mathfrak{gl}(d,\mathbb{R})$ be the matrix which satisfies $\exp (a) =A$ and let $g_t = \exp(at)$. Given a good Lipschitz domain (defined in \S \ref{subsec:avSets}) $B_0$, we define the averaging family $\{B_T\}_{T\geq 0}$ by
\begin{equation}
\label{eqn:rescalledSets}
B_T =  g_{\sigma \log T} B_0,
\end{equation}
where $\sigma = d/\log\det A$. As such, we have that $\mathrm{Vol}(B_T)  = \mathrm{Vol}(B_0) T^d$.

The class of functions whose ergodic integrals we will study are called the \emph{transversally locally constant} functions. They are defined in \S \ref{sec:cohom}, are denoted by $C_{tlc}^\infty(\Omega_\Lambda)$, are dense in $L^1(\Omega_\Lambda)$, and can be thought of as functions which are $C^\infty$ smooth along the foliation and locally constant along the transverse direction in $\Omega_\Lambda$. Let $\rho = \mathrm{dim}\, E^+(\Omega_\Lambda)$.%First let us suppose, for ease of presentation, that the matrix $\mathcal{A}_{d}$ is diagonalizable; the general case will be presented after.
\begin{theorem}[Deviations of ergodic averages]
\label{thm:deviations}
Let $\Lambda\subset\mathbb{R}^d$ be a RFT Delone set and $B_0\subset\mathbb{R}^d$ be a good Lipschitz domain. There exist a constant $C_{B_0,A}$ and $\rho$ $\mathbb{R}^d$-invariant distributions $\{\mathcal{D}_{i,j,k}\}_{(i,j,k)\in I^+_\Lambda}$ such that, for any $f \in C^\infty_{tlc}(\Omega_\Lambda)$, if there is an index $(i,j,k)$ such that $\mathcal{D}_{i',j',k'} (f) = 0$ for all $(i',j',k')<(i,j,k)$ but $\mathcal{D}_{i,j,k} (f) \neq 0$, then for $T>3$ and any $\Lambda_{0} \in \Omega_{\Lambda}$,
$$\left| \int_{B_T} f\circ \varphi_s(\Lambda)\, ds \right| \leq C_{B_0,A}  L(i,j,T)T^{d\frac{\log|\nu_{i}|}{\log \nu_1}  }\|f\|_{\infty}.$$
Moreover, there exists an $M_{B_0}$ such that if $\mathcal{D}_{i,j,k}(f) = 0$ for all $(i,j,k)\in I^+_\Lambda$ then
\begin{equation}
\label{eqn:bdryErr2}
\left| \int_{B_T} f\circ \varphi_s(\Lambda)\, ds \right| \leq M_{B_0} T^{d\left(1-\frac{\log|\lambda_d|}{\log\nu_1}\right)}\|f\|_{\infty}
\end{equation}
for all $T>0$. Finally, if $\mathcal{D}_{i,j,k}(f) = 0$ for all $(i,j,k)\in I_\Lambda^+$ and $B_0$ is a stellar time cube,
$$\lim_{T\rightarrow\infty}T^{-d\left(1-\frac{\log|\lambda_d|}{\log\nu_1}\right)} \int_{B_T}f    \circ \varphi_s(\Lambda)\, ds = 0.$$
\end{theorem}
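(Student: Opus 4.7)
The plan is to interpret each ergodic integral as a pairing between the cohomology class $[f]\in H^d(\Omega_\Lambda;\mathbb{C})$ represented by the top-degree leafwise form $f\,d\mathrm{vol}$ and a leafwise $d$-chain supported on the $B_T$-piece of the leaf through $\Lambda$, and then to use the self-affinity to transfer the $T\to\infty$ asymptotics to the spectral behaviour of $\mathcal{A}_d$. Concretely, since $\Phi_A(\Lambda)=\Lambda$, the conjugacy (\ref{eqn:mph}) gives $\Phi_A(B_T\cdot\Lambda)=(AB_T)\cdot\Lambda$, so along the discrete sequence $T_n=\nu_1^{n/d}$ (for which $B_{T_n}=A^nB_0$) one obtains the fundamental renormalization identity
\begin{equation*}
\int_{B_{T_n}}f\circ\varphi_s(\Lambda)\,ds=\bigl\langle\mathcal{A}_d^n[f],[B_0\cdot\Lambda]\bigr\rangle+\mathcal{E}_n(f),
\end{equation*}
where $\mathcal{E}_n(f)$ is a boundary error dominated by the leafwise $(d-1)$-volume of $\partial A^n B_0$ times the transverse size of a cocycle representative. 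Arbitrary $T\in[T_n,T_{n+1})$ is handled by interpolation with bounded multiplicative cost absorbed into $C_{B_0,A}$, and the passage from $\Lambda$ to a general $\Lambda_0\in\Omega_\Lambda$ is automatic since the bound depends only on $[f]$ and the geometry of $B_0$, both uniform on the orbit closure.

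For the principal bound, expand $[f]$ in the Jordan basis $\{[\eta_{i,j,k}]\}$ of (\ref{eqn:basisAct}) and define $\mathcal{D}_{i,j,k}(f)$ to be the coefficient of $[\eta_{i,j,k}]$ paired with the appropriate component of $[B_0\cdot\Lambda]$; these are $\mathbb{R}^d$-invariant distributions because translations act trivially on cohomology and the pairing sees only $[f]$. Iterating (\ref{eqn:basisAct}) yields
\begin{equation*}
\mathcal{A}_d^n[\eta_{i,j,k}]=\sum_{l=0}^{j-1}\binom{n}{l}\nu_i^{\,n-l}[\eta_{i,j-l,k}],
\end{equation*}
whose top term grows as $n^{j-1}|\nu_i|^n$. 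If $(i,j,k)$ is the smallest index in the partial order of $I_\Lambda^+$ where $\mathcal{D}_{i,j,k}(f)\neq 0$, the minimality hypothesis kills all indices strictly below in the order, while the definition of the order (\ref{eqn:logs}) ensures that every remaining contribution grows no faster than the $(i,j,k)$ term. Substituting $n=d\log T/\log\nu_1$ converts $n^{j-1}|\nu_i|^n$ into $L(i,j,T)\,T^{d\log|\nu_i|/\log\nu_1}$, producing the first inequality.

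When every $\mathcal{D}_{i,j,k}$ vanishes the cohomological main term is absent at every scale, and the ergodic integral is controlled by the cumulative boundary error $\sum_{m=0}^{n}\mathcal{E}_m(f)$. Because $B_0$ is a good Lipschitz domain, $\mathrm{Vol}_{d-1}(\partial B_T)=O(T^{d-1})$, while the transverse regularity of $f$ requires only a $|\lambda_d|^{-n}$-thick buffer at each step since $g_t$ expands slowest along the $\lambda_d$-eigendirection; multiplying these yields the exponent $d(1-\log|\lambda_d|/\log\nu_1)$ of (\ref{eqn:bdryErr2}) with explicit constant $M_{B_0}$. For the $o(\cdot)$ refinement, the faces of a stellar time cube are themselves stellar cubes one dimension lower under an appropriate subgroup of $\{g_t\}$, so the above argument iterates on each face. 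The resulting geometric series has ratio strictly smaller than $\nu_1^{-(1-\log|\lambda_d|/\log\nu_1)}$ on the complement of $E^+(\Omega_\Lambda)$, so its tail tends to zero, giving the limit assertion. The delicate step throughout is this boundary estimate: one has to reconcile the non-conformal leafwise dilation $g_t$ with the transverse regularity of $f$ so that exactly the exponent $d(1-\log|\lambda_d|/\log\nu_1)$ appears, and one must ensure that the functionals $\mathcal{D}_{i,j,k}$ extend from linear forms on $H^d$ to genuine $\mathbb{R}^d$-invariant distributions on $C_{tlc}^\infty(\Omega_\Lambda)$ with the correct normalization against $[B_0\cdot\Lambda]$.
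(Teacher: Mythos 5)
Your overall architecture matches the paper's: renormalize the ergodic integral via the self-affinity, expand the class $[f]$ in a Jordan basis for $\mathcal{A}_d$, read off the $n^{j-1}|\nu_i|^n$ growth, interpolate from integer to real time, and control the leftover by boundary estimates. But two of your supporting estimates are wrong as stated, and one is missing entirely.

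First, the boundary estimate. You claim $\mathrm{Vol}_{d-1}(\partial B_T)=O(T^{d-1})$ and then try to recover the exponent $d\left(1-\tfrac{\log|\lambda_d|}{\log\nu_1}\right)$ by multiplying in a ``$|\lambda_d|^{-n}$-thick buffer.'' Both steps are off. For a non-conformal $A$, the $(d-1)$-dimensional Hausdorff measure of $\partial B_T$ is \emph{not} $O(T^{d-1})$; rescaling by $g_{\sigma\log T}$ (with $\sigma=d/\log\det A$) expands $\mathcal{H}^{d-1}$ by at most $\prod_{i=1}^{d-1}|\lambda_i|^{\sigma\log T}=T^{d\left(1-\frac{\log|\lambda_d|}{\log\nu_1}\right)}$, which exceeds $T^{d-1}$ strictly unless $A$ is a pure dilation. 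This is precisely the content of Lemma~\ref{lem:cbdry} and it gives the exponent of (\ref{eqn:bdryErr2}) on its own, without any buffer. Moreover, if you try to run your version literally, $T^{d-1}\cdot|\lambda_d|^{-n}$ with $n=\tfrac{d\log T}{\log\nu_1}$ is $T^{d-1-d\frac{\log|\lambda_d|}{\log\nu_1}}$, which is off by a full factor of $T$ from the stated bound.

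Second, and more seriously, your treatment of the stellar time cube case is not a proof. You propose to ``iterate on each face'' and sum a ``geometric series with ratio strictly smaller than $\nu_1^{-(1-\log|\lambda_d|/\log\nu_1)}$,'' but no such iteration or series exists in the problem. The reason the $O$-bound improves to $o$ for a stellar time cube is a \emph{cancellation} phenomenon: the faces of a time cube come in parallel pairs $\partial_i^\pm B_0$ each orthogonal to an eigenvector of $A$, and the boundary integral reduces (via Stokes) to the difference of ergodic integrals of the same function $i_{\bar X_i}\omega$ over $\partial_i^+B_T$ and $\partial_i^-B_T$. Because the $\mathbb{R}^{d-1}$-subaction tangent to each face is uniquely ergodic (this is what ``stellar'' buys you), both normalized face integrals converge to the same mean $\mu(\bar f_{i_{\bar X_i}\omega})$, so their difference tends to zero. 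This is Lemma~\ref{lem:awesome} in the paper, and it is the key idea you omit; no decay rate between consecutive scales enters.

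Finally, a smaller point: you define $\mathcal{D}_{i,j,k}(f)$ as the Jordan coefficient of $[f]$ ``paired with the appropriate component of $[B_0\cdot\Lambda]$.'' The distributions in the theorem must be independent of $B_0$; building the $B_0$-pairing into $\mathcal{D}_{i,j,k}$ makes the non-vanishing hypothesis depend on whether that pairing happens to vanish for the chosen $B_0$. The paper takes $\mathcal{D}_{i,j,k}(f)=\alpha_{i,j,k}(\star i_\Lambda(f))$, purely the cohomological coefficient, and absorbs the $B_0$-dependence into the bounded functions $\Psi^{B_0}_{i,j,k}$ and the constant $C_{B_0,A}$.
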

Stellar time cubes are defined in \S \ref{subsec:avSets}.
\begin{remark}
We believe that through the approach of this paper, a similar statement can be derived for Delone sets with pattern spaces of infinite topological type, that is, spaces for which $H^*(\Omega_\Lambda;\mathbb{C})$ is not finite dimensional, as long as there is a homeomorphism of the form (\ref{eqn:mph}) and the spectrum of the induced action on cohomology has some disctrete components.
\end{remark}

\begin{remark}
  \label{rem:expansion1}
  The theorem will be a consequence of being able to express ergodic integrals through an ``expansion'' of the form
$$\int_{B_T} f\circ \varphi_s(\Lambda_0)\, ds = \sum_{(i,j,k)\in I^+_\Lambda} \mathcal{D}_{i,j,k}(f)\Psi_{i,j,k}^{B_0}(T)L(i,j,T)T^{d\frac{\log |\nu_{i}|}{\log\nu_1}}  + \mathcal{O}(|\partial B_T|),$$
for any function $f\in C_{tlc}^\infty(\Omega_\Lambda)$ and averaging sets of the form (\ref{eqn:rescalledSets}), where $\Psi_{i,j,k}^{B_0}:\mathbb{R}^+\rightarrow \mathbb{R}$ are bounded, continuous functions, dependent on $B_0$, which are responsible for fluctuations (see (\ref{eqn:intExpansion}) in Remark \ref{rmk:expansion}, \S \ref{subsubsec:speeds}). The distributions $\mathcal{D}_{i,j,k}$ do not depend on the set $B_0$, but one can think that they carry a ``homology'' class and their evaluation on $f\in C^{\infty}_{tlc}(\Omega_\Lambda)$ only depends on the cohomology class of $f$, which is defined as the $\Lambda$-equivariant cohomology class of the form $f(\star 1)$ (see \S \ref{sec:cohom}).
\end{remark}
\begin{remark}
The bounds in (\ref{eqn:bdryErr2}) are given by the boundary. In other words, the volume of the boundary of $B_T$ gives an error term of size $T^{d\left(1-\frac{\log|\lambda_d|}{\log\nu_1}\right)}$ (this is shown in \S \ref{subsec:avSets}). This is the so-called \emph{boundary effect}. The theorem says that fluctuations of ergodic integrals are described by eigenvalues of the induced action on cohomology which are large enough. Otherwise, the boundary of the averaging sets $B_T$ give a large error. When $A$ is pure dilation, the error caused by the boundary effects are of size $T^{d-1}$.
\end{remark}

\begin{remark}[Errors of convergence for patch frequencies]
\label{rmk:counting}
One can try to use Theorem \ref{thm:deviations} to calculate error terms of counting points of some Delone set $\Lambda$ inside the sets $B_T$. This can be done by looking at the ergodic integrals of a $C^\infty_{tlc}(\Omega_\Lambda)$ function $h$ which is supported on a small neighborhood of the canonical transversal. To know whether $\mathcal{D}_{i,j,k}(h)=0$ or not involves looking at the projection of an associated cohomology class $[\eta_h]\in H^d(\Omega_\Lambda)$ onto the different eigenvectors in (\ref{eqn:basisAct}). If these projections are non-zero, then the error terms are controlled by cohomology classes in the rapidly expanding subspace. If they are not, the geometry of the sets $B_T$, in particular, the boundary $\partial B_T$, plays a significant role. Our results about cut and project sets in \S \ref{subsec:renormCAPS} combined with Theorem \ref{thm:deviations}, which we state below as Theorem \ref{thm:cod1errs}, helps answer \cite[Question 55]{DF:limit} in the cases when a codimension-one cut and project set is RFT.

Questions about the rate of growth of counting functions, i.e., of asymptotic density of Delone sets, come up in the study of bounded displacement equivalence between Delone sets in $\mathbb{R}^d$ and $\mathbb{Z}^d$. See \cite{solomon:subs, ApCG:rapid, ApCG:rect, solomon:simple, HKW:separated} for related results.
\end{remark}
We also study the implications of Theorem \ref{thm:deviations} in the setting of substitutions as well as for some type of cut and project sets which are RFT (see \S \ref{subsec:renormCAPS} for a precise setup).
\begin{theorem}
\label{thm:cod1errs}
Let $\Lambda(K,\Gamma)$ be a codimension 1 almost canonical RFT cut and project set, $B_0$ a good Lipschitz domain and define $B_T$ as in (\ref{eqn:rescalledSets}). Then there exists a $C_{B_0}$ such that for any $f\in C^\infty_{tlc}(\Omega_{\Lambda(K,\Gamma)})$ and $T>1$,
$$\left| \int_{B_T} f\circ\varphi_t(\Lambda_0)\, dt - T^d\mu(f)\right|  \leq C_{B_0} \log(T) T^{d\left( 1-\frac{\log |\lambda_d|}{\log \nu_1}\right)} \|f\|_\infty.$$
\end{theorem}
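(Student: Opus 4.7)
The plan is to specialize the expansion from Remark \ref{rem:expansion1} (i.e.\ Theorem \ref{thm:deviations}) to codimension 1 almost canonical cut and project sets by means of the cohomological description to be developed in \S \ref{subsec:renormCAPS}. The target bound differs from the boundary error (\ref{eqn:bdryErr2}) only by a factor of $\log T$, matching precisely the factor $(\log T)^{j}$ appearing in (\ref{eqn:logs}) when $j=1$ and the eigenvalue $\nu_i$ satisfies equality in the rapidly expanding condition (\ref{eqn:RES}). The strategy is therefore to show that in the codimension 1 CAPS setting every term of the expansion other than the leading one fits under this envelope.

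First, I isolate the $(1,1,1)$-index in the expansion, corresponding to the simple eigenvalue $\nu_1=\det A$ at the top of the spectrum of $\mathcal{A}_d$. By unique ergodicity this piece reproduces the leading term $T^d\mu(f)$ (with $\mathrm{Vol}(B_0)$ absorbed into the constants), because $\mathcal{D}_{1,1,1}$ is a scalar multiple of the unique translation-invariant measure paired against the top class and $\Psi^{B_0}_{1,1,1}(T)$ is the corresponding constant shape factor. The residual boundary term $\mathcal{O}(|\partial B_T|)$ is already of size $T^{d(1-\log|\lambda_d|/\log\nu_1)}$ by the estimate in (\ref{eqn:bdryErr2}), and so is well within the allowed error.

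The core of the argument is then the following spectral claim: for a codimension 1 almost canonical RFT CAPS, every non-leading eigenvalue $\nu_i$ of $\mathcal{A}_d$ satisfies the inequality in (\ref{eqn:RES}), and any Jordan block at the equality case has size at most $1$. I plan to establish this from the explicit model for $H^d(\Omega_{\Lambda(K,\Gamma)};\mathbb{C})$ and the action $\Phi_A^*$ obtained in \S \ref{subsec:renormCAPS}. The codimension 1 hypothesis forces the enveloping lattice automorphism to have only a one-dimensional internal contracting direction, and the non-leading eigenvalues of $\mathcal{A}_d$ are governed by this single internal contraction rate. Integrality of the enveloping automorphism pins the internal rate to the threshold $1-\log|\lambda_d|/\log\nu_1$, and the one-dimensional internal direction caps the Jordan size at $1$ so that the associated $L(i,j,T)$-factor is at most $\log T$. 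Summing the finitely many non-leading terms, each of size at most $\log(T) T^{d(1-\log|\lambda_d|/\log\nu_1)}\|f\|_\infty$, together with the boundary contribution, yields the stated bound after collecting constants into $C_{B_0}$.

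The main obstacle will be the spectral bookkeeping in the previous step: carefully unpacking the cohomology model of $\Omega_{\Lambda(K,\Gamma)}$ for codimension 1 almost canonical CAPS, identifying how $\Phi_A^*$ acts on the non-leading part of $H^d$ in terms of the enveloping lattice automorphism, and in particular verifying both the spectral inequality and the triviality of the Jordan structure at the threshold. Once these two facts are in hand, the remainder of the argument reduces to routine substitution into the expansion of Remark \ref{rem:expansion1} and termwise estimation.
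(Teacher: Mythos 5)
Your high-level strategy (plug the expansion of Remark \ref{rem:expansion1} into the codimension-1 setting, isolate the leading $(1,1,1)$ term as $T^d\mu(f)$, absorb the $\mathcal{O}(|\partial B_T|)$ remainder, and then show the remaining spectral terms fit under the $\log(T)\,T^{d(1-\log|\lambda_d|/\log\nu_1)}$ envelope) is essentially the paper's, but the core spectral claim you state is backwards and, as written, would in fact sink the argument.

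You assert that ``every non-leading eigenvalue $\nu_i$ of $\mathcal{A}_d$ satisfies the inequality in (\ref{eqn:RES})'', i.e.\ that they all lie \emph{in} the rapidly expanding subspace. But (\ref{eqn:RES}) is a \emph{lower} bound on $|\nu_i|$, marking precisely the eigenvalues whose contribution to (\ref{eqn:intExpansion}) outruns the boundary error. If a non-leading $\nu_i$ satisfied (\ref{eqn:RES}) \emph{strictly}, the corresponding term would have size $T^{d\log|\nu_i|/\log\nu_1}$ strictly larger than $T^{d(1-\log|\lambda_d|/\log\nu_1)}$, and the conclusion of Theorem \ref{thm:cod1errs} would be false. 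What one must show is the opposite: \emph{no} non-leading eigenvalue satisfies (\ref{eqn:RES}) strictly, and any that sit exactly at equality contribute at most a single factor of $\log T$. Your auxiliary heuristic --- that ``integrality of the enveloping automorphism pins the internal rate to the threshold'' --- is also not correct: for codimension one, $\nu_1 = \det A = |\lambda_{d+1}|^{-1}$ and the non-leading eigenvalues coming from $\mathcal{A}^{-1}$ are $\lambda_1^{-1},\ldots,\lambda_d^{-1}$, all of magnitude strictly less than $1 \le \nu_1/|\lambda_d|$, so they sit strictly \emph{below} the threshold, not on it. (For instance, for the Fibonacci CAPS the sole non-leading eigenvalue is $-1/\phi$, well below the threshold $1$.)

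The paper's actual argument is direct and short: Proposition \ref{prop:cod1Spec} identifies the spectrum of $\Phi_A^*$ on $H^d(\Omega_\Lambda)$ as the spectrum of $\mathcal{A}^{-1}$ together with some roots of unity. Since $\mathcal{A} \in SL(\Gamma)$ has $|\lambda_1|\ge\cdots\ge|\lambda_d|>1>|\lambda_{d+1}|$, one has $\nu_1 = |\lambda_{d+1}|^{-1}$ and the remaining eigenvalues of $\mathcal{A}^{-1}$ all have modulus $<1 \le \nu_1/|\lambda_d|$, so they never strictly satisfy (\ref{eqn:RES}); at worst they, and the roots of unity, land on the boundary of (\ref{eqn:RES}), which produces only the $\log T$ factor via $L(i,j,T)$. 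You would need to replace your spectral claim with this computation (or its equivalent) for the proof to go through.
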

\begin{comment}
In light of Remark \ref{rmk:counting}, we obtain the following corollary from Theorem \ref{thm:cod1errs}.
\begin{corollary}
For codimension 1 canonical CAPs which are renormalizable, the errors of counting functions are at most as big as the boundary errors.
\end{corollary}
The approach of this paper is to study the interplay between ergodic theory and cohomology of the pattern space. When there is a renormalizing map giving a homeomorphism of the type (\ref{eqn:mph}), we can make sense of a homology theory for the pattern space, and look at certain currents and their actions on cohomology classes. In particular, we can view ergodic integrals as currents acting on a special type of forms, and we exploit the action of the homeomorphism on cohomology to study the growth rate of ergodic integrals. Currents representing homology classes are ``asymptotic'' in some sense.
\end{comment}

The $\mathbb{R}^d$ action on pattern spaces $\Omega_\Lambda$ gives rise to an asymptotic cycle $\mathfrak{C}_\Lambda$. As in general cases, the asymptotic cycle in this setting is a topological invariant for ergodic $\mathbb{R}^d$-invariant probability measures on $\Omega_\Lambda$. It is called a cycle because it is a closed linear functional on pattern-equivariant $d$-forms, i.e., elements of the $\Lambda$-equivariant cohomology of $\Omega_\Lambda$. The asymptotic cycle defines an $\mathbb{R}^d$-invariant current and $\mathbb{R}^d$-invariant distribution on pattern equivariant functions. Invariant currents in the context of Delone sets were first investigated in \cite{Kellendonk-Putnam:RS}, where they defines \emph{the Ruelle-Sullivan map}, used to study cohomological properties of pattern spaces. The point of view here is different since it is motivated by the duality of asymptotic cycles with transverse invariant distributions and its connection to ergodic theory and diffraction. This point of view is influenced by the ergodic theory of translation flows on flat surfaces and, as such, the invariant currents defined here are inspired by Forni's basic currents in the study of deviation of ergodic averages of translations flows \cite{forni:deviation} (see \S \ref{subsec:related}).

We now briefly recall the main definitions used to define the diffraction measure.
%The work of Dworkin \cite{dworkin} uncovered a deep connection between between the ergodic theory of the dynamical system defined by translations of $\Lambda$ and the diffraction spectrum of $\Lambda$.

Let $\Lambda$ be a Delone set with finite local complexity and uniform cluster frequency (see \S \ref{sec:background} for the necessary definitions). Given a translation-bounded measure $\upsilon$ on $\mathbb{R}^d$, let $\gamma(\upsilon)$ denote its autocorrelation, i.e.,
\begin{equation}
\label{eqn:autocorr}
\
\gamma(\upsilon) = \lim_{n\rightarrow\infty}\frac{1}{\mathrm{Vol}(F_n)} \left( \upsilon|_{F_n} * \tilde{\upsilon} |_{F_n}\right) = \lim_{n\rightarrow\infty} \frac{1}{\mathrm{Vol}(F_n)} \gamma_\Lambda^n ,
\end{equation}
where $\{F_n\}$ is some van Hove sequence used to average. For any set $\Lambda$, we denote its Dirac comb formally by
$$\upsilon_\Lambda = \sum_{x\in\Lambda} \delta_x.$$
For $\Lambda$ a Delone set with finite local complexity, $\upsilon_\Lambda$ has a unique autocorrelation given by the equation
\begin{equation}
\label{eqn:auto}
\gamma_\Lambda = \sum_{x,y\in\Lambda}\mathrm{freq}(x-y,\Lambda) \delta_{x-y},
\end{equation}
where $\mathrm{freq}(x-y,\Lambda)$ denotes the asymptotic frequency in $\Lambda$ of the cluster defined by two points $x,y\in \Lambda$ (see \S \ref{sec:background}). By Bochner's theorem the Fourier transform $\widehat{\gamma_\Lambda}=\sigma_\Lambda$ is a positive measure called the \emph{diffraction measure} for $\Lambda$. The measure $\widehat{\gamma_{\Lambda'}}$ is the same for any $\Lambda'\in\Omega_\Lambda$.

In this paper we show that the asymptotic cycle $\mathfrak{C}_\Lambda$ yields the autocorrelation measure of $\Lambda$, which is in fact the transverse invariant measure to the $\mathbb{R}^d$ action on $\Omega_\Lambda$. Therefore, as in the case of flat surfaces, the asymptotic cycle is nontrivial and dual to a transverse invariant measure.
\begin{theorem}
\label{thm:homology}
Let $\Lambda\subset \mathbb{R}^d$ be a Delone set with finite local complexity and uniform cluster frequency. Then there exists a unique asymptotic cycle $\mathfrak{C}_\Lambda$ whose homology class $[\mathfrak{C}_\Lambda]$ is nontrivial: there is a $d$-parameter family of $\Lambda$-equivariant cohomology classes $[\lambda^x]$, $x\in\mathbb{R}^d$, which are dual to $\mathfrak{C}_\Lambda$ in the sense that
$$ \widehat{\mathfrak{C}_\Lambda(\lambda^x)} =  \widehat{\gamma_{\Lambda}(x)} = \sigma_{\Lambda},$$
where $\gamma_\Lambda$ is the autocorrelation of $\Lambda$. The classes $[\lambda^x]$ are supported on the set $\Lambda-\Lambda$.
\end{theorem}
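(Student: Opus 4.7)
The plan is to construct the asymptotic cycle as the Birkhoff average against pattern-equivariant top-degree forms and then build explicit pattern-equivariant cocycles whose pairings with the cycle recover the autocorrelation.

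First, I would define $\mathfrak{C}_\Lambda$ on pattern-equivariant $d$-forms $\omega = f \, dx_1 \wedge \cdots \wedge dx_d$ by
$$\mathfrak{C}_\Lambda(\omega) = \lim_{T \to \infty} \frac{1}{\mathrm{Vol}(B_T)} \int_{B_T} f \circ \varphi_t(\Lambda) \, dt.$$
Under FLC and UCF the $\mathbb{R}^d$-action on $\Omega_\Lambda$ is uniquely ergodic, so this limit exists and equals $\mu(f)$ for the unique invariant measure $\mu$; this gives both existence and uniqueness of $\mathfrak{C}_\Lambda$. To see that $\mathfrak{C}_\Lambda$ descends to $H^d(\Omega_\Lambda;\mathbb{C})$, I would check closedness: if $\omega = d\eta$ with $\eta$ pattern-equivariant, Stokes reduces the $B_T$ integral to $\int_{\partial B_T} \eta$, bounded by $\|\eta\|_\infty |\partial B_T|$; dividing by $\mathrm{Vol}(B_T)$ and using the van Hove property of the averaging family forces the limit to be zero.

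Second, I would construct the dual classes explicitly. For each $x \in \mathbb{R}^d$ choose a bump $\phi_\epsilon \geq 0$ of integral one supported in a ball of radius $\epsilon$ less than half the uniform discreteness constant of $\Lambda$, and set
$$\lambda^x(y) = \Big( \sum_{\substack{y_1 \in \Lambda \\ y_1 + x \in \Lambda}} \phi_\epsilon(y - y_1) \Big) \, dy_1 \wedge \cdots \wedge dy_d.$$
By FLC, the predicate $y_1 + x \in \Lambda$ is determined by the local pattern of $\Lambda$ in a ball of radius $|x| + \epsilon$ around $y_1$, so $\lambda^x$ is pattern-equivariant, and it is automatically closed as a top-degree form on $\mathbb{R}^d$. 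If $x \notin \Lambda - \Lambda$, the defining sum is empty and $\lambda^x \equiv 0$, which yields the support statement. Pairing with the cycle,
$$\mathfrak{C}_\Lambda(\lambda^x) = \lim_{T \to \infty} \frac{1}{\mathrm{Vol}(B_T)} \#\big\{ y_1 \in \Lambda \cap B_T : y_1 + x \in \Lambda \big\} = \mathrm{freq}(x, \Lambda)$$
by UCF, so as $x$ varies the function $x \mapsto \mathfrak{C}_\Lambda(\lambda^x)$ recovers exactly the weights of $\gamma_\Lambda = \sum_{z \in \Lambda - \Lambda} \mathrm{freq}(z,\Lambda)\, \delta_z$. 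Fourier transforming yields $\sigma_\Lambda$ by definition. Nontriviality of $[\mathfrak{C}_\Lambda]$ is then immediate from $\mathfrak{C}_\Lambda(\lambda^0) = \mathrm{dens}(\Lambda) > 0$.

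The hard part is assembling the individual pairings into a genuine $d$-parameter family of cohomology classes rather than a pointwise collection of numbers. For generic $x$ the class $[\lambda^x]$ vanishes, while on the countable set $\Lambda - \Lambda$ it carries the weight $\mathrm{freq}(x,\Lambda)$, and to convert $x \mapsto \mathfrak{C}_\Lambda(\lambda^x)$ into a measure on $\mathbb{R}^d$ whose Fourier transform is $\sigma_\Lambda$ requires either a distributional framework for the family in $x$ or a controlled passage $\epsilon \to 0$ of the smearing. A secondary technical point is verifying that the choice of $\phi_\epsilon$ does not alter $[\lambda^x]$: two such choices should differ by an exact pattern-equivariant form, which can be argued by a partition-of-unity and Poincar\'e-lemma argument applied separately on each uniformly isolated neighborhood where points of $\Lambda$ and $\Lambda - x$ coincide.
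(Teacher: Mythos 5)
Your construction of the cycle $\mathfrak{C}_\Lambda$ and the closedness argument via Stokes and van Hove are exactly what the paper does (cf.\ Lemma \ref{lem:cbdry} and Proposition \ref{prop:closed}). The dual classes, however, are built by a slightly different route. The paper's representative forms are $\eta^x_{\omega,\Lambda} = \star\left(\varphi_x^*\rho_{\omega,\Lambda}\cdot\overline{\rho_{\omega,\Lambda}}\right)$ where $\rho_{\omega,\Lambda} = \omega*\upsilon_\Lambda$; pairing with $\mathfrak{C}_\Lambda$ gives the $L^2(\Omega_\Lambda,\mu)$ inner product $(\varphi_x^*f_\omega,f_\omega)$, which is precisely Dworkin's argument rewritten as a pairing of the cycle with a cohomology class, and the autocorrelation is then recovered by sending a sequence $\omega_\ell\to\delta_0$ using the constancy of the cohomology class of $\eta^x_{\omega_\ell,\Lambda}$ in $\ell$. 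You instead take a single normalized bump $\phi_\epsilon$ planted at each point of $\Lambda\cap(\Lambda-x)$; since $\int\phi_\epsilon = 1$ the Birkhoff limit is directly a normalized patch count and the pairing yields $\mathrm{freq}(x,\Lambda)$ with no $\epsilon\to 0$ passage needed for that identity. Both constructions are asymptotically the same (for $\epsilon < r_{\min}/2$ and $x\in\Lambda-\Lambda$ the paper's integrand collapses to $\sum_{y_1\in\Lambda\cap(\Lambda-x)}|\omega(y-y_1)|^2$, an unnormalized version of yours), but yours skips the $L^2$/spectral-measure detour and is in that sense more elementary, at the price of losing the direct connection to spectral measures of observables that the paper leverages in \S\ref{sec:diff}. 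Your flag about the distributional interpretation of $x\mapsto\mathfrak{C}_\Lambda(\lambda^x)$ and the bump-independence of $[\lambda^x]$ are the right things to worry about, and your sketched resolutions (compactly supported Poincar\'e lemma on each uniformly isolated $\epsilon$-ball) would close those gaps; the paper handles the first point via the $\omega_\ell\to\delta_0$ limit and dominated convergence in \S\ref{subsec:homology}.
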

\begin{remark}
That $\gamma_{\Lambda}(x) = \widehat{\sigma_{\chi_\mho}}(x)$ is due to Dworkin \cite{dworkin}; what is new here is their relationship to the asymptotic cycle $\mathfrak{C}_\Lambda$.
\end{remark}
As observed in \cite{Kwapisz}, the existence of measure preserving homeomorphisms of type (\ref{eqn:mph}) reflects the existence of symmetries of the diffraction spectrum. Indeed, by \cite[Corollary 2]{lenz:continuity} we know that the pure point part of the diffraction measure is contained in the set of eigenvalues of the (uniquely ergodic) translation action of $\mathbb{R}^d$ on $\Omega_\Lambda$. Using the conjugacy (\ref{eqn:mph}) it can be shown that the set of eigenvalues is preserved by $A$, and thus that $A$ preserves the pure point part spectrum of the diffraction measure, that is, that it gives the diffraction measure a ``symmetry''.

Using Theorem \ref{thm:deviations} in conjunction with the interpretation of the diffraction measure as a current, we obtain terms for the convergence to the diffraction measure. In what follows, $\gamma_\Lambda^T$ is defined as in (\ref{eqn:autocorr}) but using the averaging sets (\ref{eqn:rescalledSets}).
\begin{theorem}[Deviations of diffraction measures]
\label{thm:DiffDevs}
Let $\Lambda$ be a RFT Delone set and $B_0$ a good Lipschitz domain. There exist $\rho$ distributions $\sigma_{i,j,k}$, supported on $\Lambda-\Lambda$ and indexed in the same way as the distributions in Theorem \ref{thm:deviations}, and a one-parameter family of distributions $\mathcal{B}^T_\Lambda$ such that
\begin{equation}
\label{eqn:diffDev}
\hat{\gamma}_\Lambda^T = \sum_{(i,j,k)\in I^+_\Lambda} L(i,j,T) \Psi_{i,j,k}^{B_0}(T)T^{d\frac{\log |\nu_{i}|}{\log\nu_1}} \sigma_{i,j,k} + \mathcal{B}_\Lambda^T
\end{equation}
in the sense of distributions, where $\sigma_{1,1,1} = \sigma_{\Lambda}$ the diffraction measure from Theorem \ref{thm:homology}, $T^{-d\left(1-\frac{\log|\lambda_{d}|}{\log\nu_1}+\varepsilon\right)}\mathcal{B}_\Lambda^T\longrightarrow 0$ as $T\rightarrow \infty$ for any $\varepsilon>0$, and the functions $\Psi_{i,j,k}^{B_0}:\mathbb{R}^+\rightarrow\mathbb{R}$ are bounded, continuous functions which depend on the set $B_0$ used to average.
\end{theorem}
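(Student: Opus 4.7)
The plan is to translate the convergence of the truncated autocorrelation into a question about ergodic integrals on $\Omega_\Lambda$ parametrized by a test function, so that Theorem \ref{thm:deviations} (in the expanded form of Remark \ref{rem:expansion1}) applies. For each $\phi \in C_c^\infty(\mathbb{R}^d)$, I would build a transversally locally constant function $F_\phi \in C^\infty_{tlc}(\Omega_\Lambda)$ whose ergodic integral over $B_T$ reproduces $\gamma_\Lambda^T(\phi)$ up to a boundary error. A natural candidate, given a smooth bump $\eta \in C_c^\infty(\mathbb{R}^d)$ with $\int \eta = 1$, is
$$F_\phi(\Lambda') := \sum_{x,y \in \Lambda'} \eta(y)\,\phi(x-y),$$
which is transversally locally constant because the double sum depends only on finitely many points of $\Lambda'$ in a fixed ball, and is smooth along orbits since $\phi$ and $\eta$ are. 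A direct calculation using the substitution $s\mapsto y+s$ (and the involution symmetry of the autocorrelation to match orientations) yields
$$\int_{B_T} F_\phi \circ \varphi_s(\Lambda)\,ds = \gamma_\Lambda^T(\phi) + O\!\bigl(|\partial B_T|\,\|\phi\|_\infty\bigr),$$
with the error controlled uniformly on compact subsets of test functions, since $\phi$ has compact support so only pairs $(x,y)$ with one endpoint in a thin layer near $\partial B_T$ fail to match.

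Applying the expansion from Remark \ref{rem:expansion1} to $F_\phi$ then produces
$$\gamma_\Lambda^T(\phi) = \sum_{(i,j,k) \in I^+_\Lambda} \mathcal{D}_{i,j,k}(F_\phi)\,\Psi^{B_0}_{i,j,k}(T)\,L(i,j,T)\,T^{d\log|\nu_i|/\log\nu_1} + R(\phi,T),$$
with $R(\phi,T)$ of order at most $T^{d(1-\log|\lambda_d|/\log\nu_1)}$ after combining the boundary error above with the remainder estimate of Theorem \ref{thm:deviations}. I would then define distributions on $\mathbb{R}^d$ by $\tilde\sigma_{i,j,k}(\phi) := \mathcal{D}_{i,j,k}(F_\phi)$. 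Continuity of the assignment $\phi \mapsto F_\phi$ in the appropriate topology promotes each $\tilde\sigma_{i,j,k}$ to a tempered distribution, and since $F_\phi$ only feels $\phi$ through values $\phi(x-y)$ with $x-y$ a difference of points in a pattern of $\Omega_\Lambda$, which by minimality equals $\Lambda-\Lambda$, the $\tilde\sigma_{i,j,k}$ are supported in $\Lambda-\Lambda$. Setting $\sigma_{i,j,k} := \widehat{\tilde\sigma_{i,j,k}}$ and Fourier-transforming the expansion produces (\ref{eqn:diffDev}), with $\mathcal{B}_\Lambda^T$ collecting contributions of size at most $T^{d(1-\log|\lambda_d|/\log\nu_1)}$ times logarithmic factors absorbed by any $T^\varepsilon$.

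The identification $\sigma_{1,1,1} = \sigma_\Lambda$ then follows from Theorem \ref{thm:homology}: the distribution $\mathcal{D}_{1,1,1}$ is the unique $\mathbb{R}^d$-invariant one attached to the dominant eigenvalue $\nu_1 = \det A$ of $\mathcal{A}_d$, which by Theorem \ref{thm:homology} is exactly the asymptotic cycle $\mathfrak{C}_\Lambda$; pairing against the cohomology class represented by $F_\phi$ returns $\gamma_\Lambda(\phi)$, so (after absorbing normalizations into $\Psi^{B_0}_{1,1,1}$) $\sigma_{1,1,1} = \widehat{\gamma_\Lambda} = \sigma_\Lambda$. The main technical obstacle will be the first step: verifying that $F_\phi$ is transversally locally constant with norm bounds uniform in $\phi$ strong enough that the Fourier transforms $\sigma_{i,j,k}$ are well-defined distributions, and accurately accounting for the boundary pairs $(x,y) \in \Lambda\times\Lambda$ straddling $\partial B_T$. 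Once that uniformity is in place, the remainder of the argument is essentially a Fourier transform of a scalar deviation formula.
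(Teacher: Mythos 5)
Your proof is correct and reaches the same conclusion as the paper, but takes a genuinely different intermediate route. The paper works pointwise in $x$: it sets $\rho_{\omega,\Lambda} = \omega * \upsilon_\Lambda$ for a smooth bump $\omega$, forms the $d$-parameter family of $\Lambda$-equivariant $d$-forms $\eta^x_{\omega,\Lambda} = \star\bigl(\varphi_x^*\rho_{\omega,\Lambda}\cdot\overline{\rho_{\omega,\Lambda}}\bigr)$, applies the expansion of Remark \ref{rem:expansion1} to each of these, and then shows via a cohomological-invariance lemma that the coefficients $\alpha_{i,j,k}(\eta^x_{\omega_\ell,\Lambda})$ are independent of the bump $\omega_\ell$ (because the cohomology class is), which allows passing to the sharp limit $\mathcal{P}_{i,j,k}(x) = \alpha_{i,j,k}\bigl(\star(\varphi_x^*\upsilon_\Lambda\cdot\upsilon_\Lambda)\bigr)$ and defining $\sigma_{i,j,k} = \widehat{\mathcal{P}_{i,j,k}}$. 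Your construction instead fixes a test function $\phi$ and builds a single function $F_\phi\in C^\infty_{tlc}(\Omega_\Lambda)$ whose ergodic integral over $B_T$ reproduces $\gamma_\Lambda^T(\phi)$ directly, up to a boundary error; the distributions are then $\tilde\sigma_{i,j,k}(\phi) = \mathcal{D}_{i,j,k}(F_\phi)$. This sidesteps the bump-approximation limit and the associated $\ell$-invariance argument entirely, which is a virtue since that step is somewhat delicate. The trade-off, which you correctly flag, is that you must establish that $\phi\mapsto F_\phi$ is linear and continuous in a suitable topology so that each $\tilde\sigma_{i,j,k}$ is a genuine (tempered) distribution, and you must check that the implicit constant in the boundary error $O(|\partial B_T|)$ depends on $\phi$ only through quantities like $\|\phi\|_\infty$ and the diameter of $\mathrm{supp}\,\phi$, uniformly on compacta. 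Both approaches invoke the same ingredients at the start (the Dworkin-type identity linking autocorrelation to an ergodic integral) and at the end (the expansion (\ref{eqn:intExpansion}) of Theorem \ref{thm:deviations}, Fourier transform, identification of the leading term via the asymptotic cycle $\mathfrak{C}_\Lambda$ of Theorem \ref{thm:homology}). Your identification $\sigma_{1,1,1} = \sigma_\Lambda$ is handled exactly as in the paper and is fine once the $\mathrm{Vol}(B_0)$ normalization is absorbed, as you note. One minor caution: you reuse the letter $\eta$ for your auxiliary bump, which collides with the paper's use of $\eta_{i,j,k}$ for cohomology basis representatives and $\eta^x_{\omega,\Lambda}$ for $d$-forms; renaming it would avoid confusion.
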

\begin{remark}
By the isomorphism $i_\Lambda$ between $\Lambda$-equivariant functions in $\mathbb{R}^d$ and $C_{tlc}^\infty(\Omega_\Lambda)$ (see Theorem \ref{thm:tlc} in \S \ref{subsec:PEC}) the distributions $\sigma_{i,j,k}$ in Theorem \ref{thm:DiffDevs} give $\mathbb{R}^d$-invariant distributions on $\Omega_\Lambda$ associated to different homology classes. Because $\Omega_\Lambda$ has a foliated topology (see \S \ref{sec:background}), they are locally of the form $(i^{-1}_\Lambda)_*\sigma_{i,j,k}\times \mathrm{Leb}$.
\end{remark}
In well-known examples of self-affine Delone sets such as the vertex set of the Penrose tiling or Ammann-Beenker tiling (which are both substitution tilings), the rapidly expanding subspace has dimension greater than 1. We cover these examples in \S \ref{sec:ex}. However, in these examples, the relevant (non-leading) eigenvalues which satisfy (\ref{eqn:RES}) only do so as equalities (that is, $|I^{+,>}_\Lambda|=1$ while $|I^{+,=}_\Lambda|>0$). Following a suggestion of L. Sadun (for which we are grateful) we show in \S \ref{subsec:largeRES} how, using products, one can construct examples of self-similar substitutions with arbitrarily large rapidly expanding subspaces which satisfy (\ref{eqn:RES}) strictly (that is, $|I^{+,>}_\Lambda|>1$). These eigenvalues control the deviations of ergodic integrals at rates which are higher than the boundary effects. However, we only know of these types of constructions to generate such examples. Thus we end with the following.
\begin{ques}
Given any $k,d\in\mathbb{N}$, is there a self-affine Delone set $\Lambda\subset \mathbb{R}^d$ which does not come from a product such that at least $k$ eigenvalues of the induced action on $H^d(\Omega_\Lambda;\mathbb{C})$  satisfy (\ref{eqn:RES}) strictly?
\end{ques}
\subsection{Context}
\label{subsec:related}
The idea of an asymptotic cycle starts with the work of Schwartzman \cite{schwartzman:cycle}, where he defines the cycle as a topological invariant of an ergodic invariant probability measure for a flow on a manifold, somewhat generalizing the concept of rotation vector. This concept was eventually generalized to higher rank actions \cite{schwartzman:cycle2} on manifolds, where the asymptotic cycles are higher dimensional and still topological invariants of ergodic invariant probability measures. In the context of tilings and Delone sets, asymptotic cycles have been already considered in \cite{Kellendonk-Putnam:RS}. In the study of translation flows on flat surfaces, the asymptotic cycle plays an important role: for minimal translation flows on flat surfaces, the Hodge representative of the Poincar\'{e} dual of the asymptotic cycle defines a transverse invariant probability measure to the flow. When the flow is uniquely ergodic (which is generically true), this is the only transverse invariant probability measure.

More generally, Zorich discovered that there are other asymptotic cycles which are responsible for different rates of growth of ergodic integrals: his work first explored the relationship between the rates of deviations of Birkhoff sums over interval exchange transformations and quantitative properties of an associated hyperbolic system (see \cite{zorich-leaves}). This relationship was later conjectured \cite{kontsevich:hodge} and proved \cite{forni:deviation} to hold for ergodic integrals of smooth functions over translation flows on flat surfaces. More specifically, it was shown that the Lyapunov exponents of Kontsevich-Zorich cocycle
%, a hyperbolic dynamical system defined over a cohomology bundle,
dictate the rate of deviations of ergodic averages of smooth functions over translation flows. This yielded the Zorich-Forni phenomenon: a hierarchy of spaces of (co)homology classes in bijection with a hierarchy of spaces of functions for which the growth of ergodic integrals is dictated by the quantitative properties of a renormalizing hyperbolic dynamical system (defined on a cohomological space). The Kontsevich-Zorich cocycle, in this situation, defines a renormalization dynamical system for the translation flow (see \cite{FM:intro} for an introduction). Thus, there is motivation to study the growth exponents for renormalizing dynamical systems since it can yield detailed information of related systems, which are sometimes even of physical interest. For example, the rates dictated by the Lyapunov exponents of the Kontsevich-Zorich cocycle have been shown to control the limiting large-scale geometry of associated systems, some of which come from physical models (see e.g. \cite{DHL:wind-tree, FSU:billiards}). We consider Theorem \ref{thm:DiffDevs} to be of this type.

Results concerning the speed of convergence of ergodic integrals of systems defined through Delone sets exist in the literature. Estimates for discrepancies for one dimensional self-similar tilings were studied by Adamczewski in \cite{Adamczewski} where the results depend on the eigenvalues of the substitution matrix through an inequality of the form (\ref{eqn:RES2}). These are estimates for deviations of ergodic integrals.

For higher dimensional Delone sets (which define higher rank actions), a manifestation of the Zorich-Forni phenomenon has already been hinted at in the case of self-similar tilings: in \cite{sadun:exact}, using \v{C}ech cohomology, Sadun shows that the rates of convergence of patch frequencies for self-similar substitution tilings are controlled by the top eigenvalues of the substitution matrix, up to boundary effects. The estimates there give rates of growth of ergodic integrals for functions which are related to the functions we consider here.

Likewise, the work of Bufetov and Solomyak \cite{BufetovSolomyak} addresses the deviation of ergodic averages for systems coming from self-similar substitution tilings and derive limit theorems from this. There are many parallels between our results here and those of \cite{BufetovSolomyak}, but we first note the significant differences. The methods of \cite{BufetovSolomyak} do not use cohomology and are based on earlier work of Bufetov on finitely-additive measures \cite{bufetov:cocycles}. Our approach here using cohomology allows us to apply our formalism to problems which come from mathematical physics such as diffraction and the spectral theory of random Schr\"odinger operators (see the last paragraph in this subsection). In addition, our results apply to self-affine systems, not just self-similar systems as in \cite{BufetovSolomyak}. Moreover, in \S \ref{subsec:renormCAPS}, we show how our results can be applied to self-affine cut-and-project sets which falls beyond the scope of self-similar tilings.

But the parallels between this work and \cite{BufetovSolomyak} are many. For example: 1) their results depend on an inequality of the form (\ref{eqn:RES2}) for eigenvalues of the substitution matrix since boundary effects are also present; 2) the class of functions they consider is different from the class of functions we consider. However, what is common in both cases is that the functions are transversally locally constant. The class of functions used in \cite{BufetovSolomyak} is called ``cylindrical functions'', which need less regularity compared to the ones used here but which are not dense in $L^2$ unlike the functions $C_{tlc}^\infty(\Omega_\Lambda)$ considered here; 3) the finitely-additive measures on the transversals of \cite{BufetovSolomyak} play the same role that the $\Lambda$-equivariant distributions and currents play here. In fact, one expects that these functionals are different manifestations of the same functionals.

Our results on deviations of ergodic integrals (e.g. Theorem \ref{thm:deviations}) are in the spirit of the ones in \cite{sadun:exact, BufetovSolomyak}. Indeed, we prove in Proposition \ref{prop:specCont} in \S \ref{sec:subs} that the spectrum for the induced action on cohomology is contained in the relevant spectrum in substitutions. In all the examples that we have worked out we have not seen a case where their relevant spectrum is different from ours. Thus, in such cases, our theorems recover theirs and apply to a more general class of systems.

The results of Sadun \cite{sadun:exact} and Bufetov-Solomyak \cite{BufetovSolomyak} are the first examples of phenomena similar to the Zorich-Forni phenomenon for higher rank actions of Abelian groups. Outside the context of tilings and Delone sets, the work of Cosentino and Flaminio \cite{CF:heisenberg} have also shown the relationship between cohomology and growth properties of norms of ergodic integrals for higher rank Abelian actions on Heisenberg manifolds, where the renormalization dynamics do not come from a self similarity necessarily, but a renormalizing flow on a homogeneous space.
%In the works \cite of Aliste-Prieto, Coronel and Gambaudo, the rate of convergence of patch frequencies of self-similar tilings. In \cite{ApCG:rapid} they restrict to the case of planar, self-similar substitution tilings and prove estimates involving the length of the boundary of the Jordan domain depending on the size of the second eigenvalue of the substitution matrix. The work \cite{ApCG:rect} is motivated by questions of equivalence of Delone sets to lattices by homeomorphisms of bounded displacement, and contains a result analogous to \cite{ApCG:rapid} but under a much stricter condition than (\ref{eqn:RES2}) on the eigenvalues of the substitution matrix.

%The work of Solomon \cite{} studies properties of bi-Lipschitz and bounded displacement equivalence of Delone sets to lattices in $\mathbb{R}^d$. In particular, for primitive substitution tilings, the estimates obtained in these works also rely on inequalities of the form (\ref{eqn:RES2}) involving the leading eigenvalues of the substitution matrix, which are related to the interaction of the boundary (of the large cubes over which we average) with the tiling.

We close by mentioning that we have found an application of the Theorem \ref{thm:deviations} to the spectral theory of random Schr\"odinger operators. More specifically, the asymptotic cycles here give rise to traces on a dense subalgebra of random self-adjoint operators on $\ell^2(\Lambda)$. In \cite{ST:traces}, we show that these traces control the error rates of convergence of the integrated density of states in Shubin's formula.

\subsection{Organization}
This paper is organized as follows. In \S \ref{sec:background} we review the basic notions related to Delone sets, pattern spaces and special constructions such as substitution systems and cut and project sets. In \S \ref{sec:cohom} we review the relevant cohomology theories which we will use. Sections \S \ref{sec:renorm} and \S \ref{sec:erg} deal with the renormalization procedure created by the homeomorphism related to RFT Delone sets, and how it relates to ergodic integrals and their growth. In \S \ref{sec:diff} we study the consequences on the diffraction spectrum. In \S \ref{sec:subs} we relate the renormalization given by substitution systems to the action on cohomology of the pattern space. In \S \ref{subsec:renormCAPS} we define and study certain RFT cut and project sets and apply the results on deviations of ergodic inegrals from \S \ref{sec:erg}. We finish by computing specific examples in \S \ref{sec:ex} and relate them to the results of \S \ref{sec:erg}.
\subsection{Acknowledgements}
R.T. wishes to thank J. Kellendonk for patiently explaining cohomology of pattern spaces, to G. Forni for many insightful discussions about deviations of ergodic averages for translation actions, and to B. Weiss for many useful conversations about tilings and Delone sets as well as many useful comments about an early draft of this paper which greatly improved it. R.T. was partially supported by the NSF under Award No. DMS-1204008, BSF Grant 2010428, and ERC Starting Grant DLGAPS 279893. A significant part of this was written while R.T. was visiting IMPA during the Programa de Pos-Doutorado de Verao 2015, and is grateful for the hospitality. This research was also supported in part by the National Science Foundation grant "RTG: Analysis on manifolds" at Northwestern University. We would also like to thank the participants of Arbeitsgemeinschaft: Mathematical Quasicrystals held at Mathematisches Forschungsinstitut Oberwolfach in October 2015 for many useful conversations. We are also very grateful to Lorenzo Sadun for pointing out a mistake in an early version of this paper. Finally, we are grateful to an anonymous referee for many helpful suggestions which improved the paper.

\section{Delone sets and pattern spaces}
\label{sec:background}
For a Delone set $\Lambda$, let $r_{min}(\Lambda)$ and $r_{max}(\Lambda)$ denote, respectively, the supremum of all $r>0$ and infimum of all $R>0$ which allow $\Lambda$ to satisfy the definitions of uniform discreteness and relative density. We call a Delone set $\Lambda$ \emph{aperiodic} whenever $\varphi_t(\Lambda) = \Lambda$ if and only if $t = 0$.

\begin{definition}
A Delone set $\Lambda$ has \emph{finite local complexity} (FLC) if for every $R>0$ there exists a finite set $Y\subset\Lambda$ such that for all $x\in \Lambda$ there exists a $y\in Y$ such that
$$B_R(x)\cap \Lambda = \varphi_{x-y}(B_R(y)\cap \Lambda).$$
\end{definition}
In other words, a Delone set $\Lambda$ has finite local complexity if for every $R>0$ there are finitely many clusters (up to translation) in any ball of radius $R$ centered at any point in $\mathbb{R}^d$. In this paper we will always consider Delone sets of finite local complexity.

Recall that there is a translation action $\varphi:\Lambda \mapsto \varphi_t(\Lambda) = \Lambda- t$ of the Delone set $\Lambda$ for any $t\in \mathbb{R}^d$. For any $\Lambda_1,\Lambda_2 \in \{\varphi_t(\Lambda)\}_{t\in\mathbb{R}^d}$ which are translates of $\Lambda$, we define the
\emph{local metric}
\begin{equation}
\label{eqn:metric}
\bar{d}(\Lambda_1,\Lambda_2) = \inf \{\varepsilon > 0: \mbox{there exist } x,y\in B_\varepsilon \mbox{ such that } B_{\varepsilon^{-1}}\cap \varphi_x(\Lambda_1) = B_{\varepsilon^{-1}}\cap \varphi_y(\Lambda_2) \}
\end{equation}
and define the \emph{local or pattern metric} to be
\begin{equation}
\label{eqn:metricChab}
d(\Lambda_1,\Lambda_2) = \min(2^{-\frac{1}{2}}, \bar{d}(\Lambda_1,\Lambda_2)).
\end{equation}
That this is a metric is proved in \cite{LMS02}. Consider the orbit closure of $\Lambda$ under the translation action:
$$\Omega_{\Lambda} \equiv \overline{\{ \varphi_t( \Lambda) : t\in \mathbb{R}^d\}},$$
where the closure is with respect to the metric given in (\ref{eqn:metricChab}). Since $\Lambda$ has finite local complexity, $\Omega_\Lambda$ is a compact metric space (see \cite[\S 5.4]{BG:book1}). We call $\Omega_\Lambda$ the \emph{pattern space of $\Lambda$} and the dynamical system the \emph{pattern dynamical system} which we denote by $(\Omega_\Lambda,\mathbb{R}^d)$.
\begin{definition}
A Delone set $\Lambda$ is \emph{repetitive} if for every compact set $K\subset \mathbb{R}^d$ there is a compact subset $K'\subset\mathbb{R}^d$ such that for every $t,t'\in\mathbb{R}^d$ there is a $s\in K'$ with $\varphi_t(\Lambda)\cap K = \varphi_{s+t'}(\Lambda)\cap K$.
\end{definition}
Repetitivity for Delone sets of finite local complexity is equivalent to minimality of the associated dynamical system $(\Omega_\Lambda,\mathbb{R}^d)$, i.e., every orbit is dense in $\Omega_\Lambda$ (see \cite[Proposition 5.4]{BG:book1}). If $\Lambda$ is aperiodic and repetitive, then the system is aperiodic in the sense that $\Lambda'$ is aperiodic for any $\Lambda'\in\Omega_\Lambda$.

The \emph{canonical transversal} of $\Omega_\Lambda$ is the set
$$\mho_\Lambda = \{ \Lambda' \in \Omega_\Lambda : \bar{0}\in\Lambda'\},$$
where $\bar{0}$ denotes the origin. For self-affine RFT Delone sets, from (\ref{eqn:mph}), it can be seen that the canonical transversal is mapped into itself by the homeomorphism $\Phi_A$.

The pattern space $\Omega_\Lambda$ is a foliated topological space (see, for example, \cite{KellendonkPEF}): the topology generated by the metric (\ref{eqn:metricChab}) can also be seen to be generated by \emph{cylinder sets} centered at some point $\Lambda_0\in\Omega_\Lambda$ of the form
\begin{equation}
\label{eqn:charts1}
\mathcal{C}_{\Lambda_0,\varepsilon, V} = \{\varphi_t(\Lambda') \in \Omega_\Lambda : B_{\varepsilon^{-1}}\cap \Lambda' = B_{\varepsilon^{-1}}\cap \Lambda_0, t\in V\}
\end{equation}
for $\Lambda_0 \in \Omega_\Lambda,\varepsilon < r_{min}(\Lambda)/2,$ and open sets $V\subset \mathbb{R}^d$ of diameter less than $r_{min}$. Such sets are the homeomorphic image of
\begin{equation}
\label{eqn:charts2}
\{\Lambda' \in \Omega_\Lambda: B_{\varepsilon^{-1}}\cap \Lambda' = B_{\varepsilon^{-1}}\cap \Lambda_0\} \times V
\end{equation}
under the map $\phi_{\Lambda_0,\varepsilon, V}^{-1}:(\Lambda',t)\mapsto \varphi_t(\Lambda')$. Pairs of the form $(\mathcal{C}_{\Lambda_0,\varepsilon,V},\phi_{\Lambda_0,\varepsilon, V})$ are the foliated charts of $\Omega_\Lambda$.

For a non-empty cluster $P$ of a Delone set $\Lambda$ and a bounded set $A\subset \mathbb{R}^d$ denote
$$L_{P}(A) \equiv \mbox{card}\{ t\in \mathbb{R}^d: \varphi_t(P) \subset A\cap\Lambda \},$$
which is the number of translates of $P$ contained in $A$.

For a bounded, measurable subset $F$ of $\mathbb{R}^d$, let
$$\partial^rF = \{x\in \mathbb{R}^d: \mbox{dist}(x,\partial F)\leq r\}.$$
A \emph{van Hove sequence} $\{F_i\}_{i>0}$ is a countably infinite collection of bounded measurable subsets of $\mathbb{R}^d$ satisfying
$$\lim_{n\rightarrow \infty} \frac{\mbox{Vol}( \partial^r F_n)}{\mbox{Vol}(F_n)} = 0$$
for all $r>0$.

\begin{definition}
Let $F_n$ be a van Hove sequence. A Delone set $\Lambda$ has \emph{uniform cluster frequencies} (UCF) (relative to $F_n$) if for any cluster $P$, the limit
$$\mbox{freq}(P,\Lambda) = \lim_{n\rightarrow\infty} \frac{L_{P}(\varphi_t( F_n))}{\mbox{Vol}(F_n)}$$
exists uniformly in $t\in\mathbb{R}^d$.
\end{definition}

\begin{proposition}[\cite{LMS02}]
\label{prop:UE}
Let $\Lambda$ be a Delone set with finite local complexity. Then the dynamical system $(\Omega_\Lambda, \mathbb{R}^d)$ is uniquely ergodic if and only if $\Lambda$ has uniform cluster frequency with respect to any van Hove sequence.
\end{proposition}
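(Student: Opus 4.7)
The plan is to prove this equivalence by characterizing unique ergodicity via uniform convergence of ergodic averages on continuous functions (the uniform ergodic theorem of Oxtoby), and then to set up an explicit correspondence between continuous ``cylinder-type'' functions on $\Omega_\Lambda$ and cluster-counting functions on $\mathbb{R}^d$. The bridge in both directions is that for any cluster $P \subset \Lambda$ and small $\varepsilon$, one can build a continuous function $f_{P,\varepsilon} \in C(\Omega_\Lambda)$ that is a smooth approximation to the indicator of the cylinder set $\{\Lambda' : P-p_0 \subset \Lambda' \text{ with } p_0 \in B_\varepsilon\}$, where $p_0$ is a chosen reference point of $P$; for $\Lambda' = \varphi_t(\Lambda)$, the integral $\int_{F_n} f_{P,\varepsilon}(\varphi_s(\Lambda'))\,ds$ equals, up to a boundary term supported in $\partial^{\mathrm{diam}(P)+\varepsilon} F_n$, the quantity $\mathrm{Vol}(B_\varepsilon) \cdot L_P(\varphi_{-t}F_n)$.

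For the $(\Rightarrow)$ direction, assume unique ergodicity. By Oxtoby's theorem the averages $\mathrm{Vol}(F_n)^{-1}\int_{F_n} f \circ \varphi_s\,ds$ converge uniformly in $\Lambda' \in \Omega_\Lambda$ for every $f \in C(\Omega_\Lambda)$ and every van Hove sequence $\{F_n\}$. Applying this to $f_{P,\varepsilon}$ and using the above identity, together with the van Hove property $\mathrm{Vol}(\partial^r F_n)/\mathrm{Vol}(F_n) \to 0$ to kill the boundary correction, gives uniform convergence of $L_P(\varphi_t F_n)/\mathrm{Vol}(F_n)$ in $t$, which is exactly UCF.

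For the $(\Leftarrow)$ direction, assume UCF. One first shows that the algebra generated by the functions $\{f_{P,\varepsilon}\}$ (ranging over finite clusters $P$ and small $\varepsilon$) is dense in $C(\Omega_\Lambda)$: the cylinder sets (\ref{eqn:charts1}) form a base for the topology on $\Omega_\Lambda$, so these functions separate points, and Stone–Weierstrass (or a direct partition-of-unity argument using the FLC structure) gives density. For each $f_{P,\varepsilon}$, UCF combined with the identity above provides a uniform limit of the ergodic averages; by the density and a standard $3\varepsilon$ argument, the averages converge uniformly for all $f \in C(\Omega_\Lambda)$. Uniform convergence of averages to a constant on all continuous $f$ implies that every $\mathbb{R}^d$-invariant Borel probability measure assigns $f$ the same integral, hence uniqueness of the invariant measure.

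The main obstacle is the passage between the set-theoretic counting function $L_P$ and genuinely continuous functions on $\Omega_\Lambda$: the raw indicator of the cylinder is discontinuous, so one must smooth it in the transverse direction and carefully account for points in $\Lambda \cap F_n$ whose cluster $P$ extends across $\partial F_n$. The FLC hypothesis is essential here because it makes the number of cluster types at scale $R$ finite, giving uniform control on the smoothing, while the van Hove property ensures that the boundary discrepancy contributes $o(\mathrm{Vol}(F_n))$ uniformly in the basepoint $\Lambda'$. Handling the uniformity in $\Lambda'$ (not merely in $t$ for a fixed $\Lambda$) in the equivalence between $L_P(\varphi_t F_n)/\mathrm{Vol}(F_n)$ and ergodic averages on general $\Lambda' \in \Omega_\Lambda$ requires the observation that any $\Lambda' \in \Omega_\Lambda$ has the same local cluster structure as $\Lambda$, so the same $f_{P,\varepsilon}$ works simultaneously for all $\Lambda'$.
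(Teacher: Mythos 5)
The paper does not supply its own proof of this proposition; it is cited directly from \cite{LMS02}, so there is no in-paper argument to compare against. Your reconstruction, via Oxtoby's characterization of unique ergodicity as uniform convergence of ergodic averages for continuous functions, together with the cylinder-function/cluster-counting correspondence and van Hove control of boundary terms, is the standard route and appears substantively correct. Two places deserve a touch more care. First, in the $(\Rightarrow)$ direction you pass from uniform convergence of the averages of the smoothed cylinder function $f_{P,\varepsilon}$ to convergence of $L_P(\varphi_t F_n)/\mathrm{Vol}(F_n)$; since $f_{P,\varepsilon}$ only approximates the (discontinuous) indicator, you should sandwich the normalized count between the averages of two continuous functions $f^{\pm}_{P,\varepsilon}$ whose invariant integrals differ by $O(\varepsilon)$ and then let $\varepsilon \to 0$, rather than asserting convergence directly from a single smoothed function. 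Second, in the $(\Leftarrow)$ direction the transfer of uniformity from the orbit $\{\varphi_t(\Lambda)\}$ to all of $\Omega_\Lambda$ should be spelled out: for any $\Lambda' \in \Omega_\Lambda$ and any $R$ there exists $t$ with $\Lambda' \cap B_R = \varphi_t(\Lambda) \cap B_R$, so the averages over $F_n$ at $\Lambda'$ and at $\varphi_t(\Lambda)$ agree up to a van Hove boundary correction, which is what lets UCF along the orbit upgrade to uniform convergence on the whole hull. With those two points made explicit, the argument is complete.
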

The unique invariant measure for systems defined by Delone sets of finite local complexity has a nice product form compatible with the foliated charts (\ref{eqn:charts1}) and (\ref{eqn:charts2}). The following is a restatement of \cite[Corollary 2.8]{LMS02} adapted to our setup here.
\begin{proposition}
\label{prop:measure}
Let $\Lambda_0 \in \Omega_\Lambda$ be a Delone set with finite local complexity and uniform cluster frequency and denote by $\mu$ the unique invariant measure for the $\mathbb{R}^d$ action on $\Omega_\Lambda$. Then there exists a $K(d)$ such that for any $\varepsilon<r_{min}(\Lambda_0)/2$,
$$\mu(\mathcal{C}_{\Lambda_0,\varepsilon, B_\varepsilon}) = \mathrm{Vol}(B_\varepsilon(0)) \cdot \mathrm{freq}(P_{\Lambda_0,\varepsilon},\Lambda_0) = K(d)\varepsilon^d \cdot \mathrm{freq}(P_{\Lambda_0, \varepsilon},\Lambda_0),$$
where $P_{\Lambda_0, \varepsilon} = B_{\varepsilon^{-1}}\cap \Lambda_0$.
\end{proposition}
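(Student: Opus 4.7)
The plan is to compute $\mu(\mathcal{C}_{\Lambda_0,\varepsilon,B_\varepsilon})$ by expressing it as a normalized limiting count of patch occurrences, which then matches the UCF. First I would unwind the definition of the cylinder set: $\varphi_s(\Lambda_0)\in \mathcal{C}_{\Lambda_0,\varepsilon,B_\varepsilon}$ iff there exist $\Lambda'\in\Omega_\Lambda$ with $B_{\varepsilon^{-1}}\cap \Lambda'=B_{\varepsilon^{-1}}\cap \Lambda_0$ and $t\in B_\varepsilon$ with $\varphi_s(\Lambda_0)=\varphi_t(\Lambda')$. Writing $\Lambda' = \varphi_{-t}(\varphi_s(\Lambda_0))$, the matching-patch requirement translates to saying that the patch $P_{\Lambda_0,\varepsilon}=B_{\varepsilon^{-1}}\cap \Lambda_0$ occurs in $\Lambda_0$ at position $s-t$. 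Hence the return set $R:=\{s\in\mathbb{R}^d:\varphi_s(\Lambda_0)\in \mathcal{C}_{\Lambda_0,\varepsilon,B_\varepsilon}\}$ is exactly $\bigcup_{x\in X} B_\varepsilon(x)$, where $X\subset \mathbb{R}^d$ is the set of occurrences of $P_{\Lambda_0,\varepsilon}$ in $\Lambda_0$. Because $\varepsilon<r_{\min}(\Lambda_0)/2$, distinct occurrences are separated by more than $2\varepsilon$ (else two close occurrences would force two nearby points in $\Lambda_0$), so the union is disjoint.

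Next I would use unique ergodicity together with UCF to pass to the measure. For a van Hove sequence $\{F_n\}$, the disjointness above gives
$$\mathrm{Vol}(R\cap F_n) = L_{P_{\Lambda_0,\varepsilon}}(F_n)\cdot K(d)\varepsilon^d + O\bigl(\mathrm{Vol}(\partial^\varepsilon F_n)\bigr),$$
where $K(d)=\mathrm{Vol}(B_1(0))$ and the error absorbs those balls $B_\varepsilon(x)$ with $x$ near $\partial F_n$. Dividing by $\mathrm{Vol}(F_n)$, the van Hove property kills the error, and the UCF hypothesis (Proposition \ref{prop:UE}) gives
$$\lim_{n\to\infty}\frac{\mathrm{Vol}(R\cap F_n)}{\mathrm{Vol}(F_n)} = K(d)\varepsilon^d\cdot \mathrm{freq}(P_{\Lambda_0,\varepsilon},\Lambda_0).$$
On the other hand, the left side is precisely the time average $\frac{1}{\mathrm{Vol}(F_n)}\int_{F_n}\chi_{\mathcal{C}_{\Lambda_0,\varepsilon,B_\varepsilon}}\circ\varphi_s(\Lambda_0)\,ds$, and by unique ergodicity this converges to $\mu(\mathcal{C}_{\Lambda_0,\varepsilon,B_\varepsilon})$, yielding the claim.

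The one genuine obstacle is that unique ergodicity guarantees uniform convergence of time averages only for \emph{continuous} functions, whereas $\chi_{\mathcal{C}_{\Lambda_0,\varepsilon,B_\varepsilon}}$ is merely Borel. I would handle this by sandwiching: for $\delta>0$ small, the sets $\mathcal{C}_{\Lambda_0,\varepsilon-\delta,B_{\varepsilon-\delta}}\subset \mathcal{C}_{\Lambda_0,\varepsilon,B_\varepsilon}\subset \mathcal{C}_{\Lambda_0,\varepsilon+\delta,B_{\varepsilon+\delta}}$ (possible for $\delta$ so small the patch $P_{\Lambda_0,\varepsilon}$ doesn't change and $\varepsilon+\delta<r_{\min}/2$), and one can interpolate with continuous functions supported between the inner and outer cylinders. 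Applying unique ergodicity to these continuous approximants, both bounds converge as $\delta\to 0$ to the same value computed above, using continuity of $\delta\mapsto K(d)(\varepsilon\pm\delta)^d$ and the fact that $\mathrm{freq}(P_{\Lambda_0,\varepsilon},\Lambda_0)$ is locally constant in $\varepsilon$ (since the patch itself is locally constant). This squeeze identifies $\mu(\mathcal{C}_{\Lambda_0,\varepsilon,B_\varepsilon}) = K(d)\varepsilon^d\,\mathrm{freq}(P_{\Lambda_0,\varepsilon},\Lambda_0)$, which is the desired formula.
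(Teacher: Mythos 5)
The paper itself does not prove Proposition~\ref{prop:measure}: it is stated as a restatement of Corollary~2.8 of \cite{LMS02} adapted to the present setup, and the reader is referred there. You therefore supply a self-contained proof where the paper gives none, so a line-by-line comparison is not possible; the comparison has to be with the standard LMS-type argument.

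Your argument is correct and follows the route one would expect from the LMS framework. The computation of the return set is right: unwinding the definition of the cylinder set really does give $R=\{s:\varphi_s(\Lambda_0)\in\mathcal{C}_{\Lambda_0,\varepsilon,B_\varepsilon}\}=\bigcup_{y\in X}B_\varepsilon(y)$ with $X=\{y:B_{\varepsilon^{-1}}\cap\varphi_y(\Lambda_0)=P_{\Lambda_0,\varepsilon}\}$, and the disjointness follows from uniform discreteness as you say, since any $p\in P_{\Lambda_0,\varepsilon}$ produces points $p+y_1,p+y_2\in\Lambda_0$ at distance $|y_1-y_2|$, forcing $|y_1-y_2|\ge r_{\min}>2\varepsilon$. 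The reduction to a volume count with a boundary error absorbed by the van Hove property, followed by an appeal to UCF and unique ergodicity, is the right strategy, and you are right that the only genuine delicacy is that unique ergodicity gives uniform convergence only for continuous observables. Your sandwich by cylinder sets at radii $\varepsilon\mp\delta$, interpolated by transversally locally constant bump functions, correctly resolves this: the transversal component is clopen and hence causes no trouble, and the $\mathbb{R}^d$-component has a boundary of Lebesgue measure zero, so the two sides of the squeeze converge to the same limit as $\delta\to 0$.

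Two small points worth making explicit. First, the disjointness argument requires $P_{\Lambda_0,\varepsilon}\neq\varnothing$; this holds once $\varepsilon^{-1}\ge r_{\max}(\Lambda_0)$, and since the proposition is uninteresting (both sides degenerate) otherwise, you should either add this as a hypothesis or observe that the statement is vacuous when the cluster is empty. Second, you describe $X$ as the set of positions where the patch $P_{\Lambda_0,\varepsilon}$ occurs, and then count these with $L_{P_{\Lambda_0,\varepsilon}}(F_n)$. The definition of $L_P$ in the paper is stated in terms of containment ($\varphi_t(P)\subset A\cap\Lambda$), while your $X$ is the set of \emph{exact-patch} occurrences, i.e.\ translates $y$ for which $\varphi_y(\Lambda_0)$ agrees with $\Lambda_0$ on the full ball $B_{\varepsilon^{-1}}$. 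These two counts coincide only when the cluster $P_{\Lambda_0,\varepsilon}$ determines its own $\varepsilon^{-1}$-neighbourhood, and in general they need not. This is, however, a looseness in the paper's own restatement of the LMS02 definition (which uses the exact-patch notion compatible with the cylinder sets), not a logical gap introduced by your argument; your identification of $|X\cap F_n|$ with the relevant frequency is the intended reading and is what makes the formula correct. It would tighten the writeup to flag that by ``occurrences'' you mean exact-patch occurrences, matching the cylinder set condition.
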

This proposition shows that since the foliated topology of $\Omega_\Lambda$ is locally the product of a Cantor set and $B_1\subset \mathbb{R}^d$, the unique invariant measure for the $\mathbb{R}^d$ action also admits a local product structure, in terms of a transverse invariant measure (in the sense of \cite[Ch. IV]{global}) and Lebesgue measure. Indeed, since subsets of the transversal are given by clusters, by Proposition \ref{prop:measure}, for any cluster $P$, the unique transverse $\mathbb{R}^d$-invariant measure $\tau_\Lambda$ on $\mho_\Lambda$ satisfies
\begin{equation}
\label{eqn:treasure}
\tau_\Lambda(P) = \mathrm{freq}(P,\Lambda).
\end{equation}
\subsection{Cut and project sets}
\label{subsec:CAPS}
A special class of Delone sets are those given by a \emph{cut and project} scheme. In this subsection we describe their construction.

Consider two Euclidean spaces $E^\parallel$ and $E^\perp$ of dimensions $d$ and $n-d$, respectively, and identify $\mathbb{R}^n = E^\parallel\oplus E^\perp$. Denote by $\pi_\parallel:\mathbb{R}^n\rightarrow E^\parallel$ and $\pi_\perp: \mathbb{R}^n\rightarrow E^\perp$ the corresponding projections. Let $\Gamma \subset \mathbb{R}^n$ be a lattice of unit covolume, which can be identified with an element of $X_n = SL(n,\mathbb{R})/SL(n,\mathbb{Z})$. We always assume that $\Gamma$ is in \emph{totally irrational position with respect to $E^\parallel$ and $E^\perp$}, that is, that the maps $\pi_\parallel|_{\Gamma}$ and $\pi_\perp|_{\Gamma}$ are injective and have a dense image in $E^\parallel$ and $E^\perp$, respectively. Let $K\subset E^\perp$ be a compact set which is the closure of its non-empty interior and such that $\mathrm{Vol}(\partial K) = 0$.

\begin{definition}
The \emph{cut and project set} (CAPS) $\Lambda(\Gamma,K)\subset E^\parallel$ associated to $\Gamma $ and $K$ is the set
$$ \Lambda(\Gamma,K) = \{\pi_\parallel(z):z\in\Gamma\mbox{ and }\pi_\perp(z)\in K\}.$$
\end{definition}
That CAPS are Delone sets is well known. Under the assumptions stated above for the definition, they define a uniquely ergodic dynamical system $(\Omega_{\Lambda(K,\Gamma)}, \mathbb{R}^d)$; in the case $K$ also satisfies $\pi_{\perp}(\Gamma) \cap \partial K = \varnothing$, the associated dynamical system is also minimal. See \cite[\S 5]{RobinsonHalmos} for details.
\subsection{Substitution systems}
\label{subsec:subs}
Let $\mathfrak{T} = \{T^1,\dots, T^r\}$ be a disjoint collection of compact subsets of $\mathbb{R}^{d}$, called \emph{prototiles}. The set $\mathfrak{T}$ \emph{tiles} $\mathbb{R}^d$ if $\mathbb{R}^d$ can be expressed as a union $\bigcup_i T_i$ where each $T_i$ is a translation of one of the prototiles $T^j$ and the $T_{i}$ have disjoint interiors. We will always assume our prototiles are polyhedra, and that the tiles meet full face to full face in each dimension. This gives rise to a \emph{tiling} $\mathcal{T}$ of $\mathbb{R}^d$. The elements $T_i$ of a tiling are called \emph{tiles} of $\mathcal{T}$. We note that the vertex set of a tiling, $\Lambda_{\mathcal{T}}$, is a Delone set. A \emph{patch} of $\mathcal{T}$ is a finite collection of tiles $T_i$ of $\mathcal{T}$. For a tiling $\mathcal{T}$ and $r \ge 0$, let $B_{r}[\mathcal{T}] = \{T_{i} \in \mathcal{T} : T_{i} \cap \overline{B_{r}(0)} \not= \varnothing \}$. We denote the set of patches of $\mathcal{T}$ by $\mathcal{P}_\mathcal{T}$, and let $\Omega_{\mathfrak{T}}$ denote the set of tilings generated by $\mathfrak{T}$ satisfying the conditions outlined above. The tiling metric, analogous to the pattern metric defined in (\ref{eqn:metricChab}) (see \cite[1.2]{sadun:book} for a precise definition), makes $\Omega_{\mathfrak{T}}$ into a metric space. Given $\mathfrak{T}$, it is not always true that $\Omega_{\mathfrak{T}}$ is non-empty, but the substitution process described in Definition \ref{def:sub} below gives a classical method for guaranteeing this.\\
\begin{remark}
\label{rmk:backandforth}
There are several ways to obtain a tiling from a Delone set and vice versa. For example, if one has a tiling $\mathcal{T}$, one can pick the center of mass of every tile, and this choice for every tile in $\mathcal{T}$ gives a Delone set $\Lambda_\mathcal{T}$. This is one of many ways of assigning a point in every tile in order to obtain a Delone set. In the other direction, given a Delone set $\Lambda$, one can consider the Voronoi construction or a Delaunay partition to obtain tilings $\mathcal{T}_\Lambda$ which are built from $\Lambda$. In these constructions, properties such as finite local complexity, repetitivity, etc, carry over to the newly constructed object.
\end{remark}
\begin{definition}\label{def:sub}
Let $\phi \in GL^+(d,\mathbb{R})$ be an expanding linear map. A map $S:\mathfrak{T}\rightarrow\mathcal{P}_\mathcal{T}$ is called a \emph{tile-substitution} with expansion matrix $\phi$ if $S(T^i)$ is translation-equivalent to $\phi \cdot T^i$ for all $i$.
\end{definition}
In other words, every expanded prototile $\phi \cdot T^{i}$ can be decomposed into a patch made up of finitely many prototiles. The \emph{substitution matrix} (or \emph{incidence matrix}) associated to $S$ and $\phi$ is the $r\times r$ matrix $M_{S} = (m_{i,j})$ given by defining $m_{i,j}$ to be the number of translated copies of $T^{i}$ that appear in the patch $S(T^{j})$. Recall a matrix $A$ is called \emph{primitive} if there exists $l > 0$ such that $A^{l}$ has all positive entries. If the incidence matrix associated to a substitution is primitive we will say the substitution is primitive, and we will assume from here on that our substitutions are primitive.

The map $S$ can be extended to patches in a natural way, and thus to tilings: given a patch $P$, $\phi(P)$ is a new patch which may be subdivided using $S$ (we refer the reader to \cite[Def. 2.5]{solomyak:SS} for more details). A patch $P$ is \emph{allowed} (for $S$) if there exists a prototile $T^{j} \in \mathfrak{T}$ and $l \ge 0$ such that $P$ is contained in a translate of a patch of $S^{l}(T^{j})$. We define the \emph{tiling space} $\Omega_S$ associated to $S$ to be the set $\{\mathcal{T} \in \Omega_{\mathfrak{T}} : B_{r}[\mathcal{T}] $ is an allowed patch for $S$ for all $r \ge 0\}$. Given a tiling $\mathcal{T} \in \Omega_{S}$, the inflated tiling $\phi(\mathcal{T})$ may be subdivided using the substitution $S$ to produce a new tiling in $\Omega_{S}$, giving a continuous map $\Phi_S:\Omega_{S} \to \Omega_{S}$ (see the discussion following Definition 2.5 in \cite{solomyak:SS} for more details). Furthermore, translation gives an action of $\mathbb{R}^{d}$ on $\Omega_{S}$. It is well known (see \cite[Theorem 1.5]{sadun:book} for example) that if $S$ is a primitive substitution and $\mathcal{T} \in \Omega_{S}$ then the orbit of $T$ under $\mathbb{R}^{d}$ is dense in $\Omega_{S}$, and hence $\Omega_{S}$ agrees with the standard hull $\Omega_{\mathcal{T}}$.\\

Along with primitivity of $M_{S}$, we will also assume the following standard properties of $\Omega_{S}$:
\begin{itemize}
\item
$\Omega_{S}$ has finite local complexity, so each tiling $\mathcal{T} \in \Omega_{S}$ satisfies finite local complexity: for each $r \ge 0$ there are only finitely many patches (up to translation) of the form $B_{r}[\mathcal{T}]$.
\item
$\Omega_{S}$ is translation aperiodic, meaning for all $\mathcal{T} \in \Omega_{S}$, $\varphi_{t}(\mathcal{T}) = \mathcal{T}$ implies $t = \overline{0}$.
\end{itemize}

With these assumptions, the associated tiling space $\Omega_{S}$ is compact, and the action of $\mathbb{R}^{d}$ on $\Omega_{S}$ is minimal. The actions are related via the identity $\Phi_S(\varphi_{t}(\mathcal{T})) = \varphi_{\phi t}S(\mathcal{T})$. Furthermore, we have the following two facts, which can be found in \cite{solomyak:SS, AP}.

\begin{proposition}
\label{prop:subsMPH}
The map $\Phi_S:\Omega_{S} \to \Omega_{S}$ is a homeomorphism, and the action of $\mathbb{R}^{d}$ on $\Omega_{S}$ is uniquely ergodic.
\end{proposition}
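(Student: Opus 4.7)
The plan is to prove the two assertions separately, relying on the well-established recognizability/unique composition theorem of Solomyak and on the Perron--Frobenius theory for the primitive matrix $M_S$.

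For the homeomorphism claim, I would first verify that $\Phi_S$ is continuous: by finite local complexity and the fact that substitution is determined locally, two tilings agreeing on a large ball produce substituted tilings which agree on an even larger ball (expanded by $\phi$, up to a boundary correction), and this is exactly what the tiling metric controls. Bijectivity is the heart of the matter. Injectivity follows from the \emph{recognizability} (or unique composition) property of primitive aperiodic FLC tile-substitutions, proved in this generality by Solomyak: every tiling $\mathcal{T} \in \Omega_S$ sits inside a unique supertiling $\mathcal{T}'$ with $\Phi_S(\mathcal{T}') = \mathcal{T}$. Surjectivity is then a compactness argument: given $\mathcal{T} \in \Omega_S$, one uses the allowed-patch definition of $\Omega_S$ together with primitivity to find, for each $r$, a tiling $\mathcal{T}_r \in \Omega_S$ whose substitution agrees with $\mathcal{T}$ on $B_r(0)$, and any limit point of $\{\mathcal{T}_r\}$ (which exists by compactness of $\Omega_S$) is a preimage. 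A continuous bijection from a compact Hausdorff space to itself is automatically a homeomorphism, so $\Phi_S^{-1}$ is continuous.

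For unique ergodicity, by Proposition \ref{prop:UE} it suffices to show that every tiling $\mathcal{T}\in\Omega_S$ has uniform patch frequencies along some, equivalently any, van Hove sequence. The key input is the Perron--Frobenius theorem applied to the primitive matrix $M_S$: there is a strictly positive left eigenvector giving a limiting distribution of prototiles under iterated substitution, and the normalized columns of $M_S^n/\nu_1^n$ converge to a rank-one projection onto this eigenvector. For any allowed patch $P$, recognizability lets us recover, inside any large patch $B_r[\mathcal{T}]$, the coarser patches at level $n$ obtained by de-substitution; the number of occurrences of $P$ inside $S^n(T^j)$ can then be computed combinatorially from $M_S$ and its iterates. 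Dividing by volume and letting $r \to \infty$, the Perron--Frobenius convergence yields the existence of the limit $\mathrm{freq}(P,\mathcal{T})$, and uniformity in translation $t \in \mathbb{R}^d$ follows from the bounded-distortion estimate coming from the van Hove condition together with the uniform subdivision structure.

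The main obstacle is recognizability, which is genuinely nontrivial and is where the aperiodicity hypothesis is used; I would not reprove it but cite Solomyak \cite{solomyak:SS} directly. A second, more technical, point is controlling the boundary contributions in the frequency count: the patches obtained by de-substituting $B_r[\mathcal{T}]$ do not fit perfectly inside the ball, but the van Hove property ensures these boundary errors are of lower order than $\mathrm{Vol}(B_r)$, which is exactly what is needed for uniform convergence of frequencies.
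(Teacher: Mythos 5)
The paper does not supply a proof of this proposition; it simply states the two facts as known and cites \cite{solomyak:SS, AP}. Your sketch is a correct reconstruction of the standard argument from those references: continuity of $\Phi_S$ because the substitution is locally determined and $\phi$ is expanding, injectivity via recognizability (unique composition, correctly attributed to Solomyak rather than reproved), surjectivity by the compactness/allowed-patch argument, invertibility of a continuous bijection on a compact Hausdorff space, and unique ergodicity through uniform patch frequencies obtained from Perron--Frobenius theory for the primitive incidence matrix together with the van Hove control of boundary terms (which is exactly the criterion in Proposition \ref{prop:UE}). Since the paper's ``proof'' is a citation, your proposal is in substance the same route — the one taken in \cite{solomyak:SS, AP} — and I see no gap in it.
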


\section{Cohomology}
\label{sec:cohom}
In this section we review several cohomology theories of pattern spaces.
\subsection{Pattern equivariant functions and Cohomology \cite{KellendonkPEF}}
\label{subsec:PEC}
Let $\Lambda \subset \mathbb{R}^d$ be a Delone set and $\Omega_\Lambda$ its associated pattern space. We say that $f:\mathbb{R}^d\rightarrow\mathbb{C}$ is (strongly) $\Lambda$-equivariant if for some $r > 0$
$$B_{r} \cap \varphi_x(\Lambda) = B_{r} \cap \varphi_y(\Lambda) \Rightarrow f(x) = f(y).$$
\begin{remark}
There is a notion of \emph{weakly} $\Lambda$-equivariant functions defined in \cite{KellendonkPEF}. Since we will only use strongly pattern equivariant functions, we will omit the adverb ``strongly'' and just write ``$\Lambda$-equivariant''.
\end{remark}
We denote by $C^k_{\Lambda}(\mathbb{R}^d,V)$ the space of $\Lambda$-equivariant $C^k$ functions on $\mathbb{R}^d$ with values in the vector space $V$. Smooth $\Lambda$-equivariant forms can be then identified with elements of $C^\infty_{\Lambda}(\mathbb{R}^d,\bigwedge(\mathbb{R}^{d})^*)$. We will denote those spaces by $\Delta_{\Lambda}^*$ and by $\star$ the Hodge-$\star$ operator which induces an isomorphism $\star:\Delta_\Lambda^k\rightarrow\Delta_\Lambda^{d-k}$ for any $k\in\{0,\dots, d\}$. We will denote by $(\star 1)$ the canonical Lebesgue volume form in $\mathbb{R}^d$.

The spaces $\Delta_\Lambda^k$ form a subcomplex of the de Rham complex. As such, we can define the pattern equivariant cohomology
\begin{equation}
\label{eqn:PEcoh}
H^k(\Omega_\Lambda;\mathbb{C}) = \frac{\ker \{d:\Delta_{\Lambda}^k\rightarrow \Delta_{\Lambda}^{k+1}   \}}{\mathrm{Im}\{ d:\Delta_{\Lambda}^{k-1}\rightarrow \Delta_{\Lambda}^{k}\}}.
\end{equation}

\begin{definition}
A function $f: \Omega_\Lambda\rightarrow \mathbb{C}$ is \emph{transversally locally constant} if for every $\Lambda'\in\Omega_\Lambda$ there is a $\epsilon>0$ such that $f$ is constant on $\mathcal{C}_{\Lambda',\epsilon,\{\bar{0}\}}$.
\end{definition}
We denote by $C^\infty_{tlc}(\Omega_\Lambda, \bigwedge \mathbb{C}^d)$ the transversally locally constant functions in $C^\infty(\Omega_\Lambda, \bigwedge \mathbb{C}^d)$ which are smooth along the leaves of the foliation.
\begin{theorem}[\cite{Kellendonk-Putnam:RS}]
\label{thm:tlc}
Every smooth $\Lambda $-equivariant function $g:\mathbb{R}^d\rightarrow \mathbb{C}$ defines a unique smooth, transversally locally constant function $\bar{f}_g:\Omega_\Lambda\rightarrow\mathbb{C}$ and we have that $\varphi^*_t \bar{f}_g(\Lambda) = g(t)$. This induces an algebra isomorphism between transversally locally constant functions which are smooth along leaves, and smooth $\Lambda$-equivariant functions.
\end{theorem}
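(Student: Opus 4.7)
The plan is to construct $\bar{f}_g$ by pushing $g$ forward along the $\mathbb{R}^d$-orbit of $\Lambda$, extend by continuity to the orbit closure $\Omega_\Lambda$ using the pattern equivariance of $g$, and verify the desired regularity; then construct the inverse and check algebraic compatibility.

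First I would define $\bar{f}_g$ on the orbit of $\Lambda$ by $\bar{f}_g(\varphi_t(\Lambda)) := g(t)$. This is well defined because if $\varphi_t(\Lambda) = \varphi_{t'}(\Lambda)$ then $B_r \cap \varphi_t(\Lambda) = B_r \cap \varphi_{t'}(\Lambda)$ for every $r>0$, and the $\Lambda$-equivariance of $g$ at radius $r$ forces $g(t) = g(t')$. Evidently $\varphi_t^* \bar{f}_g(\Lambda) = g(t)$ on the orbit.

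Next I would extend $\bar{f}_g$ by continuity. Fix $r$ realizing the $\Lambda$-equivariance of $g$, pick any $\Lambda'\in \Omega_\Lambda$, and choose $s_n \in \mathbb{R}^d$ with $\varphi_{s_n}(\Lambda) \to \Lambda'$. The definition of the pattern metric (\ref{eqn:metric})--(\ref{eqn:metricChab}) gives, for $m,n$ large, small $y_n,y_m \in B_\varepsilon$ with $\varepsilon^{-1} > r$ such that $\varphi_{s_n+y_n}(\Lambda)$ and $\varphi_{s_m+y_m}(\Lambda)$ agree on $B_{\varepsilon^{-1}}$; $\Lambda$-equivariance of $g$ then yields $g(s_n+y_n) = g(s_m+y_m)$, and smoothness of $g$ together with smallness of the $y_\ast$ shows that $\{g(s_n)\}$ is Cauchy and its limit is independent of the approximating sequence. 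Set $\bar{f}_g(\Lambda') := \lim_n g(s_n)$; continuity follows from the same estimate. Smoothness along the leaf through $\Lambda'$ follows by applying this uniform Cauchy argument with parameter $t$ to both $g(\,\cdot + s_n)$ and its derivatives (which are again $\Lambda$-equivariant at radius $r$), exploiting continuity of the translation action to note that $\varphi_{t+s_n}(\Lambda) \to \varphi_t(\Lambda')$ uniformly on compact sets of $t$. For transverse local constancy, note that if $\Lambda_1,\Lambda_2 \in \mathcal{C}_{\Lambda',\varepsilon,\{\bar{0}\}}$ with $\varepsilon^{-1} > r$, then approximating sequences can be taken so that $\varphi_{s_n^i}(\Lambda)$ agrees with $\Lambda'$ on $B_{\varepsilon^{-1}}$, so $g(s_n^1) = g(s_n^2)$ and the limits agree.

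Finally I would construct the inverse map by setting, for $\bar{f}\in C^\infty_{tlc}(\Omega_\Lambda,\mathbb{C})$, $g_{\bar{f}}(t) := \bar{f}(\varphi_t(\Lambda))$. Smoothness of $g_{\bar{f}}$ is immediate from smoothness of $\bar{f}$ along leaves, and the $\Lambda$-equivariance follows from transverse local constancy: if $B_r \cap \varphi_x(\Lambda) = B_r \cap \varphi_y(\Lambda)$, then with $\varepsilon^{-1} = r$ one has $\varphi_y(\Lambda) \in \mathcal{C}_{\varphi_x(\Lambda),\varepsilon,\{\bar 0\}}$, so $\bar{f}$ returns the same value. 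The two constructions are mutually inverse on the dense orbit, hence everywhere by continuity, and both intertwine pointwise sums and products; this gives the algebra isomorphism. The main obstacle is the extension step: one must show that the purely topological notion of closeness in the pattern metric really does transfer, through $\Lambda$-equivariance, to closeness of values of $g$, uniformly enough to preserve smoothness along leaves. Once that uniform Cauchy estimate is set up, the remaining verifications are formal.
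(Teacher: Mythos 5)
The paper does not prove this statement; it is quoted as Theorem \ref{thm:tlc} and attributed to Kellendonk--Putnam \cite{Kellendonk-Putnam:RS}, so there is no in-paper proof to compare against. On its own terms, your construction follows the standard route (define $\bar f_g$ on the dense orbit, extend by continuity using the pattern metric, build the inverse via restriction to the orbit of $\Lambda$) and the overall architecture is sound.

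Two points in the extension step deserve to be made explicit rather than left implicit. First, when you argue that $\{g(s_n)\}$ is Cauchy from ``smoothness of $g$ together with smallness of the $y_\ast$,'' what is really being used is \emph{uniform} continuity of $g$ on all of $\mathbb{R}^d$; bare smoothness does not give this. Here it does hold, but the reason is finite local complexity: $g$ and each $\partial^\alpha g$ are $\Lambda$-equivariant of some fixed radius, and FLC implies there are only finitely many local pattern types up to translation, so each $\partial^\alpha g$ is bounded, hence $g$ is globally Lipschitz. Without flagging this, the Cauchy estimate (and the analogous uniform estimate for the leaf-derivatives) is not justified. Second, in the inverse direction, deducing that $g_{\bar f}$ is $\Lambda$-equivariant \emph{at a single radius $r$} from transverse local constancy requires a uniform $\epsilon$ valid for every $\Lambda'\in\Omega_\Lambda$, whereas the definition of $C^\infty_{tlc}$ only gives an $\epsilon(\Lambda')$ at each point. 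This uniformity follows from compactness of $\Omega_\Lambda$ (thicken the transversal cylinders $\mathcal{C}_{\Lambda',\epsilon(\Lambda'),\{\bar 0\}}$ to open sets $\mathcal{C}_{\Lambda',\epsilon(\Lambda'),V}$, extract a finite subcover, take the worst $\epsilon$), but as written you pass from a pointwise statement to a uniform one without comment. Both are short fixes, not obstructions, but as stated they are genuine gaps in the argument.
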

We shall denote by
\begin{equation}
\label{eqn:AlgIso}
i_\Lambda: C^\infty_{tlc}(\Omega_\Lambda)\rightarrow \Delta^0_{\Lambda}
\end{equation}
the isomorphism given by above theorem.

\subsection{\v{C}ech cohomology}
\label{subsec:Cech}
For clarity, this subsection will use the setting of tilings instead of Delone sets. We will make some mild assumptions on our tilings: that our tiling is built out of polyhedra for which there are only finitely many tile types (up to translation) with tiles meeting full face to full face. This assumption gives the tiling finite local complexity. Note that any such tiling may be associated with a Delone set satisfying finite local complexity; see Remark \ref{rmk:backandforth}.

Let $T$ be a tiling of $\mathbb{R}^{d}$ satisfying the conditions above. We let $\check{H}^{*}(\Omega_T;\mathbb{C})$ denote the \v{C}ech cohomology of $\Omega_T$ with coefficients in $\mathbb{C}$. It is now well known that it is possible to compute $\check{H}^{*}(\Omega_\Lambda,\mathbb{C})$ using the inverse limit structure of $\Omega_{T}$, described below. While there are many presentations of $\Omega_T$ as an inverse limit, we use the method initially outlined by Anderson-Putnam and generalized by G\"{a}hler. More information can be found in \cite{sadun:inverse}, which our presentation follows.

Suppose $T$ is our given tiling, and let $P$ be a patch in $T$. The first corona of $P$ is defined to be all tiles $t$ in $T$ for which $t \cap P \ne \varnothing$. The first corona is thus a new patch $P_{1}$ in $T$, and we may consider the second corona $P_{2}$ of $P$ to be the patch obtained as the first corona of the patch $P_{1}$. In this way, given the $n$-th corona $P_{n}$ of $P$, we define the $n+1$st corona of $P$ to be the patch $P_{n+1}$ defined as the first corona of $P_{n}$.

Let $n \in \mathbb{N}$. We say two tiles $t_1, t_2$ in $T$ are $n$-equivalent if a patch of $T$ consisting of $t_1$ and it's $n$-th corona is equal, up to translation, to a similar patch around $t_2$. Because we assume $T$ has finite local complexity, there are only finitely many $n$-equivalence classes of tiles, and we refer to such a class as an $n$-collared tile. Thus an $n$-collared tile consists of a tile, along with all the information of its $n$-th corona. Let $\{p_{i}^{(n)}\}_{i=1}^{m(n)}$ denote the collection of $n$-collared tiles in $T$. Consider now the disjoint union $P^{(n)} = \bigsqcup_{i=1}^{m(n)}p_{i}^{(n)}$, and consider the relation on $P^{(n)}$ obtained by identifying two boundary faces of $n$-collared tiles $p_{j}^{(n)}$ and $p_{k}^{(n)}$ in $P^{(n)}$ if there is a patch in $T$ for which those boundary faces meet. We let $\Gamma_{n}$ denote the quotient of $P^{(n)}$ under this relation, and refer to it as the $n$-th Anderson-Putnam complex (for $T$). The spaces $\Gamma_{n}$ also come equipped with a branched manifold structure, allowing the notion of smooth functions, and smooth $k$-forms on $\Gamma_{n}$ to be defined (see \cite[\S 5.1]{sadun:book}). We refer to the collection of smooth $k$-forms on $\Gamma_{n}$ as $\Delta^{k}(\Gamma_{n})$. \\
\indent There is a map $f_{n}:\Gamma_{n+1} \to \Gamma_{n}$ given by 'forgetting' the $n+1$-st corona; that is, for $x \in \Gamma_{n+1}$ lying in an $n+1$-st collared tile, we send $x$ to the corresponding $n$-collared tile it lies in. This map extends naturally to branch points in $\Gamma_{n}$, and we get an inverse system $\{\Gamma_{n},f_{n}\}$. \\
\indent The following, due to Anderson and Putnam in the case of substitutions and later G{\"a}hler in more generality, shows that this construction can be used to present the tiling space $\Omega_{T}$ as an inverse limit. See \cite[Ch. 2]{sadun:book}.
\begin{theorem}
\label{APGTheorem}
$\Omega_{T}$ is homeomorphic to $\varprojlim\{\Gamma_{n},f_{n}\}$.
\end{theorem}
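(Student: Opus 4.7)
The plan is to construct an explicit homeomorphism $\pi:\Omega_T \to \varprojlim\{\Gamma_n,f_n\}$ via the positions of the origin inside successively larger collared neighborhoods, then use compactness to upgrade a continuous bijection to a homeomorphism. For each $T' \in \Omega_T$ and each $n\geq 0$, the origin $\bar 0$ lies in some tile $t_0(T')$; together with its $n$-th corona in $T'$, this tile carries the full data of an $n$-collared tile $p_{i(n)}^{(n)}$, and the position of $\bar 0$ inside $t_0(T')$ singles out a point $\pi_n(T')\in\Gamma_n$ (when $\bar 0$ sits on a shared face of tiles, the face-identifications in the definition of $\Gamma_n$ make this assignment well-defined). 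The forgetful map $f_n$ strips the outermost corona, which is exactly what passing from the collared neighborhood of radius $n+1$ to radius $n$ around $t_0(T')$ does, so the sequence $(\pi_n(T'))_{n\geq 0}$ is compatible under the bonding maps and defines a point $\pi(T')\in\varprojlim\{\Gamma_n,f_n\}$.

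Continuity of $\pi$ would follow from the fact that the tiling metric and the inverse-limit topology are both metrizable in terms of agreement on large balls. Fix $n$; because $T$ has finite local complexity, there is a radius $R_n$ such that the $n$-th corona of the tile containing $\bar 0$ is determined by the patch $B_{R_n}[T']$. Hence if $T', T''$ agree on $B_{1/\varepsilon}$ for $\varepsilon$ small enough that $1/\varepsilon > R_n + \mathrm{diam}(\text{prototiles})$, then $\pi_n(T')$ and $\pi_n(T'')$ differ only by the small translation present in the tiling metric, which is small in $\Gamma_n$. This establishes continuity of each $\pi_n$, hence of $\pi$.

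For injectivity, if $\pi(T') = \pi(T'')$ then the $n$-coronas of the tiles containing $\bar 0$ in $T'$ and $T''$ agree (as collared tiles) for every $n$, and the position of $\bar 0$ inside that central tile is the same; this forces $T'$ and $T''$ to agree on arbitrarily large neighborhoods of $\bar 0$, hence $T'=T''$. For surjectivity, I would start with a compatible sequence $(x_n)\in\varprojlim\{\Gamma_n,f_n\}$, interpret each $x_n$ as a point inside an $n$-collared tile $p_{i(n)}^{(n)}$, and note that compatibility under $f_n$ means these collared tiles realize nested allowed patches around a common basepoint. Taking the union produces a locally finite collection of tiles in $\mathbb{R}^d$ covering a neighborhood of every compact set, each of whose finite sub-patches is by construction an allowed patch of $T$; this is the desired tiling $T' \in \Omega_T$ with $\pi(T')=(x_n)$.

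The main obstacle is ensuring the map $\pi$ is well-defined and continuous at the ``boundary'' configurations where $\bar 0$ lies on a face shared between tiles; these are precisely the points where the naive tile-containing-the-origin is ambiguous and two nearby tilings, related by a small translation, will pick different central tiles. The whole point of the face identifications in the definition of $\Gamma_n$ is to glue these boundary strata together exactly so that $\pi_n$ descends to a continuous map, and verifying that the identifications are the right ones (i.e.\ that two boundary faces of $n$-collared tiles are glued in $\Gamma_n$ precisely when a tiling in $\Omega_T$ realizes that adjacency) is the technical heart of the argument. Once continuity and bijectivity are in place, compactness of $\Omega_T$ together with the Hausdorff property of the inverse limit of compact Hausdorff spaces yields that $\pi$ is a homeomorphism.
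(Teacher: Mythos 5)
The paper does not actually prove this theorem: it is stated as a citation to Anderson--Putnam, G\"ahler, and Sadun's book (see the sentence immediately preceding the theorem). Your proposal reproduces the standard argument from those sources---send a tiling to the compatible sequence of origin-positions inside successively larger collared tiles, check compatibility with the bonding maps, establish continuity and bijectivity, and invoke compactness of $\Omega_T$ plus Hausdorffness of the inverse limit to upgrade the continuous bijection to a homeomorphism. This is the right skeleton, and you correctly isolate the one place where real care is required: well-definedness of $\pi_n$ at tile boundaries, which is exactly what the face identifications in the construction of $\Gamma_n$ are designed to handle.

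Two points that deserve a little more than a nod if this were to be made rigorous. First, for surjectivity you need not only that the nested collared patches are consistent, but that their union actually exhausts $\mathbb{R}^d$ and that the resulting tiling lies in $\Omega_T$; the former follows because FLC forces the $n$-th corona to contain a ball of radius growing linearly in $n$ around the central tile, and the latter because under FLC the orbit closure $\Omega_T$ coincides with the set of tilings all of whose finite patches occur (up to translation) in $T$, a characterization your construction produces by design. Second, the continuity argument should be phrased with some care about what ``small in $\Gamma_n$'' means near a branch point, but the face identifications again ensure the two candidate images are glued, so the estimate you describe does go through.
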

Given $n \in \mathbb{N}$, we may use the tiling $T$ of $\mathbb{R}^{d}$ to construct a map $\pi_{n}:\mathbb{R}^{d} \to \Gamma_{n}$: for $x \in \mathbb{R}^{d}$, $\pi_{n}(x) \in \Gamma_{n}$ is obtained by inspecting the $n$-th corona of $x$ in $T$. A key fact, which is not hard to show, is that a function $f$ on $\mathbb{R}^{d}$ is pattern-equivariant if and only if there is some $n$ for which there exists some smooth function $g$ on $\Gamma_{n}$ such that $f = \pi_{n}^{*}(g) = g\circ\pi_{n}$. The following, found in \cite[Theorem 5.4]{sadun:book}, follows immediately from this.
\begin{proposition}
\label{prop:PEformsPullBack}
The collection of pattern-equivariant $k$-forms $\Delta_{T}^{k}$ is naturally identified with $\bigcup_{n \in \mathbb{N}}\pi_{n}^{*}(\Delta^{k}(\Gamma_{n}))$.
\end{proposition}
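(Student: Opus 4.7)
The plan is to deduce the proposition from the analogous statement for pattern-equivariant functions, which has just been recalled in the discussion preceding the proposition. Both inclusions of the claimed identification need to be established.

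For the inclusion $\bigcup_n \pi_n^*(\Delta^k(\Gamma_n)) \subseteq \Delta_T^k$, I would take a smooth $k$-form $\omega$ on $\Gamma_n$ and observe that the map $\pi_n:\mathbb{R}^d\to\Gamma_n$ depends on a point $x$ only through the $n$-th corona of $x$ in $T$. If $x,y\in\mathbb{R}^d$ have the property that the tiling near $y$ agrees with a translate of the tiling near $x$ on a ball whose radius dominates the diameter of any $n$-th corona, then $\pi_n(x)=\pi_n(y)$ and moreover the differential of $\pi_n$ at $x$ matches that at $y$ after translation. Consequently $\pi_n^*\omega$ is pattern-equivariant with a radius controlled uniformly by $n$ and the maximal tile diameter.

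For the reverse inclusion, I would take $\omega\in\Delta_T^k$ with equivariance radius $r$ and expand it in the standard coordinates on $\mathbb{R}^d$ as $\omega=\sum_I f_I\,dx^I$. Because the basic forms $dx^I$ are translation-invariant, each coefficient $f_I$ is itself pattern-equivariant with the same radius $r$. The function-level characterization then supplies a common $n$ (take the maximum over the finitely many multi-indices $I$) and smooth functions $g_I$ on $\Gamma_n$ such that $f_I = g_I\circ\pi_n$, provided $n$ is chosen large enough that the $n$-th corona at any point determines the tiling on a ball of radius $r$; this is possible since tiles have uniformly bounded size and $r$ is fixed.

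The main obstacle, and the step deserving the most care, is assembling the $g_I$ into a genuine smooth $k$-form $\omega'\in\Delta^k(\Gamma_n)$ with $\pi_n^*\omega'=\omega$, because $\Gamma_n$ is only a branched manifold. On the interior of each $n$-collared tile $p_i^{(n)}$ the projection $\pi_n$ restricts to a diffeomorphism onto an open subset of $\Gamma_n$, so I would define $\omega'$ there by pushing $\omega|_{\mathrm{int}(p_i^{(n)})}$ forward through this local diffeomorphism. Pattern-equivariance of $\omega$ with radius $r$ forces the resulting pieces to agree across the face identifications used to build $\Gamma_n$ from $\bigsqcup_i p_i^{(n)}$, so they fit together as a smooth $k$-form on $\Gamma_n$ in the branched-manifold sense of \cite[\S 5.1]{sadun:book}. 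The equality $\pi_n^*\omega'=\omega$ then holds by construction, which yields the desired identification.
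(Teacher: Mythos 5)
Your proof is correct and follows the route the paper itself indicates: the proposition is deduced from the function-level fact (recalled just before the statement) that a $\Lambda$-equivariant function factors through some $\pi_n$. The paper does not spell out the argument, remarking only that the form case ``follows immediately'' from the function case and citing \cite[Theorem 5.4]{sadun:book}; you are essentially filling in the details of that deduction, and both inclusions are handled soundly.

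One comment on efficiency in the reverse inclusion. After expanding $\omega = \sum_I f_I\,dx^I$ and invoking the function-level result to produce a common $n$ and smooth $g_I$ on $\Gamma_n$ with $f_I = g_I\circ\pi_n$, the cleanest way to finish is to observe that the translation-invariant basic forms $dx^I$ descend to well-defined smooth $k$-forms on $\Gamma_n$: each $p_i^{(n)}$ is a polyhedron in $\mathbb{R}^d$ and the face identifications in the construction of $\Gamma_n$ are restrictions of translations, which preserve $dx^I$ and all its derivatives. Setting $\omega' = \sum_I g_I\,dx^I \in \Delta^k(\Gamma_n)$, one has $\pi_n^*\omega' = \sum_I (g_I\circ\pi_n)\,\pi_n^*(dx^I) = \sum_I f_I\,dx^I = \omega$ with no further checking, since $\pi_n$ is locally a translation. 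Your version instead discards the $g_I$ after using them to fix $n$, and reconstructs $\omega'$ by pushing $\omega$ forward through local diffeomorphisms; this then forces you to re-argue well-definedness across distinct copies of the same $n$-collared tile and compatibility at branch faces, facts which are already encapsulated in the $g_I$ being genuine functions on $\Gamma_n$. Both routes are valid; yours just does some of the work twice.
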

This can be used to show the following, originally proved by Kellendonk and Putnam \cite{Kellendonk-Putnam:RS}.
\begin{theorem}
\label{IsoCohomology}
For a tiling $T$ of finite local complexity, the pattern-equivariant cohomology $H^{*}(\Omega_T;\mathbb{C})$ is naturally isomorphic to the \v{C}ech cohomology $\check{H}^{*}(\Omega_{T};\mathbb{C})$.
\end{theorem}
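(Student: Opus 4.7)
The plan is to leverage the two inputs already established: the inverse-limit presentation $\Omega_T \cong \varprojlim\{\Gamma_n,f_n\}$ of Theorem \ref{APGTheorem}, and the identification $\Delta_T^{\bullet} = \bigcup_n \pi_n^{\ast}\bigl(\Delta^{\ast}(\Gamma_n)\bigr)$ of Proposition \ref{prop:PEformsPullBack}. The strategy is to match both sides with a directed colimit over $n$: pattern-equivariant cohomology with $\varinjlim_n H_{dR}^{\ast}(\Gamma_n;\mathbb{C})$, and \v{C}ech cohomology with $\varinjlim_n \check{H}^{\ast}(\Gamma_n;\mathbb{C})$, and then invoke a branched manifold de Rham theorem on each level $\Gamma_n$.

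First I would verify that the pullbacks assemble into a directed system of cochain complexes. Since $\pi_n = f_n\circ \pi_{n+1}$, we have $\pi_n^{\ast} = \pi_{n+1}^{\ast}\circ f_n^{\ast}$, so $f_n^{\ast}$ embeds $\Delta^{\ast}(\Gamma_n)$ into $\Delta^{\ast}(\Gamma_{n+1})$ compatibly with the exterior differential, and Proposition \ref{prop:PEformsPullBack} identifies $\Delta_T^{\bullet}$ with the directed union. One must check that the pullbacks $\pi_n^{\ast}$ are injective on forms (which follows because $\pi_n$ hits each point of $\Gamma_n$ with an open preimage in $\mathbb{R}^d$, so a smooth form on $\Gamma_n$ is determined by its pullback); this lets us identify $\Delta_T^{\bullet}$ with $\varinjlim_n \Delta^{\ast}(\Gamma_n)$ as cochain complexes. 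Because cohomology commutes with directed colimits of complexes of $\mathbb{C}$-modules, we conclude
\begin{equation*}
H^{\ast}(\Omega_T;\mathbb{C}) \;\cong\; \varinjlim_n H^{\ast}\!\bigl(\Delta^{\ast}(\Gamma_n),d\bigr).
\end{equation*}

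Next, on each branched manifold $\Gamma_n$ I would apply the branched de Rham theorem asserting $H^{\ast}(\Delta^{\ast}(\Gamma_n),d) \cong \check{H}^{\ast}(\Gamma_n;\mathbb{C})$. The idea is the standard one: $\Gamma_n$ admits a good open cover by sets that retract onto a face or onto the interior of a collared tile, and smoothness across branch points is compatible with the identifications on faces, so the usual Poincar\'{e}/Mayer--Vietoris argument on a good cover goes through. Naturality of this isomorphism with respect to the forgetful maps $f_n$ is the key point, and follows from functoriality of Mayer--Vietoris once one takes the good covers to be compatible under $f_n^{-1}$. Finally, using that $\Omega_T$ is the inverse limit of the compact Hausdorff (in fact finite CW) spaces $\Gamma_n$ along surjective maps, the continuity of \v{C}ech cohomology gives
\begin{equation*}
\check{H}^{\ast}(\Omega_T;\mathbb{C}) \;\cong\; \varinjlim_n \check{H}^{\ast}(\Gamma_n;\mathbb{C}),
\end{equation*}
and chaining the three isomorphisms yields the claim. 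Naturality in $T$ is automatic from the construction.

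The main obstacle I anticipate is the de Rham theorem on a branched manifold: one has to be careful about what "smooth form" means at branch points (forms on adjacent sheets must agree on shared faces), and about producing partitions of unity and Poincar\'{e} lemmas adapted to this category. A secondary technical point is justifying the continuity of \v{C}ech cohomology on the inverse system $\{\Gamma_n,f_n\}$; this is standard for inverse limits of compact Hausdorff spaces, but one should note that the bonding maps $f_n$ are typically not cellular homotopy equivalences, so one cannot shortcut the direct-limit step. Everything else is bookkeeping around directed colimits and the identification in Proposition \ref{prop:PEformsPullBack}.
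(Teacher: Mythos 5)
Your proof is correct and takes the same route the paper itself sketches when describing the isomorphism (in the paragraph preceding the proof of Lemma \ref{lem:agreement}): use Proposition \ref{prop:PEformsPullBack} to identify $\Delta_T^{\bullet}$ with a directed colimit so that $H^{\ast}(\Omega_T;\mathbb{C}) \cong \varinjlim_n H^{\ast}_{dR}(\Gamma_n;\mathbb{C})$, apply the branched de Rham theorem on each $\Gamma_n$ (for which the paper cites \cite{sadun:PECints}), and invoke continuity of \v{C}ech cohomology over the inverse system to get $\check{H}^{\ast}(\Omega_T;\mathbb{C}) \cong \varinjlim_n \check{H}^{\ast}(\Gamma_n;\mathbb{C})$. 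The paper attributes the theorem to \cite{Kellendonk-Putnam:RS} without reproducing a proof, so your argument matches the paper's own account of the isomorphism.
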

\subsection{Cohomology for cut and project sets}
\label{subsec:CAPScohom}
In this section we summarize the results of \cite{FHK:topological, GHK:cohomology} which are relevant to our work. Namely, we review sufficient conditions under which the cohomology spaces defined in (\ref{eqn:PEcoh}) are finite dimensional for cut and project constructions.

Recall from \S \ref{subsec:CAPS} that our CAPS is given by an Euclidean space $E$ of dimension $n$ containing some lattice $\Gamma$ such that $E = \mathbb{R}^d \oplus \mathbb{R}^{n-d}$ with associated projections $\pi_\parallel:E\rightarrow \mathbb{R}^d$ and $\pi_\perp:E\rightarrow \mathbb{R}^{n-d}$. It is assumed that $\mathbb{R}^d$ and $\mathbb{R}^{n-d}$ are in \emph{total irrational position} with respect to $\Gamma$, meaning that $\pi_\parallel$ and $\pi_\perp$ are one to one and with dense image on the lattice. Let $K\subset \mathbb{R}^{n-d}$ be the window which we will take to be a finite union of compact non-degenerate polyhedra in $\mathbb{R}^{n-d}$ with boundary $\partial K$ made up of faces of dimension $n-d-1$. Denote by $\Gamma^\parallel = \pi_\parallel(\Gamma)$ and $\Gamma^\perp = \pi_\perp(\Gamma)$ the rank $n$ subgroups of $\mathbb{R}^d$ and $\mathbb{R}^{n-d}$, respectively.

We denote by
$$\Lambda(\Gamma,K) = \{\pi_\parallel(z):z\in\Gamma\mbox{ and }\pi_\perp(z)\in K\}.$$
For each point $x\in \mathbb{R}^n$ denote
\begin{equation}
\label{eqn:LambdaX}
\Lambda_x(\Gamma,K) = \{\pi_\parallel(z):z\in\Gamma\mbox{ and }\pi_\perp(z+x)\in K\}.
\end{equation}
the point in $\Omega_{\Lambda(K,\Gamma)}$ corresponding to $x$.

The set of \emph{singular points} in $E$ is the set
$$\mathcal{S}(\Gamma,K) = \{ x\in E: \pi_\perp(x)\in \partial K + \Gamma^\perp\}$$
and its complement $N\mathcal{S}$ is the set of \emph{nonsingular points}.

\begin{definition}
We call the CAPS $\Lambda(\Gamma,K)$ \emph{almost canonical} if for each face $f_i$ of the window $K$, the set $f_i + \Gamma^\perp$ contains the affine space spanned by $f_i$.
\end{definition}
This is equivalent \cite{GHK:cohomology} to having a finite family of $(n-d-1)$-dimensional affine subspaces
$$\mathcal{W} = \{W_\alpha\subset \mathbb{R}^{n-d}\}_{\alpha\in I_{n-d-1}}$$
indexed by some finite set $I_{n-d-1}$ such that
\begin{equation}
\label{eqn:Singular}
\mathcal{S}(\Gamma,K) = \mathbb{R}^d + \Gamma^\perp + \bigcup_{\alpha\in I_{n-d-1}} W_\alpha.
\end{equation}
Suppose we have an almost canonical CAPS and we have chosen such a family of subspaces $\mathcal{W}$. An affine subspace $W_\alpha + \gamma \subset \mathbb{R}^{n-d}$ for any $\alpha\in I_{n-d-1}$ and $\gamma\in\Gamma^\perp$ is called a \emph{singular space}. The set of singular spaces is independent of the generating set $\mathcal{W}$ chosen.

Two singular spaces with non-trivial intersection give rise to singular spaces of lower dimensions. As such, $\Gamma$ acts on the set of all singular spaces as follows. If $W$ is a singular space of dimension $r$ and $\gamma\in\Gamma$, then so is $\gamma\cdot W = W + \pi_\perp(\gamma)$. The \emph{stabilizer} $\Gamma^W$ of a singular space is the subgroup $\{\gamma\in\Gamma: \gamma\cdot W = W\}$.

\begin{definition}
\begin{enumerate}
\item For each $0\leq r < n-d$, let $\mathcal{P}_r$ be the set of all singular $r$-spaces. Their orbit space under the translation action by $\Gamma$ is denoted by $I_r = \mathcal{P}_r/\Gamma$.
\item Since $\Gamma$ is abelian, the stabilizer $\Gamma^W$ of a singular $r$-space $W$ depends only on the orbit class $\Theta\in I_r$ of $W$. We denote the stabilizer of an orbit class $\Theta$ by $\Gamma^\Theta$.
\item For $r<k<n-d$ and $W\in \mathcal{P}_k$ of orbit class $\Theta\in I_k$, let $\mathcal{P}^W_r$ be the set $\{U\in\mathcal{P}_r:U\subset W\}$, the set of singular $r$-spaces lying in $W$. Then $\Gamma^\Theta$ acts on $\mathcal{P}^W_r$ and we write $I^\Theta_r = \mathcal{P}^W_r/\Gamma^\Theta$, a set which depends only on the class $\Theta$ of $W$. Therefore $I^\Theta_r\subset I_r$ consists of those orbits of singular $r$-spaces which have a representative which lies in a singular $k$-space of class $\Theta$.
\item We denote the cardinalities of these sets by $L_r = |I_r|$ and $L^\Theta_r = |I_r^\Theta|$.
\end{enumerate}
\end{definition}
Given the collection $\mathcal{W}$ of affine subspaces as defined above, let $\mathcal{N}(\mathcal{W}) = \{u_{W} \mid W \in \mathcal{W}\}$ be a collection of unit vectors for which $u_{W}$ is normal to the subspace $W \in \mathcal{W}$, and such that $\mathcal{N}(\mathcal{W})$ spans $\mathbb{R}^{n-d}$. Following \cite{FHK:topological}, we say $\mathcal{N}(\mathcal{W})$ is decomposable if there exists a partition $\mathcal{N}(\mathcal{W}) = \mathcal{N}_{1} \cup \mathcal{N}_{2}$ such that $\textnormal{span}\mathcal{N}_{1} \cap \textnormal{span}\mathcal{N}_{2} = 0$; if there is no such partition, we call $\mathcal{N}(\mathcal{W})$ indecomposable. Finally, call $\mathcal{W}$ indecomposable if there exists an indecomposable collection $\mathcal{N}(\mathcal{W})$ for $\mathcal{W}$.\\
\indent With all this, we have the following important result \cite{FHK:topological, GHK:cohomology}.
\begin{theorem}
\label{thm:L0andcohomology}
\begin{enumerate}
\item $L_0$ is finite if and only if $H^*(\Omega_\Lambda;\mathbb{R})$ is finite dimensional.
\item If $L_0$ is finite then all the $L_r$ and $L_r^\Theta$ are finite as well, and $\nu = n/(n-d)$ is an integer. Moreover, if $\mathcal{W}$ is indecomposable, then $\mathrm{rank}\,\Gamma^U = \nu\cdot\mathrm{dim}(U)$ for any singular space if and only if $L_0$ is finite.
\end{enumerate}
\end{theorem}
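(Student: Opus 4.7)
The plan is to reduce the cohomology computation to a combinatorial analysis of the arrangement $\mathcal{W}$ of singular affine subspaces in $\mathbb{R}^{n-d}$, building on the Anderson--Putnam/G\"ahler inverse limit presentation (Theorem \ref{APGTheorem}) and on the spectral sequence that expresses $\check H^{*}(\Omega_{\Lambda(K,\Gamma)};\mathbb{C})$ in terms of the stratification of $\mathbb{R}^{n-d}$ by $\Gamma$-orbits of singular spaces. The heuristic is: local patch types in the tiling associated with $\Lambda(K,\Gamma)$ are in bijection with cells in this stratification modulo $\Gamma^\perp$, and the $L_r$ count the types of cells of codimension $n-d-r$. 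Thus cell-finiteness of the Anderson--Putnam complex is equivalent to finiteness of the $L_r$.

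For part $(1)$, the forward implication is the easier one. Assuming $L_{0}<\infty$, every singular $r$-space is (generically) an intersection of translates of the hyperplanes $W_\alpha$ passing through a singular $0$-space, so finitely many orbit classes of $0$-spaces control the entire poset of singular spaces. One then shows the Anderson--Putnam complexes $\Gamma_{n}$ stabilize to a finite CW-complex $\Gamma_\infty$ (with branching) so that $\check H^{*}(\Omega_\Lambda;\mathbb{C}) \cong \varinjlim H^{*}(\Gamma_{n};\mathbb{C})$ is finite-dimensional. Conversely, if $L_0 = \infty$, I would produce infinitely many linearly independent classes by exhibiting, for each new $\Gamma$-orbit of singular $0$-spaces, a pattern-equivariant top-degree form $\eta$ supported near that orbit whose class in $H^{d}(\Omega_\Lambda;\mathbb{C})$ is nonzero; using Proposition \ref{prop:PEformsPullBack}, these forms descend to independent classes in $H^{*}(\Gamma_{n};\mathbb{C})$ for large $n$.

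For part $(2)$, the finiteness of the $L_r$ and $L^\Theta_r$ follows by downward induction on codimension from $L_0<\infty$, using the fact that each singular $r$-space is determined by the orbit classes of the $0$-spaces it contains together with the flats they span. The integrality of $\nu=n/(n-d)$ and the rank formula $\mathrm{rank}\,\Gamma^{U}=\nu\cdot\dim U$ are the substantive algebraic content. Here the plan is a dimension/covolume count: the stabilizer $\Gamma^{U}$ of an $r$-dimensional singular space sits inside the rank-$n$ lattice $\Gamma$, and via the injective-but-dense map $\pi_\perp|_{\Gamma}$ is identified with a rank-$\rho(r)$ discrete subgroup of $U^{\parallel}\subset\mathbb{R}^{n-d}$. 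Finitely many $\Gamma$-orbits of $r$-spaces parallel to $U^{\parallel}$ forces $\pi_\perp(\Gamma)/\pi_\perp(\Gamma^{U})$ to have a quotient which is effectively finite-dimensional modulo $U^{\parallel}$, and evaluating the resulting rank identity at the extremes $r=0$ (trivial stabilizer) and $r=n-d$ (where $\Gamma^{U}=\Gamma$ has rank $n$) forces $\rho(r)$ to be linear in $r$ with slope $\nu=n/(n-d)$. In particular $\nu$ must be an integer.

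The hard part is the last implication in the moreover clause, where indecomposability of $\mathcal{W}$ is essential. The issue is that the rank formula could \emph{a priori} hold while the arrangement still fragments into independent subsystems, each contributing infinitely many $0$-orbits; indecomposability, via the partition condition on $\mathcal{N}(\mathcal{W})$, rules this out. Concretely, I would argue the contrapositive: if $L_0=\infty$ but the rank formula holds, then the arrangement of normal directions $\mathcal{N}(\mathcal{W})$ splits as $\mathcal{N}_1\cup\mathcal{N}_2$ with $\mathrm{span}\,\mathcal{N}_1\cap\mathrm{span}\,\mathcal{N}_2=0$, contradicting indecomposability, by using the rank formula to produce the splitting from an infinite family of inequivalent $0$-orbits. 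This is the step I expect to require the most care, since it is where the purely combinatorial input of $\mathcal{W}$ interacts with the lattice-theoretic structure of $\Gamma$, and where the proofs in \cite{FHK:topological, GHK:cohomology} do their most delicate bookkeeping.
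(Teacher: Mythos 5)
This theorem is not proved in the paper: it is quoted verbatim from the cited references \cite{FHK:topological, GHK:cohomology} with the sentence ``With all this, we have the following important result,'' and the paper immediately moves on. There is therefore no in-paper proof to compare your proposal against, and anything you write here is a reconstruction of an argument that lives in those sources.

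Viewed on its own terms your sketch has one concrete flaw and several places where it stays at the level of plausibility. The flaw is in the forward direction of part (1): the claim that the Anderson--Putnam complexes $\Gamma_{n}$ ``stabilize to a finite CW-complex $\Gamma_{\infty}$'' is not a valid mechanism here. For cut-and-project sets the $\Gamma_{n}$ are each finite complexes but the bonding maps $f_{n}$ are typically not eventually isomorphisms, and a direct limit $\varinjlim H^{*}(\Gamma_{n};\mathbb{C})$ of finite-dimensional spaces along non-surjective bonding maps can easily be infinite-dimensional; finiteness of $L_{0}$ does not produce stabilization of the AP tower. The actual route in Forrest--Hunton--Kellendonk and G\"{a}hler--Hunton--Kellendonk does not pass through AP complexes at all: it expresses $H^{*}(\Omega_{\Lambda})$ via group cohomology of $\Gamma$ (equivalently, via the long exact sequence of the pair associated to the inclusion of the singular set into $\mathbb{T}^{n}$ --- the same sequence the present paper later quotes as equation (\ref{eqn:longcohomologysequence}) from GHK Corollary 4.10), and the finiteness of the $L_{r}$ and $L_{r}^{\Theta}$ is exactly what bounds the ranks of the terms in that sequence. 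Your converse sketch (producing independent classes from infinitely many $0$-orbits) is closer to the real argument, but it is stated only as a wish.

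For part (2) your covolume/dimension count is pointing in the right direction --- the FHK/GHK proof does bound $\mathrm{rank}\,\Gamma^{U}$ in terms of $\dim U$ by exploiting that finitely many orbits of $r$-spaces force large stabilizers --- but the linearity of $\rho(r)$ does not follow just from evaluating at $r=0$ and $r=n-d$; one needs the intermediate combinatorics of how singular spaces intersect, and this is where the indecomposability hypothesis enters. You correctly flag the ``moreover'' clause as the hard step and that it hinges on indecomposability ruling out a splitting $\mathcal{N}_{1}\cup\mathcal{N}_{2}$, which matches the role it plays in \cite{FHK:topological}. So: the proposal identifies the right ingredients, but the AP-stabilization step is wrong and would need to be replaced by the group-cohomology/long-exact-sequence argument, and the remaining steps are sketches rather than proofs.
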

We now go over concrete geometrical conditions which guarantee that the cohomological spaces are finitely generated.
\begin{definition}
A \emph{rational subspace} of $E$ is a subspace spanned by vectors from $\mathbb{Q}\Gamma$. A \emph{rational affine subspace} is a translate of a rational subspace.
\end{definition}

\begin{definition}
\label{def:rational}
A \emph{rational projection method pattern} is any Delone set arising from an almost canonical CAPS satisfying the following rationality conditions:
\begin{enumerate}
\item The number $\nu = n/(n-d)$ is an integer.
\item There is a finite set $\mathcal{D}$ of rational affine subspaces of $E$ in one to one correspondence under $\pi_\perp$ with the set $\mathcal{W}$, i.e., each $W\in\mathcal{W}$ is of the form $W = \pi_\perp(D)$ for some unique $D\in\mathcal{D}$.
\item The elements of $\mathcal{D}$ are $\nu(n-d-1)$-dimensional, and any intersection of finitely many members of $\mathcal{D}$ or their translates is either empty or a rational affine subspace $R$ of dimension $\nu\cdot\mathrm{dim}\,\pi_\perp(R)$.
\end{enumerate}
\end{definition}
For any affine subspace $R$ in $E$, denote by $\Gamma^R$ the stabilizer subgroup of $\Gamma$ under its translation action on $E$.

\begin{theorem}[\cite{FHK:topological, GHK:cohomology}]
\label{thm:finiteCohom}
Suppose $\Lambda(\Gamma,K)$ is a rational projection method pattern and $\mathbb{T}^n = \mathbb{R}^n/\mathbb{Z}^n$. Then
\begin{enumerate}
\item There are isomomorphisms $H^s(\mathbb{T}^n;\mathbb{C}) \cong H^s(\Omega_\Lambda;\mathbb{C})$ for $s< \nu - 1$.
\item For any commutative ring $S$, the cohomology $H^*(\Omega_\Lambda;S)$ is finitely generated over $S$.
\end{enumerate}
\end{theorem}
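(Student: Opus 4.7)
The plan is to assemble $\Omega_{\Lambda(\Gamma,K)}$ from finitely many pieces indexed by the singular orbit data of \S\ref{subsec:CAPScohom} and to compute $H^{*}$ via a stratification spectral sequence whose $E_{1}$ page is controlled by the finite combinatorial invariants $\{L_{r}, L_{r}^{\Theta}\}$, whose finiteness is guaranteed by Theorem \ref{thm:L0andcohomology}.

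First, I would exploit the natural surjection $\Omega_{\Lambda(\Gamma,K)} \to \mathbb{T}^{n}$ defined on the nonsingular stratum by $\Lambda_{x}(\Gamma,K) \mapsto [x]$; this is well defined by (\ref{eqn:LambdaX}) since $\Lambda_{x+\gamma}=\Lambda_{x}$ for $\gamma\in\Gamma$, and it is a homeomorphism away from the singular locus. Under the rationality hypothesis of Definition \ref{def:rational}, the singular spaces and their intersections descend modulo $\Gamma$ to a finite stratification of $\mathbb{T}^{n}$ by rational subtori, whose preimages contribute a finite, controlled set of additional cells to $\Omega_{\Lambda(\Gamma,K)}$ indexed exactly by the orbit sets $I_{r}$ and their refinements $I_{r}^{\Theta}$.

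Next, I would set up a Mayer--Vietoris / stratification spectral sequence converging to $H^{*}(\Omega_{\Lambda(\Gamma,K)};S)$ whose $E_{1}$ terms split into a ``base'' contribution $H^{*}(\mathbb{T}^{n};S)$ plus correction terms supported on the singular strata and governed by the group cohomologies $H^{*}(\Gamma^{\Theta};S)$. Because each $\Gamma^{\Theta}$ has rank $\nu\cdot\dim\Theta$ by the rationality assumption, and because each $I_{r}$ and $I_{r}^{\Theta}$ is finite by Theorem \ref{thm:L0andcohomology}, every $E_{1}$ term is finitely generated over $S$; finite generation passes to $E_{\infty}$, which yields (2).

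For (1), the point is that contributions from the singular strata can only appear in degrees $\geq \nu-1$: the dimension formula $\dim R = \nu\cdot\dim\pi_{\perp}(R)$ from Definition \ref{def:rational} forces every nontrivial singular subtorus in $\mathbb{T}^{n}$ to have codimension at least $\nu-1$, so in degrees $s<\nu-1$ only the base term $H^{s}(\mathbb{T}^{n};\mathbb{C})$ can survive, producing the claimed isomorphism. The main obstacle is verifying that the relevant differentials in this low-degree range vanish and identifying $d_{1}$ with the combinatorial coboundary on the stratification; this careful bookkeeping, relating the group-cohomological pieces $H^{*}(\Gamma^{\Theta})$ to the topology of the strata they are attached to, is the technical heart of \cite{FHK:topological, GHK:cohomology}.
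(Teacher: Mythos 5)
This statement is a theorem quoted directly from \cite{FHK:topological, GHK:cohomology}; the paper does not give a proof of it, so there is no internal argument to compare against. Your outline (maximal torus factor, stratification of $\mathbb{T}^{n}$ by singular subtori, and a Mayer--Vietoris-type spectral sequence whose terms are finite by Theorem \ref{thm:L0andcohomology}) is indeed the shape of the argument in those references, and the paper later displays the controlling long exact sequence as (\ref{eqn:longcohomologysequence}).

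There is, however, one quantitative slip in the reasoning for part (1). You claim the singular subtori of $\mathbb{T}^{n}$ have codimension at least $\nu-1$ and deduce that corrections appear only in degree $\geq \nu-1$. In fact each generating singular subtorus (a component of what the paper later calls $\mathbb{A}$) has dimension $\nu(n-d-1) = n-\nu$, hence codimension \emph{exactly} $\nu$ in $\mathbb{T}^{n}$. If the threshold came from a codimension count alone, one would expect the isomorphism $H^{s}(\mathbb{T}^{n})\cong H^{s}(\Omega_{\Lambda})$ to hold for all $s<\nu$. The actual bound $s<\nu-1$ comes from the degree shift in the long exact sequence (\ref{eqn:longcohomologysequence}): since $\mathbb{A}$ is a finite union of $(n-\nu)$-tori, $H_{r}(\mathbb{A})=0$ for $r>n-\nu$, and to make $m_{*}\colon H_{r}(\mathbb{T}^{n})\to H^{n-r}(\Omega_{\Lambda})$ an isomorphism one needs \emph{both} $H_{r}(\mathbb{A})=0$ and $H_{r-1}(\mathbb{A})=0$, i.e.\ $r>n-\nu+1$. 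Writing $s=n-r$ and applying Poincar\'e duality on $\mathbb{T}^{n}$ then gives exactly $s<\nu-1$. So the $\nu-1$ threshold is an artifact of the connecting homomorphism landing in $H_{r-1}(\mathbb{A})$, not of the strata having codimension $\nu-1$; your version of the claim happens to produce the correct bound only because it understates the codimension.
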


\section{Renormalization}
\label{sec:renorm}
Recall that a Delone set $\Lambda$ is RFT if the cohomology of its pattern space is finite dimensional and there exists a homeomorphism $\Phi_A$ satisfying the conjugacy (\ref{eqn:mph}).

For a RFT Delone set, the induced action of the homeomorphism $\Phi_A$ on the cohomology is defined through the isomorphism (\ref{eqn:AlgIso}). More precisely, if $h\in \Delta^0_{\Lambda}$ is a $\Lambda$ equivariant function, then $i_\Lambda(\Phi_A^*i^{-1}_\Lambda(h)): \Delta^0_{\Lambda} \rightarrow \Delta^0_{\Lambda}$ is the induced map on pattern equivariant functions. This map can be then extended to a map on $\Lambda$-equivariant forms.
\begin{lemma}
\label{lem:indMap}
The map $i_\Lambda\circ \Phi_A^*\circ i^{-1}_\Lambda: \Delta^0_{\Lambda} \rightarrow \Delta^0_{\Lambda}$ is given by $A^*$, i.e., the following conjugacy diagram holds:
$$
\xymatrix{
C^\infty_{tlc}(\Omega_{\Lambda}) \ar[d]_{i_\Lambda} \ar[r]^{\Phi_A^*} &C^\infty_{tlc}(\Omega_{\Lambda})\ar[d]^{i_\Lambda}\\
\Delta^0_{\Lambda} \ar[r]^{A^*} &\Delta^0_{\Lambda}}$$
\end{lemma}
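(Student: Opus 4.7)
The plan is to verify the lemma by a direct unwinding of the isomorphism $i_\Lambda$ from Theorem \ref{thm:tlc}. Recall that $i_\Lambda$ sends a transversally locally constant function $f \in C^\infty_{tlc}(\Omega_\Lambda)$ to the pattern-equivariant function $g$ on $\mathbb{R}^d$ determined by $g(t) = f(\varphi_t(\Lambda))$, i.e.\ by the relation $\varphi_t^* f(\Lambda) = g(t)$ appearing in Theorem \ref{thm:tlc}. Thus the content of the lemma is the identity of $\Lambda$-equivariant functions
\[
\bigl[i_\Lambda(\Phi_A^* f)\bigr](t) \;=\; (A^* \, i_\Lambda(f))(t) \;=\; i_\Lambda(f)(At),
\]
for all $t \in \mathbb{R}^d$ and all $f \in C^\infty_{tlc}(\Omega_\Lambda)$.

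First I would compute the left-hand side by applying the definition of $i_\Lambda$ twice:
\[
\bigl[i_\Lambda(\Phi_A^* f)\bigr](t) = (\Phi_A^* f)(\varphi_t(\Lambda)) = f\bigl(\Phi_A(\varphi_t(\Lambda))\bigr).
\]
Next I would invoke the conjugacy (\ref{eqn:mph}) to pull $\Phi_A$ past $\varphi_t$, obtaining
\[
f\bigl(\Phi_A(\varphi_t(\Lambda))\bigr) = f\bigl(\varphi_{At}(\Phi_A(\Lambda))\bigr).
\]
At this point I would apply the standing assumption, discussed just after the definition of RFT, that $\Phi_A(\Lambda) = \Lambda$. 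This collapses the right-hand side to $f(\varphi_{At}(\Lambda)) = i_\Lambda(f)(At)$, which is exactly $(A^* i_\Lambda(f))(t)$. Composing these identities closes the diagram.

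There is essentially no obstacle beyond bookkeeping: the argument is a line-by-line substitution of definitions. The only genuine step is the use of $\Phi_A(\Lambda) = \Lambda$, and indeed the text explicitly flags this lemma as the first place that hypothesis is needed. Without it one would only obtain that $i_\Lambda(\Phi_A^* f)$ agrees with $A^* i_{\Phi_A(\Lambda)}(f)$, i.e.\ the analogous $\Phi_A(\Lambda)$-equivariant function rather than a $\Lambda$-equivariant one; having $\Phi_A$ fix the chosen base point $\Lambda$ is precisely what identifies these two pattern-equivariant algebras and makes the diagram commute on the nose. I would close by noting that since $A^*$ manifestly extends to $\Lambda$-equivariant forms of all degrees via pullback and commutes with $d$, this induced action is well-defined on $H^*(\Omega_\Lambda;\mathbb{C})$, matching $\Phi_A^*$ on cohomology through $i_\Lambda$.
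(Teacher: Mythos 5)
Your proof is correct and follows essentially the same line-by-line unwinding as the paper: compute $i_\Lambda(\Phi_A^* f)(t) = f(\Phi_A \circ \varphi_t(\Lambda))$, apply the conjugacy (\ref{eqn:mph}), and then use $\Phi_A(\Lambda) = \Lambda$ to conclude $f(\varphi_{At}(\Lambda)) = (A^* i_\Lambda(f))(t)$. You also correctly identify $\Phi_A(\Lambda) = \Lambda$ as the one substantive hypothesis being invoked, matching the paper's remark that this is the first place it is used.
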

\begin{proof}
Let $f \in C^\infty_{tlc}(\Omega_{\Lambda})$. It is enough to verify that $i_\Lambda \circ \Phi_{A}^{*}(f) = A^{*} \circ i_\Lambda(f)$. For $t \in \mathbb{R}^{d}$ we have $(i_\Lambda \circ \Phi_{A}^{*}(f))(t) = \Phi_{A}^{*}(f)(\varphi_{t}(\Lambda)) = f(\Phi_{A} \circ \varphi_{t}(\Lambda)) = f(\varphi_{At} \circ \Phi_{A}(\Lambda)) = f(\varphi_{At}(\Lambda))$. On the other hand, we have $(A^{*} \circ i_\Lambda(f))(t) = i_\Lambda(f)(At) = f(\varphi_{At}(\Lambda))$.
\end{proof}
\begin{lemma}
\label{lem:closedForms}
The map $\Phi_A$ preserves exact forms: for $\omega\in \Delta^{d-1}_\Lambda$ there exists $\omega'\in\Delta^{d-1}_\Lambda$ such that $\Phi_A^* \bar{f}_{\star d\omega} = \bar{f}_{\star d \omega '}$.
\end{lemma}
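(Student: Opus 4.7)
The plan is to write down $\omega'$ explicitly using Lemma \ref{lem:indMap}. That lemma says that the induced action $\Phi_A^*$, viewed through the isomorphism $i_\Lambda$, is exactly the linear pullback $A^*$ on pattern-equivariant functions. So we expect $\bar f_{\star d\omega'}$ to correspond, via $i_\Lambda$, to $A^*(\star d\omega) = (\star d\omega)\circ A$, and the only remaining question is how to realize $(\star d\omega)\circ A$ as $\star d(\text{something equivariant})$ on $\mathbb{R}^d$. A direct computation with the Hodge star on a top form singles out the candidate $\omega' = (\det A)^{-1} A^*\omega$.

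The first step is to check that $A^*\omega$ is again $\Lambda$-equivariant, i.e.\ that $A^*$ maps $\Delta^{d-1}_\Lambda$ into itself. It is enough to verify this on coefficient functions, i.e.\ to show $A^*:\Delta^0_\Lambda\to \Delta^0_\Lambda$. This follows from Lemma \ref{lem:indMap}: if $g\in \Delta^0_\Lambda$ then $g = i_\Lambda(f)$ for a unique $f\in C^\infty_{tlc}(\Omega_\Lambda)$, and $A^*g = i_\Lambda(\Phi_A^*f)$. Since $\Phi_A$ is a homeomorphism which carries cylinder sets of the form (\ref{eqn:charts1}) to cylinder sets (the content of the conjugacy (\ref{eqn:mph}) together with continuity of $\Phi_A$ on the transversal), $\Phi_A^*f$ is again smooth and transversally locally constant, so $A^*g\in\Delta^0_\Lambda$.

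Next, I would use the two standard facts $A^*\circ d = d\circ A^*$ and $A^*(h\,(\star 1)) = (h\circ A)\,(\det A)\,(\star 1)$ for any function $h$. Writing $d\omega = (\star d\omega)\cdot(\star 1)$, the second identity gives
\[
A^*(d\omega) = (\det A)\cdot \bigl((\star d\omega)\circ A\bigr)\cdot(\star 1),
\]
so $\star A^*(d\omega) = (\det A)\cdot A^*(\star d\omega)$. Combining with $A^*(d\omega) = d(A^*\omega)$ and setting $\omega' := (\det A)^{-1} A^*\omega \in \Delta^{d-1}_\Lambda$ yields $\star d\omega' = A^*(\star d\omega)$. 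Applying $i_\Lambda^{-1}$ and invoking Lemma \ref{lem:indMap} in the form $i_\Lambda^{-1}\circ A^* = \Phi_A^*\circ i_\Lambda^{-1}$, we conclude $\bar f_{\star d\omega'} = \Phi_A^*\bar f_{\star d\omega}$.

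The only nontrivial step is the preservation of $\Lambda$-equivariance under $A^*$; all of the rest is bookkeeping with the Hodge star and the fact that pullback commutes with $d$. That step is the main obstacle in spirit, but it is already packaged inside Lemma \ref{lem:indMap}, since the lemma tells us that $A^*$ on $\Delta^0_\Lambda$ is conjugate to $\Phi_A^*$ on $C^\infty_{tlc}(\Omega_\Lambda)$, and the latter clearly preserves its domain.
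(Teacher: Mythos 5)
Your proof is correct, and it takes a cleaner route than the paper's. The paper verifies $A^*(\star d\omega) = \star d\omega'$ by a direct coordinate computation, writing $\omega = \sum_i g_i\,\star dx_i$, setting $u_i = A^*g_i$, and grinding through the chain rule and the $A^{-1}_{i,j}$ entries to extract the coefficients of $\omega'$. You instead derive the coordinate-free identity $A^*(\star\alpha) = (\det A)^{-1}\star(A^*\alpha)$ for a top form $\alpha$, which with $A^* d = d A^*$ immediately yields the closed formula $\omega' = (\det A)^{-1}A^*\omega$ — more transparent, and it makes the rescaling by $\det A$ visible rather than buried in index gymnastics. You also do something the paper's proof skips silently: you check that $A^*$ maps $\Delta^0_\Lambda$ to itself (hence $A^*\omega \in \Delta^{d-1}_\Lambda$) rather than merely that it implements $\Phi_A^*$. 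Your justification of this — $\Phi_A^* f$ is smooth and transversally locally constant because $\Phi_A$ respects the foliated chart structure — is the right idea; the paper only makes this precise later, in Lemma~\ref{PERescale}, so if you wanted to be fully rigorous at this point you would either cite that lemma (reordering slightly) or flesh out the cylinder-set argument, since Lemma~\ref{lem:indMap}'s proof as written only establishes the algebraic identity $i_\Lambda \circ \Phi_A^* = A^* \circ i_\Lambda$ and not the codomain claim. That is a minor presentational gap in the paper, not in your argument.
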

\begin{proof}
In view of Lemma \ref{lem:indMap}, we need to show that given $\omega\in\Delta^{d-1}_\Lambda$, there exists $\omega'\in\Delta^{d-1}_\Lambda$ such that $A^* (\star d\omega) = \star d\omega'$.

If we define $\omega = \sum_{i=1}^d g_i\, \star dx_i$, then it trivially follows that $\star d\omega = \sum_{i=1}^d (-1)^{i+1}X_i g_i$, where $X_i = \frac{d}{dx_i}$. Let $u_i = A^*g_i$ and note that $\omega = \sum_{i=1}^d (A^{-1})^* u_i\, \star dx_i$. We then have
\begin{equation}
\begin{split}
\star d\omega &= \sum_{i=1}^d (-1)^{i+1} X_i [(A^{-1})^* u_i] = \sum_{i=1}^d (-1)^{i+1} (A^{-1})^* [\nabla u_i\cdot A^{-1}_{*,i}] \\
&= (A^{-1})^* \sum_{i=1}^d (-1)^{i+1} \sum_{j=1}^d X_j u_i\cdot A^{-1}_{j,i} \\
&= (A^{-1})^* \sum_{i=1}^d X_i \sum_{j=1}^d (-1)^{j+1} u_j\cdot A^{-1}_{i,j} = (A^{-1})^*\, \star d\omega'.
\end{split}
\end{equation}
It follows that $A^* (\star d\omega) = \star d\omega'$.
\end{proof}

\begin{lemma}
\label{PERescale}
Given $r > 0$, there exists $R(r) = R(r,A) > 0$ such that if $B_{R(r)} \cap \varphi_{x}(\Lambda) = B_{R(r)} \cap \varphi_{y}(\Lambda)$, then $B_{r} \cap \varphi_{Ax}(\Lambda) = B_{r} \cap \varphi_{Ay}(\Lambda)$.
\end{lemma}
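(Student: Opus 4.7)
The plan is to exploit uniform continuity of $\Phi_A$ on the compact pattern space $\Omega_\Lambda$, combined with the conjugacy (\ref{eqn:mph}) and the standing assumption $\Phi_A(\Lambda)=\Lambda$.

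First, I would translate the hypothesis into the pattern metric $d$ of (\ref{eqn:metricChab}): the set equality $B_R\cap\varphi_x(\Lambda)=B_R\cap\varphi_y(\Lambda)$ directly gives $d(\varphi_x(\Lambda),\varphi_y(\Lambda))\leq 1/R$, by taking the zero-shift witness ($u=v=0$) in the definition of $\bar{d}$ from (\ref{eqn:metric}). Next, since $\Phi_A$ is a homeomorphism of the compact metric space $\Omega_\Lambda$, it is uniformly continuous, so for each prescribed $\varepsilon>0$ there exists $\delta(\varepsilon)>0$ with $d(\Lambda_1,\Lambda_2)<\delta(\varepsilon)$ implying $d(\Phi_A(\Lambda_1),\Phi_A(\Lambda_2))<\varepsilon$. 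Combining this with the conjugacy (\ref{eqn:mph}) and $\Phi_A(\Lambda)=\Lambda$, which together yield $\Phi_A(\varphi_t(\Lambda))=\varphi_{At}(\Lambda)$ for any $t$, one obtains $d(\varphi_{Ax}(\Lambda),\varphi_{Ay}(\Lambda))<\varepsilon$ provided $R\geq 1/\delta(\varepsilon)$.

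The final and most delicate step is to recover the exact pointwise equality $B_r\cap\varphi_{Ax}(\Lambda)=B_r\cap\varphi_{Ay}(\Lambda)$ from pattern-metric closeness. The tension is that $\bar{d}<\varepsilon$ only guarantees agreement on $B_{1/\varepsilon}$ after small auxiliary shifts $u,v\in B_\varepsilon$, while the lemma's conclusion forbids any such shift. I would handle this by choosing $\varepsilon<\min(r_{\min}(\Lambda)/3,\,1/(r+r_{\min}(\Lambda)))$, so that both auxiliary shifts are strictly smaller than half the minimum spacing of $\Lambda$ and the large ball $B_{1/\varepsilon}$ comfortably contains $B_r$ with a margin exceeding $r_{\min}(\Lambda)$. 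Since $\varphi_{Ax}(\Lambda)$ and $\varphi_{Ay}(\Lambda)$ are both translates of the uniformly discrete FLC Delone set $\Lambda$, the set equality on $B_{1/\varepsilon}$ after these tiny shifts, combined with the $r_{\min}(\Lambda)$-separation of points in both patterns, forces the shifts to be compatible with the patch structure and yields the desired pointwise agreement on $B_r$. Setting $R(r):=1/\delta(\varepsilon)$ for such $\varepsilon$ then produces the required constant.

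The hard part is indeed this conversion step: the pattern metric is inherently translation-tolerant whereas the lemma demands strict pointwise agreement, so one must carefully exploit both the uniform discreteness of $\Lambda$ and the fact that $\varphi_{Ax}(\Lambda)$ and $\varphi_{Ay}(\Lambda)$ lie in the same leaf of $\Omega_\Lambda$ to neutralize the auxiliary shifts appearing in the definition of $\bar{d}$.
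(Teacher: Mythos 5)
Your overall strategy—convert the hypothesis to pattern-metric closeness, invoke uniform continuity of $\Phi_A$ on the compact space $\Omega_\Lambda$, push through the conjugacy $\Phi_A\circ\varphi_t = \varphi_{At}\circ\Phi_A$, then convert back—is really the same argument as the paper's, just phrased directly rather than as a compactness/contradiction argument (the paper extracts convergent subsequences $\varphi_{x_n}(\Lambda)\to\Lambda_1$, $\varphi_{y_n}(\Lambda)\to\Lambda_2$; uniform continuity on a compact metric space is exactly what that sequential argument delivers). You are also right to flag the last conversion step as the delicate one, and the paper's own write-up is somewhat compressed at the same point.

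However, your resolution of that step has a genuine gap. From $d(\varphi_{Ax}(\Lambda),\varphi_{Ay}(\Lambda))<\varepsilon$ you get $u,v\in B_\varepsilon$ with $B_{1/\varepsilon}\cap\varphi_{Ax+u}(\Lambda)=B_{1/\varepsilon}\cap\varphi_{Ay+v}(\Lambda)$. Chasing this through, the conclusion is that every $p\in B_r\cap\varphi_{Ax}(\Lambda)$ has the point $p+(v-u)$ lying in $\varphi_{Ay}(\Lambda)$; that is, the two patches agree only after the rigid offset $v-u$, and uniform discreteness alone does not force $v-u=0$. "Both are translates of $\Lambda$" and "$r_{\min}$-separation" by themselves cannot rule out $v\neq u$—to kill the offset you need a common anchor point that is constrained to be fixed by both shifts. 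This anchor is available, but must be extracted from the original hypothesis: since $B_R\cap\varphi_x(\Lambda)=B_R\cap\varphi_y(\Lambda)$ and $\Lambda$ is relatively dense, there is a common point $p_0\in B_{r_{\max}(\Lambda)}\cap\varphi_x(\Lambda)=B_{r_{\max}(\Lambda)}\cap\varphi_y(\Lambda)$. Translating by $p_0$ places both $\varphi_{p_0+x}(\Lambda)$ and $\varphi_{p_0+y}(\Lambda)$ in the canonical transversal $\mho_\Lambda$; since $\Phi_A$ preserves $\mho_\Lambda$, their images both contain the origin, and now pattern-metric closeness together with $2\varepsilon<r_{\min}(\Lambda)$ forces $u=v$ (the origin must be sent to a single point of the other pattern within distance $2\varepsilon<r_{\min}$). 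Undoing the bounded translation $Ap_0$ then gives the stated agreement on $B_r$ for $\varepsilon$ small enough. Without this anchoring through the transversal—or an equivalent mechanism—the proposal does not close the gap it correctly identifies.
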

\begin{proof}
Fix $r > 0$, and suppose there were no such $R(r)$. Then there exists a sequence $R_{n} \to \infty$ and pairs $x_{n}, y_{n} \in \mathbb{R}^{d}$ such that, for all $n$, $B_{R_{n}} \cap \varphi_{x_{n}}(\Lambda) = B_{R_{n}} \cap \varphi_{y_{n}}(\Lambda)$, but $B_{r} \cap \varphi_{Ax_{n}}(\Lambda) \ne B_{r} \cap \varphi_{Ay_{n}}(\Lambda)$. By compactness we may choose subsequences and assume $\varphi_{x_{n}}(\Lambda)$ converges to some pattern $\Lambda_{1}$ in $\Omega_\Lambda$, and $\varphi_{y_{n}}(\Lambda)$ converges to some pattern $\Lambda_{2}$. Since $B_{R_{n}} \cap \varphi_{x_{n}}(\Lambda) = B_{R_{n}} \cap \varphi_{y_{n}}(\Lambda)$ we have $\Lambda_{1} = \Lambda_{2}$, and hence $\Phi_{A}(\Lambda_{1}) = \Phi_{A}(\Lambda_{2})$. But we also have $\Phi_{A}(\Lambda_{1}) = \Phi_{A}(\lim \Lambda - x_{n}) = \lim \Phi_{A}(\Lambda - x_{n}) = \lim \Lambda - Ax_{n}$, and similarly $\Phi_{A}(\Lambda_{2}) = \lim \Lambda - Ay_{n}$. However, by assumption we have $B_{r} \cap \varphi_{Ax_{n}}(\Lambda) \ne B_{r} \cap \varphi_{Ay_{n}}(\Lambda)$, giving a contradiction.
\end{proof}
By Lemma \ref{lem:closedForms}, the map $A^*$ preserves exact forms, and therefore induces a non-trivial map on the pattern equivariant cohomology $H^*_{\Lambda}$.
\begin{lemma}
\label{lem:agreement}
The map $A^*$ on $\Lambda$-equivariant cohomology agrees with the map $\Phi_A^*$ as induced maps on \v{C}ech cohomology.
\end{lemma}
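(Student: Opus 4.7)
The plan is to match the two induced maps at the level of the Anderson--Putnam inverse limit presentation $\Omega_\Lambda = \varprojlim\{\Gamma_n, f_n\}$ from Theorem \ref{APGTheorem}. Under this identification, the isomorphism of Theorem \ref{IsoCohomology} sends a $\Lambda$-equivariant form $\omega = \pi_n^*(g)$ (using Proposition \ref{prop:PEformsPullBack}) to the class $[g] \in H^*(\Gamma_n)$, whose image in $\check{H}^*(\Omega_\Lambda) = \varinjlim H^*(\Gamma_n)$ is the corresponding \v{C}ech class. The strategy is to build a single family of continuous maps $\tilde{A}_n:\Gamma_{m(n)} \to \Gamma_n$ that simultaneously realizes $A^*$ on pattern-equivariant cohomology and $\Phi_A^*$ on \v{C}ech cohomology.

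The first step is to show that the linear map $A$ on $\mathbb{R}^d$ descends to maps on Anderson--Putnam complexes. Given $n$, choose $r_n > 0$ large enough that $\pi_n(x)$ depends only on $B_{r_n}\cap \varphi_x(\Lambda)$. By Lemma \ref{PERescale} there is $R_n = R(r_n,A)$ so that $\pi_n(Ax)$ depends only on $B_{R_n}\cap\varphi_x(\Lambda)$, and then choosing $m(n)$ large enough that $\pi_{m(n)}(x)$ determines that patch yields a well-defined continuous map $\tilde{A}_n:\Gamma_{m(n)}\to\Gamma_n$ satisfying $\pi_n\circ A = \tilde{A}_n\circ \pi_{m(n)}$ on $\mathbb{R}^d$.

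With $\tilde{A}_n$ in hand, both sides become easy to identify. For $\omega = \pi_n^*(g)\in \Delta_\Lambda^*$,
\[
A^*\omega = (\pi_n\circ A)^*g = \pi_{m(n)}^*\tilde{A}_n^*g,
\]
so $A^*$ on PE cohomology is represented, via Theorem \ref{IsoCohomology}, by the direct limit of the pullbacks $\tilde{A}_n^*:H^*(\Gamma_n)\to H^*(\Gamma_{m(n)})$. On the other hand, the conjugacy (\ref{eqn:mph}) combined with continuity of $\Phi_A$ shows that the $n$-collared type of $\Phi_A(\Lambda')$ at the origin is determined by $B_{R_n}\cap\Lambda'$; thus $\Phi_A$ induces exactly the same maps $\tilde{A}_n$ at the level of the inverse limit, and hence $\Phi_A^*$ on \v{C}ech cohomology is likewise the direct limit of the $\tilde{A}_n^*$. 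The two actions therefore agree.

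The main obstacle is the careful bookkeeping ensuring that the maps on $\Gamma_n$ arising from $A:\mathbb{R}^d\to\mathbb{R}^d$ and from $\Phi_A:\Omega_\Lambda\to\Omega_\Lambda$ really coincide as continuous maps, and not merely up to homotopy. This requires keeping straight the relationship between pattern-equivariance radii and $n$-collared neighborhoods, for which Lemma \ref{PERescale} supplies the essential factorization. Once the descent is established and naturality of the isomorphism in Theorem \ref{IsoCohomology} is invoked, the agreement on cohomology is purely formal.
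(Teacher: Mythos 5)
Your proof takes essentially the same approach as the paper's: you use the Anderson--Putnam inverse limit presentation, invoke Lemma \ref{PERescale} to descend the linear map $A$ to continuous maps between the approximating complexes $\Gamma_n$, and then show that these maps simultaneously realize $A^*$ on pattern-equivariant cohomology and $\Phi_A^*$ on \v{C}ech cohomology, finishing with naturality of the isomorphism between de Rham and \v{C}ech cohomology. The only notable difference is that you are somewhat more compressed in verifying that the descended maps $\tilde{A}_n$ satisfy $q_n\circ\Phi_A = \tilde{A}_n\circ q_{m(n)}$ (the paper tracks individual points in the inverse limit through $\Phi_A$, using $\Phi_A(\Lambda - z) = \Lambda - Az$, which requires the standing assumption that $\Phi_A$ fixes $\Lambda$); you should make that dependence explicit, but the structure of the argument is the same.
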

We will prove Lemma \ref{lem:agreement} in the language of tilings, for presentation reasons; there is no loss in generality, and one can easily adapt the proof to the language of Delone sets. The proof of Lemma \ref{lem:agreement} requires working with the isomorphism in Theorem \ref{IsoCohomology}, which we briefly recall now. Given a closed pattern-equivariant form $\omega \in \Delta_{\Lambda}^{k}$, there exists an $n$ and a closed form $\omega_{n} \in \Delta^{k}(\Gamma_{n})$ such that $\omega = \pi_{n}^{*}(\omega_{n})$ (Proposition \ref{prop:PEformsPullBack}). The class $[\omega_{n}]_{dR}$ in the de Rham cohomology $H^{k}_{dR}(\Gamma_{n};\mathbb{C})$ gives a corresponding class $[\omega_{n}]_{\check{C}}$ in $\check{H}^{k}(\Gamma_{n};\mathbb{C})$, coming from the fact that the de Rham cohomology $H^{k}_{dR}(\Gamma_{n};\mathbb{C})$ is naturally isomorphic to the \v{C}ech cohomology $\check{H}^{k}(\Gamma_{n},\mathbb{C})$ (see \cite{sadun:PECints}). The class $[\omega_{n}]_{\check{C}}$ gives a corresponding class $[\omega_{n}]$ in $\varinjlim\{\check{H}^{k}(\Gamma_{n};\mathbb{C})\}$, and since $\check{H}^{k}(\Omega_{\Lambda};\mathbb{C}) = \varinjlim\{\check{H}^{k}(\Gamma_{n};\mathbb{C})\}$, this gives a class $[\omega_{n}] \in \check{H}^{k}(\Omega_{\Lambda};\mathbb{C})$. This construction gives a map $h:H^{k}(\Omega_\Lambda;\mathbb{C}) \to \check{H}^{k}(\Omega_{\Lambda},\mathbb{C})$ which is the isomorphism described in Theorem \ref{IsoCohomology}.
\begin{proof}[Proof of Lemma \ref{lem:agreement}]
Let $\{\Gamma_{n},f_{n}\}$ be the inverse system described in Theorem \ref{APGTheorem} so that $\Omega_{\Lambda} \cong \varprojlim\{\Gamma_{n},f_{n}\}$. Let $q_{i}:\Omega_{\Lambda} \to \Gamma_{i}$ be the corresponding projections coming from the inverse system. Consider a class $[\omega] \in H^{k}_{\Lambda}(\mathbb{C})$, for which $\omega$ is pattern-equivariant of radius $R$. By Lemma \ref{PERescale}, $A^{*}(\omega)$ is pattern-equivariant of radius $r$ for some $r$. \\
\indent For $n$ large enough, by Lemma \ref{PERescale} there exists $m$ such that the map $A:\mathbb{R}^{d} \to \mathbb{R}^{d}$ given by $A:x \mapsto Ax$ descends to a map $\phi_{A}:\Gamma_{n} \to \Gamma_{m}$. Consider a point $(x_{0},x_{1},\ldots) \in \varprojlim\{\Gamma_{n},f_{n}\}$. The point $x_{n}$ lies in some $n$-collared tile in $\Gamma_{n}$, and we may think of $x_{n}$ as giving instructions for laying an $n$-th collared tile around the origin; call the patch in $\Lambda$ associated to this $n$-collared tile $P_{n}$. Since this patch must occur somewhere in $\Lambda$, there exists $z_{n} \in \mathbb{R}^{d}$ such that $\Lambda - z_{n}$ and $P_{n}$ agree exactly around the origin. Again by Lemma \ref{PERescale}, this implies $\Phi_{A}(\Lambda - z_{n}) = \Lambda - Az_{n}$ and $AP_{n}$ must agree as $m$-collared tiles around the origin. This then implies $q_{m}(\Phi_{A}(x_{0},x_{1},\ldots))$ must agree with $\phi_{A}(x_{n})$ in $\Gamma_{m}$. Note that $n$ and $m$ were independent of $(x_{0},x_{1},\ldots)$, and we get a diagram
$$
\xymatrix{
\Omega_{\Lambda} \ar[r]^{\Phi_{A}} \ar[d]_{q_{n}} & \Omega_{\Lambda} \ar[d]^{q_{m}}\\
\Gamma_{n} \ar[r]^{\phi_{A}} & \Gamma_{m}}$$
Furthermore, by construction, $\phi_{A}$ satisfies $\phi_{A} \circ q_{n} = q_{m} \circ \Phi_{A}$. \\
\indent Now the classes $[\omega], [A^{*}(\omega)]$ correspond to de Rham classes $[\omega]_{dR} \in \Delta^{k}(\Gamma_{n}), [A^{*}(\omega)]_{dR} \in \Delta^{k}(\Gamma_{m})$, and by construction they satisfy $\phi_{A}^{*}([\omega]_{dR}) = [A^{*}(\omega)]_{dR}$. By Theorem 3 in \cite{sadun:PECints}, there is a \emph{natural} isomorphism between the real \v{C}ech cohomology and de Rham cohomology on the branched manifolds $\Gamma_{n}, \Gamma_{m}$; thus, letting $[\omega]_{\check{C}}, [A^{*}(\omega)]_{\check{C}}$ denote the corresponding \v{C}ech classes under this isomorphism, naturality of this isomorphism implies we also get $\Phi_{A}^{*}([\omega]_{\check{C}}) = [A^{*}(\omega)]_{\check{C}}$, as desired. Given the description of the isomorphism $h:H^{k}(\Omega_\Lambda;\mathbb{C}) \to \check{H}^{k}(\Omega_{\Lambda},\mathbb{C})$ above, this completes the proof.
\end{proof}
%We will denote by $\Pi_{i,j,k}: \Delta_\Lambda^d  \rightarrow \mathrm{span}(\eta_{i,j,k})$ the projection of $d$-forms into the subspaces spanned by representatives $\eta_{i,j,k}$ of vectors in the chosen basis (\ref{eqn:basisAct}). In other words, for any $\eta\in\Delta_\Lambda^d$, there exists a number $\alpha_{i,j,k} = \alpha_{i,j,k}(\eta)$ and a form $\omega_{i,j,k} = \omega_{i,j,k}(\eta)\in\Delta_\Lambda^{d-1}$ such that $\Pi_{i,j,k}\eta - \alpha_{i,j,k}(\eta)\eta_{i,j,k} = d\omega_{i,j,k}$ and $A^*\eta_{i,j,k} = \nu_i\eta_{i,j,k} + \eta_{i,j+1,k}$.

Denote by $g_{i,j,k} = \star \eta_{i,j,k}$ the collection of pattern equivariant functions associated to $\eta_{i,j,k}$ according to the basis chosen in (\ref{eqn:basisAct}). As such, for any $\eta \in \Delta^d_\Lambda$, there exist real numbers $\{ \alpha_{i,j,k}(\eta) \}$, for $i\in \{1,\dots, r\}$, $j\in \{1,\dots, \kappa(i)\}$ and $k\in \{1,\dots, s(i,j)\}$, and $\omega \in \Delta^{d-1}_\Lambda$ such that
\begin{equation}
\label{eqn:decomp}
\star \eta = \sum_{i=1}^r \sum_{j=1}^{\kappa(i)} \sum_{k=1}^{s(i,j)}\alpha_{i,j,k}(\eta) g_{i,j,k}  + \star d\omega.
\end{equation}
Recall that $\nu_1\geq\cdots\geq |\nu_r|$ are the eigenvalues of the induced action $\Phi_A^*:H^d(\Omega_\Lambda;\mathbb{C})\rightarrow H^d(\Omega_\Lambda;\mathbb{C})$. In view of Lemmas \ref{lem:closedForms} and \ref{lem:agreement}, there exist forms $\zeta_{i,j,k}\in\Delta_\Lambda^{d-1}$ such that
\begin{equation}
\label{eqn:ACtionExact}
A^*\eta_{i,j,k} = \nu_i \eta_{i,j,k} + \eta_{i,j-1,k} + d\zeta_{i,j,k}.
\end{equation}
from which it follows that
\begin{equation}
\label{eqn:fullAct}
(A^*)^n\eta_{i,j,k} = \sum_{q=0}^{\mathrm{min}\{n,j-1\}}  \binom{n}{q}  \nu_i^{n-q} \eta_{i,j-q,k} + \sum_{\ell=0}^{n-q-1} \binom{q+\ell}{q}  \nu_i^\ell (A^*)^{n-\ell-q-1}d\zeta_{i,j-q,k}.
\end{equation}

We note that we always have
\begin{equation}
\label{eqn:firstEig}
\nu_1 = \det A
\end{equation}
since $A^* (\star 1) = \det A\, (\star 1)$. By the Hodge-$\star$ isomorphism, this corresponds to the subspace consisting of constant functions in $L^2(\Omega_\Lambda,\mu)$. Let $\mathcal{D}_+ = \det A$, $\mathcal{D}_- = \mathcal{D}_+^{-1}$.
\begin{comment}
\begin{proposition}
\label{cor:rescale}
We have that
$$ \Phi_A^* \bar{f}_{\star\eta_{i,j,k}} = \bar{f}_{\star A^*\eta_{i,j,k}} =  \mathcal{D}_- (\nu_{i} \bar{f}_{\star \eta_{i,j,k}} + \bar{f}_{\star \eta_{i,j+1,k}}).$$
\end{proposition}
\begin{proof}
We have, by Lemmas \ref{lem:closedForms}, \ref{lem:goodBasis}, \ref{lem:agreement} and by  (\ref{eqn:basisAct}), that
\begin{equation}
\label{eqn:action}
A^*\eta_{i,j,k} = \nu_i\eta_{i,j,k} + \eta_{i,j+1,k}.
\end{equation}
Additionally, we have that
$$A^*(g_{i,j,k}\star 1) = A^*(g_{i,j,k}) A^*(\star 1) = \mathcal{D}_+ A^*g_{i,j,k}(\star 1).$$
Therefore, since $\eta_{i,j,k} = g_{i,j,k}\star 1$,
$$ A^*(\star g_{i,j,k})  = \mathcal{D}_+ A^*g_{i,j,k} (\star 1) = \star(\nu_i g_{i,j,k} + g_{i,j+1,k})= (\nu_i g_{i,j,k} + g_{i,j+1,k}) (\star 1) ,$$
so $ A^*g_{i,j,k} = \mathcal{D}_-(\nu_i g_{i,j,k} + g_{i,j+1,k})$. Since $\bar{f}_{\star\eta_{i,j,k}} = i^{-1}(\star\eta_{i,j,k}) = i^{-1}(g_{i,j,k})$, the result follows from Lemma \ref{lem:indMap}.
\end{proof}
\end{comment}

%Let $\eta_{i,j,k}\in\Delta_\Lambda^d$ be a representative of the class $[\eta_{i,j,k}]$ in the eigenbasis in (\ref{eqn:basisAct}).

\section{Currents and ergodic integrals}
\label{sec:erg}
Let $\Lambda\subset \mathbb{R}^d$ be a Delone set such that its associated pattern space $\Omega_\Lambda$ is compact and the $\mathbb{R}^d$ action on it is uniquely ergodic. This is satisfied if we assume that the Delone sets which we work with have finite local complexity and uniform cluster frequency.

\subsection{Averaging sets and (co)boundaries}
\label{subsec:avSets}
Let $\mathcal{H}^m$ denote the $m$-dimensional Hausdorff measure.
\begin{definition}
A set $E\subset \mathbb{R}^d$ is called \emph{$m$-rectifiable} if there exist Lipschitz maps $f_i: \mathbb{R}^m\rightarrow\mathbb{R}^d$, $i = 1,2,\dots$ such that
$$\mathcal{H}^m\left(  E\backslash \bigcup_{i\geq 0} f_i(\mathbb{R}^m)   \right) = 0.$$
\end{definition}
\begin{definition}
A \emph{Lipschitz domain} $A\subset\mathbb{R}^d$ is an open, bounded subset of $\mathbb{R}^d$ for which there exist finitely many Lipschitz maps $f_i:\mathbb{R}^{d-1}\rightarrow \mathbb{R}^d$, $i = 1,\dots, L$ such that
$$\mathcal{H}^{d-1}\left(  \partial A \backslash \bigcup_{i=1}^L f_i(\mathbb{R}^{d-1})   \right) = 0.$$
\end{definition}
Clearly, Lipschitz domains have $d-1$-rectifiable boundaries.
\begin{definition}
An open subset $A\subset\mathbb{R}^d$ is a \emph{good Lipschitz domain} if it is a Lipschitz domain and $\mathcal{H}^{d-1}(\partial A)<\infty$.
\end{definition}

Let $\Lambda$ be a RFT Delone set and $A\in GL^+(d,\mathbb{R})$ be the associated expanding matrix. There exists $a\in \mathfrak{gl}(d,\mathbb{R})$ which satisfies $\exp (a) =A$ (as pointed out in the introduction, we assume this without loss of generality because we could pass to $A^2$ if it is not true for $A$ without affecting the results). For $T\geq 0$, let $g_T = \exp(aT)$. Given a bounded measurable set $F_0$ with non-empty interior, we define the averaging family $\{B_T\}_{T\geq 1}$ by
$$B_T = F_{\sigma \log T} = g_{\sigma \log T} F_0,$$
where $\sigma  = d/\log \det A$. As such, we have that $\mathrm{Vol}(B_T) = \mathrm{Vol}(F_0) \mathcal{D}_+^{\sigma \log T}  = \mathrm{Vol}(F_0) T^d$.

Suppose $\rho:\mathbb{R}^d\rightarrow\mathbb{R}$ is a $\Lambda$-equivariant function. Moreover assume that $\rho\in \star d\Delta^{d-1}_{\Lambda}$, where $\star$ denotes the Hodge-$\star$ operator. Note that by Theorem \ref{thm:tlc} there exists a function $\bar{f}_{\rho}\in C^0(\Omega_\Lambda)$ such that $\rho(t) = \varphi_t^* \bar{f}_{\rho}(\Lambda_0)$. By Stokes' theorem, for any bounded, measurable subset $A\subset \mathbb{R}^d$ we have that
\begin{equation}
\label{eqn:cbdry}
\int_A \varphi^*_t \bar{f}_\rho (\Lambda_0)\, dt = \int_{A } \rho(t)\, dt  = \int_{ A} \star d\omega\, dt = \int_{\partial A} \omega
\end{equation}
for some $\omega \in \Delta_{\Lambda}^{d-1}$. By Poincar\'{e}'s Lemma, for any continuous function $g:\mathbb{R}^d\rightarrow\mathbb{R}$, one can always find a solution $\omega$ to the equation $g = \star d\omega$. However, only if $g\in \star d\Delta^{d-1}_{\Lambda}$ is the solution $\omega$ going to be pattern equivariant. We call functions in $\star d\Delta^{d-1}_{\Lambda}$ \emph{coboundaries}.
\begin{lemma}
\label{lem:cbdry}
Let $F_0$ be a good Lipschitz domain and define the one-parameter family of sets $B_T$ according to (\ref{eqn:rescalledSets}). For any coboundary $\psi$ there exists a constant $K = K(F_0,\psi)$ such that for all $T>1$,
$$\left|\int_{B_T} \varphi^*_t \bar{f}_\psi (\Lambda_0)  \, dt\right|\leq K \cdot T^{d\left(1-\frac{\log|\lambda_d|}{\log\nu_1}\right)}.$$
\end{lemma}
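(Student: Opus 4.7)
The plan is to reduce the bulk integral to a boundary integral via Stokes' theorem (exactly as in equation (\ref{eqn:cbdry})), and then estimate the $(d-1)$-dimensional Hausdorff measure of $\partial B_T$ using the linear structure of the rescaling $g_{\sigma \log T}$. Since $\psi = \star d\omega$ for some $\omega \in \Delta^{d-1}_\Lambda$, one has
$$\int_{B_T} \varphi^*_t \bar{f}_\psi(\Lambda_0)\, dt = \int_{B_T} \star d\omega = \int_{\partial B_T} \omega.$$
This reduces matters to bounding the right-hand side.

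The next step is to observe that $\omega$ is bounded. Because $\omega$ is pattern-equivariant, its component functions are in $\Delta^0_\Lambda$; by Theorem \ref{thm:tlc} (via the isomorphism $i_\Lambda$) these correspond to continuous functions on the compact space $\Omega_\Lambda$, hence are uniformly bounded. Writing $M = \|\omega\|_\infty < \infty$, we obtain
$$\left| \int_{\partial B_T} \omega \right| \leq M \cdot \mathcal{H}^{d-1}(\partial B_T).$$
The whole problem thus boils down to controlling $\mathcal{H}^{d-1}(\partial B_T)$.

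For this, note that $\partial B_T = g_{\sigma \log T}(\partial F_0)$, and $\partial F_0$ is $(d-1)$-rectifiable with $\mathcal{H}^{d-1}(\partial F_0)<\infty$ since $F_0$ is a good Lipschitz domain. The $(d-1)$-Hausdorff measure of a linear image is controlled by the maximal $(d-1)$-Jacobian of the map restricted to a hyperplane, which equals $\prod_{i=1}^{d-1}\sigma_i(g_{\sigma\log T}) = \det(g_{\sigma \log T})/\sigma_d(g_{\sigma \log T})$, where $\sigma_1\geq \cdots \geq \sigma_d$ are singular values. Since $g_{\sigma\log T} = A^{\sigma \log T}$ and $\sigma = d/\log \nu_1$ with $\nu_1 = \det A$, one computes $\det(g_{\sigma\log T}) = T^d$, while the smallest singular value satisfies $\sigma_d(g_{\sigma\log T}) \gtrsim T^{d\log|\lambda_d|/\log\nu_1}$. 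Therefore
$$\mathcal{H}^{d-1}(\partial B_T) \leq C(F_0) \cdot T^{d\left(1-\frac{\log|\lambda_d|}{\log\nu_1}\right)},$$
and combining with the previous step yields the stated bound with $K = M \cdot C(F_0)$.

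The main subtlety in executing this outline is the lower bound on $\sigma_d(g_{\sigma\log T})$: when $A$ has nontrivial Jordan structure, polynomial-in-$t$ correction factors can appear in the singular values of $\exp(at)$, which would introduce extraneous $\mathrm{poly}(\log T)$ factors on the right-hand side. One resolves this either by verifying that such corrections can be absorbed by a larger constant $K$ on the range $T>1$ where the dominant $T$-power absorbs them, or by working directly with the singular-value decomposition of $A$ restricted to the generalized eigenspaces and using that every singular value of $g_{\sigma\log T}$ is bounded below by a constant multiple of $T^{d\log|\lambda_d|/\log\nu_1}$ on the relevant scale; either way, the exponent of $T$ in the final bound is unaffected.
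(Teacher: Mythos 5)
Your proof follows the same path as the paper's: Stokes' theorem via \eqref{eqn:cbdry} to reduce the bulk integral to $\int_{\partial B_T}\omega$, a sup-norm bound on $\omega$ by compactness of $\Omega_\Lambda$, and an estimate of $\mathcal{H}^{d-1}(\partial B_T)$ through the $(d-1)$-Jacobian of $g_{\sigma\log T}$. You are in fact more careful than the paper in formulating the last step: the maximal $(d-1)$-dimensional expansion is indeed $\prod_{i<d}\sigma_i(g_{\sigma\log T}) = \det(g_{\sigma\log T})/\sigma_d(g_{\sigma\log T})$ in terms of singular values, whereas the paper's proof simply substitutes the eigenvalue moduli $|\lambda_i|$. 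This substitution is backwards in general: Weyl's inequality gives $\sigma_d \le |\lambda_d|$, hence $\prod_{i<d}\sigma_i \ge \prod_{i<d}|\lambda_i|$, with equality only in the normal case. So the paper's assertion that the expansion is \emph{at most} $\exp\bigl(\tfrac{d\log T}{\log\det A}\sum_{i<d}\log|\lambda_i|\bigr)$ is not justified for non-normal $A$.

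However, the two fixes you propose do not succeed. Polynomial-in-$\log T$ corrections cannot be absorbed into a constant $K$ valid for all $T>1$, since they grow without bound. And it is not true that $\sigma_d(g_{\sigma\log T})\ge c\,T^{d\log|\lambda_d|/\log\nu_1}$ when $a$ has a non-trivial Jordan block for its slowest eigenvalue: writing $a=S+N$ (semisimple plus nilpotent, commuting), one gets $g_t=\exp(St)\exp(Nt)$ and $\sigma_d(g_t)\sim |\lambda_d|^t/\mathrm{poly}(t)$. This is a genuine gap, but it is one shared by the paper's own proof, which asserts the eigenvalue-product bound without argument. Under the additional hypothesis that $A$ is semisimple (so $g_t$ is conjugate to a normal family with a fixed condition-number constant), the eigenvalue moduli control the singular values up to a constant, and both proofs are valid as stated. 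In general the right-hand side should carry an extra $\mathrm{poly}(\log T)$ factor, which is harmless for the paper's subsequent applications (they routinely absorb such factors into a $\log T$ prefactor or an $\varepsilon$-slack in the exponent), but it means the bound in Lemma~\ref{lem:cbdry} is not sharp as stated unless one assumes semisimplicity of $A$.
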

\begin{proof}
Since $\psi$ is a coboundary, by definition, there exists a $\omega\in \Delta^{d-1}_\Lambda$ such that $\psi = \star d\omega$. Denote $\omega  = \sum_{i=1}^d \omega_i \star dx_{i}$, where $\omega_i\in\Delta_\Lambda^0$.

By (\ref{eqn:cbdry}) we have that
\begin{equation}
\label{eqn:bdryEst1}
\left|\int_{B_T} \varphi^*_t \bar{f}_\psi (\Lambda) \, dt\right| = \left|\int_{\partial B_T} \omega  \right|\leq    d\max_i \{\|\omega_i \|_{\infty}\} \cdot\mathcal{H}^{d-1}(\partial B_T).
\end{equation}
Since $F_0 = B_0$ is a good Lipschitz domain, its boundary is $d-1$-rectifiable and $\mathcal{H}^{d-1}(\partial B_0)<\infty$ \cite[Ch. 15]{mattila:GMT}. Since $A$ is an expansive matrix with eigenvalues $\lambda_1, \dots, \lambda_d > 1$, $\sigma = d/\log\det A$, and $\sum_{i=1}^d\log |\lambda_i| = \log \det A$, we have that
$$\det g_{\sigma \log T} = \mbox{exp}\, \left( \frac{d\log T }{\log\det A}\sum_{i=1}^d \log|\lambda_i| \right) =  T^d$$
for all $T\geq 1$. As such, the expansion induced on the by $g_{\sigma \log T}$ on the $d-1$-dimensional Lebesgue measure $\mathcal{H}^{d-1}$ is at most
$$\mbox{exp}\,\left( \frac{d\log T}{\log\det A}  \sum_{i=1}^{d-1}\log|\lambda_i|\right) = \mbox{exp}\,\left( \frac{d\log T}{\log\det A}  \log\left(\frac{\det A}{|\lambda_d|} \right)\right) = \mbox{exp}\,\left( d\log T \left( 1 - \frac{\log |\lambda_d|}{\log\det A}   \right)\right)$$
and, since $\det A = \nu_1$, it follows that
$$ \mathcal{H}^{d-1}(\partial B_T) \leq \mathcal{H}^{d-1}(\partial B_0)  \mbox{exp}\,\left( \frac{d\log T}{\log\det A}  \log\left(\frac{\det A}{|\lambda_d|} \right)\right)   = \mathrm{Vol}^{d-1}(\partial B_0) T^{d\left( 1-\frac{\log |\lambda_d|}{\log\nu_1}\right)}.$$
This, combined with (\ref{eqn:firstEig}) and (\ref{eqn:bdryEst1}), gives the desired result.
\end{proof}

\begin{remark}
\label{rem:FZ}
The boundary effects appearing above only appear for higher rank actions. This is a phenomenon which also appears in \cite{BufetovSolomyak}. Lemma \ref{lem:cbdry} suggests that if we want to get meaningful information about growth of ergodic integrals, we should look at functions which are not coboundaries, i.e., functions coming from the pattern equivariant forms in $\star\mathrm{coker}\{d:\Delta^{d-1}_{\Lambda}  \rightarrow \Delta^{d}_{\Lambda}\}$. Note that such functions are given, by definition, by representatives of classes in $H_\Lambda^d(\Omega_\Lambda;\mathbb{R})$. In other words, \emph{the growth of ergodic averages is controlled by the cohomology class of the function}. This is the spirit of the Zorich-Forni phenomenon for ergodic averages of translation flows on flat surfaces.
\end{remark}
\begin{definition}
Given an expanding, diagonalizable $d\times d$ matrix $A$, a \emph{time cube} $F_0$ is a subset of $\mathbb{R}^d$ which
\begin{enumerate}
\item is a finite polytope with an even number of faces;
\item for every face $\partial_i^+ F_0 \subset \partial F_0$ there is another face $\partial^-_i F_0$ such that $\partial^-_i F_0 = \partial^+_i F_0 + c_i$ for some $c_i\in\mathbb{R}^d$;
\item the face $\partial_i^+ F_0$ is orthogonal to an eigenvector $e_{\ell(i)}$ of $A$.
\end{enumerate}
\end{definition}
\begin{remark}
  Note that in the self-similar case ($A$ is pure dilation), every vector in $\mathbb{R}^d$ is an eigenvector of $A$, and thus requirement (3) from the definition is automatically satisfied. Thus, in the self-similar case, any polytope satisfying the first two conditions is a time cube.
\end{remark}
Any collection of $d-1$ linearly independent vectors in $\mathbb{R}^d$ defines an action of $\mathbb{R}^{d-1}$ on $\Omega_\Lambda$, which is a subaction of the $\mathbb{R}^d$ action. Any such subaction preserves the measure $\mu$ which is invariant for the full $\mathbb{R}^d$ action on $\Omega_\Lambda$.
\begin{definition}
A time cube is \emph{stellar} if all the $\mathbb{R}^{d-1}$ subactions parallel to the faces of its boundary are uniquely ergodic.
\end{definition}
Note that if the subactions defined by faces of times-cubes are uniquely ergodic, the unique invariant measure coincides with the unique invariant measure $\mu$ of the full $\mathbb{R}^d$ action on $\Omega_\Lambda$.
\begin{lemma}
\label{lem:awesome}
If $B_0$ is a stellar time cube, $B_T$ is defined using (\ref{eqn:rescalledSets}), and $g$ is a coboundary, then
$$\frac{1}{T^{d\left( 1 -\frac{\log |\lambda_d|}{\log \nu_1}\right)}}\int_{B_T} \bar{f}_{g}\circ \varphi_s(\Lambda_0)\, ds \longrightarrow 0.$$
\end{lemma}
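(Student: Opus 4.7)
The plan is to combine Stokes' theorem with unique ergodicity of the face subactions to upgrade the $O$-bound of Lemma \ref{lem:cbdry} to an $o$-bound. Since $g$ is a coboundary, write $g = \star d\omega$ for some $\omega \in \Delta^{d-1}_\Lambda$, and apply Stokes' theorem (using pattern equivariance as in the proof of Lemma \ref{lem:cbdry}) to obtain
\begin{equation*}
\int_{B_T}\bar f_g\circ\varphi_s(\Lambda_0)\,ds = \int_{\partial B_T}\omega.
\end{equation*}
Lemma \ref{lem:cbdry} already bounds the right-hand side by a constant times $T^{d(1-\log|\lambda_d|/\log\nu_1)}$, so the task reduces to showing that the boundary integral is actually little-$o$ of this quantity.

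Next, exploit the time-cube structure of $B_0$. The boundary $\partial B_T$ decomposes into pairs of opposite faces $F^\pm_i = g_{\sigma\log T}(\partial^\pm_i B_0)$; since $\partial^-_i B_0 = \partial^+_i B_0 + c_i$, the face $F^-_i$ is a translate of $F^+_i$, and the two carry opposite outward-normal orientations inside $\partial B_T$. Because $\partial^+_i B_0$ is orthogonal to an eigenvector $e_{\ell(i)}$ of $A$, each face lies in a single $(d-1)$-plane that is (an affine image of) a hyperplane on which the subaction parallel to it acts. Restricting the $(d-1)$-form $\omega$ to such a hyperplane picks out a single scalar component (the one containing the $\star dx$-term in the normal direction), which by Theorem \ref{thm:tlc} corresponds via the isomorphism $i_\Lambda$ to a continuous function $\bar h_i \in C(\Omega_\Lambda)$. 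The integral of $\omega$ over $F^\pm_i$ is thereby identified with an ergodic integral of $\bar h_i$ for an $\mathbb{R}^{d-1}$ subaction of $\varphi$.

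By the stellar hypothesis, the subaction parallel to $\partial^\pm_i B_0$, i.e.\ by translations in $e_{\ell(i)}^\perp$, is uniquely ergodic on $\Omega_\Lambda$. For $T$ with $\sigma\log T \in \mathbb{N}$, the intertwining $\varphi_{At} = \Phi_A\circ\varphi_t\circ\Phi_A^{-1}$ from (\ref{eqn:mph}), combined with the fact that $\Phi_A$ preserves $\mu$, provides a measurable conjugacy from the original subaction to the subaction in the scaled hyperplane $g_{\sigma\log T}(e_{\ell(i)}^\perp)$, and therefore unique ergodicity persists with the same invariant measure $\mu$. For general real $T$ one writes $T = T_n \cdot s$ with $\sigma\log T_n \in \mathbb{N}$ and $s$ in a fixed compact range, and uses that the residual rescaling $g_{\log s}$ is a bounded linear distortion of each face. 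Unique ergodicity then yields, uniformly in the base point $\Lambda_0$,
\begin{equation*}
\int_{F^\pm_i}\omega \;=\; \mathcal H^{d-1}(F^\pm_i)\cdot\bigl(\mu(\bar h_i)+o(1)\bigr) \qquad \text{as } T\to\infty,
\end{equation*}
and because $F^+_i$ and $F^-_i$ have the same area but enter $\int_{\partial B_T}\omega$ with opposite signs, the $\mu(\bar h_i)$ contributions cancel, leaving $\int_{F^+_i}\omega + \int_{F^-_i}\omega = o\bigl(\mathcal H^{d-1}(F^+_i)\bigr)$.

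Summing over all opposite-face pairs and invoking the boundary-area estimate $\mathcal H^{d-1}(\partial B_T) = O\bigl(T^{d(1-\log|\lambda_d|/\log\nu_1)}\bigr)$ from the proof of Lemma \ref{lem:cbdry} then delivers the conclusion. The main obstacle I expect is the transfer of unique ergodicity to the continuous parameter $T$: the conjugacy argument works cleanly along the arithmetic sequence of integer renormalization times, but between those times the subaction acts along a rotated hyperplane which is no longer directly related to the original by a map on $\Omega_\Lambda$. Controlling the interpolation so that the $o(1)$ in the ergodic average is genuinely uniform in $T$ (and not just along a subsequence) is the delicate point of the argument.
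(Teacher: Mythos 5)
Your proof follows the same route as the paper's: Stokes' theorem to move to a boundary integral, decomposition of $\partial B_T$ into opposite-face pairs from the time-cube structure, identification of each face integral with an ergodic integral for an $\mathbb{R}^{d-1}$-subaction, and cancellation of the two $\mu(\bar h_i)$ terms because the faces appear with opposite orientations. The overall structure is correct.

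Where you diverge is in the treatment of the scaled faces. You worry that $g_{\sigma\log T}\bigl(e_{\ell(i)}^\perp\bigr)$ rotates as $T$ varies, and so introduce a $\Phi_A$-conjugacy to transport unique ergodicity along the orbit of hyperplanes — an argument that, as you yourself note, only works cleanly at the integer renormalization times. The paper does not need this step. The point (implicit in the definition of time cube, and automatic in the self-similar case singled out in the remark following it) is that the tangent direction of each face is a $g_t$-invariant subspace; consequently $g_{\sigma\log T}\partial^\pm_i B_0$ lies in a hyperplane parallel to $\partial^\pm_i B_0$ for \emph{every} $T$, not just integer ones. One then has a single fixed $\mathbb{R}^{d-1}$-subaction (the one asserted to be uniquely ergodic by the stellar hypothesis) and a van Hove family of averaging domains in that fixed hyperplane. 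Uniform convergence of ergodic averages for the uniquely ergodic subaction gives the $o(1)$ directly, with no conjugacy, no arithmetic subsequence, and no interpolation argument. So the ``delicate point'' you flag at the end is not actually present once the faces are taken parallel to invariant subspaces; your proposed fix (writing $T = T_n\cdot s$ and appealing to bounded distortion of the residual $g_{\log s}$) would, in the genuinely rotating case, not by itself force the $o(1)$ error to be uniform in $T$, because the subaction is changing — but in the setting the lemma is intended for, there is no rotation to control.
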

\begin{proof}
Since $g$ is a coboundary, $g = \star d\omega$ for some $\omega\in\Delta_\Lambda^{d-1}$. Then
\begin{equation}
\label{eqn:splitBdry}
\begin{split}
\int_{B_T} \bar{f}_{\star d\omega}\circ \varphi_s(\Lambda_0)\, ds &= \int_{B_T} d\omega = \int_{\partial B_T} \omega  = \int_{\bigcup_i \partial_i^\pm B_T} \omega\\
&= \sum_i \int_{\partial^\pm_i B_T} \omega = \sum_i \int_{\partial^+_i B_T}  \omega +  \int_{\partial^-_i B_T}  \omega \\
&= \sum_i \int_{\partial^+_i B_T}  i_{\bar{X}_i}\omega \circ \varphi_t (\Lambda_0)\, ds  +  \int_{\partial^-_i B_T} i_{-\bar{X}_i}\omega \circ \varphi_t  (\Lambda_0)\, ds,
\end{split}
\end{equation}
where $\bar{X}_i$ is the wedge of $d-1$ linearly independent vectors tangent to $\partial_i B_0$ and orthogonal to some eigenvector $e_i$ of $A$. Since the faces $\partial_i F_0$ are orthogonal to an eigenvector of $A$, using (\ref{eqn:rescalledSets}), we have that
\begin{equation*}
\mathrm{Vol}^{d-1}(\partial_i^+ B_T) = \mathrm{Vol}^{d-1}(\partial_i^+ F_0) e^{\frac{d}{\log \det A}\log T \log(\det A /|\lambda_{\ell(i)}|)} = \mathrm{Vol}^{d-1}(\partial_i^+ F_0) T^{d\left(1-\frac{\log |\lambda_i|}{\log\nu_1}\right)},
\end{equation*}
where $\lambda_{\ell(i)}$ is the eigenvalue corresponding to the eigenvector orthogonal to $\partial_i^\pm F_0$ and we have used (\ref{eqn:firstEig}) in the last equality. It follows that the faces with fastest growth are those orthogonal to any eigenvector whose eigenvalue is equal to $\lambda_d$. Let $\partial_d^\pm F_0$ be a pair of such faces. Since a uniquely ergodic action on a compact metric space has uniform convergence of ergodic averages, we have that
\begin{equation*}
\begin{split}
&\frac{1}{T^{d\left(1-\frac{\log |\lambda_d|}{\log\nu_1}\right)}} \left( \int_{\partial^+_d B_T}  i_{\bar{X}_d}\omega \circ \varphi_t (\Lambda_0)\, ds  +  \int_{\partial^-_d B_T} i_{-\bar{X}_d}\omega \circ \varphi_t  (\Lambda_0)\, ds\right)  = \\
& \frac{1}{T^{d\left(1-\frac{\log |\lambda_d|}{\log\nu_1}\right)}} \left( \int_{\partial^+_d B_T}  i_{\bar{X}_d}\omega \circ \varphi_t (\Lambda_0)\, ds  -  \int_{\partial^-_d B_T} i_{\bar{X}_d}\omega \circ \varphi_t  (\Lambda_0)\, ds\right) \rightarrow \mu(\bar{f}_{i_{\bar{X}_d}\omega}) - \mu(\bar{f}_{i_{\bar{X}_d}\omega})=0,
\end{split}
\end{equation*}
which, combined with (\ref{eqn:splitBdry}), yields the result.
\end{proof}

\subsection{Asymptotic Cycles}
We continue our assumption that $\Lambda$ is RFT.
\begin{definition}
A \emph{$\Lambda$-equivariant $k$-current} $\mathfrak{C}$ is an element of the dual space $(\Delta^k_\Lambda)'$. A \emph{$\Lambda$-equivariant distribution} $\mathfrak{D}$ is an element of the dual space of all $\Lambda$-equivariant smooth functions $\Delta^0_\Lambda$.
\end{definition}
\begin{remark}
Every $\Lambda$-equivariant $d$-current $\mathfrak{C}$ defines a $\Lambda$-equivariant distribution $\mathfrak{D}$ by the isomorphism induced by the Hodge-$\star$ operator: $\star\mathfrak{C} = \mathfrak{D}$.
\end{remark}
The only $\Lambda$-equivariant currents which will be relevant in this paper will be $d$-currents and the associated distributions they define by the remark above.

Let $\eta\in\Delta_\Lambda^d$ and $\{ F_i \}_{i>0}$ a van Hove sequence. Define the currents
\begin{equation}
\label{eqn:current1}
\mathfrak{c}_i(\eta) = \frac{1}{\mathrm{Vol}(F_i)}\int_{F_i} \eta = \frac{1}{\mathrm{Vol}(F_i)}\int_{F_i} \varphi^*_t \bar{f}_{\star \eta}\, dt
= \frac{1}{\mathrm{Vol}(F_i)} \left\langle\eta ,F_i\right \rangle .
\end{equation}
By Birkhoff's ergodic theorem,
\begin{equation}
\label{eqn:current2}
\mathfrak{c}_i(\eta) \longrightarrow \mathfrak{C}_\Lambda(\eta) := \int_{\Omega_\Lambda} f_\eta(\Lambda')\, d\mu(\Lambda').
\end{equation}
This defines a $\mathbb{R}^d$-invariant, $\Lambda$-equivariant current $\mathfrak{C}_\Lambda$. Note that by unique ergodicity this does not depend on the van Hove sequence used to obtain the limit current. The following is a consequence of Lemma \ref{lem:cbdry}.
\begin{proposition}
\label{prop:closed}
$\mathfrak{C}_\Lambda$ is a closed current: $\partial\mathfrak{C}_\Lambda(\omega) = \mathfrak{C}_\Lambda(d\omega) = 0$ for any $\omega\in\Delta_\Lambda^{d-1}$.
\end{proposition}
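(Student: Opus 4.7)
The plan is to show closedness by applying Lemma \ref{lem:cbdry} along a well-chosen van Hove sequence, then invoking unique ergodicity to identify the resulting limit with $\mathfrak{C}_{\Lambda}(d\omega)$.

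First I would unwind the definition: for any $\omega \in \Delta_{\Lambda}^{d-1}$ we have $d\omega \in \Delta_{\Lambda}^{d}$, so by the Hodge star we can write $d\omega = (\star d\omega)(\star 1)$ with $\star d\omega \in \Delta_{\Lambda}^{0}$. By construction (\ref{eqn:current1})--(\ref{eqn:current2}),
\[
\mathfrak{c}_{i}(d\omega) = \frac{1}{\mathrm{Vol}(F_{i})}\int_{F_{i}} \varphi_{t}^{*}\bar{f}_{\star d\omega}(\Lambda_{0})\, dt \longrightarrow \int_{\Omega_{\Lambda}} \bar{f}_{\star d\omega}\, d\mu = \mathfrak{C}_{\Lambda}(d\omega)
\]
by Birkhoff's theorem along any van Hove sequence $\{F_{i}\}$, where unique ergodicity (Proposition \ref{prop:UE}, which applies since $\Lambda$ has finite local complexity and uniform cluster frequency) ensures the limit is independent of the sequence chosen.

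Next I would choose a particularly convenient van Hove sequence: fix any good Lipschitz domain $B_{0}$ and let $B_{T} = g_{\sigma \log T}B_{0}$ as in (\ref{eqn:rescalledSets}), so that $\mathrm{Vol}(B_{T}) = \mathrm{Vol}(B_{0})\,T^{d}$. The boundary estimate proved in Lemma \ref{lem:cbdry}, together with the bound $\mathcal{H}^{d-1}(\partial B_{T}) \leq \mathcal{H}^{d-1}(\partial B_{0})\,T^{d(1-\log|\lambda_{d}|/\log\nu_{1})}$ derived there, confirms that $\{B_{T}\}$ is a van Hove sequence (since $\log|\lambda_{d}|/\log\nu_{1}>0$). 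Since $\star d\omega$ is by definition a coboundary, Lemma \ref{lem:cbdry} yields a constant $K$ such that
\[
\left| \int_{B_{T}} \varphi_{t}^{*}\bar{f}_{\star d\omega}(\Lambda_{0})\, dt \right| \;\leq\; K\, T^{d\left(1-\frac{\log|\lambda_{d}|}{\log\nu_{1}}\right)}
\]
for all $T > 1$.

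Finally I would combine these two facts. Dividing by $\mathrm{Vol}(B_{T}) = \mathrm{Vol}(B_{0})\,T^{d}$ gives
\[
\bigl| \mathfrak{c}_{B_{T}}(d\omega) \bigr| \;\leq\; \frac{K}{\mathrm{Vol}(B_{0})}\, T^{-d\,\log|\lambda_{d}|/\log\nu_{1}} \longrightarrow 0
\]
as $T\to\infty$, using that $|\lambda_{d}|>1$. Comparing with the Birkhoff limit above yields $\mathfrak{C}_{\Lambda}(d\omega) = 0$. I do not expect any genuine obstacle: the only point requiring a brief verification is that the family $\{B_{T}\}$ satisfies the van Hove property, which is immediate from the boundary computation already carried out in the proof of Lemma \ref{lem:cbdry}.
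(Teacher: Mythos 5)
Your proof is correct and follows the same route as the paper, which simply asserts that the proposition ``is a consequence of Lemma~\ref{lem:cbdry}''; your write-up fills in exactly those details (verify that $\{B_T\}$ is a van Hove family, apply the boundary estimate to the coboundary $\star d\omega$, divide by $\mathrm{Vol}(B_T)$, and use unique ergodicity to identify the vanishing limit with $\mathfrak{C}_\Lambda(d\omega)$).
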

\begin{definition}
The current defined by (\ref{eqn:current2}) is called the \emph{asymptotic cycle}.
\end{definition}
\begin{remark}
The asymptotic cycle $\mathfrak{C}_\Lambda$ represents a non-trivial $\Lambda$-equivariant homology class. Indeed, considering the $d$-form $(\star 1)$, we have that $\mathfrak{C}_\Lambda((\star 1))=1$.
\end{remark}
\begin{remark}
Asymptotic currents defined as in (\ref{eqn:current2}) can be made much more general. For example, for a system which is not uniquely ergodic, there are asymptotic currents defined for any ergodic invariant measure. Moreover, for a higher rank action ($d>1$), there are subactions defined by actions of lower dimensional subspaces of $\mathbb{R}^d$ which lead to asymptotic currents of lower degrees which are invariant for the subactions used to construct it. Such general framework was considered for the asymptotic cycles in \cite{Kellendonk-Putnam:RS}. Here, due to the application we have in mind, we only consider cycles for the full $\mathbb{R}^d$ action on uniquely ergodic pattern dynamical systems.
\end{remark}

\subsection{Growth of ergodic integrals}
\label{sec:warmup}
Let $\Lambda$ be an RFT Delone set with finite local complexity and uniform cluster frequency. For any Delone set $\Lambda_0 := \Lambda \in \Omega_{\Lambda}$ we define a sequence of strongly $\Lambda$-equivariant currents, i.e., linear functionals on the space of all strongly $\Lambda$-equivariant $d$-forms $\Delta_{\Lambda}^d$ by
\begin{equation}
\label{eqn:littleCurrents}
\Upsilon_{\Lambda_0,F_0}^n : \eta \longmapsto  \int_{F_n} \varphi^*_t \bar{f}_{\star\eta} (\Lambda_0)\, dt,
\end{equation}
where $F_n = A^n F_0$, $A$ being the expanding matrix in the conjugacy (\ref{eqn:mph}). Moreover, by (\ref{eqn:current2}), the averaged currents converge to the asymptotic cycle:
$$\frac{1}{\mathrm{Vol}(F_n)}\Upsilon^n_{\Lambda_0,F_0}(\eta) \longrightarrow \mathfrak{C}_{\Lambda}(\eta) = \int_{\Omega_{\Lambda}} \bar{f}_{\star\eta}\, d\mu$$
independent of $\Lambda_0$ by unique ergodicity.

\begin{lemma}
\label{lem:defCurr}
Let $\Lambda_0 := \Lambda \in \Omega_{\Lambda}$ be RFT, $F_0\subset \mathbb{R}^d$ a bounded, measurable subset with non-empty interior, and define the family of sets $\{F_n\}_{n\in\mathbb{N}} = \{A^n F_0\}_{n\in\mathbb{N}}$. For the currents $\Upsilon^n_{\Lambda_0,F_0}$ defined in (\ref{eqn:littleCurrents}) we have that
$$ \Upsilon_{\Lambda_0,F_0}^n = A^n_* \Upsilon_{\Lambda_n,F_0}^{0},$$
where $\Lambda_n = \Phi_A^{-1}(\Lambda)$, for any $n\in\mathbb{N}$.
\end{lemma}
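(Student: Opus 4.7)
The plan is to prove the equality by reducing everything on the left-hand side to an integral over $F_0$ via a change of variables, and then using the conjugacy (\ref{eqn:mph}) together with Lemma \ref{lem:indMap} to transport $\Lambda_0$ through the action of $\Phi_A^n$ onto the base point, while the geometric rescaling transfers to the form side as a pullback by $A^n$. The push-forward on the right-hand side means $(A^n_* \Upsilon^0_{\Lambda_n,F_0})(\eta) = \Upsilon^0_{\Lambda_n,F_0}(A^{n*}\eta)$, so the goal is to show
\begin{equation*}
\int_{A^n F_0} \bar f_{\star \eta}(\varphi_t(\Lambda_0))\, dt \;=\; \int_{F_0} \bar f_{\star(A^{n*}\eta)}(\varphi_s(\Lambda_n))\, ds.
\end{equation*}

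First I would apply the change of variables $t = A^n s$ in the left-hand integral, producing a Jacobian factor $|\det A|^n$ and replacing $\varphi_t$ by $\varphi_{A^n s}$. Next I would iterate the conjugacy (\ref{eqn:mph}) in the form $\varphi_{As} = \Phi_A \circ \varphi_s \circ \Phi_A^{-1}$ to obtain $\varphi_{A^n s}(\Lambda_0) = \Phi_A^n(\varphi_s(\Phi_A^{-n}(\Lambda_0))) = \Phi_A^n(\varphi_s(\Lambda_n))$. This yields
\begin{equation*}
\bar f_{\star\eta}(\varphi_{A^n s}(\Lambda_0)) \;=\; \bigl(\Phi_A^{n*}\bar f_{\star\eta}\bigr)(\varphi_s(\Lambda_n)),
\end{equation*}
and Lemma \ref{lem:indMap} (applied $n$ times through the algebra isomorphism $i_\Lambda$) identifies this with $\bar f_{A^{n*}(\star\eta)}(\varphi_s(\Lambda_n))$. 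Finally, a short direct computation with $\eta = g\,(\star 1)$ shows that $\star(A^{n*}\eta) = |\det A|^n \, A^{n*}(\star \eta)$, because the Hodge-$\star$ applied to the top form $A^{n*}\eta$ picks up exactly the Jacobian of $A^n$. Substituting this identity, the two factors of $|\det A|^n$ cancel and the right-hand side is recovered.

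The only real subtlety is bookkeeping: one must be careful that the composition $\Phi_A^n \circ \varphi_s$ and the pullback $A^{n*}$ on $\Lambda$-equivariant forms are compatible (which is precisely the content of Lemma \ref{lem:indMap} extended to higher-degree forms via the Hodge-$\star$), and that the Jacobian from the geometric change of variables exactly matches the Jacobian relating $A^{n*}(\star\eta)$ to $\star(A^{n*}\eta)$. Once these two factors of $|\det A|^n$ are correctly identified, they cancel and the identity $\Upsilon^n_{\Lambda_0,F_0} = A^n_* \Upsilon^0_{\Lambda_n, F_0}$ follows. (Note that since we have arranged $\Phi_A(\Lambda) = \Lambda$, in fact $\Lambda_n = \Lambda_0$, but the argument above does not use this.)
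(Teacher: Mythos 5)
Your proof is correct and takes essentially the same route as the paper: a change of variables on the averaging set, the conjugacy (\ref{eqn:mph}) to transport $\Lambda_0$ backward through $\Phi_A$, Lemma \ref{lem:indMap} to identify $\Phi_A^{*}$ with $A^{*}$ on pattern-equivariant functions, and the Hodge-$\star$/Jacobian identity $\star(A^{n*}\eta) = (\det A)^n A^{n*}(\star\eta)$ to cancel the $|\det A|^n$ from the change of variables. The only cosmetic difference is that the paper performs one step ($\Upsilon^n_{\Lambda_0,F_0} = A_*\Upsilon^{n-1}_{\Lambda_1,F_0}$) and iterates, whereas you carry out all $n$ steps at once; the ingredients and the bookkeeping are identical.
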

\begin{proof}
If $\eta \in \Delta_\Lambda^d$, then
\begin{equation*}
\begin{split}
\Upsilon_{\Lambda_0,F_0}^n (\eta) &= \int_{F_n}  \bar{f}_{\star \eta}\circ \varphi_\tau(\Lambda_0)\, d\tau  = \int_{F_n}  \bar{f}_{\star \eta}\circ \varphi_\tau \circ \Phi_A(\Lambda_1)\, d\tau  \overset{(i)}{=}
\int_{F_n}  \bar{f}_{\star \eta}\circ   \Phi_A  \circ  \varphi_{A^{-1}\tau}(\Lambda_1)\, d\tau  \\
&= \frac{1}{\mathcal{D}_-}\int_{F_n} \bar{f}_{\star \eta}\circ   \Phi_A  \circ  \varphi_{A^{-1}\tau}(\Lambda_1)\, \mathcal{D}_-\, d\tau  \overset{(ii)}{=}  \mathcal{D}_+\int_{F_{n-1}} \bar{f}_{\star \eta}\circ   \Phi_A  \circ  \varphi_{s}(\Lambda_1)\, ds \\
&\overset{(iii)}{=} \mathcal{D}_+\int_{F_{n-1}} \bar{f}_{A^*(\star \eta)}  \circ  \varphi_{s}(\Lambda_1)\, ds = \mathcal{D}_+\int_{F_{n-1}} \varphi_{s}^* \bar{f}_{\frac{A^* \eta}{\mathcal{D}_+ (\star 1)}}  (\Lambda_1)\, ds\\
&=  \int_{F_{n-1}} \varphi_{s}^* \bar{f}_{\frac{A^* \eta}{ (\star 1)}}  (\Lambda_1)\, ds = \Upsilon^{n-1}_{\Lambda_1,F_0}(A^*\eta) = A_* \Upsilon^{n-1}_{\Lambda_1,F_0}(\eta),
\end{split}
\end{equation*}
where $(i)$ follows from the conjugacy (\ref{eqn:mph}), $(ii)$ follows from the change of variables formula, and $(iii)$ follows from Lemma \ref{lem:indMap}. Iterating this procedure we obtain that $\Upsilon_{\Lambda_0,F_0}^n= A_*^n \Upsilon^{0}_{\Lambda_n,F_0}$.
\end{proof}
%The following is a consequence of Proposition \ref{cor:rescale} and Lemma \ref{lem:defCurr}.
%\begin{lemma}
%\label{lem:bdryBs}
%Let $\zeta\in\Delta_\Lambda^{d-1}$. Then for any Lipschitz domain $F_0$ and any $n\in\mathbb{N}$ we have that
%$$\left| \Upsilon_{\Lambda_0,F_0}^n(d\zeta) \right|\leq K_{F_0}$$
%\end{lemma}
\begin{proposition}
\label{cor:indAct}
Let $\eta_{i,j,k}$ represent a class in the basis chosen in (\ref{eqn:basisAct}) with eigenvalue $\nu_i$. For $n\in\mathbb{N}$ we have that
\begin{equation}
\label{eqn:fullAct2}
 \Upsilon_{\Lambda_0,F_0}^n(\eta_{i,j,k}) = \nu_i^n \sum_{q=0}^{\mathrm{min}\{n,j-1\}}  \binom{n}{q}  \nu_i^{-q} \Upsilon_{\Lambda_0,F_0}^0(\star g_{i,j-q,k}) + \sum_{\ell=0}^{n-q-1} \binom{q+\ell}{q}\nu_i^\ell   \Upsilon_{\Lambda_0,F_0}^{n-\ell-q-1}(d\zeta_{i,j-q,k}).
\end{equation}
from which it follows that
\begin{equation}
\label{eqn:bdryError1}
\left|\Upsilon_{\Lambda_0,F_0}^n(\eta_{i,j,k})\right| \leq K_{F_0, \Lambda} n^{j-1}|\nu_i|^n \| g_{i,1,k} \|_\infty
\end{equation}
whenever $|\nu_i|> \nu_1/|\lambda_d|$. If $|\nu_i|= \nu_1/\|lambda_d|$, then
\begin{equation}
\label{eqnberyError2}
\left|\Upsilon_{\Lambda_0,F_0}^n(\eta_{i,j,k})\right| \leq K_{F_0, \Lambda} n^{j}|\nu_i|^n \| g_{i,1,k} \|_\infty,
\end{equation}
where the constant $K_{F_0, \Lambda, A}$ only depends on $F_0$ and the RFT Delone set $\Lambda$.
\end{proposition}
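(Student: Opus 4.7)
The plan is to combine Lemma \ref{lem:defCurr} with the expansion \eqref{eqn:fullAct} of $(A^*)^n \eta_{i,j,k}$ and then bound the resulting terms using Lemma \ref{lem:cbdry} for the coboundary pieces.

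\textbf{Reduction to $\Upsilon^0$.} Because our running assumption is that $\Phi_A$ fixes $\Lambda$, the recursion in Lemma \ref{lem:defCurr} collapses to $\Lambda_n = \Lambda_0$ for all $n$, giving the identity
$$\Upsilon^n_{\Lambda_0,F_0}(\eta) \;=\; \Upsilon^0_{\Lambda_0,F_0}\bigl((A^*)^n \eta\bigr)$$
for every $\eta \in \Delta_\Lambda^d$, and more generally $\Upsilon^m_{\Lambda_0,F_0}(d\zeta_{i,j-q,k}) = \Upsilon^0_{\Lambda_0,F_0}\bigl((A^*)^m d\zeta_{i,j-q,k}\bigr)$ for any $m \geq 0$. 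Substituting \eqref{eqn:fullAct} into the first identity and then re-packaging the coboundary blocks via the second delivers the displayed formula of the proposition, purely by linearity of $\Upsilon^0_{\Lambda_0,F_0}$.

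\textbf{Bounding the individual summands.} For the eigendirection summands the trivial estimate $|\Upsilon^0_{\Lambda_0,F_0}(\star g_{i,j-q,k})| \leq \mathrm{Vol}(F_0)\,\|g_{i,j-q,k}\|_\infty$ suffices, with the ratios $\|g_{i,j-q,k}\|_\infty/\|g_{i,1,k}\|_\infty$ (over the finite range $q \in \{0,\dots,j-1\}$) absorbed into the final constant. For the coboundary summands, observe that $F_m = A^m F_0 = B_{T_m}$ with $T_m = \nu_1^{m/d}$ in the notation of \eqref{eqn:rescalledSets}, so Lemma \ref{lem:cbdry} applied to $\psi = \star d\zeta_{i,j-q,k}$ yields
$$\bigl|\Upsilon^m_{\Lambda_0,F_0}(d\zeta_{i,j-q,k})\bigr| \;\leq\; K\, T_m^{\,d(1-\log|\lambda_d|/\log\nu_1)} \;=\; K\,(\nu_1/|\lambda_d|)^m,$$
with $K$ depending only on $F_0$ and on $\zeta_{i,j-q,k}$.

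\textbf{Assembling the estimate.} Set $\mu = |\nu_i|$ and $\beta = \nu_1/|\lambda_d|$. Using $\binom{n}{q} \leq n^q$ and finiteness of the outer range, the main part contributes at most $C\,n^{j-1}\mu^n$. For the coboundary part one is led to estimate
$$\sum_{q=0}^{\min\{n,j-1\}}\sum_{\ell=0}^{n-q-1} \binom{q+\ell}{q}\,\mu^\ell\,\beta^{\,n-\ell-q-1}.$$
When $\mu > \beta$ the inner sum is geometric with ratio $\mu/\beta > 1$ and is controlled by its top term $\ell = n-q-1$, giving $O(n^{j-1}\mu^n)$, matching the first announced bound. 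When $\mu = \beta$ the hockey-stick identity $\sum_{\ell=0}^{n-q-1}\binom{q+\ell}{q} = \binom{n}{q+1}$ collapses the inner sum to $\binom{n}{q+1}\beta^{n-q-1}$, whose sum over $q \leq j-1$ produces $O(n^j\beta^n) = O(n^j\mu^n)$, matching the second bound.

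The only technically subtle point is the borderline case $\mu = \beta$, where the Jordan-block polynomial growth and the boundary polynomial growth have equal exponential rates and combine through the hockey-stick identity to contribute the extra power of $n$ that is recorded in \eqref{eqn:logs}. Everything else is a bookkeeping exercise in binomial coefficients.
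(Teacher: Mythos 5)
Your proof is correct and follows essentially the same route as the paper: you reduce via Lemma \ref{lem:defCurr} (using that $\Phi_A$ fixes $\Lambda$) to $\Upsilon^0_{\Lambda_0,F_0}\bigl((A^*)^n\eta_{i,j,k}\bigr)$, substitute the Jordan-block expansion (\ref{eqn:fullAct}), bound the coboundary pieces by $(\nu_1/|\lambda_d|)^m$ via Lemma \ref{lem:cbdry}, and then split into $|\nu_i|>\nu_1/|\lambda_d|$ (geometric-type domination) versus equality (hockey-stick identity), exactly as the paper does. The only cosmetic difference is that the paper re-indexes the inner sum and uses a ratio-of-consecutive-terms estimate, whereas you argue directly that the top term $\ell=n-q-1$ dominates; both yield the same $O(n^{j-1}|\nu_i|^n)$ and $O(n^{j}|\nu_i|^n)$ conclusions.
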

\begin{proof}
Equation (\ref{eqn:fullAct2}) follows by combining (\ref{eqn:fullAct}) with Lemma \ref{lem:defCurr}. We claim that for any $\zeta\in\Delta_\Lambda^{d-1}$ and any $k\in\mathbb{N}$ we have that
\begin{equation}
\label{eqn:bdryGrowth}
\left| \Upsilon_{\Lambda_0,F_0}^k(d\zeta) \right|\leq K_{F_0} \left(\frac{\nu_1}{|\lambda_d|}\right)^k \|\zeta\|_{\infty}
\end{equation}
for some $K_{F_0} > 0$. Indeed, the current $\Upsilon_{\Lambda_0,F_0}^k$ is defined by an integral and we can use Stokes theorem to convert it to an integral of $\zeta$ over the boundary of $A^k F_0$. Since the rate of growth of the boundary is bounded by $\nu_1/|\lambda_d|$ (see the proof of Lemma \ref{lem:cbdry}), (\ref{eqn:bdryGrowth}) follows.

Using (\ref{eqn:fullAct2}) and (\ref{eqn:bdryGrowth}),
\begin{equation}
\label{eqn:FullCurrentEst}
\begin{split}
& \left| \Upsilon_{\Lambda_0,F_0}^n(\eta_{i,j,k})\right| \\
 &\leq |\nu_i|^n  \sum_{q=0}^{\mathrm{min}\{n,j-1\}} \left|  \binom{n}{q}  \nu_i^{-q} \Upsilon_{\Lambda_0,F_0}^0(\star g_{i,j-q,k})\right| + \sum_{\ell=0}^{n-q-1} \left| \binom{q+\ell}{q}\nu_i^{\ell-n}   \Upsilon_{\Lambda_0,F_0}^{n-\ell-q-1}(d\zeta_{i,j-q,k}) \right| \\
&\leq |\nu_i|^n  \sum_{q=0}^{\mathrm{min}\{n,j-1\}} \left|  \binom{n}{q}  \nu_i^{-q} \Upsilon_{\Lambda_0,F_0}^0(\star g_{i,j-q,k})\right| +  \sum_{\ell=0}^{n-q-1} \binom{q+\ell}{q} |\nu_i|^{\ell-n} K_{F_0}\left(\frac{\nu_1}{|\lambda_d|}\right)^{n-\ell-q-1}  \|\zeta_{i,j-q,k}\|_\infty  \\
&\leq |\nu_i|^n\left( C_1 n^{j-1}\max_m \{\|g_{i,m,k}\|_\infty\}+    K'_{F_0} \max_m  \{\|\zeta_{i,m,k}\|_\infty\}   \sum_{q=0}^{j-1}   \sum_{\ell=0}^{n-q-1} \binom{q+\ell}{q}  \left(\frac{\nu_1}{|\nu_i||\lambda_d|}\right)^{n-\ell}  \right).
\end{split}
\end{equation}
Suppose $|\nu_i|=\nu_1/|\lambda_d|$. Then
$$\sum_{q=0}^{j-1}   \sum_{\ell=0}^{n-q-1} \binom{q+\ell}{q}  \left(\frac{\nu_1}{|\nu_i||\lambda_d|}\right)^{n-\ell}= \sum_{q=0}^{j-1}   \sum_{\ell=0}^{n-q-1} \binom{q+\ell}{q} = \sum_{q=0}^{j-1} \binom{n}{n-q-1}$$
which, combining with (\ref{eqn:FullCurrentEst}), gives that
\begin{equation}
\label{eqn:FullCurrentEst1}
\begin{split}
 \left| \Upsilon_{\Lambda_0,F_0}^n(\eta_{i,j,k})\right|
&\leq |\nu_i|^n\left( C_1 n^{j-1}\max_m \{\|g_{i,m,k}\|_\infty\}+    K'_{F_0} \max_m  \{\|\zeta_{i,m,k}\|_\infty\}   \sum_{q=0}^{j-1}\binom{n}{n-q-1}  \right) \\
&\leq |\nu_i|^n\left( C_1 n^{j-1}\max_m \{\|g_{i,m,k}\|_\infty\}+    K''_{F_0} \max_m  \{\|\zeta_{i,m,k}\|_\infty\}  n^j  \right),
\end{split}
\end{equation}
which gives (\ref{eqnberyError2}). We now consider $|\nu_i| > \nu_1/|\lambda_d|$. Let $\vartheta_i = \nu_1/|\nu_i||\lambda_d| < 1$. Given the last line of (\ref{eqn:FullCurrentEst}), we need to estimate the sum
$$\sum_{\ell = 0}^{n-q-1} \binom{q+\ell}{q}  \left(\frac{\nu_1}{|\nu_i||\lambda_d|}\right)^{n-\ell} = \sum_{\ell = 0}^{n-q-1} \binom{q+\ell}{q}  \vartheta_i^{n-\ell},$$
which, after changing indices, we write as
$$\sum_{\ell = q+1}^n \binom{q+n-\ell}{q}\vartheta_i^\ell  = \sum_{\ell = q+1}^n a_i(n,q,\ell).$$
Given that
$$\frac{a_i(n,q,\ell+1)}{a_i(n,q,\ell)} = \frac{n-\ell}{q+n-\ell}\vartheta_i \leq \vartheta_i < 1,$$
for all $\ell = 0,\dots, n-1$, we have that
$$ \sum_{\ell = 0}^{n-q-1} \binom{q+\ell}{q}  \vartheta_i^{n-\ell} = \sum_{\ell = q+1}^n \binom{q+n-\ell}{q}\vartheta_i^\ell \leq \binom{n+1}{q}\sum_{\ell=q+1}^n  \vartheta_i^\ell \leq \frac{\binom{n+1}{q}}{1-\vartheta_i} \leq C_2(i) n^{q},$$
which, combining with (\ref{eqn:FullCurrentEst}), gives that
\begin{equation}
\label{eqn:FullCurrentEst2}
\begin{split}
& \left| \Upsilon_{\Lambda_0,F_0}^n(\eta_{i,j,k})\right| \\
&\leq |\nu_i|^n\left( C_1 n^{j-1}\max_m \{\|g_{i,m,k}\|_\infty\}+    K'_{F_0} \max_m  \{\|\zeta_{i,m,k}\|_\infty\}   \sum_{q=0}^{j-1}   \sum_{\ell=0}^{n-q-1} \binom{q+\ell}{q}  \vartheta_i^{n-\ell}  \right) \\
&\leq |\nu_i|^n\left( C_1 n^{j-1}\max_m \{\|g_{i,m,k}\|_\infty\}+    K'_{F_0} \max_m  \{\|\zeta_{i,m,k}\|_\infty\}   \sum_{q=0}^{j-1}C_2(i) n^q   \right) \\
&\leq |\nu_i|^n\left( C_1 n^{j-1}\max_m \{\|g_{i,m,k}\|_\infty\}+    C_2(F_0,i) \max_m  \{\|\zeta_{i,m,k}\|_\infty\}   n^{j-1}   \right) \leq K_{F_0,\Lambda, A} n^{j-1} |\nu_i|^n,
\end{split}
\end{equation}
which gives (\ref{eqn:bdryError1}).
\end{proof}
Lemma \ref{lem:defCurr} and Proposition \ref{cor:indAct} apply to the currents $\Upsilon^n_{\Lambda_0, F_0}$ only for integer values of $n$. We can make sense of the currents $\Upsilon_{\Lambda_0,F_0}^n$ when $n$ takes non-integer, positive values. It suffices to observe that for any $t\in\mathbb{R}^+$,
\begin{equation}
\label{eqn:fix}
\Upsilon_{\Lambda_0,F_0}^t = \Upsilon_{\Lambda_0,g_{ t\,\mathrm{mod}\, 1} F_0}^{\lfloor t\rfloor } = \Upsilon_{\Lambda_0, F_{t_1}}^{\lfloor t\rfloor },
\end{equation}
where $t_1 = t\,\mathrm{mod}\, 1$ and $g_t = \exp(at)$ with $a\in \mathfrak{gl}(d,\mathbb{R})$ satisfying $\exp (a) =A$. We can thus apply Proposition \ref{cor:indAct} to the current $\Upsilon^n_{\Lambda_0, F_0}$ for any $n\in \mathbb{R}^+$.

Recall that for any function $\bar{f}_{\star\eta} \in C^\infty_{tlc}(\Omega_\Lambda)$ we can decompose it using (\ref{eqn:decomp}) and the currents (\ref{eqn:littleCurrents}) to compute its ergodic integral:
\begin{equation}
\label{eqn:breakCurr}
\int_{F_n}\varphi^*_\tau \bar{f}_{\star\eta}(\Lambda) \, d\tau = \Upsilon_{\Lambda_0,F_0}^n(\eta) =  \sum_{i=1}^r \sum_{j=1}^{\kappa(i)} \sum_{k=1}^{s(i,j)}\alpha_{i,j,k}(\eta) \Upsilon_{\Lambda_0,F_0}^n(\star g_{i,j,k}) + \Upsilon_{\Lambda_0,F_0}^n(d\omega_\eta),
\end{equation}
Recall also that $\nu_i$ are the eigenvalues of the matrix $\mathcal{A}_d$ (see (\ref{eqn:basisAct})). Let $\rho = \dim E^+(\Omega_\Lambda)$.
\begin{proposition}
\label{prop:speeds}
Let $F_0$ be a good Lipschitz domain and let $B_T$ be defined as in (\ref{eqn:rescalledSets}). There exist a constant $C_{F_0,\Lambda}$ and $\rho$ $\mathbb{R}^d$-invariant, $\Lambda$-equivariant closed currents $\{\mathfrak{C}_{i,j,k}\}_{(i,j,k)\in I^+_\Lambda}$ such that for any $f = \bar{f}_{\star \eta }\in C^\infty_{tlc}(\Omega_\Lambda)$ if there is an index $(i,j,k)$ such that $\mathfrak{C}_{i',j',k'} (\eta) = 0$ for all $(i',j',k')<(i,j,k)$ but $\mathfrak{C}_{i,j,k} (\eta) \neq 0$, then for $T>3$,
$$\left| \int_{B_T} \bar{f}_{\star \eta}\circ \varphi_s(\Lambda)\, ds \right| \leq C_{F_0,\Lambda}  L(i,j,T)T^{d\frac{\log|\nu_{i}|}{\log\nu_1}  }\|\bar{f}_{\star \eta}\|_{\infty}.$$
Moreover, if $\mathfrak{C}_{i,j,k}(\eta) = 0$ for all $(i,j,k)\in I^+_\Lambda$, then there exists an $M$ such that
$$\left| \int_{B_T} \bar{f}_{\eta}\circ \varphi_s(\Lambda)\, ds \right| \leq M T^{d\left(1-\frac{\log|\lambda_d|}{\log\nu_1}\right)}\|f\|_{\infty} .$$
for all $T>0$. Finally, if $\mathfrak{C}_{i,j,k}(\eta) = 0$ for all $(i,j,k)\in I^+_\Lambda$ and $F_0$ is an stellar time cube,
$$T^{-d\left(1-\frac{\log|\lambda_d|}{\log\nu_1}\right)} \int_{B_T} \bar{f}_{\eta}\circ \varphi_s(\Lambda)\, ds \longrightarrow 0.$$
\end{proposition}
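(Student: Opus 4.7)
The plan is to combine the algebraic expansion \eqref{eqn:breakCurr} with the per-basis-element growth estimates of Proposition \ref{cor:indAct}, after passing from the continuous rescaling $B_T = g_{\sigma\log T}F_0$ back to the integer-power currents via \eqref{eqn:fix}. Since $\sigma\log T = n$ is equivalent to $T^d = \nu_1^n$, each rate $|\nu_i|^n$ coming out of Proposition \ref{cor:indAct} translates cleanly to $T^{d\log|\nu_i|/\log\nu_1}$, and each polynomial $n^{j-1}$ or $n^{j}$ becomes a power of $\log T$ matching $L(i,j,T)$.

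I would first define the currents. By Lemma \ref{lem:closedForms} the subspace $\star d\Delta^{d-1}_\Lambda \subset \Delta^d_\Lambda$ is preserved by $A^*$, so the basis classes $[\eta_{i,j,k}]$ of \eqref{eqn:basisAct} yield well-defined coefficient functionals $\eta\mapsto\alpha_{i,j,k}(\eta)$ on $\Delta^d_\Lambda$ via \eqref{eqn:decomp}, and I set $\mathfrak{C}_{i,j,k} := \alpha_{i,j,k}$ for $(i,j,k)\in I^+_\Lambda$. Closedness is automatic since $\alpha_{i,j,k}(d\omega)=0$. The $\mathbb{R}^d$-invariance follows from the fact that the translation action of $\mathbb{R}^d$ on $\Omega_\Lambda$ acts trivially on $H^d(\Omega_\Lambda;\mathbb{C})$, so $\varphi_t^*\eta-\eta$ is exact and is killed by $\alpha_{i,j,k}$. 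Writing $n = \lfloor\sigma\log T\rfloor$ and $t_1 = \sigma\log T - n$, the identity \eqref{eqn:fix} gives
\[
\int_{B_T}\bar{f}_{\star\eta}\circ\varphi_s(\Lambda)\,ds = \Upsilon^n_{\Lambda_0, F_{t_1}}(\eta).
\]
Applying Proposition \ref{cor:indAct} to each summand in the expansion \eqref{eqn:breakCurr}, and using that the family $\{F_{t_1}\}_{t_1\in[0,1]}$ is compact so that the constant $K_{F_0,\Lambda}$ can be taken uniformly in the fractional part, each basis term with index $(i,j,k)$ contributes at most $|\alpha_{i,j,k}(\eta)| \, L(i,j,T) \, T^{d\log|\nu_i|/\log\nu_1} \, \|g_{i,j,k}\|_\infty$ up to a constant depending only on $F_0$ and $\Lambda$, while the coboundary term $\Upsilon^n(d\omega_\eta)$ is bounded directly by Lemma \ref{lem:cbdry} applied to $B_T$, giving the rate $T^{d(1-\log|\lambda_d|/\log\nu_1)}$.

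From this expansion the three bounds then follow by bookkeeping against the partial order determined by $L(i,j,T)T^{ds_i}$. For the first bound, the hypothesis kills all indices strictly smaller than $(i,j,k)$; the indices outside $I^+_\Lambda$ contribute at rates strictly below $T^{d(1-\log|\lambda_d|/\log\nu_1)}$ by the defining inequality \eqref{eqn:RES}, and that boundary rate is in turn dominated by the leading term $L(i,j,T)T^{d\log|\nu_i|/\log\nu_1}$ again by \eqref{eqn:RES}. For the second bound, every $I^+_\Lambda$-index vanishes, leaving only slow indices and the coboundary, all bounded by the boundary rate; the bound extends to all $T > 0$ by absorbing small $T$ into the constant via the trivial $L^\infty$ estimate. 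For the third statement, the slow-index contributions grow strictly below $T^{d(1-\log|\lambda_d|/\log\nu_1)}$ and vanish after the normalization, while the coboundary term $\int_{B_T}\bar{f}_{\star d\omega_\eta}\circ\varphi_s(\Lambda)\,ds$ tends to zero after the same normalization by Lemma \ref{lem:awesome}, which critically uses the stellar time cube hypothesis via unique ergodicity of the $\mathbb{R}^{d-1}$-subactions on the faces. The main technical obstacle is the careful bookkeeping on the partial order; in particular, verifying that the defining inequality \eqref{eqn:RES} of $E^+(\Omega_\Lambda)$ is precisely the condition that makes the boundary rate and the relevant cohomological rates balance correctly, so that the rapidly expanding subspace captures exactly those classes whose contributions genuinely exceed the boundary effect.
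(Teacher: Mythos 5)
Your proposal is correct and follows essentially the same path as the paper: expand the ergodic integral via the decomposition (\ref{eqn:decomp})--(\ref{eqn:breakCurr}), pass from the continuous scaling to integer powers via (\ref{eqn:fix}), apply the term-by-term growth estimates of Proposition \ref{cor:indAct}, bound the coboundary term by Lemma \ref{lem:cbdry}, and invoke Lemma \ref{lem:awesome} for the stellar time cube refinement. You arrive at $\mathfrak{C}_{i,j,k} = \alpha_{i,j,k}$ directly, which is a modest streamlining of the paper's route through the truncated currents $\mathfrak{C}^{F_0,T}_{i,j,k}$ and the subsequential limit in (\ref{eqn:currentDef2}), and you helpfully make explicit the compactness of $\{F_{t_1}\}_{t_1\in[0,1]}$ used to uniformize the constant over the fractional part of $\sigma\log T$, which the paper leaves implicit.
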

\begin{remark}
The first current $\mathfrak{C}_{1,1,1}$ is given by the asymptotic cycle $\mathfrak{C}_\Lambda$ in (\ref{eqn:current2}).
\end{remark}
\subsubsection{Proof of Proposition \ref{prop:speeds}}
\label{subsubsec:speeds}
Let $T_1^{(\sigma)} = \sigma \log T\,\mathrm{mod}\,1$. According to (\ref{eqn:decomp}), (\ref{eqn:fix}), (\ref{eqn:breakCurr}), and (\ref{eqn:rescalledSets}) and we have that
\begin{equation}
\label{eqn:expansion0}
\begin{split}
 \int_{B_T}\varphi^*_\tau \bar{f}_{\star\eta}(\Lambda) \, d\tau &= \Upsilon_{\Lambda,F_0}^{\sigma \log T}(\eta) = \sum_{i=1}^r \sum_{j=1}^{\kappa(i)} \sum_{k=1}^{s(i,j)}\alpha_{i,j,k}(\eta) \Upsilon_{\Lambda,F_0}^{\sigma \log T}(\star g_{i,j,k}) + \Upsilon_{\Lambda,F_0}^{\sigma \log T}(d\omega_\eta)  \\
&= \sum_{i=1}^r \sum_{j=1}^{\kappa(i)} \sum_{k=1}^{s(i,j)}\alpha_{i,j,k} (\eta)\Upsilon_{\Lambda,F_{T_1^{(\sigma)}}}^{\lfloor \sigma \log T \rfloor}(\star g_{i,j,k}) + \Upsilon_{\Lambda,F_{T_1^{(\sigma)}}}^{\lfloor \sigma \log T \rfloor}(d\omega_\eta),
\end{split}
\end{equation}
which, by Proposition \ref{cor:indAct} is equal to
\begin{multline}
\label{eqn:expansion}
 \sum_{i=1}^r  \nu_i^{\lfloor \sigma \log T \rfloor} \sum_{j=1}^{\kappa(i)} \sum_{k=1}^{s(i,j)}\alpha_{i,j,k}(\eta)  \sum_{q=j}^{\kappa(i)} \binom{\lfloor \sigma \log T \rfloor}{q-1} \nu_i^{1-q} \Upsilon_{\Lambda,F_{T_1^{(\sigma)}}}^0(\star g_{i,q,k}) + \Upsilon_{\Lambda,F_{T_1^{(\sigma)}}}^{\lfloor \sigma \log T \rfloor}(d\omega_\eta) \\
=  \sum_{i=1}^r   \frac{ T^{d\frac{\log|\nu_i|}{\log \mathcal{D}_+}}  }{\left( \frac{|\nu_i|}{\nu_1^{1/2}}   \right)^{T^{(\sigma)}_1}}    \sum_{j=1}^{\kappa(i)} \sum_{k=1}^{s(i,j)}\alpha_{i,j,k}(\eta)  \sum_{q=j}^{\kappa(i)} \binom{\lfloor \sigma \log T \rfloor}{q-1} \nu_i^{1-q} \Upsilon_{\Lambda,F_{T_1^{(\sigma)}}}^0(\star g_{i,q,k}) + \Upsilon_{\Lambda,F_{T_1^{(\sigma)}}}^{\lfloor \sigma \log T \rfloor}(d\omega_\eta)  \\
= \sum_{i=1}^r     \left[ \frac{\nu_1}{|\nu_i|^{1/2}}   \right]^{T^{(\sigma)}_1}  T^{d \frac{\log|\nu_i|}{\log \mathcal{D}_+} }    \sum_{j=1}^{\kappa(i)} \sum_{k=1}^{s(i,j)}\alpha_{i,j,k}(\eta)  \sum_{q=j}^{\kappa(i)} \binom{\lfloor \sigma \log T \rfloor}{q-1} \nu_i^{1-q} \Upsilon_{\Lambda,F_{T_1^{(\sigma)}}}^0(\star g_{i,q,k}) + \Upsilon_{\Lambda,F_{T_1^{(\sigma)}}}^{\lfloor \sigma \log T \rfloor}(d\omega_\eta).
\end{multline}
Note that by Birkhoff's ergodic theorem, the leading term above is of order $T^d$ which corresponds to the leading eigenvalue of the matrix $\mathcal{A}_d$ which has the space of constant functions as its eigenspace. Consequently we have that
$$d = d\frac{\log\nu_1}{\log \mathcal{D}_+},$$
i.e., that $\log\nu_1 = \log\mathcal{D}_+$ (hence (\ref{eqn:firstEig})). Plugging this back into (\ref{eqn:expansion}):
\begin{multline}
\label{eqn:expansion2}
\int_{B_T}\varphi^*_\tau \bar{f}_{\star\eta}(\Lambda) \, d\tau = \Upsilon_{\Lambda,F_0}^{\sigma \log T}(\eta)  = \\
\sum_{i=1}^r   \left[ \frac{\nu_1}{|\nu_i|^{1/2}}   \right]^{T^{(\sigma)}_1}   T^{d \frac{\log|\nu_i|}{\log \nu_1}}   \sum_{j=1}^{\kappa(i)} \sum_{k=1}^{s(i,j)}\alpha_{i,j,k}(\eta)  \sum_{q=j}^{\kappa(i)} \binom{\lfloor \sigma \log T \rfloor}{q-1} \nu_i^{1-q} \Upsilon_{\Lambda,F_{T_1^{(\sigma)}}}^0(\star g_{i,q,k})  \\
+ \Upsilon_{\Lambda,F_{T_1^{(\sigma)}}}^{\lfloor \sigma \log T \rfloor}(d\omega_\eta).
\end{multline}
For $T>3$, define the $\Lambda$-equivariant currents $\mathfrak{C}^{F_0,T}_{i,j,k}$ as
\begin{equation}
\label{eqn:functl1}
\begin{split}
  \mathfrak{C}_{i,j,k}^{F_0,T} &: \eta\mapsto \mathfrak{C}_{i,j,k}^{F_0,T}(\eta) := \int_{B_T} \varphi_s^* \bar{f}_{\star\eta} (\Lambda)\, ds - \sum_{\substack{(i',j',k')\leq (i,j,k) \\ k'\neq k  }}\alpha_{i',j',k'}(\eta) \Upsilon_{\Lambda,F_{T_1^{(\sigma)}}}^{\lfloor \sigma \log T \rfloor}(\star g_{i',j',k'}) \\
  &=  \int_{B_T} \varphi_s^* \bar{f}_{\star\eta} (\Lambda)\, ds -    \sum_{i'<i} \sum_{j=1}^{\kappa(i')} \sum_{k=1}^{s(i',j)}\alpha_{i',j,k}(\eta) \Upsilon_{\Lambda,F_{T_1^{(\sigma)}}}^{\lfloor \sigma \log T \rfloor}(\star g_{i',j,k})    \\
&\hspace{.7in}   -   \sum_{j'=j+1}^{\kappa(i)} \sum_{k=1}^{s(i,j')}\alpha_{i,j',k}(\eta) \Upsilon_{\Lambda,F_{T_1^{(\sigma)}}}^{\lfloor \sigma \log T \rfloor}(\star g_{i,j',k})    -    \sum_{k'=1,k'\neq k}^{s(i,j)}\alpha_{i,j,k'}(\eta) \Upsilon_{\Lambda,F_{T_1^{(\sigma)}}}^{\lfloor \sigma \log T \rfloor}(\star g_{i,j,k'}).
\end{split}
\end{equation}
In words, the currents $\mathfrak{C}^{F_0,T}_{i,j,k}$ subtract from the ergodic integral $\int_{B_T} \varphi_s^* \bar{f}_{\star\eta} (\Lambda)\, ds$ terms from its expansion in (\ref{eqn:expansion0}) whose norms grow at a faster rate than $L(i,j,T)T^{d\frac{\log |\nu_{i}|}{\log\nu_1}}$. Let $s_i = d\frac{\log |\nu_{i}|}{\log\nu_1}$.
\begin{lemma}
\label{lem:crntBd}
There is a constant $K$ which depends on $F_0$ such that currents $\mathfrak{C}^{F_0,T}_{i,j,k}$, restricted to the rapidly expanding subspace $E^+(\Omega_\Lambda)$, satisfy
$$\left| \mathfrak{C}^{F_0,T}_{i,j,k} (\eta) \right| \leq K L(i,j,T) T^{s_i}  \|\eta\|_{\infty},$$
where $L(i,j,T)$ is defined in (\ref{eqn:logs}).
\end{lemma}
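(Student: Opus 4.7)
The plan is to work from the expansion \eqref{eqn:expansion0} and interpret $\mathfrak{C}^{F_0,T}_{i,j,k}(\eta)$ as what is left of the ergodic integral after all faster-growing terms have been subtracted away. Writing $n = \lfloor \sigma \log T\rfloor$ and applying Proposition \ref{cor:indAct}, each term $\alpha_{i',j',k'}(\eta)\,\Upsilon^{n}_{\Lambda,F_{T_1^{(\sigma)}}}(\star g_{i',j',k'})$ contributes at most a multiple of $L(i',j',T)\,T^{s_{i'}}\|g_{i',1,k'}\|_\infty$. After removing the subtracted terms as in \eqref{eqn:functl1}, what remains in $\mathfrak{C}^{F_0,T}_{i,j,k}(\eta)$ is: (a) the diagonal term $\alpha_{i,j,k}(\eta)\,\Upsilon^n(\star g_{i,j,k})$; (b) the terms indexed by $(i',j',k')$ which are strictly greater than $(i,j,k)$ in the partial order of $I^+_\Lambda$ (i.e., grow strictly slower than $L(i,j,T)T^{s_i}$) together with any terms indexed outside $I^+_\Lambda$; and (c) the coboundary contribution $\Upsilon^{n}_{\Lambda,F_{T_1^{(\sigma)}}}(d\omega_\eta)$.

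The next step is to bound each piece at the required rate. Term (a) is directly bounded by $K_{F_0,\Lambda}\,L(i,j,T)\,T^{s_i}\,|\alpha_{i,j,k}(\eta)|\,\|g_{i,1,k}\|_\infty$ using Proposition \ref{cor:indAct} and $|\nu_i|^n \leq T^{s_i}$, $n^{j-1}\leq (\sigma\log T)^{j-1}\ll L(i,j,T)$. Each summand in (b) is bounded in the same way by $K\,L(i',j',T)T^{s_{i'}}$, and the definition of the partial order \eqref{eqn:logs} forces $L(i',j',T)T^{ds_{i'}}\leq L(i,j,T)T^{ds_i}$ for $T>1$, so these contributions are absorbed. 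For (c), the boundary estimate \eqref{eqn:bdryGrowth} established inside the proof of Proposition \ref{cor:indAct} gives $|\Upsilon^n(d\omega_\eta)| \leq K_{F_0}(\nu_1/|\lambda_d|)^{n}\|\omega_\eta\|_\infty$, which translates to a bound of order $T^{d\left(1-\tfrac{\log|\lambda_d|}{\log\nu_1}\right)}$. Here is where the hypothesis $(i,j,k)\in I^+_\Lambda$ enters crucially: condition \eqref{eqn:RES} reads precisely $s_i\geq d\left(1-\tfrac{\log|\lambda_d|}{\log\nu_1}\right)$, so the boundary contribution is dominated by $L(i,j,T)T^{s_i}$ as well.

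To consolidate these into a single bound by $K\,L(i,j,T)T^{s_i}\|\eta\|_\infty$, one uses the fact that $H^*(\Omega_\Lambda;\mathbb{C})$ is finite dimensional. This means there are only finitely many $\alpha_{i',j',k'}$ to estimate, and each depends linearly on $\eta$; fixing once and for all a continuous projection onto the chosen basis in \eqref{eqn:basisAct} with a primitive $\omega_\eta$ of the exact part, we obtain universal bounds $|\alpha_{i',j',k'}(\eta)|\leq C\|\eta\|_\infty$ and $\|\omega_\eta\|_\infty\leq C\|\eta\|_\infty$, so the finitely many contributions combine to a single constant $K$. I expect the main technical subtlety to be in carefully verifying the boundary absorption step (c), since it is the only place where the restriction to $E^+(\Omega_\Lambda)$ is used and where the case of equality in \eqref{eqn:RES} forces the slightly larger factor $L(i,j,T)=(\log T)^j$ rather than $(\log T)^{j-1}$; keeping track of the correct log power in this marginal case is what the definition \eqref{eqn:logs} is designed to handle.
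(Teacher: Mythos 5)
Your decomposition into (a) diagonal term, (b) strictly slower terms (including those outside $I^+_\Lambda$), and (c) the coboundary piece is exactly the paper's split into $L^T_{i,j,k}$, $O^T_{i,j,k}+S^T_{i,j,k}$, and $E^T_{i,j,k}$ in (\ref{eqn:functl2}), and you bound each piece in the same way using Proposition \ref{cor:indAct} for (a) and (b) and the boundary estimate (\ref{eqn:bdryGrowth}) together with (\ref{eqn:RES}) for (c). The argument matches the paper's proof; the only thing you make slightly more explicit is the linear-functional bound $|\alpha_{i',j',k'}(\eta)|\leq C\|\eta\|_\infty$ via finite-dimensionality, which the paper leaves implicit.
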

\begin{proof}
Using the definition (\ref{eqn:functl1}) and the expansion (\ref{eqn:expansion0}),
\begin{equation}
\label{eqn:functl2}
\begin{split}
 \mathfrak{C}_{i,j,k}^{F_0,T}(\eta) = & \underbrace{ \alpha_{i,j,k}(\eta) \Upsilon_{\Lambda,F_{T_1^{(\sigma)}}}^{\lfloor \sigma \log T \rfloor}(\star g_{i,j,k})}_{L_{i,j,k}^T(\eta)} + \underbrace{\sum_{j'<j} \sum_{k=1}^{s(i,j')}\alpha_{i,j',k}(\eta) \Upsilon_{\Lambda,F_{T_1^{(\sigma)}}}^{\lfloor \sigma \log T \rfloor}(\star g_{i,j',k})}_{O_{i,j,k}^T(\eta)} \\
&+ \underbrace{\sum_{i'>i} \sum_{j=1}^{\kappa(i')} \sum_{k=1}^{s(i',j)}\alpha_{i',j,k}(\eta) \Upsilon_{\Lambda,F_{T_1^{(\sigma)}}}^{\lfloor \sigma \log T \rfloor}(\star g_{i',j,k})}_{S_{i,j,k}^T(\eta)} +\underbrace{ \Upsilon_{\Lambda,F_{T_1^{(\sigma)}}}^{\lfloor \sigma \log T \rfloor}(d\omega_\eta)}_{E_{i,j,k}^T(\eta)} \\
= & L_{i,j,k}^T(\eta) + O_{i,j,k}^T(\eta) + S_{i,j,k}^T(\eta) + E_{i,j,k}^T(\eta).
\end{split}
\end{equation}
We now bound the terms $L_{i,j,k}^T(\eta), O_{i,j,k}^T(\eta), S_{i,j,k}^T(\eta)$ and $E_{i,j,k}^T(\eta)$. Using Proposition \ref{cor:indAct} and (\ref{eqn:fix}) we have that there exists a constant $K$ depending on $F_0$ and $A$ such that
$$|L_{i,j,k}^T(\eta)| \leq K L(i,j,T) T^{s_i} \| \eta \|_{\infty}.$$
 Similarly it can be worked out that
$$\frac{|O_{i,j,k}^T(\eta)| + |S_{i,j,k}^T(\eta)|}{L(i,j,T) T^{s_i}}\longrightarrow 0$$
if the terms exist. Finally, from Lemma \ref{lem:cbdry} it follows that $|E_{i,j,k}^T(\eta)|  = \mathcal{O}(T^{d\left(1-\frac{\log |\lambda_d|}{\log\nu_1}\right)}) = \mathcal{O}(|\partial B_T|)$. By the definition of the rapidly expanding subspace $E^+(\Omega_\Lambda)$ we have that $T^{s_i} \geq T^{d\left(1-\frac{\log |\lambda_d|}{\log\nu_1}\right)}$, and the Lemma is proved.
\end{proof}
We define $\rho = \dim E^+(\Omega_\Lambda)$ currents on the rapidly expanding subspace $E^+_\Lambda := E^+(\Omega_\Lambda)$ as follows. Pick an index $(i,j,k)\in I^+_\Lambda$. For any $\eta\in \Delta^d_\Lambda$ representing a cohomology class in the rapidly expanding subspace $E^+_\Lambda$, by (\ref{eqn:expansion}), (\ref{eqn:functl2}) and Proposition \ref{cor:indAct} there exists a sequence $T_\ell \rightarrow +\infty$ such that
\begin{equation}
\label{eqn:currentDef2}
\mathfrak{C}^{F_0}_{i,j,k}: \eta \mapsto \mathfrak{C}^{F_0}_{i,j,k}(\eta) := \lim_{\ell\rightarrow \infty}\frac{\mathfrak{C}^{F_0,T_\ell}_{i,j,k}}{L(i,j,T_\ell) T_\ell^{s_i}} \neq 0.
\end{equation}
is well defined. By passing to a subsequence if necessary, since $E^+_\Lambda$ is finite dimensional, the definition of the current can be extended to all of $E^+_\Lambda$. Moreover, by (\ref{eqn:functl1}) and Proposition \ref{cor:indAct} we have that $\mathfrak{C}^{F_0}_{i,j,k}(\eta)$ is a non-zero multiple of $\alpha_{i,j,k}(\eta)$. That these currents are $\mathbb{R}^d$-invariant and closed is immediate. We can define the currents as $\mathfrak{C}_{i,j,k} = \alpha_{i,j,k}$. Then from (\ref{eqn:expansion2}) and Lemma \ref{lem:crntBd} the result follows for the first two bounds; the time cube case follows from Lemma \ref{lem:awesome}.
\begin{proof}[Proof of Theorem \ref{thm:deviations}]
  The currents defined (\ref{eqn:currentDef2}) give, by duality through the Hodge-$\star$ operator, $\Lambda$-equivariant, invariant distributions $\mathfrak{D}^{F_0}_{i,j,k} = \star\mathfrak{C}^{F_0}_{i,j,k}$. By the isomorphism $i_\Lambda:C^\infty_{tlc}(\Omega_\Lambda)\rightarrow \Delta^0_\Lambda$ in Theorem \ref{thm:tlc} the distributions are defined for functions in $C^\infty_{tlc}(\Omega_\Lambda)$ by $\mathcal{D}_{i,j,k}(f) := \alpha_{i,j,k}(\star i_\Lambda(f))$ which, by the preceeding paragraph, is a non-zero multiple of $ \mathfrak{D}^{F_0}_{i,j,k}(i_\Lambda(f)) = \mathfrak{C}^{F_0}_{i,j,k}(\star i_\Lambda(f)) $.

  That $\mathcal{D}_{i,j,k}$ are $\mathbb{R}^d$-invariant follows from the fact that the $\alpha_{i,j,k}$ are $\mathbb{R}^d$ invariant (translations are homotopically trivial homeomorphisms). In the case where one considers the point $\Lambda \in \Omega_{\Lambda}$ , the rest of the proof follows from Proposition \ref{prop:speeds}. For other points, say $\Lambda^{\prime} \in \Omega_{\Lambda}$, given $T>0$, there exists $t_{\Lambda^{\prime},T}$ such that $\int_{B_{T}}f \circ \varphi_{t}(\Lambda) = \int_{B_{T}} f \circ \varphi_{t_{\Lambda^{\prime},T}} \circ \varphi_{t} (\Lambda^{\prime})$, and the rest of the proof follows from Proposition \ref{prop:speeds} applied to the function $f \circ \varphi_{t_{\Lambda^{\prime},T}}$.
\end{proof}
\begin{remark}
  \label{rmk:expansion}
  From the proof of Proposition \ref{prop:speeds}, we can now write the expansion of ergodic integrals written in Remark \ref{rem:expansion1}. Indeed, setting $\mathcal{D}_{i,j,k}(f) := \alpha_{i,j,k}(\star i_\Lambda(f))$, we can define the functions $\Psi_{i,j,k}^{B_0}:\mathbb{R}^+:\longrightarrow \mathbb{R}$ by
  $$\Psi_{i,j,k}^{B_0}(T) = \frac{1}{L(i,j,T)T^{\frac{\log |\nu_i|}{\log\nu_1}}}\int_{B_T}\eta_{i,j,k}$$
  which, by Proposition \ref{cor:indAct}, are bounded. Thus we get the expansion
  \begin{equation}
\label{eqn:intExpansion}
\int_{B_T} f\circ \varphi_s(\Lambda_0)\, ds = \sum_{(i,j,k)\in I^+_\Lambda} \mathcal{D}_{i,j,k}(f)\Psi_{i,j,k}^{B_0}(T)L(i,j,T)T^{d\frac{\log |\nu_{i}|}{\log\nu_1}}  + \mathcal{O}(|\partial B_T|).
\end{equation}
\end{remark}

\section{Diffraction}
\label{sec:diff}
For our pattern dynamical system $(\Omega_\Lambda, \mu, \mathbb{R}^d)$, consider the function defined by $f\in L^2(\Omega_\Lambda,\mu)$ by $t\mapsto (\varphi_t^* f, f)$. This is a positive definite function on $\mathbb{R}^d$ so, by Bochner's theorem, its Fourier transform is a positive measure $\sigma_f$ on $\mathbb{R}^d$, called the \emph{spectral measure} of $f$.

Let $\omega$ be a smooth function with $\omega(0)=1$ compactly supported in an open ball $B_{\varepsilon}(0)$ with $\varepsilon < r_{min}(\Lambda)/2$ and $\int_{\mathbb{R}^d}\omega(t)\, dt = 1$. Define the pattern equivariant function
\begin{equation}
\label{eqn:PEF}
\rho_{\omega,\Lambda} = \omega * \upsilon_\Lambda.
\end{equation}

Let $\gamma_{\omega,\Lambda}$ be the autocorrelation of $\rho_{\omega,\Lambda}$, i.e., $\gamma_{\omega,\Lambda} = (\omega*\tilde{\omega})*\gamma_\Lambda$. The well-known argument of Dworkin \cite{dworkin} relates diffraction measure to spectral measures since
\begin{equation}
\label{eqn:dworkin}
\begin{split}
\gamma_{\omega,\Lambda}(x) &= \lim_{n\rightarrow\infty}\frac{1}{\mathrm{Vol}(F_n)} \int_{F_n} \rho_{\omega,\Lambda} (x+t) \overline{\rho_{\omega,\Lambda} (t)}\, dt \\
&= \lim_{n\rightarrow\infty}\frac{1}{\mathrm{Vol}(F_n)} \int_{F_n} \varphi_x^* \varphi_t^* f_\omega(\Lambda)\cdot \overline{\varphi_t^* f_\omega(\Lambda)}\, dt \\
&= \int_{\Omega_\Lambda} \varphi_x^*f_{\omega}(\Lambda') \overline{f_\omega(\Lambda')}\, d\mu(\Lambda') \\
&= (\varphi_x^*f_\omega,f_\omega),
\end{split}
\end{equation}
and therefore
\begin{equation}
\label{eqn:dworkin2}
\sigma_{f_\omega} = \widehat{\gamma_{\omega,\Lambda}}.
\end{equation}

Denote by $\eta_{\omega,\Lambda}^x$ the $d$-parameter family of $\Lambda$-equivariant $d$-forms
\begin{equation}
\label{eqn:family}
\eta_{\omega,\Lambda}^x = \star  \left(\varphi_x^*\rho_{\omega,\Lambda} \cdot\overline{\rho_{\omega,\Lambda}} \right),
\end{equation}
parametrized by $x\in\mathbb{R}^d$. By (\ref{eqn:current1}), (\ref{eqn:current2}) and (\ref{eqn:dworkin}),
\begin{equation}
\label{eqn:evaluation}
\mathfrak{c}_i(\eta_{\omega,\Lambda}^x) \longrightarrow \mathfrak{C}_\Lambda(\eta_{\omega,\Lambda}^x) = (\varphi_x^* f_{\omega}, f_{\omega}).
\end{equation}
Since by Proposition \ref{prop:closed} $dZ_{\Lambda}^{d-1}\subset \ker\mathfrak{C}_\Lambda$ (where $Z^k_\Lambda$ denotes the set of closed $\Lambda$-equivariant $k$-forms), we can identify $\mathfrak{C}_\Lambda$ with a homology class, i.e., with an element of $\mathrm{Hom}(H^d(\Omega_\Lambda;\mathbb{C}),\mathbb{C})$.
\subsection{Diffraction and asymptotic cycles: Proof of Theorem \ref{thm:homology}}
\label{subsec:homology}
Let $\{\omega_\ell\}_{\ell>0}$ be a sequence of smooth bump functions with the property that $\omega_\ell$ is compactly supported in a ball of radius $r_{min}(\Lambda)/(2\ell)$ and $\int_{\mathbb{R}^d }\omega_\ell\, dx = 1$ for all $\ell$. We have the pointwise convergence of $\Lambda$-equivariant functions on $\mathbb{R}^d$ $\rho_{\omega_\ell,\Lambda} \longrightarrow \upsilon_\Lambda$ and therefore we have the pointwise convergence of functions on $\Omega_\Lambda$, $\bar{f}_{\omega_\ell} \longrightarrow \upsilon_{\mathcal{L}_\Lambda}$, where $\mathcal{L}_\Lambda = \bigcup_{s\in \Lambda}\varphi_s(\Lambda)\subset \mho_\Lambda$.

Let $\eta_\ell^x = \eta^x_{\omega_\ell,\Lambda}$ be the associated families of $\Lambda$-equivariant $d$-forms defined as in (\ref{eqn:family}) corresponding to the sequence $\{\omega_\ell\}$. Note that by the requirement that $\int_{\mathbb{R}^d }\omega_\ell\, dx = 1$ for all $\ell$, the forms $\eta^x_{\omega_\ell,\Lambda}$ represent the same cohomology class. As such, since $\mathfrak{C}_\Lambda$ is closed, $\mathfrak{C}_\Lambda(\eta_\ell^x)$ is independent of $\ell$ and, moreover, by (\ref{eqn:dworkin}), (\ref{eqn:evaluation}) and the dominated convergence theorem, 
\begin{equation}
\label{eqn:dual}
\mathfrak{C}_\Lambda(\eta_\ell^x) \longrightarrow  \mathcal{D}_1(\upsilon_{\mathcal{L}_{\Lambda}}^x) = (\varphi_x^* \upsilon_{\mathcal{L}_{\Lambda}} ,\upsilon_{\mathcal{L}_{\Lambda}}) = \gamma_\Lambda(x).
\end{equation}
In particular, by Proposition \ref{prop:measure}, (\ref{eqn:auto}), and (\ref{eqn:dual}), if $z = x-y$ for some $x,y\in\Lambda$,
$$\mathcal{D}_1(\upsilon_{\mathcal{L}_{\Lambda}}^z) =  (\varphi_x^*\upsilon_{\mathcal{L}_{\Lambda}}, \varphi_y^*\upsilon_{\mathcal{L}_{\Lambda}}) = \mathrm{freq}(x-y,\Lambda) = \tau_\Lambda(P_{x,y}),$$
where $P_{x,y}\subset \Lambda$ is the two point cluster consisting of $x$ and $y$. Therefore, the asymptotic cycle gives rise to a transverse invariant measure through its duality with pattern equivariant cohomology classes.
\subsection{Deviations of diffraction spectra: Proof of Theorem \ref{thm:DiffDevs}}
Let $\{\omega_\ell\}_{\ell>0}$ be a sequence of smooth bump functions and $\eta_\ell^x = \eta^x_{\omega_\ell,\Lambda}$ be the associated families of $\Lambda$-equivariant $d$-forms as in as in \S \ref{subsec:homology}. Let $\mathcal{P}_{\omega_\ell,i,j,k} (x) = \alpha_{i,j,k} (\eta^x_{\omega_\ell,\Lambda})$. 
\begin{lemma}
The functions $\mathcal{P}_{\omega_\ell,i,j,k}$ are supported on $\Lambda-\Lambda$ and independent of $\ell$.
\end{lemma}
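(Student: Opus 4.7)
The plan is to prove the two assertions separately, deriving the support claim from the $\ell$-independence. The crucial observation is that the functionals $\alpha_{i,j,k}$ arising in the decomposition (\ref{eqn:decomp}) are insensitive to the coboundary term $\star d\omega$, so they descend to well-defined linear functionals on $H^d(\Omega_\Lambda;\mathbb{C})$. Consequently, $\mathcal{P}_{\omega_\ell, i,j,k}(x) = \alpha_{i,j,k}(\eta^x_{\omega_\ell, \Lambda})$ depends only on the $\Lambda$-equivariant cohomology class $[\eta^x_{\omega_\ell, \Lambda}]$. As asserted in the paragraph preceding the lemma, this class is independent of $\ell$ under the normalization $\int \omega_\ell = 1$, which immediately yields $\ell$-independence of $\mathcal{P}_{\omega_\ell, i,j,k}(x)$.

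For the support claim I leverage this $\ell$-independence. Fix $x \notin \Lambda - \Lambda$. Since $\Lambda$ has finite local complexity, $\Lambda - \Lambda$ is uniformly discrete on bounded balls, in particular closed in $\mathbb{R}^d$, so $d_x := \mathrm{dist}(x, \Lambda - \Lambda) > 0$. Choose $\ell$ large enough that $r_{min}(\Lambda)/\ell < d_x$. I claim that for this $\ell$, the form $\eta^x_{\omega_\ell, \Lambda}$ vanishes identically on $\mathbb{R}^d$. Indeed, if the product $\rho_{\omega_\ell, \Lambda}(y+x) \cdot \overline{\rho_{\omega_\ell, \Lambda}(y)}$ were nonzero at some $y$, then the compact-support condition on $\omega_\ell$ would produce $z_1, z_2 \in \Lambda$ with $|y+x-z_1|, |y - z_2| < r_{min}/(2\ell)$, hence $|x - (z_1 - z_2)| < r_{min}/\ell < d_x$, contradicting the definition of $d_x$. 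Thus $\mathcal{P}_{\omega_\ell, i,j,k}(x) = 0$ for this choice of $\ell$, and the $\ell$-independence established above upgrades this to vanishing for every $\ell$.

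The main obstacle is the invocation of the preceding cohomological claim that $[\eta^x_{\omega_\ell}] = [\eta^x_{\omega_{\ell'}}]$ in $H^d(\Omega_\Lambda;\mathbb{C})$ whenever $\int \omega_\ell = \int \omega_{\ell'} = 1$. The natural route is to exploit that $\omega_\ell - \omega_{\ell'}$ has zero integral and compact support, so it admits a decomposition $\omega_\ell - \omega_{\ell'} = \mathrm{div}(V)$ for some compactly supported vector field $V$. This gives $\rho_{\omega_\ell, \Lambda} - \rho_{\omega_{\ell'}, \Lambda} = \mathrm{div}(\rho_{V, \Lambda})$ with $\rho_{V, \Lambda}$ strongly $\Lambda$-equivariant. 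Expanding
\[
\eta^x_{\omega_\ell} - \eta^x_{\omega_{\ell'}} = \left[(\rho_{\omega_\ell} - \rho_{\omega_{\ell'}})(y+x)\,\overline{\rho_{\omega_\ell}(y)} + \rho_{\omega_{\ell'}}(y+x)\,\overline{(\rho_{\omega_\ell} - \rho_{\omega_{\ell'}})(y)}\right]\star 1,
\]
substituting the divergence expression, and applying a Leibniz-rule manipulation of the form $\partial_i f \cdot g \cdot \star 1 = d(f g \star dy_i) - f \partial_i g \cdot \star 1$ iteratively should realize the right-hand side as $d\beta + (\text{a term handled by induction or symmetric treatment})$ for an explicit $\beta \in \Delta^{d-1}_\Lambda$. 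Once that primitive is constructed, the cohomology classes agree, and the two paragraphs above complete the proof.
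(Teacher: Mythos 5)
Your $\ell$-independence argument follows the paper exactly: both you and the paper derive it from the (asserted) fact that the closed forms $\eta^x_{\omega_\ell}$ all represent the same pattern-equivariant cohomology class, so that the cohomology-invariant coefficients $\alpha_{i,j,k}$ take the same value on them. Your support argument, however, differs from the paper's and is in fact cleaner: the paper passes to the distributional limit $\eta^x_{\omega_\ell}\rightarrow \star(\varphi_x^*\upsilon_\Lambda\cdot\upsilon_\Lambda)$ and then reads the support off that limit (which involves products of Dirac combs and an extension of $\alpha_{i,j,k}$ to distributions), whereas you exhibit, for each fixed $x\notin\Lambda-\Lambda$, an $\ell$ large enough that $\eta^x_{\omega_\ell}$ vanishes identically on $\mathbb{R}^d$, giving $\mathcal{P}_{\omega_\ell,i,j,k}(x)=0$ directly, then upgrade via $\ell$-independence. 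This is an elementary pointwise computation and sidesteps the heuristic limit entirely.

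However, your attempted justification of the cohomological invariance --- which the paper simply asserts, and you try to prove --- has a genuine gap. After writing $\omega_\ell-\omega_{\ell'}=\mathrm{div}(V)$ and applying the Leibniz rule, you do produce an exact term $\mathrm{div}\bigl(\overline{\rho_{\omega_\ell}}\,\varphi_x^*\rho_V\bigr)$, but you are left with the cross term $-\varphi_x^*\rho_V\cdot\nabla\overline{\rho_{\omega_\ell}}$ (plus its companion from the second summand, $-\rho_V\cdot\nabla\varphi_x^*\rho_{\omega_{\ell'}}$). These are not manifestly exact, and the ``symmetric treatment'' you gesture at does not cancel them --- they involve mixed products of $V$ with $\nabla\omega_\ell$ (respectively $\nabla\omega_{\ell'}$) that do not reassemble into a total derivative. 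In fact, one can test the claimed cohomological invariance against the asymptotic cycle: by (\ref{eqn:dworkin}) and (\ref{eqn:evaluation}), $\mathfrak{C}_\Lambda(\eta^0_{\omega_\ell})=\gamma_{\omega_\ell,\Lambda}(0)=(\omega_\ell*\tilde\omega_\ell)*\gamma_\Lambda(0)$, and for $\omega_\ell$ supported well inside $B_{r_{\min}/2}$ this reduces to $\mathrm{dens}(\Lambda)\,\|\omega_\ell\|_{L^2}^2$, which is not constant in $\ell$ under the normalization $\int\omega_\ell=1$. So the classes $[\eta^x_{\omega_\ell}]$ cannot literally all coincide; at most one could hope that the discrepancy lies along $[\star 1]$, which would salvage the claim for $(i,j,k)\neq(1,1,1)$ but requires a separate argument you have not supplied. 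You inherited this difficulty from the paper's unproved assertion, but since your proposal explicitly undertakes to fill that gap, you should be aware that the divergence-decomposition route as written does not close it.
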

\begin{proof}
Since the $\alpha_{i,j,k}$ are closed, and the cohomology class of $\eta^x_{\omega_\ell,i,j,k}$ is independent of $\ell$, the value of $\mathcal{P}_{\omega_\ell,i,j,k}(x)$ does not depend on $\ell$ and therefore, since $\eta^x_{\omega_\ell}\rightarrow \upsilon_\Lambda(\star1)$, we have that $\mathcal{P}_{\omega_\ell,i,j,k}(x) = \alpha_{i,j,k}(\star(\varphi^*_x\upsilon_\Lambda\cdot\upsilon_\Lambda))$. Since $\varphi^*_x\upsilon_\Lambda\cdot\upsilon_\Lambda$ is supported on $\Lambda-\Lambda$, the result follows.
\end{proof}
In view of the above Lemma, we will define $\mathcal{P}_{i,j,k}(x) = \alpha_{i,j,k}(\star(\varphi_x^*\upsilon_\Lambda\cdot\upsilon_\Lambda))$ as the function associated to the index $(i,j,k)$. Since $\mathcal{P}_{i,j,k}$ is bounded and supported in a uniformly discrete set, we denote its Fourier transform as 
\begin{equation}
\label{eqn:diffDistributions}
\widehat{\mathcal{P}_{i,j,k}} = \widehat{\alpha_{i,j,k}(\star(\varphi_x^*\upsilon_\Lambda\cdot\upsilon_\Lambda))} = \sigma_{i,j,k}
\end{equation}
which is a distribution.  By the results of \S \ref{sec:warmup}, (\ref{eqn:intExpansion}) and (\ref{eqn:dworkin}), we have that
$$ \gamma_\Lambda^T(x) = \int_{B_T} \eta_\ell^x  = \sum_{(i,j,k)\in I^+_{\Lambda}} \mathcal{P}_{i,j,k} (x)\Psi_{i,j,k}^{B_0}(T) L(i,j,T)T^{d\frac{\log|\nu_i|}{\log\nu_1}} + \mathcal{O}(|\partial B_T|).$$ 
The result follows.

\section{Substitution systems}
\label{sec:subs}
Recall the setup for substitution systems from \S \ref{subsec:subs} and we pick up here with Proposition \ref{prop:subsMPH}. Let $\mu$ denote the unique invariant probability measure for $\Omega_{S}$. Since $(\Phi_S)_{*}(\mu)$ is also translation invariant, we have $\mu = (\Phi_S)_{*}(\mu)$ by unique-ergodicity, and hence the homeomorphism $\Phi_S:\Omega_{S} \to \Omega_{S}$ is measure preserving. Combining all of this gives the following.
\begin{lemma}
The substitution rule gives a homeomorphism $\Phi_S$ of the type (\ref{eqn:mph}).
\end{lemma}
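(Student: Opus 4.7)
The plan is to verify the three defining properties of a self-affine homeomorphism as formalized in (\ref{eqn:mph}): that $\Phi_S$ is a homeomorphism of $\Omega_S$, that the expansion matrix $\phi$ is expanding, and that $\Phi_S$ is both measure-preserving and satisfies the conjugacy $\Phi_S\circ \varphi_t = \varphi_{\phi t}\circ \Phi_S$. The first two are immediate from the setup: the expansion $\phi$ is expanding by hypothesis (Definition \ref{def:sub}), and $\Phi_S$ is a homeomorphism by Proposition \ref{prop:subsMPH}.

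Next I would record the conjugacy. This is essentially tautological from the construction: applying $\phi$ to a translate $\varphi_t(\mathcal{T})$ produces $\varphi_{\phi t}(\phi\mathcal{T})$, and the subdivision rule $S$ is applied tile-by-tile and therefore commutes with translations of $\mathbb{R}^d$. The identity $\Phi_S(\varphi_t(\mathcal{T})) = \varphi_{\phi t}(\Phi_S(\mathcal{T}))$ is exactly the one already recorded in the paragraph preceding Proposition \ref{prop:subsMPH}.

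The remaining content is that $\Phi_S$ preserves the unique invariant probability measure $\mu$ on $\Omega_S$. I would prove this by unique ergodicity: it suffices to show that the pushforward $(\Phi_S)_*\mu$ is itself translation-invariant, for then Proposition \ref{prop:subsMPH} forces $(\Phi_S)_*\mu = \mu$. Using the conjugacy together with the translation-invariance of $\mu$, for each $t \in \mathbb{R}^d$ we obtain
\[
(\varphi_t)_*(\Phi_S)_*\mu \;=\; (\varphi_t\circ \Phi_S)_*\mu \;=\; (\Phi_S\circ \varphi_{\phi^{-1}t})_*\mu \;=\; (\Phi_S)_*(\varphi_{\phi^{-1}t})_*\mu \;=\; (\Phi_S)_*\mu,
\]
which is precisely the required invariance.

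There is no real obstacle here; this lemma is a short bookkeeping argument that packages the homeomorphism and unique-ergodicity statements of Proposition \ref{prop:subsMPH} with the translation-equivariance built into the substitution construction, in order to place substitution tilings inside the RFT framework developed earlier in the paper.
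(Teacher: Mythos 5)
Your proof is correct and follows essentially the same route as the paper: invoke Proposition \ref{prop:subsMPH} for the homeomorphism and unique ergodicity, note the conjugacy $\Phi_S\circ\varphi_t = \varphi_{\phi t}\circ\Phi_S$ from the construction, and deduce measure preservation by showing $(\Phi_S)_*\mu$ is translation-invariant. The only difference is that you spell out the short pushforward computation that the paper leaves implicit.
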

Recall the construction, outlined in Section 3, of the AP complex $\Gamma_{1}$ associated to $T \in \Omega_{S}$. Anderson and Putnam proved that in the case when $T \in \Omega_{S}$ comes from a substitution, there exists a map $f_{S}:\Gamma_{1} \to \Gamma_{1}$ which can be defined using the substitution $S$, and a continuous surjection $\pi:\Omega_{S} \to \varprojlim\{X,f_{S}\}$ satisfying $\pi \circ \Phi_S = \hat{f_{S}} \circ \pi$, where $\hat{f_{S}}$ denotes the shift map on $\varprojlim\{\Gamma_{1},f_{S}\}$. Furthermore, in the case that the substitution $S$ \emph{forces the border} (see \cite{AP}), the map $\pi$ is a homeomorphism. It is always possible to re-write the substitution $S$ so that it forces the border, and hence we will always assume the substitutions have this property. Using this presentation it is thus possible, as outlined in Section 3, to compute the Cech cohomology of $\Omega_{S}$: since $\Omega_{S} \cong \varprojlim\{\Gamma_{1},f_{S}\}$ and $\check{H}^{k}(\varprojlim\{\Gamma_{1},f_{S}\}) \cong \varinjlim\{H^{k}(\Gamma_{1}),f_{S}^{k,*}\}$, we have $\check{H}^{k}(\Omega_{S}) \cong \varinjlim\{H^{k}(\Gamma_{1}),f_{S}^{k,*}\}$. In particular, since the space $\Gamma_{1}$ is a finite CW-complex, it has finitely generated cohomology, and we get the following. \\
\begin{proposition}
Let $\Omega_{S}$ be the tiling space associated to a primitive aperiodic substitution $S$ having finite local complexity. Then $\check{H}^{*}(\Omega_{S},\mathbb{C})$ is finite rank.
\end{proposition}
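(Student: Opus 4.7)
The plan is to assemble the result directly from the inverse-limit presentation of $\Omega_{S}$ together with continuity of \v{C}ech cohomology. First I would invoke the standard Anderson--Putnam trick (collaring tiles) to replace $S$ with an equivalent substitution that forces the border; this reconstruction does not change the tiling space up to homeomorphism, so we may assume $\pi: \Omega_S \to \varprojlim\{\Gamma_1, f_S\}$ is a homeomorphism as discussed in the paragraph preceding the proposition. Because finite local complexity is assumed, $\Gamma_1$ has only finitely many cells in each dimension and so is a finite CW complex.

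Next I would apply continuity of \v{C}ech cohomology for inverse limits of compact Hausdorff spaces:
\begin{equation*}
\check{H}^{k}(\Omega_{S};\mathbb{C}) \;\cong\; \check{H}^{k}\bigl(\varprojlim\{\Gamma_{1}, f_{S}\};\mathbb{C}\bigr) \;\cong\; \varinjlim \bigl\{\check{H}^{k}(\Gamma_{1};\mathbb{C}),\, f_{S}^{k,*}\bigr\}.
\end{equation*}
Since $\Gamma_{1}$ is a finite CW complex, $V := \check{H}^{k}(\Gamma_{1};\mathbb{C})$ is finite-dimensional, and $f_{S}^{k,*}$ is an endomorphism of $V$.

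The final step is the linear algebra observation that the direct limit of a stationary sequence
\begin{equation*}
V \xrightarrow{T} V \xrightarrow{T} V \xrightarrow{T} \cdots
\end{equation*}
with $T = f_{S}^{k,*}$ acting on a finite-dimensional space is itself finite-dimensional. Indeed, the chain of images $T^{n}(V)$ is decreasing in dimension, hence stabilizes at some eventual image $W := \bigcap_{n} T^{n}(V)$ on which $T$ restricts to an isomorphism, and the natural map $W \to \varinjlim\{V, T\}$ is an isomorphism. Thus $\check{H}^{k}(\Omega_{S};\mathbb{C}) \cong W$ is finite-dimensional, and since $\Gamma_{1}$ has nonzero cohomology only in degrees $0 \le k \le d$, the total cohomology $\check{H}^{*}(\Omega_{S};\mathbb{C})$ has finite rank.

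There is no serious obstacle here; each ingredient is standard. The only points that require a little care are (i) ensuring we may pass to a border-forcing substitution without changing $\Omega_S$, and (ii) the stabilization argument for direct limits of a single endomorphism of a finite-dimensional space, which is an elementary dimension count.
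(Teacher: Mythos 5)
Your proposal is correct and follows essentially the same route as the paper: both invoke the Anderson--Putnam inverse-limit presentation $\Omega_S \cong \varprojlim\{\Gamma_1, f_S\}$, continuity of \v{C}ech cohomology, finiteness of the CW complex $\Gamma_1$ (from finite local complexity), and the observation that a stationary direct limit over a single endomorphism of a finite-dimensional vector space is finite-dimensional. You simply supply a bit more detail on the last step (identifying the direct limit with the eventual image $W = \bigcap_n T^n(V)$), where the paper leaves it as an immediate consequence.
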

\begin{proof}
We have $\check{H}^{*}(\Omega_{S},\mathbb{C}) = \varinjlim\{H^{*}(\Gamma_{1}),f_{S}^{*}\}$. By finite local complexity, $\Gamma_{1}$ is a finite CW-complex, and $H^{*}(\Gamma_{1},\mathbb{C})$ is finite rank. Thus $\check{H}^{*}(\Omega_{S},\mathbb{C})$ is the direct limit of a directed system of complex vector spaces, with each term in the directed system of the same rank, and the result follows.
\end{proof}
\begin{corollary}
Primitive, aperiodic substitution systems with finite local complexity are RFT.
\end{corollary}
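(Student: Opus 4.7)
The corollary is essentially a direct assembly of the three ingredients just established in the paper. The plan is as follows.

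First, unwind the definition of RFT: a Delone set is RFT if (a) it is self-affine, meaning there exists an expanding $A$ and a measure-preserving homeomorphism $\Phi_A$ satisfying the conjugacy (\ref{eqn:mph}), and (b) $\dim H^*(\Omega_\Lambda;\mathbb{C})<\infty$. The corollary is stated for substitution systems (tilings), so I would begin by invoking Remark \ref{rmk:backandforth} to pass from a primitive aperiodic FLC tiling $T\in\Omega_S$ to an associated Delone set $\Lambda_T$ (e.g., its vertex set or a centroid choice), and note that $\Omega_{\Lambda_T}$ is homeomorphic to $\Omega_S$ in a way that intertwines the $\mathbb{R}^d$-translation actions, so cohomology and the renormalization map transfer.

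Next, verify (a): the Lemma immediately preceding the corollary asserts that the substitution gives a homeomorphism $\Phi_S$ of the form (\ref{eqn:mph}), with expansion matrix $\phi \in GL^+(d,\mathbb{R})$. Unique ergodicity of $(\Omega_S,\mathbb{R}^d)$ (Proposition \ref{prop:subsMPH}) together with the invariance of $(\Phi_S)_*\mu$ under translations forces $(\Phi_S)_*\mu=\mu$, so $\Phi_S$ is measure-preserving. The eigenvalues of $\phi$ have modulus greater than $1$ by hypothesis, so $\phi$ is expanding in the required sense. Hence the self-affinity condition holds.

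Then verify (b): the Proposition just above the corollary gives $\dim \check H^*(\Omega_S;\mathbb{C})<\infty$, using the Anderson--Putnam--G\"ahler presentation $\Omega_S\cong \varprojlim\{\Gamma_1,f_S\}$ together with the fact that each $\Gamma_n$ is a finite CW complex. By Theorem \ref{IsoCohomology} (Kellendonk--Putnam), the pattern-equivariant cohomology $H^*(\Omega_\Lambda;\mathbb{C})$ is naturally isomorphic to $\check H^*(\Omega_\Lambda;\mathbb{C})$, so $\dim H^*(\Omega_{\Lambda_T};\mathbb{C})<\infty$ as well.

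Combining (a) and (b) gives RFT. There is no real obstacle here—this is a one-line assembly; the only point meriting attention is the translation between the ``tiling'' language used for the finite-dimensionality and \v Cech isomorphism statements and the ``Delone set'' language in the definition of RFT, which is handled cleanly by Remark \ref{rmk:backandforth}.
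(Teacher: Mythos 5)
Your proposal is correct and matches the paper's (unstated) argument: the corollary is a direct combination of the immediately preceding Lemma (giving the self-affinity via $\Phi_S$ and unique ergodicity, so condition (a) of RFT holds) and the immediately preceding Proposition (giving $\dim\check H^*(\Omega_S;\mathbb{C})<\infty$, which via Theorem \ref{IsoCohomology} gives condition (b)). Your extra care about the tiling/Delone-set dictionary via Remark \ref{rmk:backandforth} is a reasonable explicitness the paper leaves implicit, but it does not change the substance of the argument.
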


\indent Recall the homeomorphism $\Phi_S:\Omega_{S} \to \Omega_{S}$ induces an isomorphism on top degree cohomology $\Phi_S^{*}:\check{H}^{d}(\Omega_{S},\mathbb{C}) \to \check{H}^{d}(\Omega_{S},\mathbb{C})$, which, upon identifying as a linear map between finite rank vector spaces, we may represent as a matrix, which we denote $\mathcal{A}_{d}$.
\begin{proposition}
\label{prop:specCont}
Suppose the substitution $S$ forces the border, and let $M_{S}$ denote the incidence matrix for $S$. The spectrum of the matrix $\mathcal{A}_{d}$ associated to the induced map on cohomology is contained in the spectrum of the substitution matrix $M_{S}$.
\end{proposition}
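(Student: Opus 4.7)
The plan is to trace the isomorphism $H^d(\Omega_S;\mathbb{C}) \cong \check{H}^{d}(\Omega_S;\mathbb{C}) \cong \varinjlim\{H^d(\Gamma_1), f_S^*\}$ through to a linear-algebra problem at the cellular cochain level. Since $S$ forces the border, $\pi:\Omega_S \to \varprojlim\{\Gamma_1, f_S\}$ is a homeomorphism, and the conjugacy $\pi \circ \Phi_S = \hat{f}_S \circ \pi$ implies that $\mathcal{A}_d = \Phi_S^*$ on $H^d(\Omega_S;\mathbb{C})$ corresponds to the endomorphism of $\varinjlim\{H^d(\Gamma_1), f_S^*\}$ induced by applying $f_S^*$ at each level.

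The first ingredient is a general observation: for any finite-dimensional complex vector space $V$ and endomorphism $T: V \to V$, the direct limit $\varinjlim(V \xrightarrow{T} V \xrightarrow{T} \cdots)$ is naturally isomorphic to the stable image $W = \mathrm{Im}(T^N)$ for $N$ large enough that $\mathrm{Im}(T^{N}) = \mathrm{Im}(T^{N+1})$, with the natural endomorphism of the direct limit corresponding to the (automatically invertible) map $T|_W:W\to W$. In particular $\mathrm{spec}(T|_W) \subseteq \mathrm{spec}(T)$, since $T|_W$ captures exactly the nonzero eigenvalues of $T$. Applied to $V = H^d(\Gamma_1)$ and $T = f_S^*$, this yields $\mathrm{spec}(\mathcal{A}_d) \subseteq \mathrm{spec}(f_S^*|_{H^d(\Gamma_1)})$.

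The second ingredient is to show $\mathrm{spec}(f_S^*|_{H^d(\Gamma_1)}) \subseteq \mathrm{spec}(M_S)$. From the cellular cochain complex, $H^d(\Gamma_1) = C^d(\Gamma_1)/\mathrm{Im}(\delta_{d-1})$, and since $f_S^*$ commutes with $\delta$, it descends to this quotient; consequently its spectrum on $H^d(\Gamma_1)$ is contained in its spectrum on $C^d(\Gamma_1)$. Now $C^d(\Gamma_1;\mathbb{C})$ has a basis indexed by the $d$-cells of $\Gamma_1$, which under the border-forcing hypothesis correspond bijectively to the prototiles of $\mathfrak{T}$. Because $f_S$ is a cellular map covering each such $d$-cell associated to $T^j$ by the $d$-cells appearing in the substituted patch $S(T^j)$, in this basis the matrix of $f_S^*$ is precisely $M_S^T$. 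Hence $\mathrm{spec}(f_S^*|_{C^d(\Gamma_1)}) = \mathrm{spec}(M_S)$, completing the chain of inclusions.

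The main obstacle I expect is the precise cellular identification in the third paragraph: verifying that the $d$-cells of $\Gamma_1$ correspond exactly to prototiles and that $f_S^*$ acts as $M_S^T$ on $C^d(\Gamma_1)$ requires a careful application of the border-forcing hypothesis. Without that hypothesis, $\Gamma_1$ is built from $1$-collared tiles and one would instead obtain a collared substitution matrix, which for primitive substitutions has the same nonzero spectrum as $M_S$ but may carry additional zero eigenvalues. Since the target conclusion is only a spectral inclusion, the argument would still go through in that generality, but the cleanest identification is the one provided by the uncollared version.
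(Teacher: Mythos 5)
Your proof is correct and is essentially the same as the paper's, with one cosmetic difference: the paper works on the cellular \emph{chain} complex, identifying $H_d(\Gamma_1)=\ker\partial_d$ as an $M_S$-invariant subspace of $C_d$ and then dualizing via the universal coefficient theorem, whereas you work directly on the cellular \emph{cochain} complex, viewing $H^d(\Gamma_1)=C^d/\mathrm{Im}\,\delta_{d-1}$ as a quotient preserved by $f_S^*=M_S^T$. Both reductions — to a subspace or to a quotient — give the same spectral inclusion, so the two arguments are dual to one another; yours avoids the explicit appeal to the universal coefficient theorem, and your opening remark about the direct limit stabilizing to the eventual image is a more explicit version of the paper's reduction to $f_S^*$ on a single level $\Gamma_1$.
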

\begin{proof}
By the relation $\pi \circ \Phi_S = \hat{f_{S}} \circ \pi$, it is enough to consider the spectrum of the map $\hat{f_{S}^{*}}:\check{H}^{*}(\varprojlim\{\Gamma_{1},f_{S}\}) \to \check{H}^{*}(\varprojlim\{\Gamma_{1},f_{S}\})$ induced by the shift map $\hat{f_{S}}$ on $\varprojlim\{\Gamma_{1},f_{S}\}$. Thus we will prove that the spectrum of the map $\hat{f_{S}^{d}}$ on top degree cohomology is contained in the spectrum of $M_{S}$.\\
\indent The homology of $\Gamma_{1}$ may be computed using the cellular complex $C_{\bullet}$ associated to its structure as a CW-complex. The map $f_{S}: \Gamma_{1} \to \Gamma_{1}$ is cellular, and hence induces a chain map on the cellular chain complex $C_{\bullet}$ for $\Gamma_{1}$
$$
\xymatrix{
C_{d} \ar[r]^{\partial_{d}} \ar[d]^{f_{(d),S}} & C_{d-1} \ar[r]^{\partial_{d-1}} \ar[d]^{f_{(d-1),S}} & \cdots & C_{1} \ar[r]^{\partial_{1}} \ar[d]^{f_{(1),S}} & C_{0} \ar[r]^{\partial_{0}} \ar[d]^{f_{(0),S}} & 0 \\
C_{d} \ar[r]^{\partial_{d}}  & C_{d-1} \ar[r]^{\partial_{d-1}} & \cdots & C_{1} \ar[r]^{\partial_{1}} & C_{0} \ar[r]^{\partial_{0}}  & 0 }
$$

The group $C_{d}$ is free abelian on the prototiles, and the map $f_{(d),S}$ is given by the matrix $M_{S}$ (see \cite[pg. 18]{AP}). The induced map on top-degree homology $f_{*,d,S}:H_{d}(C_{\bullet}) \to H_{d}(C_{\bullet})$ is thus given by the the restriction of the action of $M_{S}$ to the invariant submodule $H_{d}(\Gamma_{1},\mathbb{Z}) = H_{d}(C_{\bullet}) = $ker$\partial_{d}$, so the spectrum of $f_{*,d,S}$ is contained in the spectrum of $M_{S}$. Naturality of the universal coefficient theorem for cohomology implies the induced map $f^{*}_{d,S}:H^{d}(\Gamma_{1},\mathbb{C}) \to H^{d}(\Gamma_{1},\mathbb{C})$ is given by the dual of this map $f_{*,d,S}$ tensored with $\mathbb{C}$, and the result follows.
\end{proof}

\section{Renormalization for self-affine  cut and project sets}
\label{subsec:renormCAPS}
Let $\Gamma\subset \mathbb{R}^n = E^\parallel \oplus E^\perp = \mathbb{R}^d\oplus\mathbb{R}^{n-d}$ be a unit covolume lattice in $\mathbb{R}^n$. The \emph{unstable space} $E^u_{\mathcal{A}}$ of $\mathcal{A}\in SL(\Gamma)$ is the span of the generalized eigenspaces of $\mathcal{A}$ corresponding to the eigenvalues $|\lambda_i^+|>1$. We suppose that $E^\parallel$ is a $d$-dimensional, $\mathcal{A}$-invariant subspace of $E^u_{\mathcal{A}}$ and that $E^\perp$ is the direct sum of the complementary eigenspaces so that $E^\parallel\oplus E^\perp = \mathbb{R}^n$. We further suppose that the action of $\mathcal{A}$ restricted to $E^\parallel$ is diagonalizable, i.e., $E^\parallel$ is generated by a basis of eigenvectors:
$$E^\parallel = \langle e_1^+,\dots, e_{d}^+\rangle$$
where $Ae_i^+ = \lambda^+_i e_i^+$. We will denote by $A = \mathcal{A}|_{E^\parallel}$ the action of $\mathcal{A}$ restricted to $E^\parallel$.

Let $K\subset E^\perp$ be a compact set which is the closure of its non-empty interior (a window). Denote by
$$\Lambda(K,\Gamma) = \{\pi^\parallel(x) : \pi^\perp(x)\cap K\neq \varnothing\}$$
 the Delone set associated to this window and lattice and by $\Omega_{\Lambda(K,\Gamma)}$ the pattern space it defines.

We will denote by $\pi_{\Lambda(K,\Gamma)}:\Omega_{\Lambda(K,\Gamma)} \rightarrow \mathbb{T}^n = \mathbb{R}^n/\Gamma$ the projection to the maximal equicontinuous factor (see \cite{BargeKellendonk}). For any $x\in\mathbb{R}^n$ denote by
$$\Lambda_x = \{\pi^\parallel (\gamma + x) \in\mathbb{R}^d: \gamma\in\Gamma,\,\,\,  \pi^\perp(\gamma + x) \in K\}$$
the point in $\Omega_{\Lambda(K,\Gamma)}$ corresponding to this point. Recall from Section \ref{subsec:CAPS} the set of singular points $\mathcal{S}(K,\Gamma)$. The following is found in \cite[Theorem 4.2]{BK}.
\begin{proposition}
\label{prop:onto}
For non-singular $x\in\mathbb{R}^n$, the map  $\pi: \Lambda_x \mapsto x + \Gamma \in \mathbb{T}^n$ extends to a continuous surjection $\tilde{\pi}: \Omega_{\Lambda(K,\Gamma)}\rightarrow \mathbb{T}^{n}$ which is equivariant with the $\mathbb{R}^d$ action and is one-to-one on the set of nonsingular points (modulo $\Gamma$).
\end{proposition}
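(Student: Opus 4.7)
The plan is to define $\tilde\pi$ on the dense $\mathbb{R}^d$-orbit of $\Lambda_0 := \Lambda(K,\Gamma)$ and then extend by uniform continuity. By total irrationality, $\Gamma \cap E^\parallel = \{0\}$, and by aperiodicity of $\Lambda_0$, the orbit is parametrized bijectively by $t \in E^\parallel$ via $t \mapsto \varphi_t\Lambda_0$, so I would set $\tilde\pi(\varphi_t\Lambda_0) := t + \Gamma \in \mathbb{T}^n$. Because $\Lambda_x$ is invariant under the shift $x \mapsto x+\gamma$ for $\gamma \in \Gamma$, this construction agrees with the formula $\tilde\pi(\Lambda_x) = x+\Gamma$ for non-singular $x$ after unravelling the appropriate convention for $\Lambda_x$, and $\mathbb{R}^d$-equivariance is built in since $\tilde\pi(\varphi_u\varphi_t\Lambda_0) = (u+t)+\Gamma$.

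The core of the argument is a uniform continuity estimate for $\tilde\pi$ on the orbit: there is a modulus $\omega(R) \to 0$ as $R \to \infty$ such that $B_R \cap \varphi_s\Lambda_0 = B_R \cap \varphi_t\Lambda_0$ implies $\mathrm{dist}_{\mathbb{T}^n}(s+\Gamma,\, t+\Gamma) \leq \omega(R)$. I would prove this as follows: for every point $p$ in the common patch, there are unique $\gamma_p, \gamma_p' \in \Gamma$ with $\pi_\parallel(\gamma_p) = p+s$, $\pi_\parallel(\gamma_p') = p+t$, and $\pi_\perp(\gamma_p), \pi_\perp(\gamma_p') \in K$; setting $\beta_p := \gamma_p' - \gamma_p \in \Gamma$, one gets $\pi_\parallel(\beta_p) = t-s$ and $\pi_\perp(\beta_p) \in K - K$. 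Since $\Gamma$ is discrete and $K - K$ is bounded, only finitely many $\beta \in \Gamma$ satisfy this last constraint, so a pigeonhole argument over the large patch forces a single $\beta$ to serve for a positive-density subset of $p$. For such $p$, the perpendicular images $\pi_\perp(\gamma_p)$ and $\pi_\perp(\gamma_p')$ are forced to lie in the same translate of $K$ whose diameter one controls as $R$ grows (because the coordinates of $p$ force $\pi_\perp(\gamma_p)$ deep in the window), giving quantitative control of $\pi_\perp(\beta)$. Then $t - s - \beta \in E^\perp$ has norm bounded by a function of $R$ tending to zero, which, after projecting to $\mathbb{T}^n$, yields the needed estimate.

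Once the uniform continuity is established, $\tilde\pi$ extends uniquely to a continuous map on the orbit closure, which equals $\Omega_{\Lambda(K,\Gamma)}$ by minimality, and equivariance passes to the extension by density. For surjectivity, the image contains $(E^\parallel+\Gamma)/\Gamma$, which is dense in $\mathbb{T}^n$ since $\pi_\perp(\Gamma)$ is dense in $E^\perp$ by total irrationality; compactness of $\Omega_{\Lambda(K,\Gamma)}$ then forces the image to be closed, hence all of $\mathbb{T}^n$. For injectivity on non-singular fibers, given $\Lambda \in \tilde\pi^{-1}(y + \Gamma)$ with $y$ non-singular, I would pick $t_n \in E^\parallel$ and $\gamma_n \in \Gamma$ with $t_n - \gamma_n \to y$ in $\mathbb{R}^n$, so that $\varphi_{t_n}\Lambda_0 \to \Lambda$ after subsequencing. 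At a non-singular $y$, no lattice point of $\Gamma$ lies on $\partial K - y$, so the map $x \mapsto \Lambda_x$ is continuous at $y$, giving $\varphi_{t_n}\Lambda_0 = \Lambda_{t_n - \gamma_n} \to \Lambda_y$ in the pattern metric, hence $\Lambda = \Lambda_y$. The main obstacle is the uniform continuity estimate of the second paragraph; the remaining steps are a formal packaging of minimality, compactness, and continuity of $x \mapsto \Lambda_x$ at non-singular $x$.
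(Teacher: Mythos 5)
The paper does not prove this proposition; it cites \cite[Theorem 4.2]{BK}, so there is no internal argument to compare against and your sketch must stand on its own merits. Your global plan --- define $\tilde\pi$ on the dense orbit of $\Lambda_0$, establish a quantitative continuity estimate, extend by density, deduce surjectivity from compactness and injectivity over nonsingular fibers from continuity of $x\mapsto\Lambda_x$ at nonsingular $x$ --- is the correct and standard route, and the last three steps are packaged correctly. As you say yourself, the weight of the proof rests on the uniform continuity estimate, and that is exactly where the sketch does not close.

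Two remarks on that paragraph, one minor and one substantive. The pigeonhole is unnecessary: since $\pi_\parallel|_\Gamma$ is injective and $\pi_\parallel(\beta_p)=t-s$ does not depend on $p$, the element $\beta_p=\gamma_p'-\gamma_p\in\Gamma$ is already the same for every $p$ in the common patch; call it $\beta$. The substantive issue is that the parenthetical justification, that ``the coordinates of $p$ force $\pi_\perp(\gamma_p)$ deep in the window,'' runs in the wrong direction. For a $p$ with $\pi_\perp(\gamma_p)$ deep inside $K$, the two constraints $\pi_\perp(\gamma_p)\in K$ and $\pi_\perp(\gamma_p)+\pi_\perp(\beta)\in K$ give only the trivial bound $|\pi_\perp(\beta)|\leq\mathrm{diam}(K)$; the deeper in the window, the weaker the constraint, not the stronger. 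What actually forces $\pi_\perp(\beta)\to 0$ as $R\to\infty$ is the opposite phenomenon: the finite set $S_R(s):=\{\pi_\perp(\gamma):\gamma\in\Gamma,\ \pi_\perp(\gamma)\in K,\ \pi_\parallel(\gamma)\in B_R(s)\}$ becomes $\varepsilon(R)$-dense in $K$ with $\varepsilon(R)\to 0$ uniformly in $s$ --- a uniform equidistribution statement for the Kronecker $\mathbb{R}^d$-action on $\mathbb{T}^n$, i.e.\ unique ergodicity of that action together with density of $\pi_\perp(\Gamma)$. Only with that density in hand does the containment $S_R(s)+\pi_\perp(\beta)\subset K$ force $|\pi_\perp(\beta)|$ to be comparable to $\varepsilon(R)$, because $S_R(s)$ then contains points within $\varepsilon(R)$ of $\partial K$ in the $\pi_\perp(\beta)$-direction. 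This uniform density lemma is the genuine content of the continuity estimate, and it is neither stated nor supplied by the reasoning in the sketch.
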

From here on we write refer to the map as $\pi$ instead of $\tilde{\pi}$, so we have $\pi:\Omega_{\Lambda(K,\Gamma)} \to \mathbb{T}^{d}$.

\indent The pattern metric $\sigma$ on the set $N\mathcal{S}/\Gamma$ is defined by $\sigma(x,y) = d(\Lambda_{x},\Lambda_{y})$, where $d$ denotes the usual pattern metric on Delone sets. It is well known (see for instance \cite{BK}) that for any $x \in N\mathcal{S}$, $\Omega_{\Lambda_{x}}$ is the completion of $N\mathcal{S}/\Gamma$ with respect to the metric $\sigma$.\\
From now on we will assume that our cut and project set is almost canonical. Recall from \S \ref{subsec:CAPScohom} that this allows us, in some cases, to have finite dimensional cohomology.
\begin{lemma}\label{lem:singularpreserve}
Let $\{W_\alpha\}$ be the collection of affine subspaces which generate the singular set, i.e., the collection in (\ref{eqn:Singular}). If for all $\alpha$ there exists a $\beta$ such that $(\mathcal{A}^{-1})^\perp W_\alpha = W_\beta + \Gamma^\perp$, then $\mathcal{A}$ and $\mathcal{A}^{-1}$ both preserve the non-singular set $\mathcal{NS}$.
\end{lemma}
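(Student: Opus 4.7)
The plan is to exploit that $\mathcal{A}$ leaves both $E^\parallel$ and $E^\perp$ invariant. Writing $\mathcal{A}^\perp := \mathcal{A}|_{E^\perp}$, this invariance gives the intertwining identity $\pi_\perp \circ \mathcal{A}^{\pm 1} = (\mathcal{A}^{\pm 1})^\perp \circ \pi_\perp$. Furthermore $\mathcal{A} \in SL(\Gamma)$ means $\mathcal{A}\Gamma = \Gamma$, and since $\Gamma^\perp = \pi_\perp(\Gamma)$, it follows that $(\mathcal{A}^{\pm 1})^\perp \Gamma^\perp = \Gamma^\perp$. Using the characterization from \eqref{eqn:Singular}, a point $x \in E$ is singular precisely when $\pi_\perp(x) \in \Gamma^\perp + \bigcup_\alpha W_\alpha$, so the question reduces to the invariance of the set $\Gamma^\perp + \bigcup_\alpha W_\alpha \subset E^\perp$ under both $\mathcal{A}^\perp$ and $(\mathcal{A}^{-1})^\perp$.

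For the $\mathcal{A}^{-1}$ case I would argue by contrapositive. If $\mathcal{A}^{-1}x$ is singular, then $(\mathcal{A}^{-1})^\perp \pi_\perp(x) \in \Gamma^\perp + \bigcup_\alpha W_\alpha$. Applying $\mathcal{A}^\perp$ and using $\mathcal{A}^\perp \Gamma^\perp = \Gamma^\perp$ moves the obstruction to $\pi_\perp(x) \in \Gamma^\perp + \bigcup_\alpha \mathcal{A}^\perp W_\alpha$, and the hypothesis $(\mathcal{A}^{-1})^\perp W_\alpha = W_{\beta(\alpha)} + \Gamma^\perp$ rewrites as $\mathcal{A}^\perp W_{\beta(\alpha)} = W_\alpha + \Gamma^\perp \pmod{\Gamma^\perp}$, placing $\pi_\perp(x)$ back inside $\Gamma^\perp + \bigcup_\gamma W_\gamma$, so $x$ is singular.

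The $\mathcal{A}$ direction is essentially the same but requires one extra observation, which I expect to be the only real subtlety: the hypothesis is stated only for $(\mathcal{A}^{-1})^\perp$, but the finiteness of $I_{n-d-1}$ converts it into a symmetric statement. Consider the finite collection $\mathcal{V} = \{\,W_\alpha + \Gamma^\perp : \alpha \in I_{n-d-1}\,\}$ of subsets of $E^\perp$. By hypothesis $(\mathcal{A}^{-1})^\perp$ maps $\mathcal{V}$ into itself; since $(\mathcal{A}^{-1})^\perp$ is a linear bijection and $\mathcal{V}$ is finite, this self-map is a bijection. Hence $\mathcal{A}^\perp$ also permutes $\mathcal{V}$, and the same contrapositive argument from the previous paragraph applies verbatim, concluding that $\mathcal{A}$ preserves $\mathcal{NS}$.

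Assembling these pieces gives the lemma. The only delicate point is the permutation argument for $\mathcal{V}$, which relies on the almost canonical hypothesis making $I_{n-d-1}$ finite; everything else is direct bookkeeping with the intertwining $\pi_\perp \circ \mathcal{A}^{\pm 1} = (\mathcal{A}^{\pm 1})^\perp \circ \pi_\perp$ and the invariance of $\Gamma^\perp$ under $\mathcal{A}^\perp$.
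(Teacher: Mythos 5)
Your argument follows the same core mechanism as the paper's proof --- translate singularity into the condition $\pi^\perp(x)\in\Gamma^\perp+\bigcup_\alpha W_\alpha$, use the intertwining $\pi^\perp\circ\mathcal{A}^{\pm 1}=(\mathcal{A}^{\pm 1})^\perp\circ\pi^\perp$ together with $\mathcal{A}^{\pm 1}\Gamma^\perp=\Gamma^\perp$, and push through the hypothesis on $(\mathcal{A}^{-1})^\perp W_\alpha$. Your permutation observation on the finite collection $\mathcal{V}=\{W_\alpha+\Gamma^\perp\}$ is in fact a genuine improvement: the paper disposes of the second direction with ``the argument for $\mathcal{A}$ is analogous,'' which tacitly needs a statement about $\mathcal{A}^\perp W_\alpha$ that the hypothesis, stated only for $(\mathcal{A}^{-1})^\perp$, does not directly supply. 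Your argument that $(\mathcal{A}^{-1})^\perp$ acts as a bijection on the finite set $\mathcal{V}$, hence so does $\mathcal{A}^\perp$, is precisely the missing justification.

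However, you have the easy and hard directions reversed, and this leaves a genuine gap in your first paragraph as written. The direction that follows \emph{directly} from the hypothesis is $\mathcal{A}^{-1}(\mathcal{S})\subseteq\mathcal{S}$ (equivalently, $\mathcal{A}$ preserves $\mathcal{NS}$): from $\pi^\perp(x)\in W_\alpha+\Gamma^\perp$ one applies $(\mathcal{A}^{-1})^\perp$ and lands in $(\mathcal{A}^{-1})^\perp W_\alpha+\Gamma^\perp\subseteq W_\beta+\Gamma^\perp$ in one step, with no finiteness needed; this is exactly what the paper proves. Your first paragraph instead addresses ``$\mathcal{A}^{-1}$ preserves $\mathcal{NS}$,'' i.e.\ $\mathcal{A}(\mathcal{S})\subseteq\mathcal{S}$, via the contrapositive: after applying $\mathcal{A}^\perp$ you reach $\pi^\perp(x)\in\Gamma^\perp+\mathcal{A}^\perp W_{\alpha^*}$ for some $\alpha^*$, and then invoke $\mathcal{A}^\perp W_{\beta(\alpha)}\subseteq W_\alpha+\Gamma^\perp$ to conclude. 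But this concludes only if $\alpha^*$ lies in the image of $\beta$ (i.e.\ if $\mathcal{A}^\perp$ maps $\mathcal{V}$ into itself), which is exactly the permutation fact you establish only in the second paragraph, and which you there attribute to the other case. Reordering --- prove the direct direction first, then derive the permutation of $\mathcal{V}$ and use it for the contrapositive direction --- yields a complete proof, one that is in fact more careful than the paper's own ``analogous.''
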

\begin{proof}
We will show that if $x \in \mathcal{S}$, then $\mathcal{A}^{-1}x \in \mathcal{S}$; the argument for $\mathcal{A}$ is analogous, since $\mathcal{A}$ is non-singular. Let $x \in \mathcal{S}$, so $x \in \mathbb{R}^{d} + \Gamma^{\perp} + W_{\alpha}$ for some $\alpha$. It suffices to show that $\pi^{\perp}(\mathcal{A}^{-1}x) \in \Gamma^{\perp} + W_{\beta}$ for some $\beta$. Since we have $\pi^{\perp}(x) \in \Gamma^{\perp} + W_{\alpha}$, write $\pi^{\perp}(x) = \gamma^{\perp} + y$, with $\gamma^{\perp} \in \Gamma^{\perp}$, $y \in W_{\alpha}$. Since $\pi^{\perp}$ is projection on to the $\mathcal{A}$-invariant subspace $E^{\perp}$, we have $\pi^{\perp}\mathcal{A}^{-1}x = \mathcal{A}^{-1}\pi^{\perp}(x) = \mathcal{A}^{-1}(\gamma^{\perp} + y) = \mathcal{A}^{-1}\gamma^{\perp} + \mathcal{A}^{-1}y$. Since $\mathcal{A} \in SL(n,\Gamma)$, $\mathcal{A}^{-1}\gamma^{\perp} = \delta_{1}^{\perp}$ for some $\delta_{1} \in \Gamma$. By assumption, there exists some $W_{\beta}$, $z \in W_{\beta}$ such that $\mathcal{A}^{-1}y = z + \delta_{2}^{\perp}$ for some $\delta_{2} \in \Gamma$. All together we have $\pi^{\perp}\mathcal{A}^{-1}(x) = \mathcal{A}^{-1}\pi^{\perp}(x) = \delta_{1}^{\perp} + z + \delta_{2}^{\perp} \in W_{\beta} + \Gamma^{\perp}$, as desired.
\end{proof}
\begin{remark}
Note that in the case of canonical domains, i.e., in the case when $W_\alpha = \mathrm{span} \pi^\perp\gamma_\alpha$ for some $\gamma_\alpha\in\Gamma$, then the hypothesis of the lemma hold. The Amman-Beenker tiling satisfies this condition, and this example is worked out in \S \ref{subsec:Ammann}. Moreover, rational CAPs satisfy this condition: there exists a finite set $W_i = \pi^\perp \mathcal{D}_i$, each $\mathcal{D}_i$ a rational affine subspace of $E$. Each $E_i$ contains a face $f_i$. Since it's almost canonical, $W_i$ is contained in $f_j + \Gamma^\perp$. But $W_i = \pi^\perp \mathcal{D}_i$ and $\pi^\perp \mathcal{A}^{-1} \mathcal{D}_i\subset \pi^\perp \mathcal{D}_i + \Gamma^\perp,$ since $\mathcal{A}^{-1} \mathcal{D}_i\subset \mathcal{D}_i + \Gamma$. See \cite[\S 4.2]{GHK:cohomology}.
\end{remark}

\begin{lemma}
\label{lem:homeo}
Suppose $\mathcal{A}$ and $\mathcal{A}^{-1}$ preserve the non-singular set $N\mathcal{S}$. The composition of maps $\pi^{-1}_{\Lambda(K,\Gamma)}\circ \mathcal{A} \circ \pi_{\Lambda(K,\Gamma)}:N\mathcal{S}\rightarrow N\mathcal{S}$ extends to a measure-preserving homeomorphism $\Phi_A: \Omega_{\Lambda(K,\Gamma)}\rightarrow \Omega_{\Lambda(K,\Gamma)}$. Moreover, it induces the conjugacy
\begin{equation}
\label{eqn:mph1}
 \Phi_A\circ \varphi_{ t} = \varphi_{A t} \circ \Phi_A
\end{equation}
for any $t\in\mathbb{R}^d$.
\end{lemma}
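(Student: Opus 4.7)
The plan is to first define $\Phi_{A}$ on the dense subset $N\mathcal{S}/\Gamma \subset \Omega_{\Lambda(K,\Gamma)}$ via $\Phi_{A}(\Lambda_{x}) = \Lambda_{\mathcal{A}x}$. By Proposition \ref{prop:onto}, $\pi_{\Lambda(K,\Gamma)}$ restricts to a bijection between $N\mathcal{S}/\Gamma$ and the image of the nonsingular points in $\mathbb{T}^{n}$, and by the hypothesis that both $\mathcal{A}$ and $\mathcal{A}^{-1}$ preserve $N\mathcal{S}$, the composition $\pi^{-1}_{\Lambda(K,\Gamma)} \circ \mathcal{A} \circ \pi_{\Lambda(K,\Gamma)}$ is a well-defined bijection on $N\mathcal{S}/\Gamma$. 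The goal then is to establish uniform continuity of this map with respect to the pattern metric $\sigma$; since $\Omega_{\Lambda(K,\Gamma)}$ is the completion of $(N\mathcal{S}/\Gamma, \sigma)$, uniform continuity will yield a unique extension to a continuous map $\Phi_{A}$ on all of $\Omega_{\Lambda(K,\Gamma)}$, and applying the same argument to $\mathcal{A}^{-1}$ provides a continuous inverse, giving a homeomorphism.

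For the uniform continuity, the key tool is the identity
\begin{equation*}
A \cdot \Lambda_{x}(K,\Gamma) = \Lambda_{\mathcal{A}x}(\mathcal{A}K, \Gamma),
\end{equation*}
obtained by the change of variable $\gamma' = \mathcal{A}\gamma$ in the defining formula for $\Lambda_{x}$, using $\mathcal{A}\Gamma = \Gamma$, $\pi^{\parallel}\mathcal{A} = A\pi^{\parallel}$, and $\pi^{\perp}\mathcal{A} = \mathcal{A}\pi^{\perp}$. Equivalently, $\Lambda_{\mathcal{A}x}(K,\Gamma) = A \cdot \Lambda_{x}(\mathcal{A}^{-1}K, \Gamma)$. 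Given $\varepsilon > 0$, the points of $\Lambda_{\mathcal{A}x}$ inside $B_{1/\varepsilon}$ correspond to lattice vectors $\tilde\gamma \in \Gamma$ with $\pi^{\parallel}\tilde\gamma$ in the $A^{-1}$-pullback of $B_{1/\varepsilon}$ subject to $\pi^{\perp}(\tilde\gamma + x) \in \mathcal{A}^{-1}K$. The compact window $\mathcal{A}^{-1}K$ is covered by finitely many $\Gamma^{\perp}$-translates of $K$, say $\mathcal{A}^{-1}K \subset \bigcup_{j=1}^{N}(K + \pi^{\perp}\delta_{j})$ for some $\delta_{j} \in \Gamma$, so membership in $\mathcal{A}^{-1}K$ can be tested by finitely many membership queries in $K$ translated by the $\delta_{j}$. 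Choosing $\delta$ small enough (depending on $\varepsilon$, the $\delta_{j}$, $\|A^{-1}\|$, and the finite local complexity of the window covering), $\sigma(x,y) < \delta$ forces $\Lambda_{x}$ and $\Lambda_{y}$ to agree on a ball large enough to certify that $\Lambda_{\mathcal{A}x}$ and $\Lambda_{\mathcal{A}y}$ agree on $B_{1/\varepsilon}$ up to translations of size less than $\varepsilon$, proving $\sigma(\mathcal{A}x, \mathcal{A}y) < \varepsilon$.

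The conjugacy \eqref{eqn:mph1} follows immediately on the dense set $N\mathcal{S}/\Gamma$, since the $\mathbb{R}^{d}$-action is by translation on the torus factor and $\mathcal{A}|_{E^{\parallel}} = A$ gives $\mathcal{A}(x+t) = \mathcal{A}x + At$ for $t \in E^{\parallel}$; it then extends by continuity to all of $\Omega_{\Lambda(K,\Gamma)}$. Measure preservation follows because the unique $\mathbb{R}^{d}$-invariant probability measure $\mu$ on $\Omega_{\Lambda(K,\Gamma)}$ pushes forward under $\pi_{\Lambda(K,\Gamma)}$ to normalized Haar measure on $\mathbb{T}^{n}$ (by uniqueness of the invariant measure on each side and equivariance of $\pi_{\Lambda(K,\Gamma)}$), Haar measure is invariant under $\mathcal{A} \in SL(n,\mathbb{Z})$, and $\pi_{\Lambda(K,\Gamma)}$ is a measurable isomorphism on the full-measure set $N\mathcal{S}/\Gamma$.

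The main obstacle is the uniform continuity step: the linear map $\mathcal{A}$ is bi-Lipschitz in the Euclidean metric on $\mathbb{T}^{n}$, but the pattern metric $\sigma$ is strictly finer and depends nonlinearly on the interaction between lattice points and the window $K$. The finite covering of $\mathcal{A}^{-1}K$ by $\Gamma^{\perp}$-translates of $K$ is what enables one to convert pattern-metric proximity for the window $K$ into pattern-metric proximity for the transformed configuration, and this is where the assumption that $\mathcal{A}$ and $\mathcal{A}^{-1}$ preserve the singular structure (via Lemma \ref{lem:singularpreserve}) enters critically.
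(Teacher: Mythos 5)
Your overall plan is the same as the paper's: define $\Phi_A$ on the dense set $N\mathcal{S}/\Gamma$ via the conjugation by $\pi_{\Lambda(K,\Gamma)}$, prove uniform $\sigma$-continuity, and extend to the completion; handle the inverse by symmetry, the conjugacy via $\mathbb{R}^d$-equivariance of $\pi$, and measure preservation via the a.e.~one-to-one property of $\pi$. Those parts are correct and match the paper.

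Where you diverge is in the uniform-continuity step, and there is a genuine gap in your argument. The paper introduces the finite sets $\Sigma_K(x,R) = \{\gamma \in \Gamma : \pi^\perp(\gamma+x) \in K,\ \pi^\parallel\gamma \in B_R\}$ and $\Sigma_{\mathcal{A}^{-1}K}(x,R)$, observes that $\gamma \in \Sigma_K(\mathcal{A}x,R)$ forces $\mathcal{A}^{-1}\gamma \in \Sigma_{\mathcal{A}^{-1}K}(x,R)$ (using that $A$ expands on $E^\parallel$), and thereby reduces the whole uniform-continuity statement to the single claim that, for fixed $R$, one may choose $\delta$ so that $\sigma(x,y)<\delta$ implies $\Sigma_{\mathcal{A}^{-1}K}(x,R)=\Sigma_{\mathcal{A}^{-1}K}(y,R)$ --- i.e., that the $K$-pattern metric dominates the $\mathcal{A}^{-1}K$-pattern metric. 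You try to justify the analogous statement via a finite cover $\mathcal{A}^{-1}K \subset \bigcup_j (K+\pi^\perp\delta_j)$ and assert that ``membership in $\mathcal{A}^{-1}K$ can be tested by finitely many membership queries in $K$ translated by the $\delta_j$.'' That inference fails: a cover gives only the one-way implication $\pi^\perp(\gamma+x) \in \mathcal{A}^{-1}K \Rightarrow \pi^\perp(\gamma+x) \in K+\pi^\perp\delta_j$ for some $j$, not the converse. Knowing $\pi^\perp(\gamma+x)$ lies in some translate of $K$ does not determine whether it lies in $\mathcal{A}^{-1}K$ itself, so the cover does not actually let you reconstruct $\Sigma_{\mathcal{A}^{-1}K}$ from the $K$-configuration. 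To close the gap you need something sharper, e.g., a local derivability argument exploiting that (by Lemma~\ref{lem:singularpreserve}) $\mathcal{A}^{-1}K$ is again an almost canonical window built from the same family of singular subspaces as $K$, so the two cut-and-project sets are mutually locally derivable and the $K$-pattern on a large ball determines the $\mathcal{A}^{-1}K$-pattern on a fixed ball. Organizing the argument around the sets $\Sigma_K, \Sigma_{\mathcal{A}^{-1}K}$ as the paper does makes that reduction cleaner and isolates exactly what must be shown.
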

\begin{proof}
Let $x \in N\mathcal{S}$. The space $\Omega_{\Lambda_{x}}$ can be obtained as the completion of $N\mathcal{S}/\Gamma$ with respect to the pattern metric $\sigma$ on $N\mathcal{S}/\Gamma$. Thus, it is enough to show that the map induced by $\mathcal{A}$ on $N\mathcal{S}/\Gamma$ is uniformly continuous with respect to the metric $\sigma$. We may then extend $\mathcal{A}$ to a homeomorphism $\Phi_{A}$ on the completion.\\
\indent Recall $K$ denotes the window in $E^{\perp}$. Given $R > 0$ and $x \in \mathcal{NS}/\Gamma$, let $\Sigma_{K}(x,R) = \{ \gamma \in \Gamma: \pi^{\perp}(\gamma + x) \in K,  \pi^{\parallel}(\gamma) \in B_{R}(0)\}$. Fix $R > 0$. It suffices to show the following: there exists $\delta > 0$, sufficiently small, such that if $\sigma(x,y) < \delta$, then $\Sigma_{K}(\mathcal{A}x,R) = \Sigma_{K}(\mathcal{A}y,R)$. Indeed, for $\epsilon > 0$, the condition $\sigma(\mathcal{A}x,\mathcal{A}y) < \epsilon$ is equivalent to showing there exists $R > 0$ such that the local patterns for $\Lambda_{\mathcal{A}x}$ and $\Lambda_{\mathcal{A}y}$ agree, up to a small translation, on a ball of radius $R$ around the origin. For $\delta$ small enough, this is equivalent to $\Sigma_{K}(\mathcal{A},R) = \Sigma_{K}(\mathcal{A}y,R)$. \\
\indent Let $\Sigma_{\mathcal{A}^{-1}K}(x,R) = \{\gamma \in \Gamma: \pi^{\perp}(\gamma + x) \in \mathcal{A}^{-1}(K), \pi^{\parallel}(\gamma) \in B_{R}(0)\}$. Choose $\delta > 0$ small enough so that for $x,y \in N\mathcal{S}/\Gamma$, $\sigma(x,y) < \delta$ implies $\Sigma_{\mathcal{A}^{-1}K}(x,R) = \Sigma_{\mathcal{A}^{-1}K}(y,R)$. Then if $\sigma(x,y) < \delta$ and $\gamma \in \Sigma_{K}(\mathcal{A}x,R)$, we have $\pi^{\perp}(\mathcal{A}^{-1}\gamma + x) \in \mathcal{A}^{-1}K$, and hence $\mathcal{A}^{-1}\gamma \in \Sigma_{\mathcal{A}^{-1}K}(x,R)$ (since $|\mathcal{A}z| \ge |z|$ for $z \in E^{\parallel})$. Thus $\mathcal{A}^{-1}\gamma \in \Sigma_{\mathcal{A}^{-1}K}(y,R)$, and hence $\gamma \in \Sigma_{K}(\mathcal{A}y,R)$. The containment $\Sigma(\mathcal{A}y,R) \subset \Sigma(\mathcal{A}x,R)$ is completely analogous. That $\Phi_{A}$ is measure-preserving follows from the fact that $\pi_{\Lambda(K,\Gamma)}$ is almost everywhere one-to-one. The conjugacy (\ref{eqn:mph1}) follows from the $\mathbb{R}^d$-equivariance in Proposition \ref{prop:onto}.
\end{proof}
%\begin{remark}
%Almost canonical CAPs coming from a HTA always have a homeomorphism, but the cohomology may not always be finitely generated. As observed in \cite{GHK:cohomology}, if the cohomology of a pattern space coming from a CAPs is finitely generated, then necessarily it is an almost canonical construction.
%\end{remark}
\subsection{Codimension One Cut and Project}
This subsection is concerned with cut and project systems having dimendion $d$ and codimension one which are associated with a hyperbolic matrix $\mathcal{A} \in SL(\Gamma,d+1)$ as in the previous section. We will assume throughout that we are in the almost-canonical case, so the window $K$ is a finite union of intervals, its faces consist of points, and the collection of singular 0-spaces $P_{0}$ is just a finite union of distinct $\Gamma$-orbits of points. We let $\Lambda = \Lambda(\Gamma,K)$ denote a Delone set in $\mathbb{R}^{d}$ coming from this setup. Throughout, unless said otherwise, cohomology $H^{*}(-)$ denotes \v{C}ech cohomology with real coefficients. Note that in particular, Theorem \ref{thm:finiteCohom} implies the cohomology $H^{*}(\Omega_{\Lambda})$ will be finitely generated. In this setup, the following comes from Lemma \ref{lem:singularpreserve}.
\begin{proposition}
If $\mathcal{A}$ preserves the collection $P_{0}$ of $\Gamma$-orbits of boundary points, then $\Omega_{\Lambda(\Gamma,K)}$ is an RFT Delone set.
\end{proposition}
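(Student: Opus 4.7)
The plan is to verify the two defining properties of RFT for $\Lambda(\Gamma,K)$: finite dimensionality of $H^{*}(\Omega_{\Lambda(\Gamma,K)};\mathbb{C})$ and the existence of an expanding $A$ together with a measure-preserving homeomorphism $\Phi_A$ satisfying the conjugacy (\ref{eqn:mph}). The finite cohomology assertion is already flagged in the excerpt as a consequence of being in the almost-canonical codimension one regime: the window $K$ has only finitely many boundary points, hence $L_0<\infty$, so Theorem \ref{thm:L0andcohomology}(1) (or equivalently Theorem \ref{thm:finiteCohom}) delivers finite dimensionality.

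So the substantive step is to build $\Phi_A$, and my plan is to reduce this to Lemma \ref{lem:homeo} followed by Lemma \ref{lem:singularpreserve}. Concretely, Lemma \ref{lem:homeo} manufactures $\Phi_A$ and establishes conjugacy (\ref{eqn:mph1}) as soon as $\mathcal{A}$ and $\mathcal{A}^{-1}$ preserve the nonsingular set $N\mathcal{S}$; and Lemma \ref{lem:singularpreserve} gives exactly this whenever, for every $W_\alpha$, there is a $W_\beta$ with $(\mathcal{A}^{-1})^\perp W_\alpha \subseteq W_\beta + \Gamma^\perp$. Thus everything boils down to translating the hypothesis ``$\mathcal{A}$ preserves $P_0$'' into this statement about the $W_\alpha$.

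This translation is the step that requires the most care. In codimension one the $W_\alpha$ are precisely the singular $0$-spaces (points in $E^\perp$), and $P_0$ is the collection of their $\Gamma$-orbits, which, since only the $E^\perp$ component matters for membership in $\mathcal{S}(\Gamma,K)$, coincides with the set of $\Gamma^\perp$-orbits of the $W_\alpha$ inside $E^\perp$. Since $E^\perp$ is $\mathcal{A}$-invariant by the standing hypothesis and $\mathcal{A}\in SL(\Gamma)$, I would first check that $\mathcal{A}^\perp$ and $(\mathcal{A}^{-1})^\perp$ preserve the lattice $\Gamma^\perp$: for any $\gamma\in\Gamma$, writing $\gamma=\gamma^\parallel+\gamma^\perp$ with $\gamma^\parallel\in E^\parallel$, $\gamma^\perp\in E^\perp$, the $\mathcal{A}$-invariance of the splitting gives $\pi^\perp(\mathcal{A}\gamma)=\mathcal{A}^\perp\gamma^\perp\in\Gamma^\perp$. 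Hence $\mathcal{A}^\perp$ descends to a bijection of $E^\perp/\Gamma^\perp$ that permutes the finitely many elements of $P_0$. The hypothesis of the proposition then forces $(\mathcal{A}^{-1})^\perp$ to do the same, and unpacking this gives, for each $\alpha$, a $\beta$ and $\delta^\perp\in\Gamma^\perp$ with $(\mathcal{A}^{-1})^\perp W_\alpha = W_\beta+\delta^\perp\subseteq W_\beta+\Gamma^\perp$, which is exactly the hypothesis of Lemma \ref{lem:singularpreserve}.

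The expansion condition on $A:=\mathcal{A}|_{E^\parallel}$ is automatic from the standing assumption that $E^\parallel$ is $\mathcal{A}$-invariant and contained in the unstable space $E^u_{\mathcal{A}}$, so all eigenvalues of $A$ have modulus strictly greater than one. There is no serious obstacle in this proof beyond the bookkeeping of the previous paragraph: the proposition is essentially a codimension one packaging of Lemmas \ref{lem:singularpreserve} and \ref{lem:homeo} together with the finite-dimensionality result of \S \ref{subsec:CAPScohom}.
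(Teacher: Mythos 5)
Your proof is correct and is essentially the chain of implications the paper has in mind when it says the proposition ``comes from Lemma \ref{lem:singularpreserve}'' (the paper gives no explicit proof): finite cohomology from $L_0<\infty$ in the codimension-one almost-canonical setting via Theorem \ref{thm:L0andcohomology}, and self-affinity by translating preservation of $P_0$ into the hypothesis of Lemma \ref{lem:singularpreserve} and then invoking Lemma \ref{lem:homeo}. The translation step is the one point that needed care, and you handle it correctly, including the observation that $\mathcal{A}^\perp$ restricts to a bijection of $\Gamma^\perp$ so that finiteness of $P_0$ lets you deduce the required property for $\mathcal{A}^{-1}$ as well.
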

The following can be found in \cite[\S 4.1]{BKS}.
\begin{theorem}\label{thm:maximalfactorinducedmap}
Assume $\Lambda$ comes from an almost-canonical codimension cut and project scheme. Let $\pi:\Omega_{\Lambda} \to \mathbb{R}^{n}/\Gamma = \mathbb{T}^{n}$ denote the map to the maximal torus factor, as in Proposition \ref{prop:onto}. Then the induced map $\pi^{k,*}:H^{k}(\mathbb{T}^{n}) \to H^{k}(\Omega_{\Lambda})$ is injective for all $0 \le k \le d$.
\end{theorem}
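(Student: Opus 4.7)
The plan is to realize $\Omega_\Lambda$ as an inverse limit that starts from $\mathbb{T}^n$ and to show that at each stage of the inverse limit cohomology only grows in degrees $\le d$.

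First, I would exploit the codimension-one almost canonical hypothesis to pin down the singular structure. Because $E^\perp$ is one-dimensional, the generating family $\mathcal{W}$ consists of single points in $E^\perp$, so $\mathcal{S}(\Gamma,K)/\Gamma$ is a finite disjoint union of parallel $d$-subtori $S_1,\dots,S_r \subset \mathbb{T}^{d+1}$, one for each $\Gamma^\perp$-orbit of a boundary point of $K$. By Proposition \ref{prop:onto} the factor map $\pi$ is a homeomorphism over the complement of these subtori and has finite fibers over each of them. Using the inverse-limit presentation built in \cite{FHK:topological, GHK:cohomology}, I would write $\Omega_\Lambda \cong \varprojlim\{Y_j, q_j\}$ with $Y_0 = \mathbb{T}^{d+1}$, where each bonding map $q_j: Y_{j+1} \to Y_j$ is a ``separation along a lift of some $S_i$,'' that is, a map which is a homeomorphism away from the preimage of one singular $d$-subtorus and has a finite fiber over it. Under the standard identification $\check H^k(\Omega_\Lambda) = \varinjlim \check H^k(Y_j)$, the induced map $\pi^{k,*}$ is precisely the composition $H^k(\mathbb{T}^n) = H^k(Y_0) \to \varinjlim H^k(Y_j)$.

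The key step is to show that each bonding map induces an injection $q_j^{k,*}: H^k(Y_j) \to H^k(Y_{j+1})$ for every $0 \le k \le d$. This I would prove by a local Mayer--Vietoris argument: cover $Y_j$ by an open tubular neighborhood $N$ of the singular subtorus $S$ being separated, which deformation retracts onto a copy of $\mathbb{T}^d$, together with the closure $V$ of its complement. Because the codimension is one, the separation acts only in the transverse direction and locally replaces an open interval by two intervals glued at a point; the transverse move has trivial reduced Čech cohomology in positive degree. Pulling back this decomposition to $Y_{j+1}$ and applying naturality of Mayer--Vietoris together with the five lemma, one concludes that $q_j^{k,*}$ is injective in all degrees $0 \le k \le d$. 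Since Čech cohomology commutes with inverse limits of compact Hausdorff spaces and since direct limits of injections between finite-dimensional vector spaces remain injective (in our setting $H^k(Y_j)$ is finite dimensional by Theorem \ref{thm:L0andcohomology}), the resulting map $\pi^{k,*}: H^k(\mathbb{T}^n) \hookrightarrow H^k(\Omega_\Lambda)$ is injective in the claimed range.

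The principal obstacle is the Mayer--Vietoris step: carefully setting up the local model of the separation and verifying that no class in $H^k(Y_j)$ is lost after the cut requires writing down the bonding map in the branched manifold category precisely and controlling the local Čech cohomology there. The codimension-one assumption is what makes this tractable — it reduces the transverse analysis to an interval cut — whereas in higher codimension the singular subspaces of various dimensions intersect and the bonding maps mix cohomological information across degrees. One should also remark that the range $0 \le k \le d$ is sharp: $\Omega_\Lambda$ is an inverse limit of $d$-dimensional branched manifolds, so $\check H^{d+1}(\Omega_\Lambda)=0$ while $H^{d+1}(\mathbb{T}^{d+1})\neq 0$, and no injectivity at degree $d+1$ is possible.
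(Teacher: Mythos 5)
Your geometric description of the singular set is incorrect, and this gap undercuts the whole strategy. In codimension one, $\mathcal{W}$ does consist of points $W_\alpha \in E^\perp$, but the singular set is $\mathcal{S}(\Gamma,K) = \mathbb{R}^d + \Gamma^\perp + \bigcup_\alpha W_\alpha$, and $\Gamma^\perp = \pi_\perp(\Gamma)$ is \emph{dense} in $E^\perp$ by the total irrationality hypothesis. Moreover each affine plane $\mathbb{R}^d + W_\alpha$ maps to a dense immersed leaf in $\mathbb{T}^n$ (it is an $\mathbb{R}^d$-orbit in a minimal action), not a closed subtorus. Hence $\mathcal{S}(\Gamma,K)/\Gamma$ is a dense subset of $\mathbb{T}^n$, not ``a finite disjoint union of parallel $d$-subtori.'' Because the objects you propose to cut along are not closed, the intermediate spaces $Y_j$ cannot be ``$\mathbb{T}^n$ separated along a subtorus,'' and the Mayer--Vietoris argument, which needs a tubular neighborhood of a closed $d$-submanifold, does not get off the ground. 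The actual inverse-limit presentations of $\Omega_\Lambda$ in \cite{FHK:topological, GHK:cohomology} are built from finite polygonal decompositions of $\mathbb{T}^n$ coming from larger and larger patches, and the bonding maps do not admit the clean local model you describe.

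A proof that does work, and that the paper itself has in hand a page earlier, goes through the exact sequence (\ref{eqn:longcohomologysequence}): in codimension one the space $\mathbb{A} = \mathcal{R}_{n-d-1}/\Gamma$ is a finite set of points (the singular $0$-spaces have dimension $\nu\cdot(n-d-1)=0$), so $H_r(\mathbb{A}) = 0$ for all $r \geq 1$. Exactness then gives that $m_* = \pi^* \circ D : H_r(\mathbb{T}^n) \to H^{n-r}(\Omega_\Lambda)$ is injective for $r\geq 1$; since $D$ is the Poincar\'e duality isomorphism, $\pi^{k,*}$ is injective for all $k = n-r \leq n-1 = d$. Your closing observation that injectivity must fail at $k = d+1$ is correct and is recovered here from the $r = 0$ end of the sequence. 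I would encourage you to abandon the inverse-limit route and either invoke the exact sequence directly or consult the cited reference \cite{BKS} for the intended argument.
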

Let $\Phi_{A}$ denote the corresponding homeomorphism of $\Omega_{\Lambda(\Gamma,K)}$ coming from $\mathcal{A}$. In this setup, Theorem \ref{thm:maximalfactorinducedmap} allows us to record the following relationship between $\mathcal{A}$ and the induced map on cohomology.
\begin{proposition}[Spectrum of codimension 1 RFT CAPS]
\label{prop:cod1Spec}
Let $\Lambda$ be a Delone set coming from a codimension one almost canonical cut and project scheme which is RFT, with associated matrix $\mathcal{A} \in SL(\Gamma)$. Let $Sp(\Phi_{A})$ denote the spectrum of the map $\Phi_{A}^{*}:H^{d}(\Omega_{\Lambda}) \to H^{d}(\Omega_{\Lambda})$. Then $Sp(\Phi_{A})$ is the union of the spectrum of the matrix $\mathcal{A}^{-1}$, and a finite collection of roots of unity.
\end{proposition}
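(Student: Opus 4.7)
The plan is to split $H^d(\Omega_\Lambda;\mathbb{R})$ into a piece pulled back from the maximal equicontinuous factor and a complementary piece, then analyze the action of $\Phi_A^*$ on each separately.

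First, by Theorem \ref{thm:maximalfactorinducedmap}, the map $\pi^{d,*}:H^d(\mathbb{T}^n)\to H^d(\Omega_\Lambda)$ is injective (with $n=d+1$). Since $\pi\circ \Phi_A = \mathcal{A}\circ \pi$, where $\mathcal{A}$ denotes the induced toral automorphism on $\mathbb{T}^n = \mathbb{R}^n/\Gamma$, the image $\pi^{d,*}H^d(\mathbb{T}^n)$ is $\Phi_A^*$-invariant and the restriction of $\Phi_A^*$ to it is conjugate to $\mathcal{A}^*$ on $H^d(\mathbb{T}^n)$. Using the standard identification $H^d(\mathbb{T}^n;\mathbb{R})\cong \Lambda^d(\mathbb{R}^n)^*$ together with $\det\mathcal{A}=1$, the eigenvalues of $\mathcal{A}^*$ acting on this $n$-dimensional space are exactly the products $\prod_{i\in S}\lambda_i$ for $|S|=d$, which by $\det\mathcal{A}=1$ equal $\{\lambda^{-1}:\lambda\in \mathrm{Sp}(\mathcal{A})\}=\mathrm{Sp}(\mathcal{A}^{-1})$. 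This contributes the first part of the asserted spectrum.

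Next I would analyze the quotient $Q := H^d(\Omega_\Lambda;\mathbb{R})/\pi^{d,*}H^d(\mathbb{T}^n)$, which inherits a $\Phi_A^*$-action. Using the Forrest--Hunton--Kellendonk computation of $H^*(\Omega_\Lambda)$ for almost canonical CAPS (recalled in \S\ref{subsec:CAPScohom}), $Q$ is finite dimensional with a basis indexed by combinatorial data attached to the finite set $I_0 = \mathcal{P}_0/\Gamma$ of $\Gamma$-orbits of singular $0$-spaces (the boundary points of the window together with their stabilizer data). By Lemma \ref{lem:singularpreserve} and the hypothesis that $\mathcal{A}$ preserves the collection of singular orbits, $\mathcal{A}$ descends to a permutation of $I_0$; I would verify that the induced action of $\Phi_A^*$ on $Q$ is, in a suitable basis, a signed permutation matrix determined by this permutation. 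Since signed permutation matrices have characteristic polynomials that are products of cyclotomic polynomials, $\mathrm{Sp}(\Phi_A^*|_Q)$ consists of finitely many roots of unity. Combining via the $\Phi_A^*$-equivariant short exact sequence
$$0 \longrightarrow \pi^{d,*}H^d(\mathbb{T}^n) \longrightarrow H^d(\Omega_\Lambda) \longrightarrow Q \longrightarrow 0$$
then yields $\mathrm{Sp}(\Phi_A^*) = \mathrm{Sp}(\mathcal{A}^{-1})\cup R$ with $R$ a finite collection of roots of unity.

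The main obstacle will be the precise identification of $Q$ together with the $\Phi_A^*$-action on it as a signed permutation. The cleanest route is to work at the level of an explicit CW-approximation of $\Omega_\Lambda$ built from the almost canonical structure (for instance the complexes appearing in the proofs of Theorem \ref{thm:finiteCohom}), on which $\Phi_A$ is realized by an actual cellular map whose action on the top cells complementary to the toral classes is directly read off from the permutation of $I_0$ induced by $\mathcal{A}$. This reduces the spectral question on $Q$ to elementary linear algebra on a finite free module, and completes the decomposition.
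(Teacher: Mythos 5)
Your proposal is correct and follows essentially the same route as the paper's proof: restrict $\Phi_A^*$ to $\pi^*H^d(\mathbb{T}^n)$ and identify its spectrum with that of $\mathcal{A}^{-1}$ (you via the exterior-power computation, the paper via Poincar\'e duality --- both equivalent), then handle the complementary part of $H^d(\Omega_\Lambda)$ generated by the $\Gamma$-orbit classes of singular $0$-spaces, on which $\mathcal{A}$ acts as a permutation and hence contributes only roots of unity. The paper works with an explicit complement $V$ rather than your quotient $Q$, and states the action on $V$ as a permutation matrix rather than a signed one, but these are immaterial differences.
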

\begin{proof}
In degree $d$ we have a diagram of the form
$$
\xymatrix{
H^{d}(\Omega_{\Lambda}) \ar[r]^{\Phi_{A}^{*}} & H^{d}(\Omega_{\Lambda})\\
H^{d}(\mathbb{T}^{n}) \ar[u]^{\pi^{*}} \ar[r]^{\mathcal{A}^{*}} & H^{d}(\mathbb{T}^{n}) \ar[u]^{\pi^{*}}}$$
for which, by Theorem \ref{thm:maximalfactorinducedmap}, the maps $\pi^{*}$ are injective. It follows that $\Phi_{A}^{*}$ preserves the image of $\pi^{*}$. Recall that $n=d+1$. Then $H^{d}(\mathbb{T}^{n}) \cong \mathbb{R}^{{n \choose d}} \cong \mathbb{R}^{n}$, and Poincar\'e Duality implies the map $\mathcal{A}^{*}:H^{d}(\mathbb{T}^{n}) \to H^{d}(\mathbb{T}^{n})$ is isomorphic the map given by multiplication by the matrix $\mathcal{A}^{-1}$. Thus we get that $Sp(\Phi_{A})$ contains the spectrum of $\mathcal{A}^{-1}$. For the remaining part, first consider the subspace $V$ of $H^{d}(\Omega_{\Lambda})$ complementary to $\pi^{*}(\mathbb{T}^{n})$. The space $V$ is generated by $\Gamma$-orbit classes of points in $P_{0}$ (see \cite[\S 5.1]{GHK:cohomology}). Since $\mathcal{A}$ acts as a permutation on these classes, it follows that $\Phi_{A}^{*}$ restricted to the subspace $V \subset H^{d}(\Omega_{\Lambda})$ acts as a permutation matrix, and hence has eigenvalues given by roots of unity.
\end{proof}
The following Proposition implies Theorem \ref{thm:cod1errs}.
\begin{proposition}
\label{prop:cod1errs}
Let $\Lambda(K,\Gamma)$ be a codimension 1 almost canonical RFT cut and project set, $f\in C^\infty_{tlc}(\Omega_{\Lambda(K,\Gamma)})$, $B_0$ a good Lipschitz domain containing the origin and define $B_T$ as in (\ref{eqn:rescalledSets}). Then there exists an $M>0$ such that
$$\left| \int_{B_T} f\circ\varphi_t(\Lambda_0)\, dt - T^d\mu(f)\right|\leq M \log(T)T^{d\left(1- \frac{|\log\lambda_d|}{\log \nu_1} \right)}.$$
\end{proposition}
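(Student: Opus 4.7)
The plan is to deduce the estimate directly from Theorem \ref{thm:deviations} applied to $g=f-\mu(f)$, after using Proposition \ref{prop:cod1Spec} to show that, apart from the leading eigenvalue $\nu_1$, the only eigenvalues of $\mathcal{A}_d$ that can contribute to the rapidly expanding subspace do so with rate exactly matching the boundary error. Since the leading distribution $\mathcal{D}_{1,1,1}$ is a nonzero multiple of the $\mathbb{R}^d$-invariant mean, $\mathcal{D}_{1,1,1}(g)=0$, and the $T^d\mu(f)$ main term in the ergodic expansion is absorbed on the left-hand side; the remaining work is to estimate the contributions of the other $(i,j,k)\in I^+_\Lambda$.

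First, I would catalogue the spectrum of $\mathcal{A}_d=\Phi_A^*|_{H^d(\Omega_{\Lambda(K,\Gamma)})}$ via Proposition \ref{prop:cod1Spec}. Since $\mathcal{A}\in SL(\Gamma)$ acts on $\mathbb{R}^{d+1}=E^\parallel\oplus E^\perp$ with $E^\parallel$ eigenvalues $\lambda_1,\dots,\lambda_d$ and the single $E^\perp$ eigenvalue of modulus $|\lambda^\perp|=\nu_1^{-1}$, the spectrum of $\mathcal{A}^{-1}$ is $\{\nu_1\}\cup\{1/\lambda_i:i=1,\dots,d\}$, with each $|1/\lambda_i|<1$. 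The remaining eigenvalues of $\Phi_A^*$ are roots of unity coming from the $\mathcal{A}$-permutation on the finite collection of $\Gamma$-orbit classes in $P_0$. Crucially, this block is diagonalizable over $\mathbb{C}$ (permutation matrices always are), so all Jordan sizes on that subspace equal one.

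Next I would check the RES criterion $|\nu_i|\geq\nu_1/|\lambda_d|$. Since $\nu_1=\prod_{j=1}^d|\lambda_j|$ and every $|\lambda_j|>1$, the reciprocals $1/|\lambda_i|<1\leq\nu_1/|\lambda_d|$ never lie in $E^+(\Omega_\Lambda)$. The roots of unity, all of modulus one, satisfy the criterion only when $\nu_1/|\lambda_d|\leq 1$, which happens precisely in dimension $d=1$, and then with equality. Consequently, for $d\geq 2$ the rapidly expanding subspace reduces to the eigenline of $\nu_1$, so every $\mathcal{D}_{i,j,k}(g)$ with $(i,j,k)\neq(1,1,1)$ vanishes trivially; the second bound of Theorem \ref{thm:deviations} then yields the claimed estimate without even needing the logarithmic factor. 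For $d=1$, any further non-vanishing $\mathcal{D}_{i,j,k}(g)$ must come from a root-of-unity eigenvalue lying in $I^{+,=}_\Lambda$ with Jordan index $j=1$, and the first bound of Theorem \ref{thm:deviations} produces an error of order $L(i,1,T)\,T^{d\log 1/\log\nu_1}=\log(T)\|f\|_\infty$; because $d=1$ forces $1-\log|\lambda_d|/\log\nu_1=0$, this is exactly the stated bound.

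The main subtlety is confirming that the root-of-unity block genuinely produces $L(i,1,T)=\log T$ rather than $(\log T)^s$ with $s>1$, since a larger power would violate the bound when $d=1$. This is secured by the diagonalizability of the permutation action on the $\Gamma$-orbit classes of $P_0$ noted above, which ensures $\kappa(i)=1$ on that part of the spectrum. Absorbing all constants into a single $C_{B_0}$ and taking the worst of the two cases gives the stated bound uniformly in $f\in C^\infty_{tlc}$ and $T>1$.
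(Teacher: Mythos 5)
Your proposal follows the same strategy as the paper's proof: catalogue the spectrum of $\mathcal{A}_d$ via Proposition \ref{prop:cod1Spec}, work out which eigenvalues can satisfy the criterion (\ref{eqn:RES}), and then invoke Theorem \ref{thm:deviations} (applied to $g=f-\mu(f)$). Your treatment is in fact more careful than the paper's: you correctly split off the root-of-unity block and observe that the torus eigenvalues $1/\lambda_i$ never satisfy (\ref{eqn:RES}) (since $\nu_1/|\lambda_d|=\prod_{j<d}|\lambda_j|\ge 1$), whereas the paper's derivation of the criterion "$|\lambda_d|\ge|\lambda_i|$" drops the $|\lambda_{d+1}|$ factor and does not address the roots of unity at all; you also correctly identify that the $\log T$ factor is genuinely needed only for $d=1$, where the permutation block gives an equality in (\ref{eqn:RES}), and that its diagonalizability prevents a higher power of $\log T$. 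Both routes yield the stated bound, yours with a cleaner justification.
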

\begin{proof}
Recall that, since $\Lambda(K,\Gamma)$ is codimension 1, we have that $n =d+1$. Let $\lambda_1,\dots, \lambda_n = \lambda_{d+1}$ be the eigenvalues of $\mathcal{A}$, listed in decreasing order (by norm). Since $\mathcal{A}$ preserves volume and $\Lambda(K,\Gamma)$ is a codimension-1 cut and project set which is self-affine, we have that $|\lambda_1|\geq \cdots \geq |\lambda_d|>1>|\lambda_{d+1}|>0$. Therefore the eigenvalues of $\Phi_{\mathcal{A}}^*:H^d(\Omega_\Lambda)\rightarrow H^d(\Omega_\Lambda)$ which, by Proposition \ref{prop:cod1Spec}, coincide with those of $\mathcal{A}^*:H^d(\mathbb{T}^{n})\rightarrow H^d(\mathbb{T}^{n})$, are given by (cup) products of eigenvalues of $\mathcal{A}^*:H^1(\mathbb{T}^{n})\rightarrow H^1(\mathbb{T}^{n})$, that is, by products of the eigenvalues of $\mathcal{A}$.

We denote the eigenvalues of the induced action on $H^d(\Omega_\Lambda)$ by $\nu_1,\dots, \nu_n$ such that
\begin{equation}
\label{eqn:cod1Evals}
\nu_i = \prod_{j\neq n+1-i}\lambda_j = \lambda^{-1}_{n+1-i}.
\end{equation}
In order for the eigenspace associated to the eigenvalue $\nu_i$ to be in the rapidly expanding subspace, $\nu_i$ needs to satisfy (\ref{eqn:RES}), that is, $|\nu_i|\geq \nu_1/|\lambda_d|$. Using (\ref{eqn:cod1Evals}) this means that we need that $|\lambda_d|\geq \lambda_i$. Since we assumed $|\lambda_i|\geq |\lambda_d|$ for $i<n=d+1$, this can only happen when $|\lambda_d| = |\lambda_i|$, in which case the inequality in (\ref{eqn:RES}) becomes an equality, and rates given by those eigenvalues in Theorem \ref{thm:deviations} are of order $\log(T) T^{d\left(1- \frac{\log\lambda_d}{\log \nu_1}\right) }$.
\end{proof}
\subsection{Higher Codimension Cut and Project}
We assume throughout this subsection that we are in the rational projection method case (see \S \ref{subsec:CAPScohom}). Thus we have a finite collection $\mathcal{D}$ of rational affine subspaces of $E$, and following \cite{GHK:cohomology} we define $\mathbb{A}$ to be the set $\mathcal{R}_{n-d-1} / \Gamma$, where $\mathcal{R}_{n-d-1} = \mathcal{D} + \Gamma$ is the collection of $\Gamma$ orbits of the set $\mathcal{D}$. Corollary 4.10 in \cite{GHK:cohomology} gives the sequence
\begin{equation}
\label{eqn:longcohomologysequence}
\xymatrix{
\cdots \ar[r] & H_{r}(\mathbb{A}) \ar[r]^{j_{*}} &  H_{r}(\mathbb{T}^n) \ar[r]^{m_{*}} & H^{n-r}(\Omega_{\Lambda}) \ar[r] & H_{r-1}(\mathbb{A}) \ar[r] & \cdots \\
}
\end{equation}
where the map $m_{*}$ is identified with the composition $\pi^{*} \circ D$, with $D:H_{r}(\mathbb{T}^n) \to H^{N-r}(\mathbb{T}^n)$ the isomorphism coming from Poincar\'e Duality, and $\pi^{*}:H^{N-r}(\mathbb{T}^n) \to H^{N-r}(\Omega_{\Lambda})$ the map induced by the maximal torus factor map $\pi:\Omega_{\Lambda} \to \mathbb{T}^n$. The space $\mathbb{A}$ is a union of $(n-\nu)$-tori, the tori themselves being $D_{i}/\Gamma^{D_{i}}$, where the $D_{i}$, $i \in I_{n-d-1}$, are the elements of $\mathcal{D}$; furthermore, the intersection of finitely many of these tori is empty, or a subtorus itself (see the comment following \cite[Corollary 4.13]{GHK:cohomology}). Thus, as pointed out in \cite{GHK:cohomology}, computation of $H^{*}(\mathbb{A})$ and the maps $j_{*}$ are possible, in principle, using a Meyer-Vietoris spectral sequence (as in \cite{kalugin}). For our purposes, it is enough to compute these cohomology groups rationally, since we are interested mainly in the spectrum of the map induced by $\Phi_{\mathcal{A}}$ on cohomology. For such rational computations, one needs only to consider ranks, and the tricky extension problems which manifest themselves in the integral computations, treated in \cite{GHK:cohomology}, do not arise.
\begin{lemma}
Assume $\mathcal{A}$ satisfies the assumptions in Lemma 12: that is, assume for all $\alpha$, there exists $\beta$ such that $\mathcal{A}^{-1}W_{\alpha} = W_{\beta} + \Gamma^{\perp}$. Then $\mathcal{A}$ preserves $\mathcal{R}_{n-d-1}$, and hence also preserves $\mathbb{A} = \mathcal{R}_{n-d-1} / \Gamma$.
\end{lemma}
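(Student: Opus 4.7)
The plan is to reduce the lemma to the inclusion $\mathcal{A}\mathcal{D}\subseteq \mathcal{D}+\Gamma$. Since $\mathcal{A}\Gamma=\Gamma$, we have $\mathcal{A}(\mathcal{D}+\Gamma)=\mathcal{A}\mathcal{D}+\Gamma$, so such an inclusion is enough to give $\mathcal{A}\mathcal{R}_{n-d-1}\subseteq\mathcal{R}_{n-d-1}$, which is the first claim; equality follows because $\mathcal{A}$ is invertible and one can run the same argument for $\mathcal{A}^{-1}$ in place of $\mathcal{A}$. The statement about $\mathbb{A}=\mathcal{R}_{n-d-1}/\Gamma$ is then automatic, since $\mathcal{A}$ descends to a self-homeomorphism of $\mathbb{T}^{n}=E/\Gamma$ that carries $\mathcal{R}_{n-d-1}/\Gamma$ into itself.

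The first real step is to upgrade the hypothesis from a statement about $\mathcal{A}^{-1}$ to one about $\mathcal{A}$. The assumption says that $\mathcal{A}^{-1}$ induces a well-defined self-map on the finite set of $\Gamma^{\perp}$-orbit classes $\mathcal{W}/\Gamma^{\perp}$; any map of a finite set induced by an invertible linear map of the ambient space is a bijection, so the inverse self-map is also well-defined, yielding: for each $\alpha$ there exists $\beta$ with $\mathcal{A}W_{\alpha}=W_{\beta}+\Gamma^{\perp}$. Next, because $\mathcal{A}$ preserves the splitting $E=E^{\parallel}\oplus E^{\perp}$, the projection $\pi_{\perp}$ commutes with $\mathcal{A}$, so for a given $D_{\alpha}\in\mathcal{D}$ with $W_{\alpha}=\pi_{\perp}(D_{\alpha})$ we obtain
\[
\pi_{\perp}(\mathcal{A}D_{\alpha})=\mathcal{A}W_{\alpha}=W_{\beta}+\pi_{\perp}(\gamma_{0})
\]
for some $\beta$ and some $\gamma_{0}\in\Gamma$. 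Because $\mathcal{A}$ sends $\mathbb{Q}\Gamma$-spans to $\mathbb{Q}\Gamma$-spans, $\mathcal{A}D_{\alpha}$ is a rational affine subspace of the same dimension $\nu(n-d-1)$ as $D_{\alpha}$, so $\mathcal{A}D_{\alpha}-\gamma_{0}$ is a rational affine subspace of dimension $\nu(n-d-1)$ projecting to $W_{\beta}$.

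The final step, which I expect to be the main obstacle, is to conclude that $\mathcal{A}D_{\alpha}-\gamma_{0}$ is a $\Gamma$-translate of $D_{\beta}$, so that $\mathcal{A}D_{\alpha}\in D_{\beta}+\Gamma\subseteq\mathcal{R}_{n-d-1}$. The needed input here is a rigidity/uniqueness statement: the rational lift of an element of $\mathcal{W}$ to a rational affine subspace of dimension $\nu(n-d-1)$ is unique up to translation by $\Gamma$. This is exactly the content packaged into conditions (1)--(3) of Definition~\ref{def:rational}, combined with the total irrationality of $\Gamma$ with respect to $E^{\parallel}\oplus E^{\perp}$ (which forces $E^{\parallel}\cap\mathbb{Q}\Gamma=\{0\}$ and hence rigidifies how a rational subspace can sit over a given $W_{\beta}$). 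Granted this uniqueness, the two rational affine subspaces $\mathcal{A}D_{\alpha}-\gamma_{0}$ and $D_{\beta}$ must coincide up to a $\Gamma$-translation, proving $\mathcal{A}D_{\alpha}\in\mathcal{R}_{n-d-1}$ and completing the argument.
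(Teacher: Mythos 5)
Your proposal is correct and follows essentially the same line as the paper's proof: both compute $\pi^{\perp}(\mathcal{A}(D_{\alpha}+\gamma))$ using equivariance of $\pi^{\perp}$ with $\mathcal{A}$, identify it with $\pi^{\perp}(D_{\beta}+\gamma_{1})$ for a suitable $\gamma_{1}\in\Gamma$, and then invoke the uniqueness built into the rational projection method setup (condition (2) of Definition~\ref{def:rational}) to conclude $\mathcal{A}(D_{\alpha}+\gamma)\in\mathcal{D}+\Gamma$. The only difference is that you make explicit the preliminary step of converting the hypothesis on $\mathcal{A}^{-1}$ into the corresponding statement for $\mathcal{A}$ via a finite-set bijection argument, a step the paper passes over silently when it writes $\mathcal{A}W_{\alpha}=W_{\beta}+\gamma_{1}^{\perp}$.
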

\begin{proof}
For the collection $\mathcal{W} = \{W_{\alpha}\}_{\alpha \in I_{n-d-1}}$, we have a collection of unique rational affine subspaces $\mathcal{D} = \{D_{\alpha}\}_{\alpha \in I_{n-d-1}}$ such that for each $\alpha$, $\pi^{\perp}(D_{\alpha}) = W_{\alpha}$. For $D_{\alpha}+\gamma \in \mathcal{D} + \Gamma$ we have $\pi^{\perp}(\mathcal{A}(D_{\alpha}+\gamma)) = \mathcal{A}\pi^{\perp}(D_{\alpha}) + \pi^{\perp}\mathcal{A}(\gamma) = \mathcal{A}W_{\alpha} +  \pi^{\perp}\mathcal{A}(\gamma) = W_{\beta}+\gamma_{1}^{\perp}$ for some $\gamma_{1}^{\perp} \in \Gamma^{\perp}$. But by the uniqueness of the collection $\mathcal{D}$ we also have $W_{\beta} + \gamma_{1}^{\perp} = \pi^{\perp}(D_{\beta} + \gamma_{1})$, so $\mathcal{A}(D_{i} + \gamma) \in \mathcal{D}+\Gamma$.
\end{proof}
The above lemma, combined with the sequence (\ref{eqn:longcohomologysequence}), implies that the spectrum of the map $\Phi_{A}^{*}:H^{*}(\Omega_{\Lambda};\mathbb{R}) \to H^{*}(\Omega_{\Lambda};\mathbb{R})$ induced by $\Phi_{A}$ may be computed using the homological data coming from the action of $\mathcal{A}$ on $\mathbb{T}^{n}$, and the action of $\mathcal{A}$ on $\mathbb{A}$. To see this, suppose $r$ is fixed, and let $D_{r}:H^{N-r}(\mathbb{T}^n;\mathbb{R}) \to H_{r}(\mathbb{T}^n;\mathbb{R})$ denote the isomorphism coming from Poincar\'e Duality. Letting $\mathcal{A}_{*,r}:H_{r}(\mathbb{T}^n;\mathbb{R}) \to H_{r}(\mathbb{T}^n;\mathbb{R})$ and $\mathcal{A}^{*,N-r}:H^{N-r}(\mathbb{T}^n) \to H^{N-r}(\mathbb{T}^n;\mathbb{R})$ denote the maps induced by $\mathcal{A}$, then, if $det(\mathcal{A}) = 1$, one has the relation (see \cite[\S 3.3]{hatcher}) $\mathcal{A}_{*,r}D_{r}\mathcal{A}^{*,N-r} = D_{r}$ (if $det(\mathcal{A}) = -1$, there is a sign change, since then $\mathcal{A}$ takes the fundamental class $[\mathbb{T}^n] \in H_{N}(\mathbb{T}^n;\mathbb{R})$ to $-[\mathbb{T}^n] \in H_{N}(\mathbb{T}^n;\mathbb{R})$). Furthermore, by construction we have the diagram
$$
\xymatrix{
H^{r}(\Omega_{\Lambda};\mathbb{R}) \ar[r]^{\Phi_{A}^{*,r}} & H^{r}(\Omega_{\Lambda};\mathbb{R})\\
H^{r}(\mathbb{T}^{n};\mathbb{R}) \ar[u]^{\pi^{*,r}} \ar[r]^{\mathcal{A}^{*,r}} & H^{r}(\mathbb{T}^{n};\mathbb{R}) \ar[u]^{\pi^{*,r}}}$$
These two facts, together with the fact that the maps $m^{*}$ in the sequence (\ref{eqn:longcohomologysequence}) are given by the compositions $\pi^{*} \circ D_{r}$ of the inverse of the Poincar\'e Duality isomorphism $D_{r}:H^{N-r}(\mathbb{T}) \to H_{r}(\mathbb{T})$ with the induced maps on cohomology $\pi^{*}:H^{*}(\mathbb{T}) \to H^{*}(\Omega_{\Lambda})$, give the computation of the portion of $H^{*}(\Omega_{\Lambda})$ coming from $Image(m^{*})$ in (\ref{eqn:longcohomologysequence}). But the remaining portion of $H^{*}(\Omega_{\Lambda})$ can be obtained as the kernel of the map $j_{*-1}:H_{*-1}(\mathbb{A}) \to H_{*-1}(\mathbb{T})$ in (\ref{eqn:longcohomologysequence}), and hence one needs only the data of the action coming from the restriction of $\mathcal{A}_{*-1}:H_{*-1}(\mathbb{A}) \to H_{*-1}(\mathbb{A})$ to the kernel of $j_{*-1}$.

\section{Examples and Applications}
\label{sec:ex}
In this section we go over applications of the main results in this paper and compare them with other related results in the literature. We note that although the systems come from aperiodic tilings, it is the vertex set of such tiling which we consider as our Delone sets of interest.
\subsection{The Ammann-Beenker Tiling}
\label{subsec:Ammann}
In this section we study the point set given by the vertex set of the Ammann-Beenker tiling, a well-known aperiodic tiling of $\mathbb{R}^2$ which is given by a substitution rule. Instead of introducing it by its substitution rule, we build up using a rational projection method scheme from toral automorphisms given by matrices in $SL(4,\mathbb{Z})$, which follows the spirit of the presentation of this paper. The interested reader may easily find many references on the tiling as a substitution, and we give a few references, for example, see \cite{HarrissLamb}.

Consider the matrix
$$R_{AB} = \left(\begin{array}{cccc}
0&1&0&0 \\
0&0&1&0 \\
0&0&0&1 \\
-1&0&0&0
\end{array}\right)$$
which induces a rotation of $\pi/4$ on a two-dimensional eigenspace $E_1$ and a rotation of $3\pi/4$ on the other two-dimensional eigenspace $E_2$. The \emph{symmetry group} $GL(A_{AB})$ of $A_{AB}$ is the centralizer of $A_{AB}$ in $GL(n,\mathbb{Z})$. The \emph{special symmetry group} $SL(A_{AB})$ of $A_{AB}$ is the centralizer of $A_{AB}$ in $SL(n,\mathbb{Z})$. All matrices in $GL(A_{AB})$ are of the form
$$M(a,b,c,d) = \left(\begin{array}{cccc}
a&b&c&d \\
-d&a&b&c \\
-c&-d&a&b \\
-b&-c&-d&a
\end{array}\right).$$
Consider the symmetric matrix $\mathcal{A}=M(1,1,0,-1)\in SL(A_{AB})$. It induces a hyperbolic toral automorphism with eigenvalues $1+\sqrt{2}$ with multiplicity 2 and $1- \sqrt{2}$ with multiplicity 2. The subspace generated by the expanding eigenvalues coincides with $E_1$ while the one associated with contracting eigenvalues coincides with $E_2$. Therefore we denote $E^\parallel = E_1$ and $E^\perp=E_2$. Let $v_{i}$ denote the standard basis generating $\mathbb{Z}^{4}$ in $\mathbb{R}^{4}$, so $v_{i}$ has a 1 in the $i$th coordinate, and zeroes elsewhere. We use the octagonal scheme for the acceptance domain as in \cite{GHK:cohomology} and in what follows our notation closely follows their notation. To realize the setup as a rational projection method pattern, first consider the collection $\mathcal{W} = \{W_{i} = span \pi^{\perp}(v_{i})\}_{i=1}^{4}$. This collection generates the set of singular subspaces, with the stabilizers of each $W_{i}$ being two-dimensional, given as follows
$$\Gamma^{W_{1}} = \langle v_{1}, v_{2}-v_{4} \rangle, \hspace{.2in} \Gamma^{W_{2}} = \langle v_{2}, v_{1} + v_{3} \rangle$$
$$\Gamma^{W_{3}} = \langle v_{3}, v_{2} + v_{4} \rangle, \hspace{.2in} \Gamma^{W_{4}} = \langle v_{4}, v_{1} - v_{3} \rangle$$
The family of two-dimensional rational subspaces $\mathcal{D} = \{D_{i} = span_{\mathbb{R}}\Gamma^{W_{i}}\}_{i=1}^{4}$ satisfy $\pi^{\perp}D_{i} = W_{i}$, and we get $\mathbb{A} = \mathcal{D} / \Gamma$ as a collection of four (intersecting) 2-tori. \\

We will use the sequence (\ref{eqn:longcohomologysequence}) to compute the action of $\Phi_{M}$ on $H^{2}(\Omega_\Lambda;\mathbb{R})$, breaking up the calculation in to two parts: the first part coming from $coker(j_{2}:H_{2}(\mathbb{A};\mathbb{R}) \to H_{2}(\mathbb{T}^{4};\mathbb{R}))$, and the second part coming from $ker(j_{1}:H_{1}(\mathbb{A};\mathbb{R}) \to H_{1}(\mathbb{T}^{4};\mathbb{R}))$. Recall the map $m_{2}:\mathbb{T}^{4} \to H^{2}(\Omega_\Lambda;\mathbb{R})$ is obtained by composing the Poincare Duality isomorphism with the pullback map on cohomology coming from the maximal torus factor. \\

\textbf{Part One:} We consider the $v_{i}$'s, $i=1,2,3,4$ as generators for $H^{1}(\mathbb{T}^{4};\mathbb{R})$, and use the basis $\hat{z_{i}}$ coming from cup products (in ascending order) of these for $H^{2}(\mathbb{T}^{4};\mathbb{R})$. We let $z_{i}$ denote the corresponding classes in $H_{2}(\mathbb{T}^{4};\mathbb{R})$ associated to the basis $\hat{z_{i}}$, so in particular $z_{1}$ comes from $\hat{z_{1}} = v_{1} \cup v_{2}$, and $z_{6}$ from $\hat{z_{6}} = v_{3} \cup v_{4}$. It is easy to check using the Mayer-Vietoris spectral sequence that $H_{2}(\mathbb{A};\mathbb{R})$ is rank 4, with generators coming from the standard generator of each of the 4 tori which make up $\mathbb{A}$. Using these bases, the inclusion $j:\mathbb{A} \to \mathbb{T}^{4}$ induces $j_{2}:H_{2}(\mathbb{A};\mathbb{R}) \to H_{2}(\mathbb{T}^{4};\mathbb{R})$ which in matrix form looks like
$$J_{2} = \begin{pmatrix} 1 & -1 & 0 & 0 \\ 0 & 0 & 0 & 0 \\ -1 & 0 & 0 & -1 \\ 0 & 1 & -1 & 0 \\ 0 & 0 & 0 & 0 \\ 0 & 0 & 1 & 1 \end{pmatrix}$$
The matrix $J_{2}$ has rank 3, with the first 3 columns forming a basis for the image. Letting $y_{1}, y_{2}, y_{3}$ denote the first three columns of $J_{2}$, upon defining $y_{4} = z_{1}$, $y_{5} = z_{2}$, $y_{6} = z_{5}$, the collection of $y_{i}$'s extend to form a basis $\mathcal{B} = \{y_{i}\}_{i=1}^{6}$ for $H_{2}(\mathbb{T}^{4};\mathbb{R})$. It follows in particular that the elements $m_{2}(y_{4}), m_{2}(y_{5}), m_{2}(y_{6})$ are not zero, and generate the portion of $H^{2}(\Omega_\Lambda;\mathbb{R})$ coming from the $j_{2}$ part of (\ref{eqn:longcohomologysequence}). One can check directly that the map induced by $M$ on $H_{2}(\mathbb{T}^{4};\mathbb{R})$, written in the basis $\mathcal{B}$, has the form
$$M_{2} = \begin{pmatrix} -1 & 0 & 0 & 0 & 0 & 0\\ 0 & -1 & 0 & 0 & 0 & 0 \\ 0 & 0 & -1 & 0 & 0 & 0 \\ -1 & 2 & 1 & 3 & 1 & 1 \\ -1 & 2 & 1 & 4 & 1 & 2 \\ -1 & 2 & 1 & 4 & 2 & 1 \end{pmatrix}$$
Poincare Duality and $M_{2}$ together induce a map $M^{(2)}:H^{2}(\mathbb{T}^{4};\mathbb{R}) \to H^{2}(\mathbb{T}^{4};\mathbb{R})$ which is conjugate to $M^{-1}_{2}$, since $det(M) = 1$. But by inspection (since $M_{2}$ is block lower triangular) one can check that the map $M_{2}^{-1}$ acts on the subspace spanned by $y_{4},y_{5},y_{6}$ via
$$L = \begin{pmatrix} 3 & 1 & 1 \\ 4 & 1 & 2 \\ 4 & 2 & 1 \end{pmatrix}^{-1} = \begin{pmatrix} 3 & -1 & -1 \\ -4 & 1 & 2 \\ -4 & 2 & 1 \end{pmatrix}$$
Finally, the diagram
$$
\xymatrix{
H^{2}(\Omega_\Lambda;\mathbb{R}) \ar[r]^{\Phi_{M}^{*,2}} & H^{2}(\Omega_\Lambda;\mathbb{R})\\
H^{2}(\mathbb{T}^{4};\mathbb{R}) \ar[u]^{\pi^{*,2}} \ar[r]^{M^{*,2}} & H^{2}(\mathbb{T}^{4};\mathbb{R}) \ar[u]^{\pi^{*,2}}}$$
implies the action of $\Phi_{M}$ on the subspace of $H^{2}(\Omega_\Lambda;\mathbb{R})$ corresponding to $coker(j_{2})$, which is generated by $m_{2}(y_{4}), m_{2}(y_{5}), m_{2}(y_{6})$, is conjugate to the matrix $L$. \\
\indent \textbf{Part Two:} Following the notation for the tori $D_{i}$ which comprise $\mathbb{A}$, we begin by defining the following classes in $H_{1}(\mathbb{A};\mathbb{R})$:
$$\gamma_{1} = [v_{1}], \hspace{.1in} \gamma_{2} = [v_{2} - v_{4}], \hspace{.1in} \gamma_{3} = [v_{2}], \hspace{.1in} \gamma_{4} = [v_{1} + v_{3}]$$
$$\gamma_{5} = [v_{3}], \hspace{.1in} \gamma_{6} = [v_{2} + v_{4}], \hspace{.1in} \gamma_{7} = [v_{4}], \hspace{.1in} \gamma_{8} = [v_{1} - v_{3}]$$
The Mayer-Vietoris spectral sequence associated to $\mathbb{A}$ (see \cite{kalugin}) shows that $rank_{\mathbb{R}}H_{1}(\mathbb{A},\mathbb{R}) = 10$, and one can see directly from the sequence that each of the classes $\gamma_{i}$ is a generator in $H_{1}(\mathbb{A};\mathbb{R})$. For the remaining two generators we use the loops formed by
$$\gamma_{9} = \left\{t \begin{pmatrix} 0 \\ 1 \\ 0 \\ 1 \end{pmatrix} : 0 \le t \le 1/2\right\} +  \left\{t \begin{pmatrix}0 \\ -1 \\ 0 \\ 1 \end{pmatrix} : 1/2 \le t \le 1\right\}$$
$$\gamma_{10} = \left\{s \begin{pmatrix} 1 \\ 0 \\ 1 \\ 0 \end{pmatrix} : 0 \le s \le 1/2 \right\} + \left\{s \begin{pmatrix} -1 \\ 0 \\ 1 \\ 0 \end{pmatrix} : 1/2 \le s \le 1\right\}$$
In this basis for $H_{1}(\mathbb{A};\mathbb{R})$, a basis for the kernel $K$ of $H_{1}(\mathbb{A};\mathbb{R}) \to H_{1}(\mathbb{T}^{4};\mathbb{R})$ is given by
$$a_{1} = -2 \gamma_{1} + \gamma_{4} + \gamma_{8}, \hspace{.1in} a_{2} = \gamma_{2} - \gamma_{3} + \gamma_{7}, \hspace{.1in} a_{3} = \gamma_{2} - 2\gamma_{3}+\gamma_{6}$$
$$a_{4} = \gamma_{1}-\gamma_{4}+\gamma_{5}, \hspace{.1in} a_{5} = \gamma_{7} - \gamma_{9}, \hspace{.1in} a_{6} = \gamma_{5} - \gamma_{10}$$
Now the induced action of $M$ on the kernel $K$ in $H_{1}(\mathbb{A};\mathbb{R})$, in the basis presented above, is given by
$$\begin{pmatrix} 1 & -1 & 0 & 0 & -1 & 1 \\ -2 & 1 & 0 & 0 & 2 & -1 \\ 0 & 0 & 1 & 1 & -1 & 1 \\ 0 & 0 & 2 & 1 & -1 & 2 \\ 0 & 0 & 0 & 0 & -1 & 0 \\ 0 & 0 & 0 & 0 & 0 & -1 \end{pmatrix}$$
(Note we used here the relations $a_{1} + a_{4} - a_{6} = \frac{1}{2}a_{1}$ and $a_{2}-a_{3}-a_{5} = - \frac{1}{2}a_{3}$, which can be checked directly). The spectrum of this matrix consists of: $\{1+\sqrt{2},1+\sqrt{2},1-\sqrt{2},1-\sqrt{2},-1,-1\}$. Note that this is, apart from the -1 and -1, exactly the spectrum of the associated substitution matrix for the Amman-Beenker substitution. \\

Putting together both parts yields the spectrum of $\Phi_{M}$ as the spectrum of $L = \begin{pmatrix} 3 & -1 & -1 \\ -4 & 1 & 2 \\ -4 & 2 & 1 \end{pmatrix},$ which is $(1+\sqrt{2})^2, (1+\sqrt{2})^{-2}, -1$ along with $1+\sqrt{2}$ (with multiplicity 2) and $1-\sqrt{2}$. This shows that the rapidly expanding subspace for this example is three-dimensional, with two eigenvalues not satisfying (\ref{eqn:RES}) strictly.
\subsection{The Penrose Tiling}
Both using the substitution matrix and the matrix representing the induced action on the top level of cohomology (using the Anderson-Putnam complex), with some work, it can be computed that the leading eigenvalues are $\nu_1 = \left(\frac{1+\sqrt{5}}{2}\right)^2$ followed by $\nu_2 = \frac{1+\sqrt{5}}{2} = \nu_1^{\frac{1}{2}}$. Therefore, the rapidly expanding subspace has dimension greater than one, but there are no eigenvalues other than $\nu_1$ which satisfy (\ref{eqn:RES}) strictly. 
\subsection{Large rapidly expanding subspaces}
\label{subsec:largeRES}
Given the previous examples, one wonders whether there are self-affine sets where a (non-leading) eigenvalue for the induced map on cohomology satisfies the inequality (\ref{eqn:RES}) strictly, as opposed to being an equality as in the previous two examples. Here we describe how such examples can be constructed.

Given $d,k \in \mathbb{N}$, it is possible to construct self-affine substitution Delone sets in $\mathbb{R}^d$ with an associated rapidly expanding subspace in cohomology of dimension at least $k$. Indeed, consider a one-dimensional primitive, aperiodic substitution $\sigma$ which is proper (see \cite[\S 6.1]{sadun:book}). Let $\Omega_{\sigma}$ be the corresponding pattern space and $M$ be the incidence matrix matrix which has eigenvalues $\xi_1,\dots, \xi_m$ ordered in decreasing magnitude. Barge and Diamond showed in \cite{BD:cohomology} that $H^{1}(\Omega_{\sigma};\mathbb{R})$ is isomorphic to the direct limit of $M^{t}$ and thus the action of $\sigma$ on $H^{1}(\Omega_{\sigma};\mathbb{R})$ is given by $M^{t}$. The pattern space $\Omega_{\times_{d}\sigma}$ associated to the $d$-fold product $\times_{d} \sigma$ of $\sigma$ with itself $d$ times is homeomorphic to the product $\prod_{i=1}^{d}\Omega_{\sigma}$. Via the K\"{u}nneth theorem, the induced action on $H^{*}(\prod_{i=1}^{d}\Omega_{\sigma},\mathbb{R})$ is the linear action $(M^{t})^{\otimes d}$ induced by $M^{t}$ on the $d$-fold tensor productd $H^{*}(\Omega_{\sigma},\mathbb{R})^{\otimes d}$.

The spectrum of the map $(M^{t})^{\otimes d}$ consists of all $d$-fold products of eigenvalues of $M^{t}$. Since $M^{t}$ is primitive it has a leading eigenvalue $\xi_1$ and the leading eigenvalue of $(M^{t})^{\otimes d}$ is $\xi_{1}^{d} = \nu_1$. The product substitution $\times_{d} \sigma$ is a pure dilation (coming from the dilation of $\sigma$) in dimension $d$, and an eigenvalue $\nu_{i}$ lies in the rapidly expanding subspace if $|\nu_{i}| > \nu_{1}^{ \frac{d-1}{d}} = \xi^{d-1}$ (from (\ref{eqn:RES})). It follows that if $\xi_{j}$ is an eigenvalue of $M^{t}$ with $|\xi_{j}| > 1$, then $\xi_{j} \xi_{1}^{d-1} > \xi_{1}^{d-1} = \nu_1^{\frac{d-1}{d}}$ and hence $\xi_{j} \xi_{1}^{d-1}$ (which is an eigenvalue for $(M^{t})^{\otimes d}$) corresponds to an eigenvector in the rapidly expanding subspace which corresponds to growth of ergodic integrals which are faster than the boundary effects.

\begin{comment}
\section{Questions and Notes}
\textbf{Question: }
\indent
\begin{enumerate}

\item How do we compute the cohomology classes associated to clusters?
\item Let $E^\parallel$ and $E^\perp$ be given and be totally irraional in $\mathbb{R}^n = E^\parallel\oplus E^\perp$ and let $\Gamma\in SL(n,\mathbb{R})/SL(n,\mathbb{Z})$. Suppose that $K_1$ and $K_2$ are windows in $E^\perp$ so that $\mathrm{dim}H^k(\Omega_{\Lambda(K_1,\Gamma)}) = \mathrm{dim}H^k(\Omega_{\Lambda(K_2,\Gamma)})$ for all $k\in\{0,\dots, d\}$. Does it follow that $\Omega_{\Lambda(K_1,\Gamma)}$ is homeomorphic to $\Omega_{\Lambda(K_2,\Gamma)}$? When is this true?
\end{enumerate}
\end{comment}
%\appendix
%\input{boundary}

\bibliographystyle{amsalpha}
\bibliography{biblio}

\providecommand{\bysame}{\leavevmode\hbox to3em{\hrulefill}\thinspace}
\providecommand{\MR}{\relax\ifhmode\unskip\space\fi MR }
% \MRhref is called by the amsart/book/proc definition of \MR.
\providecommand{\MRhref}[2]{%
  \href{http://www.ams.org/mathscinet-getitem?mr=#1}{#2}
}
\providecommand{\href}[2]{#2}
\begin{thebibliography}{APCG13}

\bibitem[Ada04]{Adamczewski}
Boris Adamczewski, \emph{Symbolic discrepancy and self-similar dynamics}, Ann.
  Inst. Fourier (Grenoble) \textbf{54} (2004), no.~7, 2201--2234 (2005).
  \MR{2139693 (2006m:11114)}

\bibitem[AP98]{AP}
Jared~E. Anderson and Ian~F. Putnam, \emph{Topological invariants for
  substitution tilings and their associated {$C^*$}-algebras}, Ergodic Theory
  Dynam. Systems \textbf{18} (1998), no.~3, 509--537. \MR{1631708
  (2000a:46112)}

\bibitem[APCG11]{ApCG:rapid}
Jos{\'e} Aliste-Prieto, Daniel Coronel, and Jean-Marc Gambaudo, \emph{Rapid
  convergence to frequency for substitution tilings of the plane}, Comm. Math.
  Phys. \textbf{306} (2011), no.~2, 365--380. \MR{2824475 (2012g:37035)}

\bibitem[APCG13]{ApCG:rect}
\bysame, \emph{Linearly repetitive {D}elone sets are rectifiable}, Ann. Inst.
  H. Poincar\'e Anal. Non Lin\'eaire \textbf{30} (2013), no.~2, 275--290.
  \MR{3035977}

\bibitem[BD08]{BD:cohomology}
Marcy Barge and Beverly Diamond, \emph{Cohomology in one-dimensional
  substitution tiling spaces}, Proc. Amer. Math. Soc. \textbf{136} (2008),
  no.~6, 2183--2191. \MR{2383524 (2009c:37005)}

\bibitem[BG13]{BG:book1}
Michael Baake and Uwe Grimm, \emph{Aperiodic order. {V}ol. 1}, Encyclopedia of
  Mathematics and its Applications, vol. 149, Cambridge University Press,
  Cambridge, 2013, A mathematical invitation, With a foreword by Roger Penrose.
  \MR{3136260}

\bibitem[BK13a]{BargeKellendonk}
Marcy Barge and Johannes Kellendonk, \emph{Proximality and pure point spectrum
  for tiling dynamical systems}, Michigan Math. J. \textbf{62} (2013), no.~4,
  793--822. \MR{3160543}

\bibitem[BK13b]{BK}
\bysame, \emph{Proximality and pure point spectrum for tiling dynamical
  systems}, Michigan Math. J. \textbf{62} (2013), no.~4, 793--822. \MR{3160543}

\bibitem[BKS12]{BKS}
Marcy Barge, Johannes Kellendonk, and Scott Schmieding, \emph{Maximal
  equicontinuous factors and cohomology for tiling spaces}, Fund. Math.
  \textbf{218} (2012), no.~3, 243--268. \MR{2982777}

\bibitem[BS13]{BufetovSolomyak}
Alexander~I. Bufetov and Boris Solomyak, \emph{Limit theorems for self-similar
  tilings}, Comm. Math. Phys. \textbf{319} (2013), no.~3, 761--789.
  \MR{3040375}

\bibitem[Buf14]{bufetov:cocycles}
Alexander~I. Bufetov, \emph{Finitely-additive measures on the asymptotic
  foliations of a {M}arkov compactum}, Mosc. Math. J. \textbf{14} (2014),
  no.~2, 205--224, 426. \MR{3236492}

\bibitem[CF15]{CF:heisenberg}
S.~{Cosentino} and L.~{Flaminio}, \emph{{Equidistribution for higher-rank
  Abelian actions on Heisenberg nilmanifolds}}, ArXiv e-prints (2015).

\bibitem[DF15]{DF:limit}
Dmitry Dolgopyat and Bassam Fayad, \emph{Limit theorems for toral
  translations}, Hyperbolic dynamics, fluctuations and large deviations, Proc.
  Sympos. Pure Math., vol.~89, Amer. Math. Soc., Providence, RI, 2015,
  pp.~227--277. \MR{3309100}

\bibitem[DHL14]{DHL:wind-tree}
Vincent Delecroix, Pascal Hubert, and Samuel Leli{\`e}vre, \emph{Diffusion for
  the periodic wind-tree model}, Ann. Sci. \'Ec. Norm. Sup\'er. (4) \textbf{47}
  (2014), no.~6, 1085--1110. \MR{3297155}

\bibitem[Dwo93]{dworkin}
Steven Dworkin, \emph{Spectral theory and x-ray diffraction}, J. Math. Phys.
  \textbf{34} (1993), no.~7, 2965--2967. \MR{1224190 (94g:82049)}

\bibitem[FHK02]{FHK:topological}
Alan Forrest, John Hunton, and Johannes Kellendonk, \emph{Topological
  invariants for projection method patterns}, Mem. Amer. Math. Soc.
  \textbf{159} (2002), no.~758, x+120. \MR{1922206 (2003j:37024)}

\bibitem[FM13]{FM:intro}
G.~{Forni} and C.~{Matheus}, \emph{{Introduction to Teichm\"{u}ller theory and
  its applications to dynamics of interval exchange transformations, flows on
  surfaces and billiards}}, ArXiv e-prints (2013).

\bibitem[For02]{forni:deviation}
Giovanni Forni, \emph{Deviation of ergodic averages for area-preserving flows
  on surfaces of higher genus}, Ann. of Math. (2) \textbf{155} (2002), no.~1,
  1--103. \MR{MR1888794 (2003g:37009)}

\bibitem[FSU15]{FSU:billiards}
K.~{Fr{\c a}czek}, R.~{Shi}, and C.~{Ulcigrai}, \emph{{Genericity on curves and
  applications: pseudo-integrable billiards, Eaton lenses and gap
  distributions}}, ArXiv e-prints (2015).

\bibitem[GHK13]{GHK:cohomology}
Franz G{\"a}hler, John Hunton, and Johannes Kellendonk, \emph{Integral
  cohomology of rational projection method patterns}, Algebr. Geom. Topol.
  \textbf{13} (2013), no.~3, 1661--1708. \MR{3071138}

\bibitem[Hat02]{hatcher}
Allen Hatcher, \emph{Algebraic topology}, Cambridge University Press,
  Cambridge, 2002. \MR{1867354 (2002k:55001)}

\bibitem[HKW14]{HKW:separated}
Alan Haynes, Michael Kelly, and Barak Weiss, \emph{Equivalence relations on
  separated nets arising from linear toral flows}, Proc. Lond. Math. Soc. (3)
  \textbf{109} (2014), no.~5, 1203--1228. \MR{3283615}

\bibitem[HL04]{HarrissLamb}
E.~O. Harriss and J.~S.~W. Lamb, \emph{Canonical substitutions tilings of
  {A}mmann-{B}eenker type}, Theoret. Comput. Sci. \textbf{319} (2004), no.~1-3,
  241--279. \MR{2074956 (2005f:52038)}

\bibitem[Kal05]{kalugin}
P.~Kalugin, \emph{Cohomology of quasiperiodic patterns and matching rules}, J.
  Phys. A \textbf{38} (2005), no.~14, 3115--3132. \MR{2132522 (2006a:52019)}

\bibitem[Kel08]{KellendonkPEF}
Johannes Kellendonk, \emph{Pattern equivariant functions, deformations and
  equivalence of tiling spaces}, Ergodic Theory Dynam. Systems \textbf{28}
  (2008), no.~4, 1153--1176. \MR{2437225 (2009m:37038)}

\bibitem[Kon97]{kontsevich:hodge}
M.~Kontsevich, \emph{Lyapunov exponents and {H}odge theory}, The mathematical
  beauty of physics ({S}aclay, 1996), Adv. Ser. Math. Phys., vol.~24, World
  Sci. Publ., River Edge, NJ, 1997, pp.~318--332. \MR{MR1490861 (99b:58147)}

\bibitem[KP06]{Kellendonk-Putnam:RS}
Johannes Kellendonk and Ian~F. Putnam, \emph{The {R}uelle-{S}ullivan map for
  actions of {$\Bbb R^n$}}, Math. Ann. \textbf{334} (2006), no.~3, 693--711.
  \MR{2207880 (2007e:57027)}

\bibitem[Kwa11]{Kwapisz}
Jaroslaw Kwapisz, \emph{Rigidity and mapping class group for abstract tiling
  spaces}, Ergodic Theory Dynam. Systems \textbf{31} (2011), no.~6, 1745--1783.
  \MR{2851674}

\bibitem[Len09]{lenz:continuity}
Daniel Lenz, \emph{Continuity of eigenfunctions of uniquely ergodic dynamical
  systems and intensity of {B}ragg peaks}, Comm. Math. Phys. \textbf{287}
  (2009), no.~1, 225--258. \MR{2480747 (2010g:37007)}

\bibitem[LMS02]{LMS02}
J.-Y. Lee, R.~V. Moody, and B.~Solomyak, \emph{Pure point dynamical and
  diffraction spectra}, Ann. Henri Poincar\'e \textbf{3} (2002), no.~5,
  1003--1018. \MR{1937612 (2004a:52040)}

\bibitem[LW03]{LagariasWang}
Jeffrey~C. Lagarias and Yang Wang, \emph{Substitution {D}elone sets}, Discrete
  Comput. Geom. \textbf{29} (2003), no.~2, 175--209. \MR{1957227 (2004a:52039)}

\bibitem[Mat95]{mattila:GMT}
Pertti Mattila, \emph{Geometry of sets and measures in {E}uclidean spaces},
  Cambridge Studies in Advanced Mathematics, vol.~44, Cambridge University
  Press, Cambridge, 1995, Fractals and rectifiability. \MR{1333890 (96h:28006)}

\bibitem[MS06]{global}
Calvin~C. Moore and Claude~L. Schochet, \emph{Global analysis on foliated
  spaces}, second ed., Mathematical Sciences Research Institute Publications,
  vol.~9, Cambridge University Press, New York, 2006. \MR{2202625
  (2006i:58035)}

\bibitem[Put15]{putnam:notes}
Ian~F. Putnam, \emph{{Lecture Notes on Smale Spaces}},
  \url{http://www.math.uvic.ca/faculty/putnam/ln/Smale_spaces.pdf}, July 2015.

\bibitem[Rob07]{RobinsonHalmos}
E.~Arthur Robinson, Jr., \emph{A {H}almos-von {N}eumann theorem for model sets,
  and almost automorphic dynamical systems}, Dynamics, ergodic theory, and
  geometry, Math. Sci. Res. Inst. Publ., vol.~54, Cambridge Univ. Press,
  Cambridge, 2007, pp.~243--272. \MR{2369449 (2010a:37019)}

\bibitem[Sad03]{sadun:inverse}
Lorenzo Sadun, \emph{Tiling spaces are inverse limits}, J. Math. Phys.
  \textbf{44} (2003), no.~11, 5410--5414. \MR{2014868 (2004i:37031)}

\bibitem[Sad07]{sadun:PECints}
\bysame, \emph{Pattern-equivariant cohomology with integer coefficients},
  Ergodic Theory Dynam. Systems \textbf{27} (2007), no.~6, 1991--1998.
  \MR{2371606 (2009b:52057)}

\bibitem[Sad08]{sadun:book}
\bysame, \emph{Topology of tiling spaces}, University Lecture Series, vol.~46,
  American Mathematical Society, Providence, RI, 2008. \MR{2446623
  (2009m:52041)}

\bibitem[Sad11]{sadun:exact}
\bysame, \emph{Exact regularity and the cohomology of tiling spaces}, Ergodic
  Theory Dynam. Systems \textbf{31} (2011), no.~6, 1819--1834. \MR{2851676
  (2012i:37025)}

\bibitem[SBGC84]{schechtman}
D.~Shechtman, I.~Blech, D.~Gratias, and J.~W. Cahn, \emph{Metallic phase with
  long-range orientational order and no translational symmetry}, Phys. Rev.
  Lett. \textbf{53} (1984), 1951--1953.

\bibitem[Sch57]{schwartzman:cycle}
Sol Schwartzman, \emph{Asymptotic cycles}, Ann. of Math. (2) \textbf{66}
  (1957), 270--284. \MR{MR0088720 (19,568i)}

\bibitem[Sch03]{schwartzman:cycle2}
\bysame, \emph{Higher dimensional asymptotic cycles}, Canad. J. Math.
  \textbf{55} (2003), no.~3, 636--648. \MR{1980617 (2004d:57036)}

\bibitem[Sol97]{solomyak:SS}
Boris Solomyak, \emph{Dynamics of self-similar tilings}, Ergodic Theory Dynam.
  Systems \textbf{17} (1997), no.~3, 695--738. \MR{1452190 (98f:52030)}

\bibitem[Sol11]{solomon:subs}
Yaar Solomon, \emph{Substitution tilings and separated nets with similarities
  to the integer lattice}, Israel J. Math. \textbf{181} (2011), 445--460.
  \MR{2773052 (2012c:37030)}

\bibitem[Sol14]{solomon:simple}
\bysame, \emph{A simple condition for bounded displacement}, J. Math. Anal.
  Appl. \textbf{414} (2014), no.~1, 134--148. \MR{3165299}

\bibitem[ST16]{ST:traces}
S.~{Schmieding} and R.~{Trevi{\~n}o}, \emph{{Traces of random operators
  associated with self-affine Delone sets and Shubin's formula}}, ArXiv
  e-prints (2016).

\bibitem[Zor99]{zorich-leaves}
Anton Zorich, \emph{How do the leaves of a closed {$1$}-form wind around a
  surface?}, Pseudoperiodic topology, Amer. Math. Soc. Transl. Ser. 2, vol.
  197, Amer. Math. Soc., Providence, RI, 1999, pp.~135--178. \MR{MR1733872
  (2001c:57019)}

\end{thebibliography}

\end{document}